\documentclass[10pt,final]{amsart}

\usepackage{docmute}

\usepackage[utf8]{inputenc}
\usepackage[T1]{fontenc}
\usepackage{amsmath, amssymb, amsthm}
\usepackage{mathrsfs}      % For fancy script letters
\usepackage{enumitem}      % Customizable lists
\usepackage{geometry}      % Page layout
\usepackage{graphicx}      % For figures
\usepackage{tikz}
\usepackage{tikz-cd}
\usepackage{mathtools}
\usepackage{stmaryrd}%\mapsfromのため
\usepackage{mathdots} %\iddotsのため

\usepackage[backend=biber,style=alphabetic,sorting=nyt,url=false]{biblatex}
\usepackage{comment}

\usepackage{hyperref}      % Clickable links and references
\usepackage{aliascnt}
\usepackage{cleveref}

\usepackage{xurl}
\theoremstyle{plain}
\newtheorem{theorem}{Theorem}[section]

\newaliascnt{lemma}{theorem}
\newtheorem{lemma}[lemma]{Lemma}
\aliascntresetthe{lemma}

\newaliascnt{corollary}{theorem}
\newtheorem{corollary}[corollary]{Corollary}
\aliascntresetthe{corollary}

\newaliascnt{proposition}{theorem}
\newtheorem{proposition}[proposition]{Proposition}
\aliascntresetthe{proposition}

\newtheorem{atheorem}{Theorem}

\theoremstyle{definition}
\newaliascnt{definition}{theorem}
\newtheorem{definition}[definition]{Definition}
\aliascntresetthe{definition}

\newaliascnt{example}{theorem}
\newtheorem{example}[example]{Example}
\aliascntresetthe{example}

\newaliascnt{remark}{theorem}
\newtheorem{remark}[remark]{Remark}
\aliascntresetthe{remark}
\newtheorem*{ackn}{Acknowledgements}
\newtheorem*{notation}{Notation}

\crefname{theorem}{Theorem}{Theorems}
\crefname{lemma}{Lemma}{Lemmas}
\crefname{corollary}{Corollary}{Corollaries}
\crefname{proposition}{Proposition}{Propositions}
\crefname{definition}{Definition}{Definitions}
\crefname{example}{Example}{Examples}
\crefname{remark}{Remark}{Remarks}
\crefname{section}{Section}{Sections}
\crefname{subsection}{Subsection}{Subsections}
\crefname{figure}{Figure}{Figures}

\DeclareDelimFormat{multicitedelim}{\addcomma\space} 

\newcommand{\Acal}{\mathcal{A}}

\newcommand{\Fcal}{\mathcal{F}}

\newcommand{\Lcal}{\mathcal{L}}

\newcommand{\Bb}{\mathbb{B}}

\newcommand{\Nbb}{\mathbb{Z}_{\geq 0}}

\newcommand{\Rbb}{\mathbb{R}}

\newcommand{\Tbb}{\mathbb{T}}

\newcommand{\Zbb}{\mathbb{Z}}

\newcommand{\Lsf}{\mathsf{L}}
\newcommand{\Msf}{\mathsf{M}}

\newcommand{\Fsf}{\mathsf{F}}

\newcommand{\pol}{\mathbb{T}[X_1,\dots,X_n]}
\newcommand{\lau}{\mathbb{T}[X_1^{\pm},\dots,X_n^{\pm}]}
\newcommand{\Bpol}{\mathbb{B}[X_1,\dots,X_n]}
\newcommand{\Blau}{\mathbb{B}[X_1^{\pm},\dots,X_n^{\pm}]}

\let\Im\relax

\newcommand{\Ker}{\operatorname{Ker}}
\newcommand{\Im}{\operatorname{Im}}
\newcommand{\Hom}{\operatorname{Hom}}

\newcommand{\CSpec}{\operatorname{CSpec}}
\newcommand{\Aff}{\operatorname{Aff}}

\newcommand{\het}{\operatorname{ht}}
\newcommand{\coht}{\operatorname{coht}}

\newcommand{\cone}{\operatorname{cone}}

\newcommand{\relint}{\operatorname{relint}}

\newcommand{\trop}{\operatorname{trop}}
\newcommand{\con}{\operatorname{cong}}

\newcommand{\Mp}{M_{\geq 0}}

\newcommand{\rk}{\operatorname{rk}}

\DeclareMathOperator{\supp}{supp}

\newcommand{\<}{\langle}
\renewcommand{\>}{\rangle}

\title{Space of prime congruences in tropical geometry}
\author{Kentaro Tanaka}
\address{Graduate School of Mathematics, Nagoya University\\
Furo-cho, Chikusa-ku, Nagoya 464-8602, Japan}
\email{kentaro.tanaka.h5@math.nagoya-u.ac.jp}
\subjclass[2020]{Primary 14T10; Secondary 14T15, 14T20, 16Y60}
\keywords{Tropical geometry, prime congruences, ordered monoids, tropical toric variety}

\begin{document}

\begin{abstract}
We investigate spaces of prime congruences on tropical algebras and study their geometry, 
inspired by classical scheme theory.
Our main strategy is to use tropical algebras associated to ordered monoids, which play the role of 
the monomial structure of these algebras.
Using the space of prime congruences as local models, 
we introduce a topological space which contains the usual tropical toric variety as a subspace.
We show how the separatedness and properness of this space are captured by scheme-theoretic points.
As an application of our framework, we obtain a necessary and sufficient condition 
for a prime congruence to be finitely generated.
\end{abstract}

\maketitle

\tableofcontents

\section{Introduction} \label{sec:intro}

\subsection{Background}
Tropical geometry has found an increasing number of applications in
algebraic geometry, providing combinatorial shadows of algebro-geometric
objects. Its primary geometric objects are rational polyhedral complexes
called tropical varieties, which are often obtained via the
tropicalization of algebraic varieties.

Recently, attempts have been made to develop a theory of tropical geometry 
from an algebraic perspective.
The basic algebras used in tropical geometry are the semifields 
\begin{align*}
  &\Bb=(\{-\infty,0\},\operatorname{max},+),\\
  &\Tbb=(\Rbb\cup\{-\infty\},\operatorname{max},+),
\end{align*}
and (semi)algebras over these semifields.
The set of functions on a tropical variety is considered to form such an algebra.
One of the main interests in algebraic tropical geometry is
what geometric data these algebras contain and how they can be extracted.

In \cite{joo2018prime}, 
the authors defined prime congruences and studied their properties.
A \emph{prime congruence} on a semiring $S$ is an equivalence relation on $S$
which gives a surjective semiring homomorphism from $S$ to 
a semiring with suitable properties (see \cref{definition:prime congruence} for the precise definition).
Note that this definition is slightly different from that of a prime congruence 
used in several other papers (\cite{lorscheid2012geometry,BertramEaston2017TropicalNullstellensatz,jarra2025strong}).
They also proposed the definition of the Krull dimension of an additively idempotent semiring $S$
and showed that it works well for some semirings which appear in tropical geometry.
\begin{theorem}[{\cite[parts of Theorems 4.6, 4.9, 4.14]{joo2018prime}}]
  The following equalities hold:
  \begin{itemize}
    \item $\dim \Blau=\dim \Bpol=n$,
    \item $\dim \lau=\dim \pol=n+1$.
  \end{itemize}
\end{theorem}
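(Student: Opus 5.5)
The plan is to reduce each prime congruence to order-theoretic data on the monomials. We then bound the length of a chain by a rank invariant that drops strictly at every step, and finally exhibit chains that attain the bound.

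\emph{Step 1 (reduction to monomials).} Let $P$ be a prime congruence on $S$, where $S$ is one of the four algebras. The quotient $S/P$ is totally ordered, additively idempotent and multiplicatively cancellative. The image of a polynomial is therefore the maximum of the images of its monomials, so $P$ is determined by its restriction to monomials. For $\Blau$ this restriction consists of a saturated subgroup $H\subseteq\Zbb^n$, namely the kernel, together with a translation-invariant total order on the torsion-free group $\Zbb^n/H$. For $\Bpol$ there is additional data: the set of variables sent to $-\infty$, which forms a face. The surviving monomials generate a subgroup whose image carries the induced order. Over $\Tbb$ the same description holds with $\Zbb^n$ replaced by $\Rbb\times\Zbb^n$, where $\Rbb$ records the coefficient. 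The relevant group is then the image of $\Rbb\oplus\Zbb^n$, and we measure it by its rational rank, with the $\Rbb$-factor contributing at most $1$.

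\emph{Step 2 (upper bound).} Attach to $P$ the invariant $r(P)$, the rational rank of the ordered group of units of $S/P$. This is at most $n$ over $\Bb$ and at most $n+1$ over $\Tbb$. I will show that $P\subsetneq Q$ forces $r(Q)<r(P)$.

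If the kernels and the killed faces of $P$ and $Q$ coincide, then both congruences induce total orders on the same group, one refining the other. Two compatible total orders must coincide, so $P=Q$, a contradiction. Hence either the kernel grows or more monomials are killed. If the kernel grows, strictness follows from saturation: the kernel is saturated because the quotient is torsion-free, so a strictly larger saturated kernel gives a quotient of strictly smaller rank.

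The remaining case, where new monomials are killed, is where I expect the main obstacle. Here I would pass to the localization at the surviving variables. Since $Q$ kills an element that survives in $P$, the map on groups of fractions of the surviving monoid cannot be injective on a full-rank sublattice. The delicate point is that a killed variable may lie in the rational span of the surviving ones, so I would compare ranks face by face rather than variable by variable.

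With this established, every chain has length at most $n$, respectively $n+1$.

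\emph{Step 3 (lower bound).} For $\Blau$, choose a lexicographic order on $\Zbb^n$ given by functionals $\ell_1,\dots,\ell_n$. Let $P_k$ be the congruence that remembers only $\ell_1,\dots,\ell_k$ lexicographically, that is, the quotient by a convex subgroup. Then $P_n\subsetneq P_{n-1}\subsetneq\dots\subsetneq P_0$ is a chain of length $n$.

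Over $\Tbb$, prepend the coefficient as the first lexicographic coordinate. This gives a chain of length $n+1$ whose last member is the congruence that remembers only the valuation on $\Rbb$.

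For the polynomial algebras, restrict these congruences along the inclusions $\Bpol\subseteq\Blau$ and $\pol\subseteq\lau$. One checks that distinctness is preserved, since it is already witnessed by monomials. This realizes the lower bounds and proves the equalities.
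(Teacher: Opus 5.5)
Your route differs from the paper's. You bound chains by the rank $r(P)$ of the value group; for $\Blau$ this is $n-\het P$. The paper bounds them by the flag length, which is $\coht P$. That route is viable, and your lower-bound chains work after one indexing fix: the top of the $\Tbb$-chain must be the prime with $\Lsf=\Rbb\times M$ (the kernel of $\lau\to\Bb$), with the ``valuation only'' prime one step below; otherwise you only get length $n$.

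The genuine gap is the upper bound for $\Bpol$ and $\pol$. It rests on the case where $Q$ kills more monomials than $P$, and there you give no argument. That $Q$ kills something surviving in $P$ carries no rank information by itself. Write $\Fsf_P\supsetneq\Fsf_Q$ for the surviving faces (for $\pol$, use $\Rbb\times\Fsf$) and $K_P\subseteq\Fsf_P^{\mathrm{gp}}$ for the kernel of the value map of $P$. Then $K_P\cap\Fsf_Q^{\mathrm{gp}}\subseteq K_Q$, so $r(Q)\le\rk\bigl(\Fsf_Q^{\mathrm{gp}}/(K_P\cap\Fsf_Q^{\mathrm{gp}})\bigr)$, and this group embeds into $\Fsf_P^{\mathrm{gp}}/K_P$. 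Suppose some $NX_j-\sum_i a_iX_i$, with $X_j$ newly killed and the $X_i$ surviving in $Q$, lay in $K_P$; this is exactly your ``delicate point''. Then the embedding could have finite index and no drop would follow. This situation has to be ruled out, not worked around ``face by face''.

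The missing idea is to use the order, not just the kernel. Since $P\subseteq Q$, $\pi_P(m_1)\ge\pi_P(m_2)$ implies $\pi_Q(m_1)\ge\pi_Q(m_2)$. So no monomial $m_1$ containing a variable killed by $Q$ can satisfy $\pi_P(m_1)\ge\pi_P(m_2)$ with $m_2$ surviving in $Q$; this is condition (ii) of \cref{proposition:inclusion monoid B}. Split $NX_j-\sum_i a_iX_i$ into its positive part, which contains $X_j$, and its negative part, which lies in $\Fsf_Q$. This shows it never lies in $K_P$. So every newly killed variable is rationally independent of $\Fsf_Q^{\mathrm{gp}}$ modulo $K_P$, and $r(Q)<r(P)$ follows. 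The paper uses the same input, in the form $\Fsf_Q^{\mathrm{gp}}\subseteq H(\tilde P)_1$, to get the strict drop of flag length in \cref{lemma:strict inclusion}.

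Two smaller repairs are also needed.
\begin{enumerate}
\item $r$ must be the rank of the group generated by the images of the surviving monomials, not of the unit group of $S/P$. On $\Bb[X]$, the prime with $X<0$ is strictly contained in the prime killing $X$, yet both quotients have trivial units.
\item Over $\Tbb$, ``rational rank with the $\Rbb$-factor contributing at most $1$'' must be defined. One choice is $r(P)=\epsilon_P+\rk\bigl(\Fsf_P^{\mathrm{gp}}/\Pi(K_P)\bigr)$, where $K_P\subseteq\Rbb\times\Fsf_P^{\mathrm{gp}}$, $\Pi$ is the projection, and $\epsilon_P\in\{0,1\}$ records whether $\Rbb\times\{0\}$ survives. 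Torsion-freeness of the quotient does not give strictness here, since $\Rbb/\mathbb{Q}$ is torsion-free. What you need is that $K_P\cap(\Rbb\times\{0\})$ is convex by (L1), hence equal to $0$ or $\Rbb\times\{0\}$. Then $K_P\subsetneq K_Q$ with $\Pi(K_P)=\Pi(K_Q)$ forces $\epsilon$ to drop.
\end{enumerate}
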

In order to compute the Krull dimension,
they showed that any prime congruence on $\Blau$ or $\lau$
can be represented by a matrix with coefficients in $\Rbb$, called an \emph{admissible matrix}.
Furthermore, in \cite{song2024geometricinterpretationkrulldimensions} and \cite{joo2026varieties}, 
the authors calculated the Krull dimension of semirings of the form $\lau/C$
where $C$ is a congruence satisfying appropriate conditions, using these matrices.

Admissible matrices are useful for expressing prime congruences as numerical data.
However, for a given prime congruence, the corresponding matrix is not uniquely determined.
This non-uniqueness makes it difficult to decide inclusion relations between prime congruences.

\subsection{Our contribution}

Our aim is to treat the set of prime congruences on semirings as a geometric object.
To do this in a way analogous to classical scheme theory, in this paper, 
we begin with a sufficiently abstract setting.
Some notions in our framework not only lead to several new results,
but also shed new light on previously known results.
Note that our setting is different from the one in \cite{giansiracusa2016equations}, 
where the authors construct schemes by gluing sets of prime ideals of semirings.
%Unfortunately, in this paper $\CSpec S$ is treated merely as a topological space, without additional structures such as a structure sheaf.

For a semiring $S$, we denote by $\CSpec S$ the set of all prime congruences on $S$.
Let $M$ be a free abelian group generated by $X_1,\dots,X_n$.
While previous work used matrices to represent an element of $\CSpec\Blau$ or $\CSpec\lau$, 
we employ \emph{flags} instead:
\begin{definition} (= \cref{definition:RM-flag})
  An \emph{$(\Rbb\times M)$-flag} is a collection of the following data:
  \begin{itemize}
    \item $l\in\Zbb_{\geq 0}$,
    \item A family of linear subspaces of $\Rbb\times M_{\Rbb}$
    \[\Rbb\times M_{\Rbb}=H_0\supsetneq H_1\supsetneq \dots\supsetneq H_l,\]
    such that $H_i$ is a hyperplane of $\Rbb(H_{i-1}\cap (\Rbb\times M))$ 
    for each $i\in \{1,\dots,l\}$,
    \item An orientation $H_{i,+}$ of $H_i$ as a hyperplane for each $i\in \{1,\dots,l\}$ such that 
  \[\Rbb_{\geq 0}\times \{0\} \subseteq \bigcup_{i=1}^l H_{i,+} \cup H_l.\]
  \end{itemize}
  The number $l$ is called the \emph{length} of the $(\Rbb\times M)$-flag. 
  Denote the set of all $(\Rbb\times M)$-flags by $\Fcal_{\Rbb\times M}$.
\end{definition}

We give an example of an $(\Rbb\times M)$-flag of length 2 where the rank $n$ of $M$ is equal to 1:
\begin{align*}
  &H_1    = \{(r,m)\in \Rbb\times M_{\Rbb}\mid r=0.5m\},\\
  &H_{1,+}= \{(r,m)\in \Rbb\times M_{\Rbb}\mid r>0.5m\},\\
  &H_2    = \{(0,0)\},\\
  &H_{2,+}=H_1\cap \{(r,m)\in \Rbb\times M_{\Rbb}\mid r>0\}.
\end{align*}
This $(\Rbb\times \Zbb)$-flag is illustrated in \cref{figure:RM-flag} 
where the "positive part" $(H_{1,+}\cup H_{2,+})\cap (\Rbb\times M)$ is represented by the red region.

\begin{figure}[h]
  \centering

  \begin{tikzpicture}[scale=0.5]
      \draw[->,>=stealth,thick] (-4.5,0)--(4.5,0)node[above]{$m$}; %x軸
      \draw[->,>=stealth,thick] (0,-4.5)--(0,4.5)node[right]{$r$}; %t軸
      \filldraw[black] (0,0) circle (3pt) node[below right]{O}; %原点

      \foreach \x/\y in {-4/-2, -3/-1.5, -2/-1, -1/-0.5, 0/0, 1/0.5, 2/1, 3/1.5, 4/2}{
          \draw[red, thick] (\x,\y) -- (\x,4.5);
          \draw[blue, thick] (\x,\y) -- (\x,-4.5);
          
      }

      \filldraw[black] (0,0) circle (3pt);

      \foreach \x/\y in {1/0.5, 2/1, 3/1.5, 4/2}{
            \filldraw[red] (\x,\y) circle (3pt);
      }

      \foreach \x/\y in {-4/-2, -3/-1.5, -2/-1, -1/-0.5}{
            \filldraw[blue] (\x,\y) circle (3pt);
      }
  \end{tikzpicture}

  \caption{An example of an $(\Rbb\times\Zbb)$-flag of length 2}
  \label{figure:RM-flag}
\end{figure}

The notion of a flag arises naturally 
from the general framework developed in the first half of this paper.
As a consequence, 
an $(\Rbb\times M)$-flag provides a unique representation of prime congruences on $\lau$ 
in the following sense:
\begin{atheorem}[= \cref{theorem:flag corr group T}]\label{theorem:A}
  There is a one-to-one correspondence between $\CSpec \lau$ and 
  the set of all $(\Rbb\times M)$-flags.
\end{atheorem}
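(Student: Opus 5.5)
We plan to prove \cref{theorem:A} by reducing it to a description of prime congruences through total preorders on the monoid of monomials. The first step is to identify $\lau$ with the semigroup algebra $\Tbb[M]$. Each element is then a finite formal maximum of terms $a+m$ with $a\in\Rbb$ and $m\in M$, and the group of all such terms is $\Rbb\times M$. By the definition of a prime congruence (\cref{definition:prime congruence}), a prime congruence $P$ on $\lau$ is determined by how it compares pairs of monomials. The quotient $\lau/P$ is totally ordered, and in it a sum of monomials equals the largest of them or collapses to the same value. So we attach to $P$ a relation on the group $\Rbb\times M$: $u\preceq_P v$ if $u+v\equiv v$ modulo $P$. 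We then check the basic properties. The relation is a total preorder, because the quotient is totally ordered. It is translation invariant, because monomials are invertible. It satisfies $(r,0)\preceq_P(s,0)$ whenever $r\le s$, because the coefficients come from $\Tbb$. Conversely, $P$ is recovered from $\preceq_P$: two polynomials are congruent exactly when their ``leading data'' with respect to $\preceq_P$ agree, and primality says the leading data must be compared in this way. This gives a bijection between $\CSpec\lau$ and translation-invariant total preorders on $\Rbb\times M$ that are compatible with the order on $\Rbb$. I would prove this bijection directly, using the general ordered-monoid framework developed earlier in the paper.

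The second step classifies translation-invariant total preorders on a free abelian group $G=\Rbb\times M$ (the group $\Rbb$ taken with its discrete structure) in terms of flags.
\begin{itemize}
\item The equivalence class of $0$ is a subgroup $K$, and $G/K$ is totally ordered.
\item By the Hahn/Hölder theory of ordered groups, the convex subgroups of $G/K$ form a chain. Because $\Rbb$ is uncountable, this chain cannot be read off from ranks alone. It has to be matched with the geometry of $\Rbb\times M_\Rbb$.
\end{itemize}
The intended construction is this. The sign of the preorder on the largest archimedean piece is given by a real linear functional on $\Rbb(G)$, which produces $H_1$ and $H_{1,+}$. On the kernel $H_1\cap(\Rbb\times M)$ the process repeats, giving $H_2\subset\Rbb(H_1\cap(\Rbb\times M))$, and so on. Since $M$ has finite rank, the dimension of $\Rbb(H_{i}\cap(\Rbb\times M))$ drops at each stage, so the process terminates. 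The last subspace $H_l$ is the span of the indifference class $K$. Finally, the compatibility with $\Tbb$ translates exactly into the condition $\Rbb_{\ge0}\times\{0\}\subseteq\bigcup H_{i,+}\cup H_l$.

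The converse direction is straightforward. Given a flag, we define $u\prec v$ if $v-u$ lies in $H_{i,+}$ for the first index $i$ at which $v-u$ leaves $H_i$. This relation is clearly a translation-invariant total preorder with the required positivity. The two constructions are mutually inverse, because a flag is recovered from its positive parts.

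The main obstacle is the archimedean step. An ordered abelian group of this kind is not finitely generated, since it contains $\Rbb\times\{0\}$. We must show that the ordering on the top archimedean quotient is cut out by a real hyperplane in $\Rbb\times M_\Rbb$ that meets the lattice in the way the flag definition requires, namely that $H_i$ is a hyperplane of the real span of $H_{i-1}\cap(\Rbb\times M)$. My first attempt will use Hölder's embedding of the archimedean quotient into $\Rbb$. Restricted to $\Rbb\times\{0\}$, this embedding must be monotone and hence $\Rbb$-linear, because of compatibility with $\Tbb$. Therefore it extends to a real-linear functional on $\Rbb\times M_\Rbb$. The delicate case is when the functional vanishes on $\Rbb\times\{0\}$: the $\Rbb$-factor then drops into a lower level of the flag, and linearity there must again be forced by monotonicity.
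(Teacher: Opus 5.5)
Your proposal is correct in outline, and its first step is the paper's. The positive cone $\{g\mid 0\preceq_P g\}$ of your preorder is $\Lsf(P)$, and ``total, translation-invariant, compatible with the order on $\Rbb$'' is exactly \eqref{L1} and \eqref{L2}. So this step is \cref{proposition:corr inclusion group B} for $G=\Rbb\times M$.

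The second step takes a genuinely different route. The paper never uses convex subgroups or H\"older's theorem. As in the proof of \cref{theorem:flag corr group B}, it sets $H_1=\overline{\cone(\Lsf)}\cap-\overline{\cone(\Lsf)}$ and $H_{1,+}=\overline{\cone(\Lsf)}\setminus H_1$ inside $\Rbb\times M_{\Rbb}$, and then recurses on $\Lsf\cap H_1$. The only input is the convex-geometric \cref{lemma:codim0or1}. The identity $\Lsf\setminus H_1=H_{1,+}\cap(\Rbb\times M)$ then follows from \eqref{L2} in one line, and \eqref{L1} becomes the orientation condition verbatim. So the $\Rbb$-factor needs no special treatment: there is no Cauchy-equation step and no case split on whether the functional kills $\Rbb\times\{0\}$.

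Your route instead identifies the levels of the flag with the archimedean components of $G/K$, and the orientations with signs of H\"older embeddings. This makes the structure more transparent: truncation becomes coarsening by a convex subgroup, which matches $\coht P=l$. It also makes termination automatic, since a new hyperplane appears exactly as long as the preorder on $H_i\cap(\Rbb\times M)$ is nontrivial. The cone route needs a separate (short) fact here, namely that $\overline{\cone(\Lsf)}$ cannot be the whole space when $\Lsf\neq G$; the paper treats this as immediate in the case $l=0$.

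The price is a few verifications you should write out.
\begin{itemize}
\item A largest proper convex subgroup exists at each stage. By compatibility with $\Tbb$, the image of $\Rbb\times\{0\}$ in $G/K$ is either $0$ or archimedean. Moreover, $H_i\cap(\Rbb\times M)$ is either of the form $\Rbb\times M'$ or a lattice. Hence its image is generated by a subgroup lying in one archimedean class together with finitely many elements, and the top class among these generators gives the maximal proper convex subgroup.
\item For the inverse direction, for a flag-defined preorder, the image of $H_1\cap(\Rbb\times M)$ in $G/K$ is convex with archimedean quotient. Then uniqueness of H\"older embeddings up to a positive scalar returns $H_1$ and $H_{1,+}$.
\end{itemize}

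Two minor slips. First, $\Rbb\times M$ is not a free abelian group. Second, $H_l$ need not equal $\Rbb K$: an irrational last hyperplane gives $\Rbb K\subsetneq H_l$. What your construction actually yields is $H_l\cap(\Rbb\times M)=K$.
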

Similarly, the set of prime congruences on various tropical algebras, such as
\begin{itemize}
  \item $\CSpec \Blau$\ (\cref{theorem:flag corr group B}),
  \item $\CSpec \Bpol$\ (\eqref{eq:flag corr monoid B}),
  \item $\CSpec \pol$\ (\eqref{eq:flag corr monoid T}),
\end{itemize}
can be identified with sets of flags.

An inclusion relation between prime congruences corresponds to 
a certain relation between flags (see \cref{proposition:flag inclusion group T}).
Thus we can describe the set of prime congruences on several algebras, 
as shown in \cref{example:affine line}.
In particular, we give alternative proofs of some results in \cite{joo2018prime} 
(see \cref{corollary:JM result,corollary:JM result2,corollary:JM result3}).
See \cref{remark:difference} for the difference 
between our approach and computations using admissible matrices.

In \cite{friedenberg2025geometric}, the authors have obtained a related correspondence,
which classifies the continuous spectrum of a topological semiring using flags of polyhedra.
The two approaches share similar underlying ideas.
The previous work is developed from the viewpoint of Gröbner theory and adic geometry, 
whereas our approach is algebraic and scheme-theoretic.
Furthermore, in Section 4 of \cite{mikami2025tropicalcohomologysmoothalgebraic}, 
the author gives another interpretation of the set of homomorphisms from $M$ to totally ordered groups
where $M$ is a free abelian group of rank $n$.

\medskip
 
Following classical scheme theory,
we attempt to construct a global object using the space of prime congruences.
We use techniques of toric geometry, where the process of gluing is given 
by the combinatorial data of a fan.
For a fan $\Sigma$ in $N_{\Rbb}\cong \Rbb^n$, we define a topological space $X_{\Sigma}^{\con}$ by
\begin{align*}
  X_{\Sigma}^{\con}=\bigcup_{\sigma\in\Sigma}\CSpec_{\Tbb}\Tbb[\sigma^{\vee}\cap M],
\end{align*}
where the spaces are glued along the inclusions induced by face relations of cones.
Here, $\CSpec_{\Tbb}\Tbb[\sigma^{\vee}\cap M]$ is the subset of all points 
in $\CSpec\Tbb[\sigma^{\vee}\cap M]$ which are \emph{lying over $\Tbb$} (see \cref{definition:P over T}).

On the other hand, analogues of toric varieties 
have already been constructed in \cite{Kajiwara2008TropicalToric} and \cite{payne2009analytification}.
The \emph{tropical toric variety} associated with a fan $\Sigma$ is the topological space
\begin{align*}
  X_{\Sigma}^{\trop}=\bigcup_{\sigma\in\Sigma}\Hom(\Msf_{\sigma},\Tbb).
\end{align*}

Regarding $X_{\Sigma}^{\trop}$ as the set of $\Tbb$-valued points of $X_{\Sigma}^{\con}$, 
we have the following result.
\begin{atheorem}[= \cref{theorem:trop is height n toric,corollary:closed points}]\label{theorem:B}
  The tropical toric variety $X_{\Sigma}^{\trop}$ can be topologically embedded into
  the topological space $X_{\Sigma}^{\con}$ as 
  the subset consisting of points $P$ satisfying $\dim_{P}X_{\Sigma}^{\con}=\dim X_{\Sigma}^{\con}$.
  If $\Sigma$ is complete, the image is equal to the subset of closed points.
\end{atheorem}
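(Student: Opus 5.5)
The plan is to work chart by chart and then glue. Fix a cone $\sigma\in\Sigma$ and write $S_\sigma=\sigma^{\vee}\cap M$ for its monoid. Each point $\varphi\in\Hom(\Msf_\sigma,\Tbb)$ is a monoid homomorphism $S_\sigma\to(\Rbb\cup\{-\infty\},+)$. It extends to a surjective $\Tbb$-algebra homomorphism $\Tbb[S_\sigma]\to\Tbb$, and the kernel congruence $P_\varphi$ of that map is prime, since $\Tbb$ is a totally ordered semifield, and lies over $\Tbb$. This gives a map $\iota_\sigma\colon\Hom(\Msf_\sigma,\Tbb)\to\CSpec_{\Tbb}\Tbb[S_\sigma]$.

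For a face $\tau\preceq\sigma$, the restriction maps on both sides are induced by the localization $\Tbb[S_\sigma]\to\Tbb[S_\tau]$, so the $\iota_\sigma$ commute with the gluing and assemble into a map $\iota\colon X_\Sigma^{\trop}\to X_\Sigma^{\con}$.

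Injectivity and the topology come next. The homomorphism $\varphi$ is recovered from $P_\varphi$: for $m\in S_\sigma$, the value $\varphi(m)=a$ is the unique $a\in\Tbb$ with $(X^m,a)\in P_\varphi$. So $\iota_\sigma$ is injective.

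For continuity, the Zariski-type basic open sets of $\CSpec$ are given by conditions on pairs $(f,g)$ not being identified or ordered. Pulling such a condition back along $\iota_\sigma$ gives a condition of the form $f(\varphi)<g(\varphi)$ (or its negation), where $f,g$ are tropical polynomials. These sets are open in the Euclidean-type topology on $\Hom(\Msf_\sigma,\Tbb)$.

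For the embedding property, I would show that the sets $\{\varphi: \varphi(m)<\varphi(m')+c\}$ are images of such basic opens intersected with the image. These sets generate the topology of $\Hom(\Msf_\sigma,\Tbb)$.

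To characterize the image, I would use the flag description of $\CSpec$ of monoid algebras over $\Tbb$, the analogue of \cref{theorem:A} for $\Tbb[\sigma^\vee\cap M]$ together with its face decomposition. A point lying over $\Tbb$ corresponds to a face $\tau$ of $\sigma$, namely the locus where the $X^m$ are infinite, together with an $(\Rbb\times M(\tau))$-flag. Here $M(\tau)=M\cap\tau^{\perp}$ has rank $n-\dim\tau$.

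From the dimension formulas (\cref{corollary:JM result} style), I expect the local dimension at $P$ to equal $\dim\tau$ plus the length of the flag plus its defect. Concretely, I will show that $\dim_P X_\Sigma^{\con}=n$ exactly when the flag has length $l=n-\dim\tau+1$ minus the cotype. Equivalently, $P$ is maximal in its chart: the flag is full and the chain below $P$ is as long as possible.

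It then remains to check that such maximal-height points over $\Tbb$ are precisely those whose residue semifield is $\Tbb$ itself. In flag language, $H_1$ must be a graph hyperplane $\{r=\langle u,m\rangle\}$, which is forced by the condition on $\Rbb_{\geq0}\times\{0\}$ when the flag is maximal with $H_1$ not containing the $\Rbb$-axis. Such flags are exactly the ones coming from $\varphi$. I expect this step, the exact dimension bookkeeping relating height and coheight to flag data, to be the main obstacle. I would handle it by reducing via the face decomposition to the torus case $\CSpec_{\Tbb}\lau$ and using \cref{proposition:flag inclusion group T} to compute chains.

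Finally, for complete $\Sigma$, closed points are those that are maximal in every chart containing them. A point over $\Tbb$ whose flag is not of maximal height admits a proper specialization. In the torus case, "degenerating" the first hyperplane toward a valuation lands in $\Hom(\Msf_\sigma,\Tbb)$ for a suitable cone $\sigma$, and completeness of $\Sigma$ guarantees that this cone exists in $\Sigma$. This is the same mechanism as the valuative-criterion statements on properness. Hence closed points are exactly the points with $\dim_P X_\Sigma^{\con}=\dim X_\Sigma^{\con}$, which is the image of $X_\Sigma^{\trop}$.
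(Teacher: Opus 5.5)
Your construction of $\iota$, the injectivity, and the comparison of topologies are fine. Checking the subbasic sets $\{\varphi(m)<c\}$ and $\{\varphi(m)>c\}$ directly on $\Hom(\Msf_\sigma,\Tbb)$ is a harmless variant of \cref{lemma:TopEmb,proposition:TopEmb}. The gap is in identifying the image with $\{P\mid \dim_P X_\Sigma^{\con}=n\}$, i.e.\ in showing that $P\in U_\sigma^{\con}$ is geometric if and only if $\het P=n$ (\cref{theorem:affine toric geometric ht}). This step carries essentially all the content, and what you sketch for it is not correct.

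\textbf{Errors in the characterization.} Your picture of flag length versus height is reversed. Larger congruences have \emph{shorter} flags (\cref{proposition:flag inclusion group T}), so the geometric points are those with $(l',l'')=(0,1)$, whereas in the torus a flag of full length $n+1$ has height $0$.
\begin{itemize}
\item The claimed equivalence ``$\dim_P X_\Sigma^{\con}=n$ iff $P$ is maximal in its chart'' is false. In \cref{example:affine line} the point $P_{+}$ is maximal in $\CSpec_{\Tbb}\Tbb[X]$ but has height $0$. This discrepancy is exactly why the closed-point statement needs completeness.
\item ``$H_1$ is a graph hyperplane'' does not single out the image: $P_{a,\pm}$ have such an $H_1$ and are not geometric. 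You also need length $1$.
\item Reducing to $\CSpec_{\Tbb}\lau$ via the stratification and \cref{proposition:flag inclusion group T} cannot compute heights. A geometric point of $N^{\con}(\tau)$ has height only $n-\dim\tau$ inside its stratum $\CSpec_{\Tbb}\Tbb[\tau^{\perp}\cap M]$. The missing $\dim\tau$ comes from chains running through the strata $N^{\con}(\tau_i)$ with $\tau_i\prec\tau$.
\end{itemize}

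\textbf{What is missing.} You need two cross-strata arguments, both resting on the inclusion criterion \cref{proposition:inclusion monoid T over T}, in particular its condition (ii).
\begin{itemize}
\item Upper bound. If $P_1\subsetneq P_2$ in $U_\sigma^{\con}$, the outer flag of $\tilde{P_2}$ is a restriction of that of $\tilde{P_1}$, and the inner flag is a truncation of a restriction. Hence $l'+l''$ strictly drops (\cref{lemma:strict inclusion2}). Since $1\le l'+l''\le n+1$ for points lying over $\Tbb$, this gives $\het P\le n$. Equality forces $(l',l'')=(0,1)$, so $P$ is geometric by \cref{proposition:flag when geom,proposition:strata over T}.
\item Lower bound. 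Below a geometric $P\in N^{\con}(\tau)$ you must build a chain of length $n$. Use a maximal chain of faces $\{0\}=\tau_0\prec\dots\prec\tau_{\dim\tau}=\tau$ with orientations $\tau_{i-1}^{\perp}\cap(-\tau_i^{\vee}\setminus\tau_i^{\perp})$, chosen precisely so that condition (ii) holds, together with a full-length extension of the inner flag (\cref{lemma:a sequence of maximal length}).
\end{itemize}
The two bounds together also give $\dim X_\Sigma^{\con}=n$, which your formulation presupposes.

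\textbf{The complete case.} Your mechanism, taking the normal $v$ of $H^{\mathrm{out}}_1$ and the cone of $\Sigma$ containing $v$ in its relative interior, is the one used in \cref{theorem:properness}. However, one such degeneration only moves $P$ into the stratum of that cone $\tau_1$. There the restricted flag $H^{\mathrm{out}}_{\bullet}\cap^{*}(\tau_1^{\perp}\cap M)$ may still be nontrivial, so it need not land in $\Hom(\Msf_\sigma,\Tbb)$. The paper handles this by induction on codimension via \cref{proposition:local proper}. For the closed-point claim, a single proper specialization plus the height bound would suffice. But verifying that the degenerated point actually contains $P$ again requires condition (ii), which your proposal never invokes.
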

Note that $\dim_{P}X_{\Sigma}^{\con}$ and $\dim X_{\Sigma}^{\con}$ are locally equal to 
the length of maximal chains of prime congruences.
This subspace is also related to 
the separatedness and properness of $X_{\Sigma}^{\con}$, 
although we will not give a precise definition of the notions in this paper.
See \cref{theorem:separatedness,theorem:properness} for the details.

\medskip

Our algebraic framework is also useful to study purely algebraic problems.
Somewhat surprisingly, even for standard tropical algebras such as $\Blau$ or $\lau$, 
many prime congruences fail to be finitely generated.
This phenomenon is illustrated by the following theorem.
\begin{atheorem}[= \cref{theorem:fg for T}]\label{theorem:C}
  For a prime congruence $P$ on $\lau$, the following are equivalent:
  \begin{enumerate}
    \item $P$ is finitely generated,
    \item $\Lsf(P)$ is a submonoid of $\Rbb\times M$ generated by finitely many elements and $\Rbb_{\geq 0}\times \{0\}$,
    \item $(\het P,\coht P)$ is equal to $(n+1,0)$ or $(n,1)$,
    \item $P$ is one of the following three: (i) the maximum congruence, (ii) a geometric congruence, 
    or (iii) a congruence which corresponds to an $(\Rbb\times M)$-flag $H_{\bullet}$ of length 1 
    and $H_1=\Rbb\times H$ where $H$ is a rational hyperplane of $M_{\Rbb}$.
  \end{enumerate}
\end{atheorem}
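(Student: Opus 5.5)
We prove the cycle of implications (4)$\Rightarrow$(2)$\Rightarrow$(1)$\Rightarrow$(3)$\Rightarrow$(4). Throughout we use \cref{theorem:A}: $P$ is identified with an $(\Rbb\times M)$-flag $H_\bullet$ of length $l$. Here $\Lsf(P)\subseteq \Rbb\times M$ denotes the monoid of pairs $(r,m)$ such that the monomial $rX^m$ is $P$-dominated by $0$, i.e.\ $(rX^m\oplus 0,0)\in P$. In flag terms, $\Lsf(P)$ is the lattice part of $\bigcup_i H_{i,+}\cup H_l$, the "positive part" seen in \cref{figure:RM-flag}. Height and coheight are read off from the flag using \cref{proposition:flag inclusion group T}. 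Specializations correspond to coarsening or shortening the flag, and generizations to refining it. So $\coht P$ is governed by how many further hyperplanes can be inserted below $H_l$, and $\het P$ by how far the flag can be coarsened.

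\textbf{(4)$\Rightarrow$(2) and (2)$\Rightarrow$(1).} For (4)$\Rightarrow$(2) we check the three cases directly.
\begin{itemize}
\item For the maximum congruence, $\Lsf(P)=\Rbb\times M$, which is generated by $\Rbb_{\geq0}\times\{0\}$ together with $(\pm1,0)$ and $(0,\pm e_i)$.
\item For a geometric congruence, $H_\bullet$ has full length, with $H_1$ the graph of a point $x\in\Rbb^n$. Then $\Lsf(P)$ is a closed half-space intersected with $\Rbb\times M$. This is generated by $\Rbb_{\geq 0}\times\{0\}$, the elements $(\langle x,e_i\rangle\cdot(\pm1),\pm e_i)$ on the boundary, and $(\pm1,0)$ where needed. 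We still need to verify that the lower hyperplanes of the flag do not destroy finite generation; this reduces to noting that they only order elements of a lattice of rank one over $\Rbb$.
\item In case (iii), $\Lsf(P)=(\Rbb\times H_+)\cup(\Rbb\times H)$ intersected with the lattice. Since $H$ is rational, $H\cap M$ is finitely generated, and adding one element of $H_+\cap M$ that is primitive modulo $H$ gives finite generation.
\end{itemize}
For (2)$\Rightarrow$(1), we use that $P$ is determined by the pairs $(0,\,0\oplus rX^m)$ with $(r,m)\in\Lsf(P)$. Indeed, a prime congruence on $\lau$ is determined by its monomial order, since the quotient is totally ordered and the monomials form a group. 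Relations for finitely many generators then produce all of $\Lsf(P)$, using that the congruence is closed under multiplication by monomials and sums. Relations for the real ray $\Rbb_{\geq0}\times\{0\}$ come for free because $\Tbb$ acts. One must check that the congruence generated by these finitely many pairs is already prime; I would verify this by showing that its quotient is the totally ordered one determined by the flag.

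\textbf{(1)$\Rightarrow$(3).} This is the main obstacle. Suppose $P$ is generated by finitely many pairs. Each generator involves finitely many monomials, so there is a finite set $F\subseteq\Rbb\times M$ whose comparisons determine $P$. Then every flag that induces the same order on $F$ must coincide with $H_\bullet$. Take a flag whose $H_1$ is not of the form in (iii), or one of length at least $2$ that is not geometric. In such a case I would exhibit a small perturbation of one hyperplane that preserves the order on $F$ but changes the flag. The perturbation is an irrational rotation of $H_1$ or a change of $H_2$ inside $H_1$. This is a perturbation argument in the spirit of the fact that finitely many strict inequalities define open conditions. The delicate points are that equalities in $F$ may pin the flag, and that the orientation condition $\Rbb_{\geq0}\times\{0\}\subseteq\bigcup H_{i,+}\cup H_l$ must survive the perturbation. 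Equalities force $H_i$ to contain rational points of $F$. The flag is therefore pinned exactly when each $H_i$ is spanned by such points, and one checks this happens only in the listed cases, where $(\het,\coht)$ equals $(n+1,0)$ or $(n,1)$.

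\textbf{(3)$\Rightarrow$(4).} Compute $\het$ and $\coht$ from the flag via \cref{proposition:flag inclusion group T}, in the same way as in the proofs of \cref{corollary:JM result,corollary:JM result2}. Coheight $0$ means $P$ is maximal. Then $\het P=n+1$ forces either the maximum congruence or a geometric congruence of full length with $H_1$ a graph. Coheight $1$ with height $n$ forces a flag of length $1$ whose hyperplane has a unique proper refinement direction. This is possible only when $H_1=\Rbb\times H$ with $H$ a rational hyperplane: it must contain the $\Rbb$-direction and have maximal-rank lattice intersection.
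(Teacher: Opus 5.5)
\textbf{Error 1: the flag dictionary, and hence (3)$\Rightarrow$(4).} By \cref{proposition:flag inclusion group T}, larger congruences correspond to truncations. So $\coht P=l$ and $\het P=\dim\Rbb(H_l\cap(\Rbb\times M))$; your overview sentence swaps height and coheight.

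More seriously, a geometric congruence is not a ``full-length'' flag. By \cref{proposition:flag when geom} it is a flag of length $1$ with $\Rbb\times\{0\}\not\subseteq H_1$. It is maximal in $\CSpec_{\Tbb}\lau$, but it is strictly contained in the maximum congruence $\Ker(\lau\to\Bb)$, so $(\het P,\coht P)=(n,1)$.

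Consequently, coheight $0$ yields only the maximum congruence. Coheight $1$ with height $n$ means $l=1$ and $\dim\Rbb(H_1\cap(\Rbb\times M))=n$. This holds both when $H_1$ is a graph (case (ii)) and when $H_1=\Rbb\times H$ with $H$ rational (case (iii)). Your claim that $H_1$ ``must contain the $\Rbb$-direction'' is therefore false, and your (3)$\Rightarrow$(4) fails as written.

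The same confusion appears in (4)$\Rightarrow$(2). A geometric flag has no lower hyperplanes. A flag whose $H_1$ is a graph, followed by some $H_2$, has coheight $\geq 2$ and is not finitely generated. So lower hyperplanes do destroy finite generation, although your generators for the half-space itself are fine.

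\textbf{Error 2: the gap in (1)$\Rightarrow$(3).} First, agreement of $Q$ with $P$ on $F$ only gives $P\subseteq Q$, i.e.\ $H(Q)_\bullet$ is a truncation of $H(P)_\bullet$, not equality. Second, your pinning criterion is false. For $n=1$, take the length-$2$ point $P_{a,+}$ of \cref{example:affine line}. There $H_1$ is spanned by $(a,-1)\in\Rbb\times M$ and $H_2=\{0\}$, so every $H_i$ is spanned by lattice points. Yet for any finite $F$, the geometric congruence at a point $a'$ close to $a$ on the appropriate side induces exactly the same order on $F$. It therefore contains any generators supported on $F$, but it does not contain $P_{a,+}$.

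So for $l\geq 2$ the perturbation must tilt $H_1$ to absorb the ordering given by $H_{2,+}$, collapsing two levels into one; moving $H_2$ inside $H_1$ does not work. Even with that repair, your concluding ``one checks this happens only in the listed cases'' is the entire content of the implication.

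\textbf{How the paper avoids this.} It uses no perturbation. For (1)$\Rightarrow$(2), \cref{proposition:new fg} turns a finite generating set into a finite set $A$ of differences of support elements with $\langle A\geq 0\rangle=P$. For (2)$\Rightarrow$(4), the cone spanned by $\Lsf(P)$ is then polyhedral, hence closed. By the $\Rbb\times M$ analogue of \cref{lemma:cone},
$H_1=\cone(\Lsf(P))\cap-\cone(\Lsf(P))=\Rbb(H_l\cap(\Rbb\times M))$.
This forces $l\leq 1$ with $H_1$ spanned by lattice points. The equivalence (3)$\Leftrightarrow$(4) is then read directly off \cref{proposition:flag inclusion group T}.

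Your (2)$\Rightarrow$(1) is correct and matches the easy direction of \cref{proposition:new fg}, via \cref{lemma:later used}.
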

The monoid $\Lsf(P)$ appearing in the condition (2) is a submonoid of an ordered group $\Rbb\times M$ 
corresponding to $P$ 
(see \cref{eq:right arrow}).
The equivalence of (1) and (2) holds in a more general setting (see \cref{proposition:new fg}).
The conditions (3) and (4) arise from describing the submonoid $\Lsf(P)$ as a flag.
Note that finitely generated congruences play important roles in the Nullstellensatz 
for congruences in \cite{BertramEaston2017TropicalNullstellensatz}.
We also note that in \cite{ito2026nonfinitegeneratednesscongruencesdefined}, the author gives another criterion for 
a congruence to be finitely generated.

\subsection{Structure of the paper}

\begin{figure}[h]
  \begin{tikzpicture}

  \draw (0,0) rectangle (14,6.5);
  \node[fill=white] at (7,6.5) {Semirings (\S\ref{subsection:semirings})};

  \draw (0.5,0.5) rectangle (13.5,6);
  \node[fill=white] at (7,6) {$\mathbb{B}$-algebra (\S\ref{subsection:2.2})};

  \draw (1,1.5) rectangle (12.5,5);
  \node[fill=white] at (3.5,5) {$\mathbb{B}[\Msf]$ (\S\ref{subsection:B[M]})};

  \draw (6,1) rectangle (13,5.5);
  \node[fill=white] at (9.5,5.5) {$\Tbb$-algebra (\S\ref{subsection:2.3})};

  \draw (7,2) rectangle (12,4.5);
  \node[fill=white] at (9.5,4.5) {$\Tbb[\Msf]$ (\S\ref{subsection:T[M]})};

  \node at (3.5,3.8) {$\Blau$};
  \node at (3.5,3.2) {$\Bpol$};
  \node at (3.5,2.7) {$\vdots$};

  \node at (9.5,3.8) {$\lau$};
  \node at (9.5,3.2) {$\pol$};
  \node at (9.5,2.7) {$\vdots$};

  \end{tikzpicture}

\caption{Hierarchy of some classes of algebras introduced in this paper}
\label{figure:class}
\end{figure}

We begin in \cref{section:Preliminaries} by reviewing some properties of semirings.
In \cref{section:B[M]}, we introduce a \emph{$\Bb$-algebra associated to an ordered monoid}, 
denoted by $\Bb[\Msf]$, which is a key ingredient of this paper. 
\[\Bb[\Msf]\coloneqq \left(\bigoplus_{m\in\Msf}\Bb\right)/\sim.\]
This algebra is not merely the direct sum of copies of $\Bb$ indexed by $\Msf$.
That is, the equivalence relation $\sim$ is non-trivial in general depending on 
the order of $\Msf$. 
\cref{figure:class} summarizes the hierarchy of some classes of algebras introduced in this paper.

It is important that several tropical algebras such as $\Blau$ and $\lau$ can be treated as 
$\Bb$-algebras $\Bb[\Msf]$ associated to suitable ordered monoids $\Msf$ 
(by forgetting $\Tbb$-algebra structure if necessary).

The main assertions are stated in \cref{section:B[M]cong} and \cref{section:Mflag}, 
where we describe the details of the space of prime congruences.
In \cref{section:toric}, we discuss the relation between our framework and tropical toric varieties,
and in \cref{section:fg}, we characterize when a prime congruence is finitely generated.

\begin{ackn}
The author would like to thank Shu Kawaguchi, JuAe Song, and Junta Kamiya for many helpful discussions.
The author is also grateful to Yusuke Nakamura, Kosuke Mizuno, and Ryosuke Murooka
for useful comments and suggestions, and to Yuki Tsutsui for providing an overview of relevant prior work.

This work was supported by JSPS KAKENHI Grant Number JP26KJ1347.
The author used ChatGPT (GPT-5.6 Thinking) for proofreading, language editing, and improving the exposition.
All mathematical arguments were independently verified by the author.
\end{ackn}

\begin{notation}
\leavevmode
\begin{itemize}
  \item $\<\Acal\>$ : a congruence generated by a subset $\Acal\subseteq S\times S$
  \item $\Msf$ : an ordered monoid
  \item $G$ : an ordered group
  \item $\Lsf$ : a submonoid
  \item $\Nbb A$ : the submonoid generated by a subset $A$ of a monoid, namely, 
  \[\Nbb A=\left\{\sum_{i=0}^{k}n_ia_i\ \middle|\  n_i\in \Nbb,a_i\in A,k\in\Nbb\right\}.\]
  \item $\<A\geq 0\>$ : a congruence on $\Bb[G]$ generated by $\{(m\oplus 0,m)\mid m\in A\}$ 
  where $A$ is a subset of $G$
  \item $\Rbb A$ : the linear subspace spanned by a subset of an $\Rbb$-linear space
  \item $\cone(A)$ : the convex cone spanned by a subset $A$ of an $\Rbb$-linear space, namely, 
  \[\cone (A)=\left\{\sum_{i=0}^{k}r_ia_i\ \middle|\  r_i\in \Rbb_{\geq 0},a_i\in A,k\in\Nbb\right\}.\]
  \item $M$ : a free abelian group generated by $X_1,\dots,X_n$ ($M\cong \Zbb^n$)
  \item $\Mp$ : a free (commutative) monoid generated by $X_1,\dots,X_n$ ($\Mp\cong (\Nbb)^n$)
\end{itemize}
\end{notation}

\section{Preliminaries} \label{section:Preliminaries}

\subsection{Semirings and prime congruences}\label{subsection:semirings}

We briefly recall some basic properties of semirings.
See \cite{hebisch1998semirings} and \cite{golan2013semirings} for more details. 
When a specific statement is difficult to locate in the literature, 
we include a short proof for completeness.

In this paper, a \emph{semiring} $S$ is always assumed to be commutative and nontrivial.
Namely, a semiring is a nonempty set $S$ with two binary operations $(+,\cdot)$ satisfying: 
\begin{itemize}
  \item $(S,+)$ is a commutative monoid with the identity $0_S$,
  \item $(S,\cdot)$ is a commutative monoid with the identity $1_S$,
  \item For any $a,b,c\in S$, $a(b+c)=ab+ac$,
  \item $0_S\neq1_S$ and $a\cdot 0_S=0_S$ for any $a\in S$.
\end{itemize}
A semiring $S$ is said to be \emph{cancellative} if whenever $ab=ac$ for some $a,b,c\in S$, 
then either $a=0_S$ or $b=c$. 
A semiring $S$ is called a \emph{semifield} if every nonzero element of $S$ is 
multiplicatively invertible.
If a subset $I\subseteq S$ containing $0_S$ satisfies 
$a+b\in I$ and $sa\in I$ for any $a,b\in I$ and $s\in S$,
then $I$ is called an \emph{ideal} of $S$, which rarely appears in this paper.

A map $\phi\colon S_1\to S_2$ between semirings is a \emph{semiring homomorphism} if it satisfies
\[\phi(a+b)=\phi(a)+\phi(b),\ \phi(a\cdot b)=\phi(a)\cdot \phi(b),\ \phi(0_{S_1})=0_{S_2}, \text{ and }\phi(1_{S_1})=1_{S_2},\]
for any $a,b\in S_1$.
A bijective homomorphism is called an \emph{isomorphism}.

A \emph{congruence} $C$ on a semiring $S$ is a subsemiring of $S\times S$ 
that defines an equivalence relation on $S$.
Note that if $C$ is not equal to $S\times S$, 
the set of all equivalence classes $S/C$ naturally inherits a semiring structure 
and the quotient map $S\to S/C$ is a semiring homomorphism,
which is often denoted by $\pi_C$.
We call the diagonal set $\Delta=\Delta_S$ of $S\times S$ the \emph{diagonal congruence}.

Let $\{C_{\lambda}\}$ be a family of congruences on a semiring $S$.
Since the intersection $\bigcap_{\lambda}C_{\lambda}$ is also a congruence on $S$,
for any subset of $\Acal$ of $S\times S$, 
we can define the \emph{congruence generated by $\Acal$} as the minimal congruence containing $\Acal$, denoted by $\<\Acal\>$.
We denote by $\sum_{\lambda} C_{\lambda}$ the congruence generated by the union $\bigcup_{\lambda}C_{\lambda}$.
For some subsets $\Acal_1,\dots,\Acal_k$ of $S\times S$, 
we often denote a congruence $\<\bigcup_i \Acal_i\>$ by $\<\Acal_1,\dots,\Acal_k\>$.
A congruence on $S$ is called \emph{finitely generated} 
if it is generated by a finite subset of $S\times S$.

Let $\phi\colon S_1\to S_2$ be a semiring homomorphism.
The \emph{pull-back} of a congruence $C_2$ on $S_2$ along $\phi$ is defined by
\begin{align}
  \phi^{*}C_2\coloneqq C_2\cap S_1\coloneqq \{(a,b)\in (S_1)^2\mid (\phi(a),\phi(b))\in C_2 \}.\label{eq:pull-back}
\end{align}
Note that the notation $C_2\cap S_1$ is just a symbol and does not mean the set-theoretical intersection.
In particular, the pull-back of the diagonal congruence on $S_2$ 
is called the \emph{kernel (congruence)} of $\phi$, denoted by $\Ker\phi$.

\begin{proposition}[{\cite[Theorem 7.5]{hebisch1998semirings}}]\label{proposition:thm of hom}
  Let $\phi\colon S_1\to S_2$ be a homomorphism of semirings.
  Then $\phi$ induces an isomorphism $S_1/\Ker\phi\to \Im\phi$.
\end{proposition}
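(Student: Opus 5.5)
The plan is the usual first isomorphism theorem argument, adapted to congruences. Define $\bar\phi\colon S_1/\Ker\phi\to\Im\phi$ by $\bar\phi([a])=\phi(a)$, where $[a]$ denotes the class of $a$ modulo $\Ker\phi$. Before anything else, observe that $\Ker\phi\neq S_1\times S_1$. Indeed, $(0_{S_1},1_{S_1})\notin\Ker\phi$, because $\phi(0_{S_1})=0_{S_2}\neq 1_{S_2}=\phi(1_{S_1})$; here we use that $S_2$ is nontrivial. So the quotient $S_1/\Ker\phi$ is a genuine semiring and $\pi_{\Ker\phi}$ is a semiring homomorphism, as noted in the text.

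Next I would check that $\Ker\phi$ really is a congruence, so that the quotient makes sense. By definition \eqref{eq:pull-back}, $\Ker\phi=\{(a,b)\mid\phi(a)=\phi(b)\}$. This relation is clearly an equivalence relation. It is also a subsemiring of $S_1\times S_1$: if $\phi(a)=\phi(b)$ and $\phi(c)=\phi(d)$, then $\phi(a+c)=\phi(b+d)$ and $\phi(ac)=\phi(bd)$, and it contains $(0,0)$ and $(1,1)$.

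Well-definedness and injectivity of $\bar\phi$ both come straight from the definition of the kernel: $[a]=[b]$ if and only if $(a,b)\in\Ker\phi$, if and only if $\phi(a)=\phi(b)$. Surjectivity onto $\Im\phi$ is tautological. It then remains to check that $\bar\phi$ is a homomorphism. The operations on the quotient are $[a]+[b]=[a+b]$ and $[a][b]=[ab]$, with identities $[0_{S_1}]$ and $[1_{S_1}]$. Hence $\bar\phi$ preserves sums, products, $0$ and $1$ because $\phi$ does.

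Finally, note that $\Im\phi$ is a subsemiring of $S_2$: it is closed under $+$ and $\cdot$ and contains $0_{S_2}$ and $1_{S_2}$. It is also nontrivial, since $0_{S_2}\neq1_{S_2}$. So $\bar\phi$ is a bijective homomorphism between semirings, i.e.\ an isomorphism in the sense defined above. None of these steps is a real obstacle. The only point needing care is the nontriviality convention: we must make sure the quotient is not formed by the full congruence $S_1\times S_1$, and this is exactly the observation made at the start.
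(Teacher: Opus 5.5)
Your proof is correct. It is the standard first-isomorphism-theorem argument, and you handle the paper's nontriviality convention properly by observing that $(0_{S_1},1_{S_1})\notin\Ker\phi$, so the quotient is a genuine semiring. The paper gives no proof of its own here and simply cites Theorem 7.5 of Hebisch--Weinert, which is exactly this standard homomorphism theorem, so there is no different route to compare.
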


The \emph{push-out} of a congruence $C_1$ on $S_1$ along $\phi$ is defined by
\begin{align}
  \phi_{*}C_1\coloneqq C_1\cdot S_2\coloneqq \<\{(\phi(a),\phi(b))\in S_2\times S_2\mid (a,b)\in C_1\}\>.\label{eq:push-out}
\end{align}
It is straightforward to see that if $\phi$ is surjective,
$\phi_{*}C_1$ is equal to the set-theoretical image of $C_1$:
\begin{align}
  \phi_{*}C_1=\{(\phi(a),\phi(b))\in S_2\times S_2\mid (a,b)\in C_1\}.\label{eq:surj push-out}
\end{align}

\begin{lemma}\label{lemma:pushpush}
  Let $C_1,C_2$ be congruences on $S$. Then there is an isomorphism
  \[S/\< C_1,C_2\>\cong (S/C_1)/(C_2\cdot (S/C_1)).\]
\end{lemma}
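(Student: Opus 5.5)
The plan is to build both sides as quotients of $S$ and compare them through the first isomorphism theorem, \cref{proposition:thm of hom}. Write $\pi_1=\pi_{C_1}\colon S\to S/C_1$ and $\pi_2\colon S/C_1\to (S/C_1)/(C_2\cdot(S/C_1))$ for the quotient maps. Put $\phi=\pi_2\circ\pi_1$. This is a surjective semiring homomorphism, so \cref{proposition:thm of hom} gives $S/\Ker\phi\cong (S/C_1)/(C_2\cdot(S/C_1))$. It remains to prove $\Ker\phi=\<C_1,C_2\>$.

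The degenerate cases need a word first. If $C_1=S\times S$ or $\<C_1,C_2\>=S\times S$, the quotients are not semirings under our conventions. In that case the statement is read as saying both sides are trivial, which is consistent with what follows. So assume everything is proper.

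For $\<C_1,C_2\>\subseteq\Ker\phi$, the key point is that $\Ker\phi$ is a congruence, namely the pull-back of the diagonal. It therefore suffices to check that it contains $C_1$ and $C_2$.
\begin{itemize}
\item If $(a,b)\in C_1$, then $\pi_1(a)=\pi_1(b)$, hence $\phi(a)=\phi(b)$.
\item If $(a,b)\in C_2$, then $(\pi_1(a),\pi_1(b))$ lies in $C_2\cdot(S/C_1)$ by the definition \eqref{eq:push-out}, so $\pi_2$ identifies these two elements.
\end{itemize}

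For $\Ker\phi\subseteq\<C_1,C_2\>$, use that $\pi_1$ is surjective. By \eqref{eq:surj push-out} the push-out $C_2\cdot(S/C_1)$ is not immediately the image of $C_2$, since $\pi_1$ is surjective onto $S/C_1$ but $C_2$ is not a congruence on $S/C_1$. The cleaner route is to set $D=\<C_1,C_2\>$ and argue as follows.
\begin{itemize}
\item Since $C_1\subseteq D$, the set $D':=\{(\pi_1(a),\pi_1(b))\mid (a,b)\in D\}$ is a congruence on $S/C_1$. Reflexivity, symmetry and the subsemiring property are inherited through the surjective homomorphism $\pi_1\times\pi_1$.
\item The step needing care is transitivity. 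Suppose $\pi_1(b)=\pi_1(b')$ with $(a,b),(b',c)\in D$. Then $(b,b')\in C_1\subseteq D$, which gives $(a,c)\in D$.
\item $D'$ contains the image of $C_2$, so $C_2\cdot(S/C_1)\subseteq D'$.
\end{itemize}

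Now take $(a,b)\in\Ker\phi$. Then $(\pi_1(a),\pi_1(b))\in C_2\cdot(S/C_1)\subseteq D'$. So there is $(a',b')\in D$ with $\pi_1(a')=\pi_1(a)$ and $\pi_1(b')=\pi_1(b)$. Then $(a,a')$ and $(b',b)$ lie in $C_1\subseteq D$, and transitivity gives $(a,b)\in D$.

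The only mild obstacle is verifying that $D'$ is a congruence, in particular its transitivity, which relies on $C_1\subseteq D$; the rest is formal.
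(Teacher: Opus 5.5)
Your proof is correct, and its skeleton matches the paper's. You take the kernel of the composite $S\to S/C_1\to (S/C_1)/(C_2\cdot(S/C_1))$, apply the first isomorphism theorem, get $\langle C_1,C_2\rangle\subseteq\Ker\phi$ for free, and prove the reverse inclusion by lifting a pair along $\pi_{C_1}$.

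The difference is in the lifting step. The paper invokes \eqref{eq:surj push-out} to say that $C_2\cdot(S/C_1)$ is the set-theoretic image of $C_2$, and then lifts directly to a pair in $C_2$. You instead show that the image of $D=\langle C_1,C_2\rangle$ is already a congruence, with transitivity coming from $C_1=\Ker\pi_{C_1}\subseteq D$, and you lift to $D$.

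This detour matters. The image of a congruence along a surjection is transitive when the congruence contains the kernel. The image of $C_2$ can fail to be a congruence when $C_1\not\subseteq C_2$. Take $S=\Bb[x,y,z,w]$, $C_1=\langle(y,z)\rangle$ and $C_2=\langle(x,y),(z,w)\rangle$:
\begin{itemize}
\item The pair $(x,w)$ lies in $\Ker\phi$.
\item Any lift of $(\pi_{C_1}(x),\pi_{C_1}(w))$ must be $(x,w)$ itself; see this by applying $\Bb[x,y,z,w]\to\Bb[x,y,w]$, $z\mapsto y$.
\item $(x,w)\notin C_2$; see this by applying $\Bb[x,y,z,w]\to\Bb[x,z]$, $y\mapsto x$, $w\mapsto z$.
\end{itemize}

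So the paper's lifting step is only justified when $C_1\subseteq C_2$. That does hold in its later use in \cref{lemma:P1P2}. Your argument proves the lemma as stated. As a by-product, it identifies $C_2\cdot(S/C_1)$ with the image of $\langle C_1,C_2\rangle$.

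One small point: your stated reason for not using \eqref{eq:surj push-out} directly (``$C_2$ is not a congruence on $S/C_1$'') is imprecise. The real obstruction is the kernel-containment condition needed for transitivity of the image. The argument you then give handles exactly that, and it is right.
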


\begin{proof}
  Let $\pi=\pi_{C_1}:S\to S/C_1$ be the quotient map by $C_1$ and $C$ the kernel congruence of 
  \[S\to S/C_1 \to (S/C_1)/(C_2\cdot(S/C_1)).\]
  Clearly $C$ contains $C_1$ and $C_2$. Take $(a,b)\in C$. 
  By the property \eqref{eq:surj push-out}, 
  the push-out $C_2\cdot (S/C_1)$ is the set-theoretical image of $C_2$ by $\pi\times \pi$.
  Then there exists a pair $(a',b')\in C_2$ such that $(\pi(a),\pi(b))=(\pi(a'),\pi(b'))$.
  Since $(a,a'),(b,b')\in\Ker\pi=C_1$, then $(a,b)\in \<C_1,C_2\>$. Thus we have $C=\<C_1,C_2\>$.
  By \cref{proposition:thm of hom}, we have $S/\<C_1,C_2\> \cong S/C\cong (S/C_1)/(C_2\cdot (S/C_1))$.
\end{proof}

Let $S$ be a semiring. The \emph{twisted product} of the pairs 
$\alpha=(\alpha_1,\alpha_2)$ and $\beta=(\beta_1,\beta_2)$ of $S\times S$ is
\[\alpha\rtimes \beta=(\alpha_1\beta_1+\alpha_2\beta_2,\alpha_1\beta_2+\alpha_2\beta_1).\]
Note that this product is associative and commutative.
For two congruences $C_1,C_2$ on $S$, define $C_1\rtimes C_2$ as a congruence
generated by $\{\alpha\rtimes \beta\mid \alpha\in C_1,\beta\in C_2\}$.
Note that $C_1\rtimes C_2\subseteq C_1\cap C_2$ holds.

\begin{definition}[{\cite[Definitions 2.3, 2.4]{joo2018prime}}]\label{definition:prime congruence}
  A congruence $C$ on $S$ is \emph{prime} if the complement $(S\times S)\setminus C$ is 
  a nonempty multiplicatively closed set under the twisted product. 
  In the definitions below, all congruences appearing in chains are assumed to be prime congruences on $S$.
  The \emph{dimension} of $S$ 
  is the supremum of all integers $l\geq 0$ such that there exists a chain
  \[P_0\subsetneq P_1\subsetneq\dots\subsetneq P_l.\]
  For $P\in\CSpec S$, the \emph{height} of $P$ 
  is the supremum of all integers $l\geq 0$ such that there exists a chain
  \[P_0\subsetneq P_1\subsetneq\dots\subsetneq P_l=P,\]
  and the \emph{coheight} of $P$ 
  is the supremum of all integers $l\geq 0$ such that there exists a chain
  \[P=P_0\subsetneq P_1\subsetneq\dots\subsetneq P_l.\]
  These are denoted by $\dim S$, $\het P$, and $\coht P$, respectively. 
\end{definition}

In this paper, we denote by $\CSpec S$ the set of all prime congruences on $S$.
For a congruence $C$ on $S$, we define the subset $V(C)$ of $\CSpec S$ as follows:
\[V(C)\coloneqq \{P\in\CSpec S\mid C\subseteq P\}.\]

\begin{proposition}\label{proposition:topology}
  \begin{enumerate}
    \item $V(S\times S)=\emptyset$ and $V(\Delta)=\CSpec S$ hold. 
    \item For two congruences $C_1,C_2$ on $S$, $V(C_1\rtimes C_2)=V(C_1)\cup V(C_2)$ holds.
    \item For a family of congruences $\{C_i\}_i$ on $S$, $V(\sum_i C_i)=\bigcap_i V(C_i)$ holds, 
    where $\sum_i C_i$ is a congruence generated by $\bigcup_i C_i$.
  \end{enumerate}
\end{proposition}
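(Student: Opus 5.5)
We verify the three assertions directly from the definition of a prime congruence: a congruence $P$ is prime exactly when $P\neq S\times S$ and, for $\alpha,\beta\in S\times S$, $\alpha\rtimes\beta\in P$ implies $\alpha\in P$ or $\beta\in P$.

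For (1), no prime congruence contains $S\times S$, because its complement must be nonempty. So $V(S\times S)=\emptyset$. Every congruence is reflexive, so it contains $\Delta$, and therefore $V(\Delta)=\CSpec S$.

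For (3), recall that $\sum_i C_i$ is the smallest congruence containing $\bigcup_i C_i$. A congruence $P$ contains $\sum_i C_i$ if and only if it contains every $C_i$. Intersecting this with the set of prime congruences gives $V(\sum_i C_i)=\bigcap_i V(C_i)$.

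For (2), the inclusion $V(C_1)\cup V(C_2)\subseteq V(C_1\rtimes C_2)$ follows from the remark preceding the definition, $C_1\rtimes C_2\subseteq C_1\cap C_2$. Indeed, if $P\supseteq C_1$ or $P\supseteq C_2$, then $P\supseteq C_1\rtimes C_2$.

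The reverse inclusion is the only step that uses primality. Let $P$ be prime with $P\supseteq C_1\rtimes C_2$, and suppose $P\not\supseteq C_1$ and $P\not\supseteq C_2$. Choose $\alpha\in C_1\setminus P$ and $\beta\in C_2\setminus P$. Then $\alpha\rtimes\beta$ is one of the generators of $C_1\rtimes C_2$, so $\alpha\rtimes\beta\in C_1\rtimes C_2\subseteq P$. On the other hand, $\alpha$ and $\beta$ both lie in the complement of $P$, which is closed under the twisted product, so $\alpha\rtimes\beta\notin P$. This contradiction shows $P\in V(C_1)\cup V(C_2)$.
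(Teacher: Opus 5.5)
Your proof is correct and matches the paper's argument essentially step for step. Part (2) uses $C_1\rtimes C_2\subseteq C_1\cap C_2$ for one inclusion and primality applied to a twisted product of witnesses for the other, and parts (1) and (3) are direct from the definitions, which you spell out in slightly more detail than the paper does.
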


\begin{proof}
  \begin{enumerate}
    \item It follows directly.
    \item For two congruences $C_1,C_2$ on $S$, $V(C_1)\cup V(C_2)\subseteq V(C_1\rtimes C_2)$ clearly holds
    from $C_1\rtimes C_2\subseteq C_1\cap C_2$. 
    To show the opposite inclusion, assume $P\notin V(C_1)\cup V(C_2)$. 
    Then there exist pairs $(a,b)\in C_1\setminus P$ and $(c,d)\in C_2\setminus P$.
    Since $P$ is prime, we have $(a,b)\rtimes (c,d)\in (C_1\rtimes C_2)\setminus P$. 
    Thus $P\notin V(C_1\rtimes C_2)$.
    \item Since $\sum C_i\subseteq P$ if and only if $C_i\subseteq P$ for each $i$, the claim is straightforward. 
  \end{enumerate}
\end{proof}

By \cref{proposition:topology}, the set $\CSpec S$ has a topology 
whose closed sets are represented as $V(C)$ for some congruence $C$ on $S$.
We always consider $\CSpec S$ endowed with the topology defined above.

For a semiring homomorphism $\phi:S_1\to S_2$, 
the pull-back $\phi^{*}P_2$ of a prime congruence $P_2$ on $S_2$ is a prime congruence on $S_1$.
So we obtain the map $\phi^{*}:\CSpec S_2\to\CSpec S_1$.
For a closed set $V(C_1)$ of $\CSpec S_1$,
\begin{align*}
  (\phi^{*})^{-1}(V(C)) &= \{P\in\CSpec S_2\mid C\subseteq \phi^{*}P\}\\
  &=\{P\in\CSpec S_2\mid \phi_{*}(C)\subseteq P\}\\
  &=V(\phi_{*}(C))
\end{align*}
Therefore $\phi^{*}$ is continuous.

%top spaceとしての整理．
For the discussion in \cref{section:toric}, 
where we glue topological spaces $\CSpec S$ together,
we introduce terminology for topological spaces.
A closed subset $Z$ of a topological space is said to be \emph{reducible} 
if $Z$ can be written as a union $Z=Z_1\cup Z_2$ 
where $Z_1,Z_2\subsetneq Z$ are nonempty closed subsets.
A closed subset $Z$ of $X$ is said to be \emph{irreducible} if $Z$ is not reducible.

\begin{definition}\label{definition:dim of top space}
  Let $X$ be a topological space and $x$ be a point of $X$.
  The \emph{dimension of $X$} is the supremum of all integers $l$ such that there exists a chain
  \[Z_0 \subsetneq Z_1 \subsetneq \dots \subsetneq Z_l\]
  of irreducible closed subsets of $X$.
  The \emph{dimension of $X$ at $x$} is the supremum of all integers $l$ such that there exists a chain
  \[x\in Z_0 \subsetneq Z_1 \subsetneq \dots \subsetneq Z_l\]
  of irreducible closed subsets of $X$.
  These are denoted by $\dim X$ and $\dim_x X$, respectively.
\end{definition}

Note that the dimension at a point can be calculated in an open neighborhood of the point, that is,
\begin{align}\label{eq:dim at point open nbhd}
  \dim_x X =\dim_x U, \text{ for any open neighborhood }U\text{ of }x.
\end{align}
The following lemma implies 
that the notions such as height, coheight, and dimension are topological invariant.

\begin{lemma}\label{lemma:dim height}
  Let $X=\CSpec S$ be the space of prime congruences on a semiring $S$ and $P$ a point of $X$.
  Then the following equalities hold:
  \begin{itemize}
    \item $\dim X=\dim S$,
    \item $\dim_P X=\het P$.
  \end{itemize}
\end{lemma}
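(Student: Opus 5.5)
The plan is to reduce both equalities to a single order-reversing bijection between $\CSpec S$ and the irreducible closed subsets of $X=\CSpec S$, namely $P\mapsto \overline{\{P\}}=V(P)$.

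First, I will check that the closure of a point is $V(P)$. Indeed, $V(P)$ is closed and contains $P$. Conversely, any closed set $V(C)$ containing $P$ satisfies $C\subseteq P$, hence $V(P)\subseteq V(C)$. So $\overline{\{P\}}=V(P)$, and it is irreducible, since the closure of a point always is. It follows that $P\subseteq Q$ if and only if $V(Q)\subseteq V(P)$. In particular, $P\mapsto V(P)$ is injective, because $V(P)=V(Q)$ forces $Q\in V(P)$ and $P\in V(Q)$, and hence $P=Q$.

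The main step is to show that every irreducible closed subset $Z=V(C)$ has a generic point, i.e.\ $Z=V(P)$ for some prime $P$. I will take $P=\bigcap_{Q\in Z}Q$, so that $V(P)=\overline{Z}=Z$, and show that $P$ is prime.
\begin{itemize}
  \item The complement of $P$ is nonempty since $Z\neq\emptyset$; equivalently $P\neq S\times S$.
  \item Suppose $\alpha,\beta\notin P$ but $\alpha\rtimes\beta\in P$. Let $C_1=\langle P,\alpha\rangle$ and $C_2=\langle P,\beta\rangle$.
  \item For every $Q\in Z$ we have $\alpha\rtimes\beta\in Q$, and primality of $Q$ gives $\alpha\in Q$ or $\beta\in Q$. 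Hence $Z\subseteq V(C_1)\cup V(C_2)$, so $Z=(Z\cap V(C_1))\cup(Z\cap V(C_2))$.
  \item By irreducibility, $Z\subseteq V(C_1)$, say. Then $\alpha\in Q$ for all $Q\in Z$, so $\alpha\in P$, a contradiction.
\end{itemize}
The point needing care is that irreducibility is phrased via closed subsets. The sets $Z\cap V(C_i)=V(\langle C,C_i\rangle)$ are closed by \cref{proposition:topology}, so the argument goes through. If one of them is empty, the other equals $Z$, which is consistent with the argument above.

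Finally, the bijection $P\mapsto V(P)$ between $\CSpec S$ and the irreducible closed subsets reverses inclusion. Therefore strict chains $P_0\subsetneq\dots\subsetneq P_l$ of prime congruences correspond to strict chains $V(P_l)\subsetneq\dots\subsetneq V(P_0)$ of irreducible closed subsets, and conversely. This immediately gives $\dim X=\dim S$. For the local statement, a chain $Z_0\subsetneq\dots\subsetneq Z_l$ with $P\in Z_0$ corresponds to primes $Q_l\subsetneq\dots\subsetneq Q_0$ with $Q_0\subseteq P$. Replacing $Z_0$ by $V(P)\subseteq Z_0$ can only keep or lengthen the chain, so the supremum is attained with $Q_0=P$. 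This is exactly a chain ending at $P$, which yields $\dim_P X=\het P$.
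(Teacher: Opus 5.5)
Your proof is correct and matches the paper's argument. The key step is the same: an irreducible closed $Z$ equals $V(P_0)$ for the prime $P_0=\bigcap_{Q\in Z}Q$, shown via $Z=(Z\cap V(C_1))\cup(Z\cap V(C_2))$. The differences are minor. You obtain irreducibility of $V(P)$ from $V(P)=\overline{\{P\}}$, where the paper argues by contradiction using $C_1\rtimes C_2$ and \cref{proposition:topology}. You also spell out the inclusion-reversing bijection and the chain bookkeeping for $\dim_P X=\het P$, which the paper leaves implicit.
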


\begin{proof}
  It is enough to show that a nonempty closed subset $Z=V(C)$ of $X$ is irreducible if and only if
  there is a prime congruence $P_0$ on $S$ such that $Z=V(P_0)$.

  First we assume that $Z$ is irreducible. 
  It suffices to show that $P_0\coloneqq \bigcap_{P'\in V(C)}P'$ is prime.
  Take pairs $(a_1,b_1),(a_2,b_2)\in S^2$ such that $(a_1,b_1)\rtimes (a_2,b_2)\in P_0$.
  For $i=1,2$, let $C_i$ be a congruence generated by $(a_i,b_i)$.
  Then we have $V(P_0)\subseteq V(C_1)\cup V(C_2)$ and hence $V(P_0)= (V(C_1)\cap V(P_0))\cup (V(C_2)\cap V(P_0))$.
  Since $Z=V(P_0)$ is irreducible, we may assume $V(P_0)=V(C_1)\cap V(P_0)\subseteq V(C_1)$.
  Thus we have
  \begin{align*}
    (a_1,b_1)\in \bigcap_{P'\in V(C_1)}P' \subseteq \bigcap_{P'\in V(P_0)}P'=P_0,
  \end{align*}
  which implies that $P_0$ is prime.

  Conversely, we assume that $Z$ is reducible and 
  there is a prime congruence $P'$ on $S$ such that $Z=V(P')$.
  By \cref{proposition:topology}, there exist congruences $C_1,C_2$ such that
  $V(P')=V(C_1\rtimes C_2)$ and $V(C_1),V(C_2)\subsetneq V(P')$.
  Then there exist $(a_i,b_i)\in C_i\setminus P'$ for $i=1,2$.
  Since $P'\in V(P')=V(C_1\rtimes C_2)$ holds, we have 
  $(a_1,b_1)\rtimes (a_2,b_2)\in P'$, which contradicts the assumption that $P'$ is prime.
\end{proof}

In the remainder of this subsection, 
we collect several technical propositions.
A \emph{topological embedding}
is an injective continuous map $f:X\to Y$ such that the induced map $X\to f(X)$ is a homeomorphism.

\begin{lemma}\label{lemma:phiphi}
  Let $\phi:S_1\to S_2$ be a surjective homomorphism of semirings and $P$ be a prime congruence on $S_1$.
  If $\Ker\phi\subseteq P$, then $\phi_{*}P$ is prime and $\phi^{*}\phi_{*}P=P$.
\end{lemma}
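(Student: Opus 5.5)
Since $\phi$ is surjective, \eqref{eq:surj push-out} shows that $\phi_{*}P$ is just the set-theoretic image $(\phi\times\phi)(P)$. Everything then rests on one key identity,
\[\phi^{*}\phi_{*}P=P,\]
from which primality of $\phi_{*}P$ will follow formally.

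\emph{The identity.} The inclusion $P\subseteq\phi^{*}\phi_{*}P$ is immediate from the definitions. For the reverse inclusion, take $(a,b)\in S_1^2$ with $(\phi(a),\phi(b))\in\phi_{*}P$. By the description above there is $(a',b')\in P$ with $\phi(a)=\phi(a')$ and $\phi(b)=\phi(b')$. Then $(a,a'),(b,b')\in\Ker\phi\subseteq P$, and transitivity and symmetry of $P$ give $(a,b)\in P$. This is exactly the argument in the proof of \cref{lemma:pushpush}.

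\emph{Primality.} First, the complement of $\phi_{*}P$ is nonempty. Since $P\neq S_1\times S_1$, choose $(a,b)\notin P$; by the identity, $(\phi(a),\phi(b))\notin\phi_{*}P$.

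Next, the complement is closed under the twisted product. Take $\alpha,\beta\in S_2^2\setminus\phi_{*}P$. Since $\phi$ is surjective, lift them to $\tilde\alpha,\tilde\beta\in S_1^2$. By the identity, $\tilde\alpha,\tilde\beta\notin P$, so primality of $P$ gives $\tilde\alpha\rtimes\tilde\beta\notin P$. Because $\phi$ is a semiring homomorphism,
\[(\phi\times\phi)(\tilde\alpha\rtimes\tilde\beta)=\alpha\rtimes\beta.\]
Applying the identity once more, $\alpha\rtimes\beta\notin\phi_{*}P$.

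It remains to check that $\phi_{*}P$ is a congruence, which holds by definition as a generated congruence (equivalently, it is the image of a subsemiring under a surjective homomorphism). The main point of the whole proof is the identity $\phi^{*}\phi_{*}P=P$, and that is where the hypothesis $\Ker\phi\subseteq P$ is used; without it, lifts of elements outside $\phi_{*}P$ could land inside $P$, and the primality argument would fail.
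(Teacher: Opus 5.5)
Your proof is correct and follows essentially the same approach as the paper. Both arguments use that $\phi_{*}P$ is the set-theoretic image of $P$ and use $\Ker\phi\subseteq P$ to move membership back into $P$. The only differences are that you prove $\phi^{*}\phi_{*}P=P$ first and derive primality from it (the paper checks primality directly by lifting and then proves the identity), and that you also verify the complement is nonempty, which the paper leaves implicit.
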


\begin{proof}
  Assume $\Ker\phi\subseteq P$. 
  Let $(a,b),(c,d)\in S_2\times S_2$ be pairs satisfying $(a,b)\rtimes(c,d)\in \phi_{*}P$. 
  Since $\phi$ is surjective, there exist $a',b',c',d'\in S_1$ such that
  \[\phi(a')=a,\ \phi(b')=b,\ \phi(c')=c,\ \phi(d')=d.\]
  By \eqref{eq:surj push-out}, there exists $(s,t)\in P$ such that 
  \begin{align*}
    \phi(a')\phi(c')+\phi(b')\phi(d')=\phi(s),\\
    \phi(a')\phi(d')+\phi(b')\phi(c')=\phi(t).
  \end{align*}
  By the assumption $\Ker\phi\subseteq P$, we have $(a',b')\rtimes (c',d')\in P$.
  Since $P$ is prime, $(a',b')\in P$ or $(c',d')\in P$.
  Therefore we have $(a,b)\in\phi_{*}P$ or $(c,d)\in \phi_{*}P$, so $\phi_{*}P$ is prime.

  One inclusion $P\subseteq \phi^{*}\phi_{*}P$ is clear from the definition.
  To show the opposite inclusion, take $(a,b)\in\phi^{*}\phi_{*}P$. 
  Then there exists a pair $(a',b')\in P$ such that $(\phi(a),\phi(b))=(\phi(a'),\phi(b'))$.
  Since $\Ker\phi\subseteq P$, we have $(a,b)\in P$.
\end{proof}

\begin{proposition}\label{proposition:quotient induces inj}
  If $\phi\colon S_1\to S_2$ is a surjective semiring homomorphism, 
  then 
  \[\phi^{*}\colon \CSpec S_2\to \CSpec S_1,\quad P_2\mapsto \phi^{*}P_2\] 
  is a topological embedding and its image is a closed subset
  \[V(\Ker\phi)=\{P\in\CSpec S\mid \Ker\phi\subseteq P\}.\]
\end{proposition}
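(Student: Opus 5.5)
We prove four things in turn: the image of $\phi^{*}$ lies in $V(\Ker\phi)$, $\phi^{*}$ is injective, it is onto $V(\Ker\phi)$, and it is a closed map onto its image. Continuity of $\phi^{*}$ was already shown after \cref{proposition:topology}.

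\emph{Image and injectivity.} For $P_2\in\CSpec S_2$, the pull-back $\phi^{*}P_2$ contains $\phi^{*}\Delta_{S_2}=\Ker\phi$, so $\phi^{*}(\CSpec S_2)\subseteq V(\Ker\phi)$. For injectivity, observe that $\phi_{*}\phi^{*}C_2=C_2$ for every congruence $C_2$ on $S_2$ when $\phi$ is surjective. Indeed, by \eqref{eq:surj push-out} the push-out $\phi_{*}\phi^{*}C_2$ is the set-theoretic image of $\phi^{*}C_2$ under $\phi\times\phi$. This image is contained in $C_2$. Conversely, any $(a,b)\in C_2$ can be lifted to $(a',b')$ with $(\phi(a'),\phi(b'))=(a,b)$, and then $(a',b')\in\phi^{*}C_2$, so $(a,b)$ lies in the image. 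Hence $\phi^{*}$ has left inverse $\phi_{*}$ and is injective.

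\emph{Surjectivity onto $V(\Ker\phi)$.} Let $P\in V(\Ker\phi)$. By \cref{lemma:phiphi}, $\phi_{*}P$ is prime and $\phi^{*}\phi_{*}P=P$. So $P$ lies in the image, and the image is exactly the closed set $V(\Ker\phi)$.

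\emph{Homeomorphism onto the image.} Since $\phi^{*}$ is a continuous bijection onto $V(\Ker\phi)$, it remains to show it sends closed sets to closed sets; we check that $\phi^{*}(V(C_2))$ is closed for every congruence $C_2$ on $S_2$. We claim
\[\phi^{*}(V(C_2))=V(\phi^{*}C_2)\cap V(\Ker\phi),\]
which is closed in $\CSpec S_1$, since $V(\phi^{*}C_2)$ and $V(\Ker\phi)$ are closed. Since $V(\phi^{*}C_2)=V(\<\phi^{*}C_2,\Ker\phi\>)$ by \cref{proposition:topology}(3), the right-hand side is in fact just $V(\phi^{*}C_2)$.

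For ``$\subseteq$'': if $C_2\subseteq P_2$, then $\phi^{*}C_2\subseteq\phi^{*}P_2$, and $\phi^{*}P_2$ contains $\Ker\phi$ by the first step.

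For ``$\supseteq$'': let $P\in V(\Ker\phi)$ with $\phi^{*}C_2\subseteq P$. Then $P=\phi^{*}P_2$ with $P_2=\phi_{*}P$. Pushing forward and using $\phi_{*}\phi^{*}C_2=C_2$ gives $C_2\subseteq\phi_{*}P=P_2$.

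This identity is the only step needing care, and it rests entirely on the equality $\phi_{*}\phi^{*}C_2=C_2$ for surjective $\phi$ together with \cref{lemma:phiphi}. With it, $\phi^{*}$ is a topological embedding with closed image $V(\Ker\phi)$.
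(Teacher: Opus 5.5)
Your proof is correct and follows the paper's argument. Injectivity comes from $\phi_{*}\phi^{*}=\mathrm{id}$, the image is identified with $V(\Ker\phi)$ via \cref{lemma:phiphi}, and the topological part rests on the identity $\phi^{*}(V(C_2))=V(\phi^{*}C_2)\cap V(\Ker\phi)$, which is exactly the paper's computation of $(\phi_{*})^{-1}(V(C))$, phrased as closedness of $\phi^{*}$ onto its image rather than continuity of the inverse $\phi_{*}$ (your explicit check that $\phi_{*}\phi^{*}C_2=C_2$ for arbitrary $C_2$ is what the paper uses implicitly when it cites surjectivity in that step).
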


\begin{proof}
  First note that $\phi_{*}\phi^{*}$ is the identity map on $\CSpec S_2$.
  Namely, for $P\in\CSpec S_2$, we show that $\phi_{*}\phi^{*}P=P$. 
  One inclusion $\phi_{*}\phi^{*}P\subseteq P$ is clear.
  The other inclusion is implied from the surjectivity of $\phi$.
  In particular, $\phi^{*}$ is injective.
  
  We next show that the image of $\phi^{*}$ is equal to $V(\Ker\phi)$.
  Take $P\in\CSpec S_1$. 
  If $P$ is in the image, then there exists $P'\in\CSpec S_2$ such that $\phi^{*}P'=P$.
  Then $\Ker\phi=\phi^{*}\Delta_{S_2}\subseteq \phi^{*}P'=P$ holds.
  Conversely, assume $\Ker \phi\subseteq P$.
  By \cref{lemma:phiphi}, $P$ is the image of $\phi_{*}P\in\CSpec S_2$. 

  Finally, we show that $\phi_{*}:V(\Ker\phi)\to\CSpec S_2$ is continuous.
  For a closed set $V(C)$ of $\CSpec S_2$, we have
  \begin{align*}
    (\phi_{*})^{-1}(V(C))&=\{P\in V(\Ker\phi)\mid C\subseteq \phi_{*}P\}\\
    &=\{P\in V(\Ker\phi)\mid \phi^{*}C\subseteq P\}\\
    &=V(\phi^{*}C)\cap V(\Ker\phi).
  \end{align*}
  The second equality follows from \cref{lemma:phiphi} and the surjectivity of $\phi$.
  Thus $\phi_{*}$ is continuous and hence $\phi^{*}$ is a topological embedding.
\end{proof}

Let $T$ be a multiplicatively closed subset of a semiring $S$.
In the following, we always assume $0_S\notin T$ and $1_S\in T$.
We define an equivalence relation $\sim$ on $S\times T$ as follows:
\[(s,t)\sim(s',t')\overset{\mathrm{def}}{\iff} st'u=s'tu\text{  for some $u\in T$.}\]
We denote by $\frac{s}{t}$ the equivalence class of $(s,t)$.
The set $T^{-1}S\coloneqq (S\times T)/\sim$ is a semiring with the following operation.
\begin{align*}
  &\frac{s}{t}+\frac{s'}{t'}=\frac{st'+s't}{tt'}\\
  &\frac{s}{t}\cdot\frac{s'}{t'}=\frac{ss'}{tt'}
\end{align*}
There is a canonical homomorphism of semirings $S\to T^{-1}S,s\mapsto\frac{s}{1}$.
We call this homomorphism the \emph{localization} of $S$ by $T$.

\begin{lemma}\label{lemma:localcong}
  Let $\phi:S\to T^{-1}S$ be the localization.
  For a congruence $C$ on $S$, $\phi_{*}C$ is equal to 
  \[C'\coloneqq \left\{\left(\frac{a}{t},\frac{b}{t}\right)\;\middle|\; (a,b)\in C,t\in T\right\}.\]
\end{lemma}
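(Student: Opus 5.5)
The plan is to show that $C'$ is a congruence on $T^{-1}S$, that it contains the generating pairs of $\phi_{*}C$, and that it is contained in any congruence containing those pairs. Together these give $C'=\phi_{*}C$.

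\emph{Containment of generators.} For $(a,b)\in C$, the pair $(\phi(a),\phi(b))=(\frac{a}{1},\frac{b}{1})$ lies in $C'$ by taking $t=1$.

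\emph{Minimality.} Let $D$ be any congruence on $T^{-1}S$ containing all pairs $(\frac{a}{1},\frac{b}{1})$ with $(a,b)\in C$. Since $D$ is a subsemiring of $T^{-1}S\times T^{-1}S$, multiplying by the diagonal element $(\frac{1}{t},\frac{1}{t})\in\Delta\subseteq D$ gives $(\frac{a}{t},\frac{b}{t})\in D$. Hence $C'\subseteq D$, and in particular $C'\subseteq\phi_{*}C$.

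\emph{$C'$ is a congruence.} This is the step where care is needed, since elements of $C'$ are only representatives up to the relation $\sim$. I will check the following.
\begin{itemize}
  \item \emph{Diagonal.} Any $\frac{s}{t}$ gives $(\frac{s}{t},\frac{s}{t})\in C'$, because $(s,s)\in C$.
  \item \emph{Symmetry.} This is immediate from the symmetry of $C$.
  \item \emph{Sums and products.} For $(\frac{a}{t},\frac{b}{t})$ and $(\frac{c}{u},\frac{d}{u})$ in $C'$, the sum is $(\frac{au+ct}{tu},\frac{bu+dt}{tu})$. Here $(au+ct,\,bu+dt)\in C$ because $C$ is a subsemiring containing $(u,u)$ and $(t,t)$. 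The product is $(\frac{ac}{tu},\frac{bd}{tu})$ with $(ac,bd)\in C$.
  \item \emph{Transitivity.} Suppose $\frac{b}{t}=\frac{b'}{t'}$ with $(a,b)\in C$ and $(b',c)\in C$. Then $bt'v=b'tv$ for some $v\in T$. Rewrite both pairs over the common denominator $tt'v$: the first becomes $(\frac{at'v}{tt'v},\frac{bt'v}{tt'v})$ and the second $(\frac{b'tv}{tt'v},\frac{ctv}{tt'v})$. Both numerator pairs lie in $C$, and the middle numerators $bt'v$ and $b'tv$ are literally equal in $S$. Transitivity of $C$ then gives $(at'v,ctv)\in C$, so $(\frac{a}{t},\frac{c}{t'})\in C'$.
\end{itemize}
The same rewriting handles the fact that membership in $C'$ is defined through some choice of representatives: any pair in $C'$ can be re-expressed with a common denominator while keeping its numerator pair in $C$, since $C$ is closed under multiplication by diagonal elements $(w,w)$.

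This shows $C'$ is a congruence containing the generators, so $\phi_{*}C\subseteq C'$. Combined with minimality, $\phi_{*}C=C'$.
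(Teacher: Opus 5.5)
Your proof is correct and follows the paper's approach. The paper also notes $\phi(C)\subseteq C'\subseteq\phi_{*}C$ and reduces the lemma to showing that $C'$ is a congruence. Its key step is transitivity, done exactly as you do it: clear denominators using the witness $u\in T$ of the equality of the middle terms, then apply transitivity of $C$. You additionally write out closure under sums and products and the inclusion $C'\subseteq\phi_{*}C$, which the paper treats as clear.
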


\begin{proof}
  Since $\phi(C)\subseteq C'\subseteq \phi_{*}C$ clearly holds, 
  it is enough to show that $C'$ is a congruence.
  In particular, it suffices to show the transitivity of $C'$.

  Assume that $(a/t,b/t),(a'/t',b'/t')\in C'$ and $b/t=a'/t'$, where $(a,b),(a',b')\in C$ and $t,t'\in T$.
  Then there exists $u\in T$ such that $bt'u=a'tu$. 
  By the transitivity of $C$, we have $(at'u,b'tu)\in C$.
  Thus $(a/t,b'/t')=(\frac{at'u}{tt'u},\frac{b'tu}{tt'u})\in C'$.
\end{proof}

\begin{lemma}\label{lemma:local phiphi}
  Let $\phi:S\to T^{-1}S$ be the localization and $C$ a congruence on $T^{-1}S$.
  Then $\phi_{*}\phi^{*}(C)=C$ holds.
\end{lemma}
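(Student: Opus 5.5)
We prove the two inclusions separately. By \cref{lemma:localcong}, the push-out $\phi_{*}\phi^{*}(C)$ is the set
\[\left\{\left(\frac{a}{t},\frac{b}{t}\right)\;\middle|\; (a,b)\in \phi^{*}C,\ t\in T\right\}.\]
So the task reduces to elementwise manipulations with fractions.

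For the inclusion $\phi_{*}\phi^{*}(C)\subseteq C$, take $(a,b)\in\phi^{*}C$ and $t\in T$. By the definition \eqref{eq:pull-back} of the pull-back, $(a/1,b/1)\in C$. Since $C$ is a subsemiring of $T^{-1}S\times T^{-1}S$, multiplication in $T^{-1}S$ preserves $C$: $C$ contains the diagonal and is closed under componentwise products. Multiplying by the diagonal pair $(1/t,1/t)$ gives $(a/t,b/t)\in C$.

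For the inclusion $C\subseteq \phi_{*}\phi^{*}(C)$, take $(x,y)\in C$ and write $x=a/t$, $y=b/t'$ with $a,b\in S$ and $t,t'\in T$. Bringing both to the common denominator $tt'$, we may assume $x=a'/u$ and $y=b'/u$ with $u=tt'\in T$, where $a'=at'$ and $b'=bt$. Multiplying $(x,y)$ by the diagonal pair $(u/1,u/1)$ gives $(a'/1,b'/1)\in C$, so $(a',b')\in\phi^{*}C$. Hence $(x,y)=(a'/u,b'/u)$ lies in the set described by \cref{lemma:localcong}.

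The only point requiring care is that $C$ is closed under multiplication by diagonal pairs; this is immediate since $C$ is a subsemiring containing $\Delta$. I expect no real obstacle, only bookkeeping with the equivalence relation defining $T^{-1}S$, since the equalities $a/t=at'/(tt')$ hold directly.
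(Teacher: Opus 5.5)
Your proof is correct and follows essentially the same route as the paper: clear denominators by cross-multiplying (your $(at',bt)$ is the paper's $(a_1b_2,a_2b_1)$), pull back to $\phi^{*}C$, then divide by the common denominator again. The only cosmetic difference is that you use the explicit description of the push-out from \cref{lemma:localcong}, whereas the paper uses only that $\phi_{*}\phi^{*}(C)$ is a congruence containing $\phi(\phi^{*}C)$, and is therefore closed under multiplication by diagonal pairs $(1/u,1/u)$.
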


\begin{proof}
  The inclusion $\phi_{*}\phi^{*}(C)\subseteq C$ is straightforward.
  We show the opposite inclusion.

  Take $(a,b)=(a_1/a_2,b_1/b_2)\in C$.
  Since $(\phi(a_1b_2),\phi(a_2b_1))=(a_1b_2/1,a_2b_1/1)\in C$, 
  we have $(a_1b_2,a_2b_1)\in\phi^{*}(C)$.
  Therefore we have $(a_1b_2/1,a_2b_1/1)\in \phi\phi^{*}(C)\subseteq \phi_{*}\phi^{*}(C)$.
  Thus $(a_1/a_2,b_1/b_2)\in\phi_{*}\phi^{*}(C)$ holds.
\end{proof}

\begin{lemma}\label{lemma:push is prime}
  Let $\phi:S\to T^{-1}S$ be the localization.
  If $P\in\CSpec S$ satisfies $P\cap (T\times \{0_S\})=\emptyset$, then $\phi_{*}P$ is prime.
\end{lemma}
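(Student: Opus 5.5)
We use the explicit description of $\phi_{*}P$ from \cref{lemma:localcong}:
\[\phi_{*}P=\left\{\left(\tfrac{a}{t},\tfrac{b}{t}\right)\;\middle|\;(a,b)\in P,\ t\in T\right\}.\]
Two things must be checked. First, $\phi_{*}P$ is proper, i.e.\ its complement is nonempty. Second, the complement is closed under the twisted product; equivalently, if $\alpha\rtimes\beta\in\phi_{*}P$, then $\alpha\in\phi_{*}P$ or $\beta\in\phi_{*}P$.

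For properness, we show $(1,0)\notin\phi_{*}P$. Suppose $(1/1,0/1)=(a/t,b/t)$ with $(a,b)\in P$ and $t\in T$. Then there are $u,u'\in T$ with $tu=au$ and $0=bu'$ in $S$. Multiplying $(a,b)\in P$ by $(uu',uu')\in\Delta\subseteq P$ gives $(auu',buu')=(tuu',0)\in P$. Since $tuu'\in T$, this contradicts $P\cap(T\times\{0_S\})=\emptyset$.

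For the prime condition, take $\alpha=(a_1/s,b_1/s)$ and $\beta=(a_2/s',b_2/s')$; any pair can be brought to a common denominator. Then
\[\alpha\rtimes\beta=\left(\tfrac{a_1a_2+b_1b_2}{ss'},\tfrac{a_1b_2+b_1a_2}{ss'}\right).\]
Suppose this lies in $\phi_{*}P$. Then it equals $(c/t,d/t)$ for some $(c,d)\in P$ and $t\in T$.

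The key step is to clear denominators inside $S$. By the definition of equality in $T^{-1}S$, there are $u,v\in T$ with
\[(a_1a_2+b_1b_2)tu=c\,ss'u,\qquad (a_1b_2+b_1a_2)tv=d\,ss'v.\]
Put $w=uv$. Multiplying $(c,d)\in P$ by the diagonal pair $(ss'w,ss'w)$ shows
\[\bigl((a_1,b_1)\rtimes(a_2,b_2)\bigr)\cdot(tw,tw)\in P,\]
that is,
\[(a_1tw,b_1tw)\rtimes(a_2,b_2)\in P.\]
Since $P$ is prime, either $(a_1tw,b_1tw)\in P$ or $(a_2,b_2)\in P$.

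In the first case,
\[\alpha=\left(\tfrac{a_1tw}{stw},\tfrac{b_1tw}{stw}\right)\in\phi_{*}P\]
by \cref{lemma:localcong}. In the second case, $\beta\in\phi_{*}P$ directly. Hence $\phi_{*}P$ is prime.

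The main technical point is this denominator bookkeeping: choosing the single witness $w$ so that both coordinates are scaled by the same element of $T$. No cancellativity is needed, because every step only multiplies by elements of $T$ and uses that $P$ is closed under multiplication by diagonal pairs.
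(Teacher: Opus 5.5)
Your proof is correct and takes essentially the paper's route: describe $\phi_{*}P$ via \cref{lemma:localcong}, clear denominators in the twisted product, and apply primality of $P$ in $S$. The minor differences are improvements. You absorb $tw$ into the first pair instead of splitting off $(ut,0)$ as a third twisted factor, as the paper does, so you need the hypothesis $P\cap(T\times\{0_S\})=\emptyset$ only for your explicit check that $(1,0)\notin\phi_{*}P$; that check supplies the nonemptiness of the complement, which the paper's proof leaves implicit.
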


\begin{proof}
  Assume $(a_1/a_2,b_1/b_2)\rtimes (c_1/c_2,d_1/d_2)\in\phi_{*}P$.
  Then there exist $(p,q)\in P$ and $t\in T$ such that
  \begin{align*}
    \frac{a_1c_1b_2d_2+b_1d_1a_2c_2}{a_2b_2c_2d_2}=\frac{p}{t},\\
    \frac{a_1d_1b_2c_2+b_1c_1a_2d_2}{a_2b_2c_2d_2}=\frac{q}{t}
  \end{align*}
  Since $(a_1c_1b_2d_2+b_1d_1a_2c_2,a_1d_1b_2c_2+b_1c_1a_2d_2)=(a_1b_2,a_2b_1)\rtimes (c_1d_2,c_2d_1)$,
  there exists $u\in T$ such that 
  $(ut,0)\rtimes (a_1b_2,a_2b_1)\rtimes (c_1d_2,c_2d_1)=(ua_2b_2c_2d_2,ua_2b_2c_2d_2)\cdot(p,q)\in P$.
  Since $(T\times \{0_S\})\cap P=\emptyset$ and $P$ is prime, 
  either $(a_1b_2,a_2b_1)\in P$ or $(c_1d_2,c_2d_1)\in P$.
  Thus we have $(a_1/a_2,b_1/b_2)\in \phi_{*}P$ or $(c_1/c_2,d_1/d_2)\in \phi_{*}P$.
\end{proof}

We define a subset of $\CSpec S$ by
\begin{align}
  \begin{split}\label{eq:D(M)}
    D(T) &\coloneqq \{P\in\CSpec S\mid P\cap (T\times \{0_S\})=\emptyset\}\\
    &=\{P\in\CSpec S\mid (t,0_S)\not\in P\text{ for all }t\in T\}.
  \end{split}
\end{align}

\begin{proposition}\label{proposition:loc induces inj}
  Let $T$ be a multiplicatively closed subset of a semiring $S$ and $\phi:S\to T^{-1}S$ the localization.
  Then $\phi^{*}:\CSpec T^{-1}S\to \CSpec S$ is a topological embedding onto $D(T)$.
  If $T$ is finitely generated as a monoid, then the image of $\phi^{*}$ is an open subset of $\CSpec S$.
\end{proposition}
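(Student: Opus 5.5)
The plan is to mirror the proof of \cref{proposition:quotient induces inj}, with $\phi_{*}$ serving as the inverse of $\phi^{*}$ on $D(T)$.

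\emph{Step 1: $\phi^{*}$ is injective and lands in $D(T)$.} By \cref{lemma:local phiphi}, $\phi_{*}\phi^{*}P'=P'$ for every $P'\in\CSpec T^{-1}S$, so $\phi^{*}$ is injective. To see that the image lies in $D(T)$, suppose $(t,0_S)\in\phi^{*}P'$ for some $t\in T$. Then $(t/1,0)\in P'$. Multiplying by $(1/t,1/t)\in\Delta\subseteq P'$ gives $(1,0)\in P'$. Then $(a,0)\in P'$ for all $a$, so by symmetry and transitivity $P'=T^{-1}S\times T^{-1}S$. This contradicts primality, since the complement must be nonempty.

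\emph{Step 2: surjectivity onto $D(T)$.} Let $P\in D(T)$. By \cref{lemma:push is prime}, $\phi_{*}P$ is prime. It remains to show $\phi^{*}\phi_{*}P=P$; the inclusion $\supseteq$ is clear. For $\subseteq$, take $(a,b)$ with $(a/1,b/1)\in\phi_{*}P$. By \cref{lemma:localcong} there are $(p,q)\in P$ and $t\in T$ with $a/1=p/t$ and $b/1=q/t$. Hence there is $u\in T$ with $atu=pu$ and $btu=qu$, which gives $(atu,btu)\in P$. Rewrite this as $(tu,0)\rtimes(a,b)=(tu\,a,tu\,b)\in P$. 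Since $P$ is prime and $(tu,0)\notin P$ because $P\in D(T)$, we get $(a,b)\in P$. I expect this cancellation step to be the main point, and it works because the twisted product with $(t,0)$ is exactly multiplication by $t$.

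\emph{Step 3: homeomorphism onto the image.} Continuity of $\phi^{*}$ is already known. For continuity of $\phi_{*}\colon D(T)\to\CSpec T^{-1}S$, take a closed set $V(C)$. Using \cref{lemma:local phiphi} in the form $C=\phi_{*}\phi^{*}C$, together with $P=\phi^{*}\phi_{*}P$ on $D(T)$, we have $C\subseteq\phi_{*}P$ if and only if $\phi^{*}C\subseteq\phi^{*}\phi_{*}P=P$. For the forward direction, apply $\phi^{*}$. For the converse, $C=\phi_{*}\phi^{*}C\subseteq\phi_{*}P$. Hence $(\phi_{*})^{-1}V(C)=V(\phi^{*}C)\cap D(T)$, which is closed in $D(T)$.

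\emph{Step 4: openness when $T$ is finitely generated.} Let $T$ be generated by $t_1,\dots,t_k$. If a monomial $t=t_1^{e_1}\cdots t_k^{e_k}$ satisfies $(t,0)\in P$, then $(t,0)$ is a twisted product of the factors $(t_i,0)$. By primality, some $(t_i,0)\in P$. Conversely, $(t_i,0)\in P$ trivially gives an element of $T$ in $P$. So
\[D(T)=\CSpec S\setminus\bigcup_{i=1}^{k}V(\<(t_i,0_S)\>),\]
which is open as a finite union of closed sets is closed. (If $k=0$, then $T=\{1_S\}$ and $D(T)=\CSpec S$, since $(1,0)$ lies in no prime congruence.)
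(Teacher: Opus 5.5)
Your proposal is correct and follows essentially the same route as the paper: the image lies in $D(T)$ because $t/1$ is invertible; $\phi^{*}\phi_{*}P=P$ on $D(T)$ follows from \cref{lemma:localcong} by cancelling the twisted factor $(tu,0)$; $\phi_{*}$ is continuous since $(\phi_{*})^{-1}V(C)=V(\phi^{*}C)\cap D(T)$; and openness follows by reducing to the generators $t_i$. Your justification of the Step 3 equivalence, combining $\phi^{*}\phi_{*}P=P$ with $\phi_{*}\phi^{*}C=C$, is actually cleaner than the paper's appeal to ``surjectivity of $\phi$'', which does not literally apply to a localization.
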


\begin{proof}
  Take $P\in \phi^{*}(\CSpec T^{-1}S)$. Then there exists $P'\in\CSpec T^{-1}S$ such that $P=\phi^{*}P'$.
  Assume that $(T\times \{0_S\})\cap P\neq\emptyset$. 
  Take $(t,0)\in (T\times \{0_S\})\cap P$.
  By \cref{lemma:local phiphi}, $(t/1,0/1)\in \phi_{*}P=\phi_{*}\phi^{*}P'=P'$ holds. 
  Since $t\in T$ is invertible in $T^{-1}S$, we have $(1,0)\in P'$, 
  contradicting the assumption that $P'$ is prime.
  Thus $\Im\phi^{*}\subseteq D(T)$.

  Conversely, assume that $P$ satisfies $(T\times \{0_S\})\cap P=\emptyset$.
  It is enough to show that $\phi^{*}\phi_{*}(P)=P$ from \cref{lemma:push is prime}.
  One inclusion $P\subseteq \phi^{*}\phi_{*}P$ is clear.
  To show the opposite inclusion, take $(a,b)\in \phi^{*}\phi_{*}(P)$.
  Then there exist $(c,d)\in P$ and $t\in T$ such that $(a/1,b/1)=(c/t,d/t)$ from \cref{lemma:localcong}.
  Thus $(tt',0)\rtimes (a,b)=(ct',dt')\in P$ holds for some $t'\in T$. 
  Since $P$ is prime, we have $(a,b)\in P$.

  Thus $\phi^{*}$ and $\phi_{*}$ are bijections between $\CSpec T^{-1}S$ and $D(T)$.
  The continuity of $\phi_{*}:D(T)\to\CSpec T^{-1}S$ is shown as follows:
  For a closed set $V(C)$ of $\CSpec T^{-1}S$, we have
  \begin{align*}
    (\phi_{*})^{-1}(V(C))&=\{P\in D(T)\mid C\subseteq \phi_{*}P\}\\
    &=\{P\in D(T)\mid \phi^{*}C\subseteq P\}\\
    &=V(\phi^{*}C)\cap D(T).
  \end{align*}
  The second equality follows from \cref{lemma:local phiphi} and the surjectivity of $\phi$.

  Finally assume that $T$ is generated by finitely many elements $t_1,\dots,t_k$.
  Let $C_{t}$ be a congruence generated by a single pair $(t,0_S)$.
  Then the complement of the image $\phi^{*}(\CSpec T^{-1}S)$ is equal to
  \[\{P\in\CSpec S\mid P\cap (T\times \{0_S\})\neq \emptyset\} = \bigcup_{t\in T} V(C_t)=\bigcup_{i=1}^{k} V(C_{t_i}),\]
  where the second equality follows from the definition of prime congruences.
  Thus the image of $\phi^{*}$ is open.
\end{proof}

\begin{proposition}\label{proposition:used where}
  Let $C$ be a congruence on a semiring $S$ and $T$ a multiplicatively closed subset of $S$ which 
  satisfies $C\cap (T\times \{0_S\})=\emptyset$.
  Then there is an isomorphism $T^{-1}S/(C\cdot T^{-1}S)\cong \pi_C(T)^{-1}(S/C)$,
  where $\pi_C\colon S\to S/C$ is the quotient.
  Furthermore, $\CSpec\pi_C(T)^{-1}(S/C)\to\CSpec S$ is a topological embedding onto $V(C)\cap D(T)$.
\end{proposition}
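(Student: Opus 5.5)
We first construct the isomorphism directly and then use it to obtain the topological statement from the two embedding results already available, \cref{proposition:quotient induces inj} and \cref{proposition:loc induces inj}. Write $\bar T=\pi_C(T)$. Since $C\cap(T\times\{0_S\})=\emptyset$, we have $0\notin\bar T$; also $\bar T$ is multiplicatively closed and contains $1$, so $\bar T^{-1}(S/C)$ is defined. Consider the composite $S\to S/C\to \bar T^{-1}(S/C)$. It sends every element of $T$ to a unit, so by the universal property of localization it factors through a homomorphism $\psi\colon T^{-1}S\to \bar T^{-1}(S/C)$, given by $s/t\mapsto \pi_C(s)/\pi_C(t)$. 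The universal property has not been stated in the paper, but it is routine to verify well-definedness directly: if $st'u=s'tu$, apply $\pi_C$. The map $\psi$ is surjective, because every element $\pi_C(s)/\pi_C(t)$ is the image of $s/t$. By \cref{proposition:thm of hom}, it therefore suffices to show $\Ker\psi=C\cdot T^{-1}S$.

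The inclusion $C\cdot T^{-1}S\subseteq\Ker\psi$ holds because $\Ker\psi$ is a congruence that contains the image of $C$. For the reverse inclusion, take $(a/t,b/t')\in\Ker\psi$. Rewriting both fractions over the common denominator $tt'$, we may assume they have the same denominator $t$, so $\pi_C(a)/\pi_C(t)=\pi_C(b)/\pi_C(t)$. The definition of localization then gives some $u\in T$ with $\pi_C(a)\pi_C(t)\pi_C(u)=\pi_C(b)\pi_C(t)\pi_C(u)$, that is, $(atu,btu)\in C$. By \cref{lemma:localcong}, $\bigl(\tfrac{atu}{t^2u},\tfrac{btu}{t^2u}\bigr)=(a/t,b/t)$ lies in $C\cdot T^{-1}S$. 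This gives the isomorphism.

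For the embedding, let $\phi\colon S\to T^{-1}S$ be the localization and $q\colon T^{-1}S\to T^{-1}S/(C\cdot T^{-1}S)$ the quotient; the ring $T^{-1}S/(C\cdot T^{-1}S)$ is nontrivial by the isomorphism just proved. The map in question is $\phi^*\circ q^*$. By \cref{proposition:quotient induces inj}, $q^*$ is a topological embedding onto $V(\phi_*C)\subseteq\CSpec T^{-1}S$, and by \cref{proposition:loc induces inj}, $\phi^*$ is a topological embedding onto $D(T)$. A composite of topological embeddings is again a topological embedding, so it only remains to identify the image, namely to show $\phi^*(V(\phi_*C))=V(C)\cap D(T)$.

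The main point of the argument is this image computation, which uses the adjunction between pull-back and push-out. For $P'\in\CSpec T^{-1}S$ we have $\phi_*C\subseteq P'$ if and only if $C\subseteq\phi^*P'$, since $\phi_*C$ is the congruence generated by $\phi(C)$. Every $P\in D(T)$ is of the form $\phi^*P'$ by \cref{proposition:loc induces inj}. Hence $P=\phi^*P'$ lies in $V(C)\cap D(T)$ exactly when $P'\in V(\phi_*C)$, which is the required equality.
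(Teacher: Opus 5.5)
Your proof is correct. The isomorphism part matches the paper's argument. The paper also gets the surjection $T^{-1}S\to\pi_C(T)^{-1}(S/C)$ from the invertibility of the images of $T$, and identifies its kernel with $C\cdot T^{-1}S$ through \cref{lemma:localcong}.

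For the embedding you go around the other side of the commutative square, and the comparison is this:
\begin{itemize}
\item \emph{The paper's route.} It factors the map as $S\to S/C\to\pi_C(T)^{-1}(S/C)$. It applies \cref{proposition:quotient induces inj} to $\pi_C$ and \cref{proposition:loc induces inj} to the localization of $S/C$. This gives the image
\[
\{P\in V(C)\mid (\pi_C(t),0)\notin(\pi_C)_{*}P \text{ for all } t\in T\}.
\]
It then needs \cref{lemma:phiphi} to turn the condition $(\pi_C(t),0)\notin(\pi_C)_{*}P$ into $(t,0)\notin P$.
\item \emph{Your route.} You factor through $S\to T^{-1}S\to T^{-1}S/(C\cdot T^{-1}S)$. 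This uses the isomorphism you just proved. It also uses the fact that $\psi\circ\phi$ is the canonical map $s\mapsto\pi_C(s)/1$, so that $\phi^{*}\circ q^{*}$ really is the map in the statement up to a homeomorphism; you leave this implicit. In exchange, the image computation becomes the formal adjunction $\phi_{*}C\subseteq P'\iff C\subseteq\phi^{*}P'$, and \cref{lemma:phiphi} is not needed.
\end{itemize}

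The paper's route keeps the topological statement independent of the isomorphism. Yours is more formal, and it states explicitly that a composite of topological embeddings is a topological embedding, a point the paper leaves tacit after computing the image.
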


\begin{proof}
  Since the image of $t$ in $\pi_C(T)^{-1}(S/C)$ is invertible for any $t\in T$, 
  then there exists a homomorphism $\phi:T^{-1}S\to \pi_C(T)^{-1}(S/C)$, which is surjective from the construction.

  For a pair $(s_1/t_1,s_2/t_2)\in T^{-1}S\times T^{-1}S$, 
  \[(s_1/t_1,s_2/t_2)\in\Ker\phi\iff  \exists u\in T, (s_1t_2u,s_2t_1u)\in C\iff (s_1/t_1,s_2/t_2)\in C\cdot (T^{-1}S)\]
  holds by \cref{lemma:localcong}.
  Thus, applying \cref{proposition:thm of hom} to $\phi$, we obtain the desired isomorphism.

  Let $\psi:S\to \pi_C(T)^{-1}(S/C)$ be the composition map.
  Then
  \begin{align*}
    \Im\psi^{*}&=\Im(\CSpec \pi_C(T)^{-1}(S/C)\to\CSpec(S/C)\to\CSpec S)\\
    &=\pi_C^{*}\{P'\in\CSpec(S/C)\mid (\pi_C(t),0_S)\notin P'\text{ for all }t\in T\}\\
    &=\{P\in\CSpec S\mid C\subseteq P,(\pi_C(t),0_S)\notin (\pi_{C})_{*}P\text{ for all }t\in T\}.
  \end{align*}
  The last equality follows from the proof of \cref{proposition:quotient induces inj}.
  Thus it is enough to show that for any $P\in V(C)$ and $t\in T$,
  \[(\pi_C(t),0_{S/C})\notin (\pi_{C})_{*}P \text{  if and only if  } (t,0_S)\notin P\]
  holds. The only if part is clear. The if part follows from \cref{lemma:phiphi}.
\end{proof}

\subsection{$\Bb$-algebras} \label{subsection:2.2}

In the following, we focus on additively idempotent semirings.
Then the addition is denoted by $\oplus$, the multiplication is denoted by $\odot$,
and the additive and multiplicative identity elements are often denoted by $-\infty$ and 0, respectively.
Note that we often omit the multiplication symbol $\odot$.

Recall that the \emph{boolean algebra} $\Bb=(\{-\infty,0\},\oplus,\odot)$ 
is the additively idempotent semifield which consists of only the identity elements $-\infty,0$.
An additively idempotent semiring is called a \emph{$\Bb$-algebra}. 
Note that $S$ is a $\Bb$-algebra if and only if 
there exists a semiring homomorphism from $\Bb$ to $S$. 
Such homomorphism $\Bb\to S$ is uniquely determined and injective.

A $\Bb$-algebra has a natural partial order defined by:
\begin{align}\label{eq:B-alg poset}
  a\geq b \overset{\mathrm{def}}{\iff} a\oplus b=a.
\end{align}
If this is a total order, the $\Bb$-algebra is said to be \emph{totally ordered}.

The following assertion is crucial for the discussion in \cref{section:B[M]cong}.

\begin{proposition}[{\cite[Corollary 2.6]{joo2026varieties}}]\label{proposition:JM}
  Let $S$ be a $\Bb$-algebra and $P$ be a congruence on $S$.
  Then $P$ is prime if and only if $S/P$ is totally ordered and cancellative. 
\end{proposition}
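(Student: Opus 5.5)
We first reduce to the case $P=\Delta$, and then verify both directions directly from the twisted product, using only that $S$ is additively idempotent.

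\textbf{Reduction.} Let $\pi=\pi_P\colon S\to S/P$, which makes sense once $P\neq S\times S$. Primality forces $P\neq S\times S$ by definition, and in the converse direction $S/P$ is assumed to be a (nontrivial) semiring. For pairs $\alpha,\beta\in S\times S$ we have $(\pi\times\pi)(\alpha\rtimes\beta)=(\pi\times\pi)(\alpha)\rtimes(\pi\times\pi)(\beta)$, since $\pi$ is a homomorphism. We also have $P=\pi^{*}\Delta_{S/P}=\Ker\pi$, and $\pi\times\pi$ is surjective. Hence the complement of $P$ is closed under $\rtimes$ if and only if the complement of $\Delta_{S/P}$ is. This is the special case of \cref{lemma:phiphi} and \cref{proposition:quotient induces inj} with $\phi=\pi$. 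So we may replace $S$ by $S/P$ and prove: for a nontrivial $\Bb$-algebra $S$, the diagonal $\Delta$ is prime if and only if $S$ is totally ordered and cancellative.

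\textbf{($\Leftarrow$).} Suppose $S$ is totally ordered and cancellative, and suppose $(a,b)\rtimes(c,d)\in\Delta$, i.e.
\[ac\oplus bd=ad\oplus bc,\]
with $a\neq b$ and $c\neq d$. By totality and the symmetry of the twisted product we may assume $a>b$ and $c>d$. Distributivity gives monotonicity: $x\geq y$ implies $xz\geq yz$. Therefore $ac\geq ad\geq bd$ and $ac\geq bc$, so the left side equals $ac$. By totality the right side equals either $ad$ or $bc$.
\begin{itemize}
  \item If $ac=ad$, cancelling $a$ (note $a\neq-\infty$ because $a>b$) gives $c=d$.
  \item If $ac=bc$, cancelling $c$ (note $c\neq-\infty$) gives $a=b$.
\end{itemize}
Both are contradictions, so $\Delta$ is prime. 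Its complement is nonempty since $S$ is nontrivial.

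\textbf{($\Rightarrow$).} Suppose $\Delta$ is prime.

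For cancellativity, let $ab=ac$ with $a\neq-\infty$. Then
\[(a,-\infty)\rtimes(b,c)=(ab,ac)\in\Delta.\]
Since $(a,-\infty)\notin\Delta$, primality gives $(b,c)\in\Delta$, i.e. $b=c$.

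For totality, the key step is to choose the right test pairs. For $a,b\in S$ put $s=a\oplus b$ and compute
\[(a,s)\rtimes(b,s)=(ab\oplus s^2,\ as\oplus bs)=(ab\oplus s^2,\ s^2).\]
By idempotency $s^2=a^2\oplus ab\oplus b^2\geq ab$, so both entries equal $s^2$ and the product lies in $\Delta$. Primality then gives $a=a\oplus b$ or $b=a\oplus b$, i.e. $a\geq b$ or $b\geq a$.

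The only non-formal points in the argument are finding these test pairs and the monotonicity bookkeeping in ($\Leftarrow$); everything else is routine.
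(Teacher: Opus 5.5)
Your proof is correct and complete. The paper gives no argument of its own here; it cites Joó--Mincheva, so there is no in-text route to compare against. Your direct argument is the natural one, and, to my knowledge, it matches the argument behind the cited result. You reduce to the diagonal of $S/P$, and for the forward direction you use the test pairs $(a,-\infty)\rtimes(b,c)=(ab,ac)$ and $(a,a\oplus b)\rtimes(b,a\oplus b)=\bigl((a\oplus b)^2,(a\oplus b)^2\bigr)$. For the converse you normalize to $a>b$, $c>d$, which is legitimate because swapping the coordinates of a factor swaps those of the product. You then cancel using $a,c\neq-\infty$. What your version adds is that the paper becomes self-contained at this point. It uses only additive idempotency and the definitions of the preliminaries, and it does not depend circularly on later results. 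It also handles $P=S\times S$ correctly through the nontriviality convention.
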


\subsection{$\Tbb$-algebras}\label{subsection:2.3}

Recall that $\Tbb=\Rbb\cup\{-\infty\}$ is an additively idempotent semifield 
with the addition $\oplus=\operatorname{max}$ and the multiplication $\odot=+$.
A \emph{$\Tbb$-algebra} is a pair of a semiring $S$ and 
an injective homomorphism $\iota\colon \Tbb\to S$, called the \emph{$\Tbb$-algebra structure} of $S$.
For a $\Tbb$-algebra $S$, we often regard the semifield $\Tbb$ as a subset of $S$.

\begin{definition}\label{definition:P over T}
  Let $S$ and $S'$ be $\Tbb$-algebras.
  A homomorphism $\phi\colon S\to S'$ is called a \emph{homomorphism of $\Tbb$-algebras}
  if it is compatible with the $\Tbb$-algebra structures $\iota\colon \Tbb\to S$ and $\iota\colon\Tbb\to S'$,
  that is, $\phi\circ \iota=\iota'$ holds.
  A congruence $C$ on $S$ is said to be 
  \begin{itemize}
    \item \emph{lying over $\Tbb$} if the composition $\Tbb\to S\to S/C$ is injective, and
    \item \emph{geometric} if the composition $\Tbb\to S\to S/C$ is an isomorphism. 
  \end{itemize}
  Note that if $C$ is lying over $\Tbb$, then the quotient $S/C$ can be seen as a $\Tbb$-algebra 
  and the quotient map $S\to S/C$ can be seen as a homomorphism of $\Tbb$-algebras.
  The set of all prime congruences on $S$ lying over $\Tbb$ is denoted by $\CSpec_{\Tbb}S$, 
  and the set of all geometric congruences on $S$ is denoted by $\CSpec^{\mathrm{geo}}S$.
\end{definition}

\begin{remark}
  We introduce the notion above because the dimension of semirings often exhibits undesirable behavior. 
  For example, $\dim\lau$($=\dim \CSpec \lau$) is equal to $n+1$, not $n$.
  This is due to the fact that a surjective map from a semifield need not be injective.
  However, as we will see in \cref{cor:good dimension}, 
  the dimension of $\CSpec_{\Tbb}\lau$ is equal to $n$. 
  For this reason, in \cref{section:toric}, we will mainly work with $\CSpec_{\Tbb}S$ 
  instead of $\CSpec S$.
\end{remark}

By \cref{proposition:JM}, any geometric congruence is prime.
Thus we have
\[\CSpec^{\mathrm{geo}}S\subseteq \CSpec_{\Tbb} S\subseteq \CSpec S.\]

It is easy to show that the pull-back (see \eqref{eq:pull-back}) of a prime congruence over $\Tbb$ 
by a homomorphism of $\Tbb$-algebras is also a prime congruence over $\Tbb$.
Thus a homomorphism $S_1\to S_2$ of $\Tbb$-algebras induces
a continuous map $\CSpec_{\Tbb}S_2\to \CSpec_{\Tbb}S_1$.

We will see some properties of the subset $\CSpec_{\Tbb} S$.
For a $\Tbb$-algebra $S$, the $\Tbb$-algebra structure $\iota\colon \Tbb\to S$ gives a continuous map
\begin{align}\label{eq:fiberation}
  \phi=\iota^{*}\colon\CSpec S\to \CSpec \Tbb.
\end{align}
The base space $\CSpec\Tbb$ contains two points: 
the diagonal $\Delta$, and the kernel $P_0$ of the surjective map $\Tbb\to \Bb$.
Note that the singleton $\{\Delta\}$ is an open subset of $\CSpec\Tbb$
whose complement is a closed subset $V(P_0)=\{P_0\}$.

\begin{proposition}\label{proposition:T open}
  For the continuous map of \eqref{eq:fiberation}, 
  the subset $\CSpec_{\Tbb} S$ is equal to the fiber $\phi^{-1}(\Delta)$ of the point $\Delta$.
  In particular, $\CSpec_{\Tbb} S$ is an open subset of $\CSpec S$.
\end{proposition}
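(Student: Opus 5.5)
The statement has two parts: identify $\CSpec_{\Tbb} S$ with the fiber $\phi^{-1}(\Delta)$, then show this fiber is open. The plan is to unwind the definition of the pull-back along $\iota$ for the first part, and then use continuity of $\phi=\iota^{*}$ for the second.

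\emph{The fiber.} For $P\in\CSpec S$, the pull-back $\iota^{*}P$ is by definition the kernel congruence of the composition $\Tbb\xrightarrow{\iota} S\to S/P$. By definition, $P$ lies over $\Tbb$ exactly when this composition is injective. A homomorphism is injective if and only if its kernel congruence is the diagonal. Hence $P\in\CSpec_{\Tbb}S$ if and only if $\iota^{*}P=\Delta$, that is, $P\in\phi^{-1}(\Delta)$.

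\emph{Openness.} The map $\phi$ is continuous, as shown earlier for pull-backs along any homomorphism. It therefore suffices to check that $\{\Delta\}$ is open in $\CSpec\Tbb$, i.e.\ that its complement $\{P_0\}$ equals the closed set $V(P_0)$. This amounts to knowing that $\CSpec\Tbb=\{\Delta,P_0\}$, which the paper asserts. To justify it directly, let $P$ be a prime congruence on $\Tbb$ other than $\Delta$. Then there is some pair $(a,b)\in P$ with $a\neq b$. Multiplying by the inverse of a nonzero entry (after swapping if needed), I may assume the pair has one of two forms:
\begin{itemize}
\item $(0,-\infty)$: then $P$ contains $(x,-\infty)$ for all $x$, so $P=\Tbb\times\Tbb$, which is not prime;
\item $(0,c)$ with $c\neq 0$, WLOG $c<0$: then $(0,c^{k})\in P$ for all $k\ge 1$. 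Using idempotent addition, $x\oplus c^{k}$ equals $x$ for $k$ large, whenever $x\in\Rbb$. From this one deduces that all real numbers are identified with one another, while $-\infty$ stays separate because $P$ is proper. So $P=P_0$.
\end{itemize}
Since $P_0$ is maximal among proper congruences, $V(P_0)=\{P_0\}$. Hence $\{\Delta\}$ is open, and $\phi^{-1}(\Delta)=\CSpec_{\Tbb}S$ is open in $\CSpec S$.

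The only step requiring any care is the classification $\CSpec\Tbb=\{\Delta,P_0\}$ in the openness part. The paper states it as known, so I would simply cite that remark rather than redo the case analysis.
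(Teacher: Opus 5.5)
Your proof is correct and follows the paper's argument. The fiber identification is exactly the observation that $\Tbb\to S\to S/P$ is injective if and only if its kernel congruence $\iota^{*}P$ is $\Delta$, and openness comes from continuity of $\iota^{*}$ together with openness of $\{\Delta\}$ in $\CSpec\Tbb$. Your case analysis showing $\CSpec\Tbb=\{\Delta,P_0\}$ is sound; it fills in a fact the paper only asserts, namely that $(0,kc)\oplus(d,d)=(0,d)$ once $kc<d<0$.
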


\begin{proof}
  For $P\in \CSpec S$, let $S'$ be the image of $\Tbb$ by the quotient map $\pi_P$. 
  Then the pull-back $\iota^{*}P$ induces the composition map
  \[\Tbb\xrightarrow{\iota} S \xrightarrow{\pi_P} S'.\]
  Thus we have
  \begin{align*}
    \phi^{-1}(\Delta)&=\{P\in\CSpec S \mid \phi(P)=\iota^{*}P=\Delta \}\\
    &=\{P\in\CSpec S \mid \iota^{*}P \text{ induces }\Tbb\cong S'\}\\
    &=\CSpec_{\Tbb}S
  \end{align*}

  The latter assertion holds since the singleton $\{\Delta\}$ is an open subset.
\end{proof}

With respect to the fibration $\CSpec S\to\CSpec \Tbb$, 
the special fiber $\phi^{-1}(P_0)$ corresponds to what is often called the \emph{recession} in tropical geometry
(see also \cref{proposition:special fiber}).

%separatedness of affine scheme

\begin{definition}\label{definition:limit}
  Let $S$ be a $\Tbb$-algebra and $P\in\CSpec_{\Tbb} S$.
  Define a map 
  \[\lim_{P}\colon S\to \Tbb\cup \{+\infty\}=\Rbb\cup \{\pm\infty\} \] 
  as follows:
  If $\pi_P(s)\in S/P$ is larger than any element of $\Rbb\subseteq S/P$, then define $\lim_P s$ as $+\infty$.
  If $\pi_P(s)\in S/P$ is smaller than any element of $\Rbb\subseteq S/P$, then define $\lim_P s$ as $-\infty$.
  Otherwise, $\lim_Ps$ is defined as the supremum of the subset 
  $\{t\in \Rbb\mid t\leq \pi_P(s) \text{ in }S/P\}$ of $\Rbb$.
  Note that $\lim_Ps$ is also the infimum of the subset 
  $\{t\in \Rbb\mid t\geq \pi_P(s) \text{ in }S/P\}$ of $\Rbb$.
  We call $\lim_Ps$ the \emph{limit} of $s$ under $P$.
\end{definition}

\begin{lemma}\label{lemma:limit equal}
  If $P_1,P_2\in\CSpec_{\Tbb} S$ and $P_1\subseteq P_2$, 
  then $\lim_{P_1}s=\lim_{P_2}s$ for any $s\in S$.
\end{lemma}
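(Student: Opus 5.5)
Since $P_1\subseteq P_2$, the identity of $S$ induces a surjective homomorphism $\psi\colon S/P_1\to S/P_2$ with $\psi\circ\pi_{P_1}=\pi_{P_2}$. Both $P_1$ and $P_2$ lie over $\Tbb$, so the compositions $\Tbb\to S/P_i$ are injective homomorphisms and $\psi$ restricts to the identity on $\Tbb$. The plan is to show that $\psi$ preserves the natural order \eqref{eq:B-alg poset} and induces no new coincidences between elements of $\Tbb$. Then the sets used in \cref{definition:limit} agree for $P_1$ and $P_2$.

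First, $\psi$ is monotone. If $a\geq b$ in $S/P_1$, then $a\oplus b=a$, so $\psi(a)\oplus\psi(b)=\psi(a)$, i.e.\ $\psi(a)\geq\psi(b)$. By \cref{proposition:JM} both quotients are totally ordered, and $\psi$ is injective on $\Tbb$. From this we get a \emph{strictness} property for comparisons with reals: if $t<x$ in $S/P_1$ with $t\in\Rbb$, then $t\leq\psi(x)$ in $S/P_2$, and similarly with the inequalities reversed.

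Fix $s\in S$, and set $x=\pi_{P_1}(s)$ and $y=\pi_{P_2}(s)=\psi(x)$. Define
\[L_i=\{t\in\Rbb\mid t\leq \pi_{P_i}(s)\},\qquad U_i=\{t\in\Rbb\mid t\geq \pi_{P_i}(s)\}.\]
By monotonicity, $L_1\subseteq L_2$ and $U_1\subseteq U_2$. Since $S/P_1$ is totally ordered, $L_1\cup U_1=\Rbb$. By the remark in \cref{definition:limit}, $\lim_{P_i}s=\sup L_i=\inf U_i$, with the conventions $\sup\emptyset=-\infty$ and $\sup\Rbb=+\infty$; these conventions cover the two infinite cases of the definition. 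It remains to show $\sup L_1=\sup L_2$.

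\emph{Case $L_1=\Rbb$.} Then $L_2=\Rbb$ as well, and both limits are $+\infty$. The case $U_1=\Rbb$ is symmetric and gives $-\infty$.

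\emph{Case both $L_1$ and $U_1$ nonempty.} Write $c=\sup L_1=\inf U_1$. Then $\sup L_2\geq c$. Suppose $\sup L_2>c$. Pick reals $c<t<t'$ with $t'\in L_2$. Then $t\in U_1$, so $t\geq x$, hence $t\geq y$ by monotonicity. But $t'\leq y$, so $t'\leq t$ in $S/P_2$. Since $\Tbb\to S/P_2$ is an injective homomorphism, it is an order embedding, so this contradicts $t<t'$ in $\Tbb$. Hence $\sup L_2=c$, and $\lim_{P_1}s=\lim_{P_2}s$.

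The main point requiring care is the last step: it uses that $\Tbb\to S/P_2$ is an order embedding. This holds because an injective semiring homomorphism of $\Bb$-algebras preserves and reflects $a\oplus b=a$. This is exactly where the hypothesis "lying over $\Tbb$" for $P_2$ is needed; without it, reals could collapse in $S/P_2$ and the limit could change.
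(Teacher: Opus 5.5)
Your proof is correct and follows essentially the same route as the paper: the induced surjection $S/P_1\to S/P_2$ between totally ordered $\Tbb$-algebras is monotone and restricts to the identity on $\Tbb$, so the real numbers bracketing $\pi_{P_1}(s)$ and $\pi_{P_2}(s)$ coincide. The paper states this as the equivalence of the $\epsilon$-bracketing conditions $t-\epsilon<\pi_{P_i}(s)<t+\epsilon$; your sup/inf argument with $L_i$, $U_i$ writes out details the paper leaves implicit, notably that $\Tbb\to S/P_2$ is an order embedding.
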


\begin{proof}
  Assume $P_1\subseteq P_2$. Then there is a surjective homomorphism $S/P_1\to S/P_2$.
  Since these are totally ordered $\Tbb$-algebras, then the following are equivalent:
  \begin{itemize}
    \item $t-\epsilon < \pi_{P_1}(s) < t+\epsilon$ for all $\epsilon\in \Rbb_{>0}$,
    \item $t-\epsilon < \pi_{P_2}(s) < t+\epsilon$ for all $\epsilon\in \Rbb_{>0}$.
  \end{itemize}
  Thus $\lim_{P_1}s=\lim_{P_2}s$ holds for any $s\in S$ if these limits are finite.
  The same holds if these limits are $+\infty$ or $-\infty$.
\end{proof}

\begin{lemma}\label{lemma:geom condition}
  Let $P$ be a prime congruence on a $\Tbb$-algebra $S$ lying over $\Tbb$.
  Then there exists a geometric congruence on $S$ containing $P$
  if and only if $\lim_P s\in\Tbb$ holds for any $s\in S$.
\end{lemma}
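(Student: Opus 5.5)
The forward direction is immediate from \cref{lemma:limit equal}. Suppose $Q\supseteq P$ is geometric. Then $\Tbb\to S/Q$ is an isomorphism, so $\pi_Q(s)$ equals some element $t\in\Tbb$, and by definition $\lim_Q s=t\in\Tbb$. Since $Q$ is geometric it is prime and lies over $\Tbb$, so \cref{lemma:limit equal} gives $\lim_P s=\lim_Q s\in\Tbb$.

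For the converse, assume $\lim_P s\in\Tbb$ for every $s\in S$, i.e.\ no $s$ has limit $+\infty$. I would define
\[Q\coloneqq\{(a,b)\in S\times S\mid \lim_P a=\lim_P b\}.\]
The plan is to show that $Q$ is a congruence containing $P$ and that the map $\lim_P\colon S\to\Tbb$ is a surjective semiring homomorphism with kernel $Q$. Then \cref{proposition:thm of hom} gives $S/Q\cong\Tbb$ compatibly with $\iota$, so $Q$ is geometric.

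The key steps are as follows.
\begin{enumerate}
\item Containment and compatibility. We have $P\subseteq Q$ because $\lim_P$ factors through $\pi_P$. Also $\lim_P$ restricts to the identity on $\Tbb\subseteq S$, since $S/P$ is totally ordered and $\Tbb$ embeds in it; this makes the map surjective and compatible with $\iota$.
\item Additivity. In the totally ordered quotient $S/P$ we have $\pi_P(a\oplus b)=\max(\pi_P(a),\pi_P(b))$. Taking sup of the real lower bounds then gives $\lim_P(a\oplus b)=\max(\lim_P a,\lim_P b)$.
\item Multiplicativity. We need $\lim_P(ab)=\lim_P a+\lim_P b$, with the convention that $-\infty$ absorbs.
\end{enumerate}

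Step 3 is the main point. In the case where both limits are finite, say $r=\lim_P a$ and $r'=\lim_P b$, I would use that for every $\epsilon>0$,
\[r-\epsilon<\pi_P(a)<r+\epsilon \quad\text{and}\quad r'-\epsilon<\pi_P(b)<r'+\epsilon\]
in $S/P$. Multiplication by a real number $c$ is an order automorphism of $S/P$, because $c$ is invertible. By monotonicity of multiplication in an additively idempotent semiring, $x\le y$ implies $xz\le yz$. Combining these, we get $r+r'-2\epsilon\le\pi_P(ab)\le r+r'+2\epsilon$, so $\lim_P(ab)=r+r'$.

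The remaining case is when $\lim_P a=-\infty$ and $b$ has finite limit or limit $-\infty$. Here $\pi_P(b)<c$ for some real $c$, by the hypothesis that $b$ does not have limit $+\infty$. Then $\pi_P(ab)\le \pi_P(a)\,c<t$ for every real $t$, so $\lim_P(ab)=-\infty$. This is exactly where the hypothesis is used: if $\lim_P b=+\infty$, the product $-\infty\cdot(+\infty)$ would be undetermined.

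It remains to check that multiplication preserves order in $S/P$, so that the strict inequalities above are legitimate. Cancellativity from \cref{proposition:JM} upgrades $\le$ to $<$ when multiplying by a real number. Once steps 1–3 hold, $Q=\Ker(\lim_P)$ is automatically a congruence, and $Q\supseteq P$ is geometric.
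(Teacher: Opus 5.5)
Your proof is correct and follows the same route as the paper. The forward direction uses \cref{lemma:limit equal}. For the converse you take the kernel of $\lim_P\colon S\to\Tbb$, which is a semiring homomorphism that is the identity on $\Tbb$ and factors through $\pi_P$. The paper simply asserts the homomorphism property, and your check of additivity and multiplicativity (using the no-$+\infty$ hypothesis exactly in the $-\infty$ case) supplies those details.
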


\begin{proof}
  Assume that there exists a geometric congruence $P_0$ which contains $P$.
  By the definition of the limit, we have $\lim_{P_0} s=\pi_{P_0}(s)\in\Tbb$ for any $s\in S$.
  By \cref{lemma:limit equal}, we have $\lim_{P} s\in \Tbb$ for any $s\in S$.

  Conversely, assume that $\lim_P s\in\Tbb$ holds for any $s\in S$.
  Then the map $\lim_P:S\to \Tbb$ is a homomorphism of semirings 
  such that the composition with $\Tbb\hookrightarrow S$ is identity.
  Since $\lim_P$ factors through $S\to S/P$, 
  the kernel congruence of $\lim_P$ is a geometric congruence containing $P$.
\end{proof}

We would like to call the property of \cref{proposition:abstract geom} 
the \emph{separatedness} of $\CSpec_{\Tbb} S$.
In this paper, this is not defined in a precise way, 
but it is motivated by the valuative criterion for separatedness of usual schemes 
(see \cite[Theorem 4.3]{hartshorne2013algebraic} for the details).

\begin{proposition}\label{proposition:abstract geom}
  For a prime congruence $P$ on a $\Tbb$-algebra $S$, 
  there is at most one geometric congruence which contains $P$.
\end{proposition}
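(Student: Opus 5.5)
The plan is to show that any two geometric congruences $P_1,P_2$ containing $P$ coincide. I will identify each of them with the kernel of the map $\lim_P$.

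The first step is to see that $P$ itself lies over $\Tbb$. Suppose $P\subseteq P_1$ with $P_1$ geometric. The composition $\Tbb\to S/P_1$ is an isomorphism, hence injective. It factors as $\Tbb\to S/P\to S/P_1$, so $\Tbb\to S/P$ is injective as well. Thus $P\in\CSpec_{\Tbb}S$, and $\lim_P$ is defined in the sense of \cref{definition:limit}.

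The second step applies \cref{lemma:limit equal} to the inclusion $P\subseteq P_i$, where $P_i$ is also in $\CSpec_{\Tbb}S$. This gives $\lim_{P_i}s=\lim_P s$ for every $s\in S$. Since $S/P_i$ is identified with $\Tbb$ via $\iota$, we have $\lim_{P_i}s=\pi_{P_i}(s)$, read as an element of $\Tbb$, exactly as noted in the proof of \cref{lemma:geom condition}.

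The third step concludes. For $a,b\in S$,
\[(a,b)\in P_i\iff \pi_{P_i}(a)=\pi_{P_i}(b)\iff \lim_P a=\lim_P b.\]
So $P_1=P_2$, and both equal the kernel of $\lim_P\colon S\to\Tbb$.

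The only point needing care is the identity $\lim_{P_i}s=\pi_{P_i}(s)$. It holds because every element of $S/P_i$ equals a real number or $-\infty$ under the isomorphism. When the element is a real number $t$, the supremum of the reals below it is $t$. When it is $-\infty$, it lies below every real, so the limit is $-\infty$. I expect no serious obstacle; the main subtlety is the preliminary observation that $P$ lies over $\Tbb$, which is what makes \cref{lemma:limit equal} applicable.
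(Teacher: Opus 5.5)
Your proof is correct and follows essentially the paper's route: both arguments show that any geometric congruence containing $P$ must equal the kernel of $\lim_P$. The one difference is that you transfer limits by invoking \cref{lemma:limit equal}, which also cleanly covers the case $\lim_P s=-\infty$, whereas the paper pushes the $\epsilon$-bounds from $S/P$ to $S/P_0$ directly.
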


\begin{proof}
  Assume that there exists a geometric congruence $P_0$ which contains $P$.
  In particular, $P$ is lying over $\Tbb$.
  Fix $s\in S$. By the definition of $\lim_Ps$, we have
  $\lim_Ps-\epsilon < \pi_P(s) < \lim_Ps+\epsilon$ for any $\epsilon\in \Rbb_{>0}$.
  Sending to $S/P_0$, we have $\lim_Ps-\epsilon < \pi_{P_0}(s) < \lim_Ps+\epsilon$ for any $\epsilon\in \Rbb_{>0}$.
  Since $P_0$ is geometric, $\pi_{P_0}(s)=\lim_Ps$ holds in $S/P_0$.
  Thus $P_0$ is equal to the kernel congruence of $\lim_P$.
\end{proof}

\begin{remark}
  Any geometric congruence is maximal in $\CSpec_{\Tbb}S$.
  However, as we will see in \cref{example:affine line},
  there is a prime congruence on a $\Tbb$-algebra which is maximal in $\CSpec_{\Tbb}\Tbb[X]$ but not geometric.
  This corresponds to the fact that the usual affine line is not proper 
  (see \cite[Theorem 4.7]{hartshorne2013algebraic} for the details).
\end{remark}

\section{Tropical algebras with monomial sets}\label{section:B[M]}

We now proceed gradually toward a tropical setting.
When we consider tropical algebras, additive idempotency is
one of the minimal requirements which we should impose on algebras.
However, for simplicity, 
we impose not only additive idempotency but also a \emph{monomial structure} on algebras.

\subsection{Ordered monoids}\label{section:o-monoid}

Recall that a \emph{(commutative) monoid} is 
a set equipped with an associative and commutative binary operation $+$ and an identity element 0.
We write the monoid operation additively, 
although the operation $+$ will be identified 
with the product $\odot$ of a $\Bb$-algebra in the subsequent discussion.

An \emph{ordered monoid} is
a pair $(\Msf,\geq)$ consisting of a commutative monoid $\Msf=(\Msf,+)$ 
together with a partial order $\geq$ satisfying
\[a\geq b\implies a+c\geq b+c,\]
for any $a,b,c\in \Msf$.
The pair $(\Msf,\geq)$ is often denoted simply by $\Msf$.
If every element of an ordered monoid $(\Msf,\geq)$ is invertible, then we call it a \emph{ordered group}.
A monoid homomorphism $\phi:(\Msf_1,\geq_1)\to (\Msf_2,\geq_2)$ 
between ordered monoids is an \emph{o-homomorphism} if for any $a,b\in \Msf_1$,
\[a\geq_1 b\implies \phi(a)\geq_2\phi(b).\]
If an o-homomorphism $\phi$ is a bijective map whose inverse map is also an o-homomorphism,
then $\phi$ is called an \emph{o-isomorphism}.
If there exists an o-isomorphism between two ordered monoids $\Msf_1$ and $\Msf_2$,
then $\Msf_1$ and $\Msf_2$ are said to be \emph{o-isomorphic}.

The order on $\Msf$ is said to be \emph{trivial} if no two distinct elements of $\Msf$ are comparable.
We often regard any monoid as an ordered monoid with the trivial order.

For an ordered monoid $(\Msf,\geq)$, 
every submonoid $\Msf'$ of $\Msf$ inherits the induced order, 
also denoted by $\geq$.
Unless otherwise stated, 
we equip a submonoid of an ordered monoid $\Msf$, 
such as the kernel or image of an o-homomorphism, with the order induced from $\Msf$.

\medskip

We next define the \emph{localization} of an ordered monoid, 
in particular, the \emph{group completion} of an ordered monoid.
Let $\Msf$ be an ordered monoid and $T$ be a multiplicatively closed subset of $\Msf$.
Recall that the \emph{localization} of $\Msf$ by $T$ is a monoid $T^{-1}\Msf=(\Msf\times T)/\sim$ 
where the relation $\sim$ is given as follows:
\[(a,b)\sim (c,d) \overset{\mathrm{def}}{\iff} \exists t\in T,\ a+d+t= b+c+t.\]
We often denote by $a-b$ the equivalent class of $(a,b)$.
Now we also endow $T^{-1} \Msf$ with the order induced by $\Msf$:
\[a-b\geq c-d \overset{\mathrm{def}}{\iff} \exists t\in T,\ a+d+t\geq b+c+t.\]
Note that the natural homomorphism $\Msf\to T^{-1}\Msf$ is an o-homomorphism. 
In particular, we denote $\Msf^{-1}\Msf$ by $\Msf^{\mathrm{gp}}$, called the \emph{group completion} of $\Msf$.

\medskip

Before giving examples, we introduce some terminology.
Let $X_1,\dots,X_n$ be formal variables.
Throughout this paper, we use the following notation. 
\begin{align}
  \begin{split}\label{monomial notation}
    &\Mp=\{u_1X_1+u_2X_2+\dots +u_nX_n\mid u_1,\dots,u_n\in \Nbb\},\\
    &M=\{u_1X_1+u_2X_2+\dots +u_nX_n\mid u_1,\dots,u_n\in \Zbb\},\\
    &\Rbb\times \Mp=\{(r,u_1X_1+u_2X_2+\dots +u_nX_n)\mid r\in\Rbb,\ u_1,\dots,u_n\in \Nbb\},\\
    &\Rbb\times M=\{(r,u_1X_1+u_2X_2+\dots +u_nX_n)\mid r\in\Rbb,\ u_1,\dots,u_n\in \Zbb\}.
  \end{split}
\end{align}
By abuse of notation, we often write $u_1X_1+\dots+u_nX_n$ as $u\cdot X$ or $X^u$. 
Similarly, we write $(r,u_1X_1+\dots +u_nX_n)$ as $r+u\cdot X$ or $rX^u$.
While we equip $\Mp$ and $M$ with the trivial order,
we define orders on $\Rbb\times \Mp$ and $\Rbb\times M$ as follows:
\begin{align*}
  rX^{u}\geq r'X^{u'}\overset{\mathrm{def}}{\iff} r\geq r' \text{ and }u=u'.
\end{align*}
These ordered monoids serve as the guiding examples throughout the paper.

\begin{example} \label{example:sheaf}
  A real-valued function $f$ on $\Rbb^n$ is said to be \emph{integral affine}
  if it has the form
  \[f(X_1,\dots,X_n)=c+u_1X_1+\dots+u_nX_n\]
  where $c \in \Rbb$ and $u_1,\dots ,u_n \in \Zbb$. 
  Then $\Rbb\times M$ can be seen as the set of all integral affine functions on $\Rbb^n$.

  We endow $\Rbb^n$ with the Euclidean topology.
  We say that a real-valued function $f$ on an open subset $U$ of $\Rbb^n$ is 
  \emph{locally integral affine}
  if for each point $x\in U$, there exists an open neighborhood $U_x\subseteq U$ of $x$
  and an integral affine function $f_x$ such that $f_x|_{U_x}=f|_{U_x}$.
  
  We will denote by $\Aff_{\Rbb^n}$ the sheaf of locally integral affine functions on $\Rbb^n$.
  As a sheaf of groups, any section of the sheaf is isomorphic to $\Rbb\times M$ 
  and any restriction map is an isomorphism.
  However, the sheaf $\Aff_{\Rbb^n}$ becomes nontrivial when regarded as a \emph{sheaf of ordered groups}
  by equipping each section $\Aff_{\Rbb^n}(U)$ with the natural order: for $f,g\in\Aff_{\Rbb^n}(U)$,
  \[f\geq g \overset{\mathrm{def}}{\iff} f(x)\geq g(x) \text{ for all }x\in U.\]
  In particular, the ordered group $\Aff_{\Rbb^n}(\Rbb^n)$ is o-isomorphic to $\Rbb\times M$
  in the sense of \cref{monomial notation}.
\end{example}

\begin{example}\label{example:stalk2}
  For $x=(x_1,\dots,x_n)\in\Rbb^n$,
  define a subspace $H_x$ of $\Rbb\times M$ by
  \[H_x\coloneqq \Rbb\{ (-x_1,X_1),\dots,(-x_n,X_n) \}.\]
  Then $H_x$ defines the surjective homomorphism 
  $\pi_x:\Rbb\times M\to (\Rbb\times M)/H_x\cong\Rbb$.
  This map can be seen as the map that substitutes $X_i=x_i$ for each $i$.

  Now we define a partial order $\geq_x$ on $\Rbb\times M$. 
  For $f,g\in \Rbb\times M$, 
  \[f\geq g \overset{\mathrm{def}}{\iff} \pi_x(f)>\pi_x(g) \text{ or }f=g\]
  Then the ordered group $(\Rbb\times M,\geq_x)$ is o-isomorphic to the stalk $\Aff_{\Rbb^n,x}$ at $x$.
\end{example}

\subsection{$\Bb$-algebras associated to ordered monoids} \label{subsection:B[M]}

We work under the setting of \cref{subsection:2.2}: 
an additively idempotent semiring is called a $\Bb$-algebra and 
its operations are denoted by $\oplus$ and $\odot$.
By the following definition, we construct a $\Bb$-algebra from an ordered monoid $\Msf=(\Msf,+)$,
identifying the monoid operation $+$ with the product $\odot$ of the $\Bb$-algebra.

\begin{definition}\label{definition:B-alg}
  For an ordered monoid $\Msf=(\Msf,+,\geq)$,
  we consider the following set:
  \[\Bb[\Msf]\coloneqq \{A \mid \text{$A$ is a finite subset of } \Msf \}/\sim,\]
  where the relation $\sim$ is given as follows:
  \[A_1\sim A_2 \overset{\mathrm{def}}{\iff} \text{the set of all maximal elements in $A_1$ is equal to that of $A_2$}.\]
  We denote the equivalence class of $A$ by $\sum_{m\in A} m$.
  If $A=\{m_0\}$ is a singleton, then we simply denote $\sum_{m\in A} m$ by $m_0$.
  One checks that the following operations are well-defined:
  \begin{align*}
    \sum_{m\in A_1} m\oplus \sum_{m\in A_2} m &= \sum_{m\in A_1\cup A_2} m,\\
    \sum_{m\in A_1} m\odot \sum_{m\in A_2} m &= \sum_{m_1\in A_1,m_2\in A_2} (m_1+m_2).
  \end{align*}
  Then $\Bb[\Msf]$ is a semiring 
  with additive identity $-\infty\coloneqq \sum_{m\in \emptyset}m$ and multiplicative identity $0=\sum_{m\in \{0\}} m$.
\end{definition}

For an element $f=\sum_{m\in A}m\in\Bb[\Msf]$, 
the set of all maximal elements of $A$ is called the \emph{support} of $f$, denoted $\supp(f)$.
Then $\supp(f)$ is uniquely determined, and two distinct elements of $\Bb[\Msf]$ have distinct supports. 
In other words, $\supp$ gives the following bijection:
\begin{align}\label{eq:bij}
  \Bb[\Msf]\xrightarrow{1:1}\{A\subseteq \Msf\mid A\text{ is finite and any two elements in $A$ are not comparable}\},\, f\mapsto\supp(f).
\end{align}
In particular, there is an injective map $\Msf\to \Bb[\Msf]$.
So we often treat $\Msf$ as a submonoid of $\Bb[\Msf]$.
Recall that a $\Bb$-algebra has a natural partial order (see \eqref{eq:B-alg poset}).
Note that the original order of $\Msf$ and the order induced by that of $\Bb[\Msf]$ are equivalent.
We also note that any o-homomorphism $\Msf_1\to \Msf_2$ induces a semiring homomorphism $\Bb[\Msf_1]\to\Bb[\Msf_2]$.

\begin{remark}
  By definition, $\Bb[\Msf]$ is a semiring with no additional structure.
  If one needs explicitly retain the monomial structure $\Msf$, 
  it is natural to describe it as a blueprint in \cite{lorscheid2012geometry}.
  Let $C$ be the kernel congruence of the natural homomorphism $\Nbb[\Msf]\to \Bb[\Msf]$.
  Then a pair of the monoid $\Msf$ (with its order forgotten) and the pre-addition $C$ 
  is a blueprint whose associated semiring is naturally isomorphic to $\Bb[\Msf]$.
\end{remark}

\begin{example}\label{example:monomials}
  Here we use the notation of \eqref{monomial notation}.
  For example, we consider the $\Bb$-algebra associated to $\Rbb\times M$. 
  A map
  \begin{align*}
    \{A \mid \text{$A$ is a finite subset of } \Rbb\times M \}&\to \lau,\\
    A&\mapsto \sum_{(r,u)\in A} r\odot X_1^{\odot u_1}\odot\dots \odot X_n^{\odot u_n}=\sum_{(r,u)\in A}(r+u_1X_1+\dots +u_nX_n)
  \end{align*}
  induces an isomorphism $\Bb[\Rbb\times M]\cong \lau$ by \cref{proposition:thm of hom}.
  Similarly, we have 
  \begin{align*}
    &\Bb[M]\cong\Blau\\
    &\Bb[\Mp]\cong\Bpol\\
    &\Bb[\Rbb\times \Mp]\cong\pol
  \end{align*}
  In this way, we can treat some semirings arising in tropical geometry uniformly.

  Recall that $\Aff_{\Rbb^n}$ in \cref{example:sheaf} is a sheaf of ordered groups. 
  Each restriction map $\Aff_{\Rbb^n}(\Rbb^n)\to \Aff_{\Rbb^n}(U)$ 
  induces $\lau\to\Bb[\Aff_{\Rbb^n}(U)]$ and a congruence on $\lau$.
  This demonstrates that $\lau$ admits many congruences.
\end{example}

\begin{example}
  In \cref{example:stalk2}, for $x=(x_1,\dots,x_n)\in\Rbb^n$, we have seen two o-homomorphisms:
  \[H_x\subseteq \Rbb\times M\to\Aff_{\Rbb^n,x}.\]
  These induce
  \[\Bb[Z_1^{\pm},\dots,Z_n^{\pm}]\xrightarrow{\iota} \lau\xrightarrow{\pi} \Bb[\Aff_{\Rbb^n,x}],\]
  where each $Z_i$ is a formal variable that maps to $-x_i+X_i$ by $\iota$.
  Note that the image of $\iota$ is the set of all integral affine functions on $\Rbb^n$ 
  which take the value 0 at $x$, except for the constant function $-\infty$.

  For an ideal $I$ of $\lau$, let $I'$ be the ideal of $\Bb[\Aff_{\Rbb^n,x}]$ 
  generated by the image $\pi(I)$ and 
  $I''$ be the pull-back $(\iota\circ\pi)^{-1}(I')\subseteq \Bb[Z_1^{\pm},\dots,Z_n^{\pm}]$ of $I'$.
  Then
  \begin{align*}
    I'&=\left\{\sum_{u\colon a_ux^u=f(x)}a_u X^{u}\ \middle\vert\ f(X)=\sum_{u\in\Zbb^n}a_u X^{ u}\in I \right\}\\
    I''&=\left\{\sum_{u\colon a_ux^u=0}a_u X^{u}\ \middle\vert\ f(X)=\sum_{u\in\Zbb^n}a_u X^{ u}\in I,\ f(x)=0 \right\}\\
    &=\left\{\sum_{u\colon a_ux^u=f(x)}Z^{ u}\ \middle\vert\ f(X)=\sum_{u\in\Zbb^n}a_u X^{ u}\in I \right\}.
  \end{align*}
  In \cite{maclagan2018tropical} and related works, 
  the ideal $I''$ is called the \emph{initial ideal} of $I$ associated to $x$.
\end{example}

We next introduce a basic invariant of prime congruences on $\Bb[\Msf]$, 
which we call \emph{the mobile face}.

\begin{definition}\label{definition:face}
  Let $\Msf$ be an ordered monoid.
  A submonoid $\Fsf\subseteq \Msf$ is a \emph{face} 
  if it satisfies the following conditions: for any $m_1,m_2\in\Msf$,
  \begin{itemize}
    \item If $m_1+m_2\in\Fsf$, then $m_1,m_2\in\Fsf$.
    \item If $m_1\in\Fsf$ and $m_1\leq m_2$, then $m_2\in\Fsf$.
  \end{itemize}
  For a face $\Fsf\subseteq \Msf$, we denote by $\Gamma_{\Fsf}$ 
  the congruence generated by $\{(m,-\infty)\mid m\in \Msf\setminus \Fsf\}$.
\end{definition}

\begin{lemma}\label{lemma:P'}
  Let $\Fsf$ be a face of $\Msf$. Then a subset
  \[I_{\Fsf}=\left\{\sum_{m\in S}m \;\middle\vert \text{ $S\subseteq \Msf\setminus \Fsf$ is a finite subset}\right\}\]
  is an ideal of $\Bb[\Msf]$. 
  Furthermore, the congruence $\Gamma_{\Fsf}$ is equal to
  \[C_0= \left\{(i_1\oplus f,i_2\oplus f)\;\middle\vert\; i_1,i_2\in I_{\Fsf}, f\in\Bb[\Fsf]\right\}.\]
\end{lemma}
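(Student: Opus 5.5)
The proof splits into two parts: first show that $I_{\Fsf}$ is an ideal, then show $C_0=\Gamma_{\Fsf}$ by proving that $C_0$ is a congruence squeezed between the generators of $\Gamma_{\Fsf}$ and $\Gamma_{\Fsf}$ itself.

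\emph{$I_{\Fsf}$ is an ideal.} Take $S=\emptyset$ to see that $-\infty\in I_{\Fsf}$. Closure under $\oplus$ is immediate, since a union of finite subsets of $\Msf\setminus\Fsf$ is again one. For closure under multiplication by $g=\sum_{m'\in A}m'$, it suffices to check that $m+m'\notin\Fsf$ whenever $m\notin\Fsf$. This is the first face axiom in contrapositive form: $m+m'\in\Fsf$ would force $m\in\Fsf$. Throughout I work with representatives, that is, finite sets rather than supports. An element of $\Bb[\Msf]$ lies in $I_{\Fsf}$ if and only if it has some representative inside $\Msf\setminus\Fsf$.

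\emph{Reduction to a decomposition.} Every element $h=\sum_{m\in A}m$ can be written as $h=i\oplus f$ with $i=\sum_{A\setminus\Fsf}m\in I_{\Fsf}$ and $f=\sum_{A\cap\Fsf}m\in\Bb[\Fsf]$. Here I regard $\Bb[\Fsf]\to\Bb[\Msf]$ as induced by the inclusion $\Fsf\subseteq\Msf$, which is an o-homomorphism. Using this decomposition, I show $\Gamma_{\Fsf}\subseteq C_0\subseteq\Gamma_{\Fsf}$.

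For $C_0\subseteq\Gamma_{\Fsf}$: every generator $(m,-\infty)$ with $m\notin\Fsf$ lies in $\Gamma_{\Fsf}$. Adding such pairs coordinatewise, $(i,-\infty)\in\Gamma_{\Fsf}$ for every $i\in I_{\Fsf}$. By symmetry and transitivity, $(i_1,i_2)\in\Gamma_{\Fsf}$. Adding the diagonal pair $(f,f)$ then gives $(i_1\oplus f,i_2\oplus f)\in\Gamma_{\Fsf}$.

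For $\Gamma_{\Fsf}\subseteq C_0$: the generators $(m,-\infty)$ are in $C_0$ by taking $i_1=m$, $i_2=-\infty$, $f=-\infty$. It then suffices to show that $C_0$ is a congruence, meaning an equivalence relation that is a subsemiring of $\Bb[\Msf]^2$.

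\emph{$C_0$ is a congruence.} Reflexivity follows from the decomposition $h=i\oplus f$, since $(h,h)=(i\oplus f,i\oplus f)$. Symmetry is built into the definition. Closure under $\oplus$ is clear. For $\odot$, I expand $(i_1\oplus f)(i_1'\oplus f')=i_1i_1'\oplus i_1f'\oplus fi_1'\oplus ff'$. The first three terms lie in $I_{\Fsf}$ because it is an ideal. The term $ff'$ lies in $\Bb[\Fsf]$ because $\Fsf$ is a submonoid. The second coordinate has the same $ff'$ part, so the product pair is again in $C_0$.

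\emph{Transitivity, the main obstacle.} Suppose $i_2\oplus f=i_2'\oplus f'$ in $\Bb[\Msf]$. The key observation is that the second face axiom, upward closure, makes $\Fsf$-membership compatible with taking maximal elements. Consider the support of the common element $h$. Let $A=\supp(i_2)\cup\supp(f)$, a representative of $h$. An element $m\in\Fsf$ of $A$ that is not maximal is dominated by a maximal element, which then also lies in $\Fsf$. An element of $A\setminus\Fsf$ can be dominated by anything. I expect to verify that the $\Fsf$-part of the support, $\supp(h)\cap\Fsf$, determines $f$ up to absorption: $f\oplus(\supp(h)\cap\Fsf)$ equals $\supp(h)\cap\Fsf$ as an element. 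In any case I then set $g=\sum_{\supp(h)\cap\Fsf}m$ and $j=\sum_{\supp(h)\setminus\Fsf}m$.

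With these, I can rewrite $(i_1\oplus f,i_2\oplus f)$ as $(i_1\oplus j'\oplus g,\,h)$ for a suitable $j'$. The plan is to add to both sides the parts of $f'$ and $i_2'$ that are not already visible, checking that this does not change either element. Each side absorbs the other's extra terms because those terms are dominated within $h$. Doing the same on the other side gives $(h,\,i_3\oplus g)$, and then the pair $(i_1\oplus\cdots\oplus g,\,i_3\oplus\cdots\oplus g)$ lies in $C_0$.

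The delicate point is the claim that adding dominated terms to $i_1\oplus f$ leaves it unchanged. This fails in general, because domination happens in $h$, not in $i_1\oplus f$. So the safer route is to add the same element, namely $f'\oplus i_2'$, to both pairs. In $C_0$, $(a,b)\in C_0$ implies $(a\oplus x,b\oplus x)\in C_0$ by the additivity already shown. Then $(i_1\oplus f\oplus f',\,h\oplus f')$ and $(h\oplus f,\,i_3\oplus f\oplus f')$ both lie in $C_0$. Since $h\oplus f=h=h\oplus f'$, both second and first coordinates equal $h$. Rewriting $h=i_2\oplus i_2'\oplus f\oplus f'$ gives the pair $(i_1\oplus f\oplus f',\,i_3\oplus f\oplus f')$. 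The remaining step, which I expect to be the real computation, is to show that this pair coincides with the composite $(i_1\oplus f,\,i_3\oplus f')$, for instance by showing that $f'$ is absorbed by $i_1\oplus f$ up to $I_{\Fsf}$-terms via the upward-closure axiom.
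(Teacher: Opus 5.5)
\textbf{There is a genuine gap: transitivity of $C_0$ is never proved.} Everything before that step is correct: the ideal property of $I_{\Fsf}$, the inclusion $C_0\subseteq\Gamma_{\Fsf}$, the decomposition $h=i\oplus f$, reflexivity, symmetry, and closure under $\oplus$ and $\odot$.

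Your final manoeuvre does not advance the argument. The pair $(i_1\oplus f\oplus f',\,i_3\oplus f\oplus f')$ lies in $C_0$ trivially, by definition with $\Bb[\Fsf]$-part $f\oplus f'$, so adding $f'\oplus i_2'$ to both pairs buys nothing. All of the content sits in the step you leave open: identifying that pair with $(i_1\oplus f,\,i_3\oplus f')$. Your hedge that $\supp(h)\cap\Fsf$ determines $f$ only ``up to absorption'' is exactly where the missing idea lies.

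\textbf{The missing fact is an exact identity.} For $i\in I_{\Fsf}$ and $f\in\Bb[\Fsf]$,
\[
\supp(i\oplus f)\cap\Fsf=\supp(f).
\]
Recall that $\supp(i\oplus f)$ is the set of maximal elements of $\supp(i)\cup\supp(f)$.
\begin{enumerate}
\item An element $m\in\supp(f)$ cannot be strictly dominated by another element of $\supp(f)$, since these are pairwise incomparable.
\item Nor can $m$ be dominated by an element of $\supp(i)\subseteq\Msf\setminus\Fsf$: by the upward-closure axiom, anything $\geq m$ lies in $\Fsf$.
\end{enumerate}
Hence $\supp(f)\subseteq\supp(i\oplus f)$. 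Conversely, $\supp(i\oplus f)\cap\Fsf\subseteq(\supp(i)\cup\supp(f))\cap\Fsf=\supp(f)$.

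\textbf{How this closes the argument.} The identity shows $f=\sum_{m\in\supp(h)\cap\Fsf}m$ is uniquely determined by $h=i\oplus f$. In your transitivity setup, $i_2\oplus f=i_2'\oplus f'$ therefore forces $f=f'$. Then $(i_1\oplus f,\,i_3\oplus f')=(i_1\oplus f,\,i_3\oplus f)\in C_0$ directly, and no absorption argument is needed.

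Equivalently, and this is how the paper proceeds, one gets
\[
C_0=\{(g,h)\mid \supp(g)\cap\Fsf=\supp(h)\cap\Fsf\}.
\]
The inclusion ``$\subseteq$'' is the identity above. The inclusion ``$\supseteq$'' is your decomposition applied to the representatives $\supp(g)$ and $\supp(h)$, which share the same $\Fsf$-part. This description is visibly an equivalence relation, so transitivity is immediate.
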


\begin{proof}
  By the definition of faces, it is immediate that the subset $I_{\Fsf}$ is an ideal.
  Since $\{(m,-\infty)\mid m\in \Msf\setminus \Fsf\}\subseteq C_0\subseteq \Gamma_{\Fsf}$,
  it is enough to show that $C_0$ is actually a congruence.
  Since $I_{\Fsf}$ is an ideal, $C_0$ is a subsemiring of $\Bb[\Msf]^2$.
  By the second condition of \cref{definition:face}, we have
  \[C_0=\{(g,h)\mid \supp(g)\cap \Fsf=\supp(h)\cap \Fsf\},\]
  which implies $C_0$ is an equivalence relation.
  Thus $C_0$ is a congruence and $\Gamma_{\Fsf}$ is equal to $C_0$.
\end{proof}

Thus, if a face $\Fsf\subseteq \Msf$ is given, 
then we obtain a closed subset $V(\Gamma_{\Fsf})$ of $\CSpec\Bb[\Msf]$.
Note that for faces $\Fsf_1,\Fsf_2$ of $\Msf$, the following are equivalent:
\begin{itemize}
  \item $\Fsf_1\subseteq \Fsf_2$,
  \item $\Gamma_{\Fsf_2}\subseteq \Gamma_{\Fsf_1}$,
  \item $V(\Gamma_{\Fsf_1})\subseteq V(\Gamma_{\Fsf_2})$.
\end{itemize}

\begin{lemma}\label{lemma:sed of prime}
  Let $C$ be a congruence on $\Bb[\Msf]$ and $\pi_C$ be the quotient map. 
  Then 
  \[\Fsf(C)=\{m\in\Msf\mid\pi_C(m)\neq-\infty\}=\{m\in\Msf \mid (m,-\infty)\notin C\}\] 
  is a face of $\Msf$.
  In particular, for a face $\Fsf$ of $\Msf$, we have $\Fsf(\Gamma_{\Fsf})=\Fsf$.
\end{lemma}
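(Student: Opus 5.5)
The plan is to check the two defining conditions of \cref{definition:face} and the submonoid property for $\Fsf(C)$ directly, working in the quotient $\Bb[\Msf]/C$. The equality of the two descriptions of $\Fsf(C)$ is immediate from the definition of the quotient map $\pi_C$.

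\textbf{The two face conditions.} These hold for an arbitrary congruence $C$.
\begin{itemize}
  \item Suppose $m_1+m_2\in\Fsf(C)$ but, say, $m_1\notin\Fsf(C)$. Then $(m_1,-\infty)\in C$. Multiplying by the diagonal pair $(m_2,m_2)\in C$ gives $(m_1+m_2,-\infty)\in C$, since $-\infty$ is absorbing. This contradicts $m_1+m_2\in\Fsf(C)$.
  \item Suppose $m_1\in\Fsf(C)$ and $m_1\leq m_2$ but $(m_2,-\infty)\in C$. Adding the diagonal pair $(m_1,m_1)$ gives $(m_1\oplus m_2,m_1)\in C$. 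Since $m_1\leq m_2$ in $\Msf$, and the order of $\Msf$ agrees with the order of $\Bb[\Msf]$, we have $m_1\oplus m_2=m_2$. Hence $(m_2,m_1)\in C$, and by transitivity $(m_1,-\infty)\in C$, a contradiction.
\end{itemize}

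\textbf{Submonoid property.} We need $0\in\Fsf(C)$ and closure under $+$.
\begin{itemize}
  \item $0\in\Fsf(C)$ because the quotient is nontrivial: if $(0,-\infty)\in C$, then multiplying gives $C=S\times S$.
  \item Closure under $+$ is where the main obstacle lies, because it genuinely needs more than an arbitrary congruence. For example, the congruence on $\Bb[X]$ generated by $(2X,-\infty)$ keeps $X$ but kills $2X$. So this step must be carried out for $C$ prime, which is the case in which the lemma is used.
  \item For prime $C=P$, \cref{proposition:JM} says $\Bb[\Msf]/P$ is cancellative. If $\pi_P(m_1)\neq-\infty$ and $\pi_P(m_1+m_2)=-\infty$, then $\pi_P(m_1)\pi_P(m_2)=\pi_P(m_1)\cdot(-\infty)$. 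Cancellation then forces $\pi_P(m_2)=-\infty$. Hence $m_1,m_2\in\Fsf(P)$ implies $m_1+m_2\in\Fsf(P)$.
\end{itemize}

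\textbf{The case $\Gamma_{\Fsf}$.} Here I would use the explicit description from \cref{lemma:P'}:
\[
\Gamma_{\Fsf}=\{(g,h)\mid \supp(g)\cap\Fsf=\supp(h)\cap\Fsf\}.
\]
Then $(m,-\infty)\in\Gamma_{\Fsf}$ if and only if $\{m\}\cap\Fsf=\emptyset$, that is, $m\notin\Fsf$. Therefore $\Fsf(\Gamma_{\Fsf})=\Fsf$, which is a face by hypothesis. This avoids the primeness issue entirely, since $\Gamma_{\Fsf}$ need not be prime.
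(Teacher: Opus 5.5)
Your verification of the two face conditions, and of $\Fsf(\Gamma_{\Fsf})=\Fsf$ via the explicit description in \cref{lemma:P'}, is essentially the paper's proof; the paper phrases it with images under $\pi_C$ rather than pairs in $C$, which amounts to the same thing. The one difference is in your favour: the paper never checks that $\Fsf(C)$ is a submonoid, and your example $\<(2X,-\infty)\>$ on $\Bb[X]$ correctly shows this fails for an arbitrary congruence, so your restriction of closure under $+$ to prime $C$ (via cancellativity, or directly from $(m_1,-\infty)\rtimes(m_2,-\infty)=(m_1+m_2,-\infty)$) is exactly what the lemma's actual uses require, namely mobile faces of primes and $\Gamma_{\Fsf}$.
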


\begin{proof}
  Let $m_1,m_2$ be elements of $\Msf$.
  If $m_1\notin \Fsf(C)$, then $\pi_C(m_1+m_2)=-\infty \odot \pi_C(m_2)=-\infty$.
  Thus we have $m_1+m_2\notin \Fsf(C)$.

  Assume that $m_1\in\Fsf(C)$ and $m_1\leq m_2$. 
  If $m_2\notin \Fsf(C)$, then $-\infty=\pi_C(m_2)\geq \pi_C(m_1)\geq-\infty$, 
  which contradicts the assumption that $m_1\in\Fsf(C)$.

  The equality $\Fsf(\Gamma_{\Fsf})=\Fsf$ follows from \cref{lemma:P'}.
\end{proof}

\begin{definition}
  The face uniquely determined by $P$ 
  from \cref{lemma:sed of prime} is called the \emph{mobile face} of $P$. 
  We denote by $O_{\Fsf}$ the set of all points in $\CSpec\Bb[\Msf]$ of mobile face $\Fsf$.
\end{definition}

\begin{lemma}\label{lemma:wah}
  For a face $\Fsf$ of $\Msf$,
  there is a natural isomorphism $\Bb[\Fsf]\cong \Bb[\Msf]/\Gamma_{\Fsf}$.
\end{lemma}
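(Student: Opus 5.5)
The plan is to construct a surjective homomorphism $\Bb[\Msf]\to\Bb[\Fsf]$ whose kernel congruence is exactly $\Gamma_{\Fsf}$, and then apply \cref{proposition:thm of hom}. The natural candidate is truncation of supports. For $f\in\Bb[\Msf]$, set
\[\rho(f)\coloneqq \sum_{m\in \supp(f)\cap\Fsf} m\in\Bb[\Fsf],\]
where the sum is read in $\Bb[\Fsf]$. Since $\Fsf$ carries the order induced from $\Msf$, any two elements of $\supp(f)\cap\Fsf$ are incomparable, so under the bijection \eqref{eq:bij} this set really is the support of $\rho(f)$. Surjectivity is immediate: for $g\in\Bb[\Fsf]$, the same finite antichain viewed in $\Msf$ gives an element $f$ with $\rho(f)=g$.

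The main step is to check that $\rho$ is a semiring homomorphism. The useful reformulation is that, for any finite $A\subseteq\Msf$, the maximal elements of $A\cap\Fsf$ are exactly the maximal elements of $A$ that lie in $\Fsf$. One direction uses the upward-closedness in \cref{definition:face}: if $a\in A\cap\Fsf$ and $a\le a'$ with $a'\in A$, then $a'\in\Fsf$. So a maximal element of $A\cap\Fsf$ is maximal in $A$. The converse is trivial. It follows that $\rho$ depends only on the class of $A$, and that $\rho$ preserves $\oplus$, since both sides are determined by maximal elements of $(A_1\cup A_2)\cap\Fsf$.

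For $\odot$, first note that $(A_1+A_2)\cap\Fsf=(A_1\cap\Fsf)+(A_2\cap\Fsf)$. Indeed, $\Fsf$ is a submonoid, and if $m_1+m_2\in\Fsf$ then $m_1,m_2\in\Fsf$ by the first face axiom. Taking maximal elements on both sides and using the reformulation above gives $\rho(fg)=\rho(f)\rho(g)$. Since $0\in\Fsf$, $\rho$ preserves the units, and $\rho(-\infty)=-\infty$.

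It remains to identify the kernel. By the description in the proof of \cref{lemma:P'},
\[\Gamma_{\Fsf}=C_0=\{(g,h)\mid \supp(g)\cap\Fsf=\supp(h)\cap\Fsf\},\]
and this is exactly $\Ker\rho$. Hence $\Bb[\Msf]/\Gamma_{\Fsf}\cong\Bb[\Fsf]$. The isomorphism is natural because the composite $\Bb[\Fsf]\hookrightarrow\Bb[\Msf]\to\Bb[\Msf]/\Gamma_{\Fsf}$, induced by the inclusion o-homomorphism, is its inverse.

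The only delicate point, and what I expect to be the main obstacle, is well-definedness and multiplicativity of $\rho$ in the presence of the partial order. Both are handled by the upward-closedness of faces, as above.
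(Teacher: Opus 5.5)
Your proof is correct, but it runs in the opposite direction from the paper's. The paper takes the composite $\Bb[\Fsf]\to\Bb[\Msf]\to\Bb[\Msf]/\Gamma_{\Fsf}$, which is a homomorphism for free, and checks only that it is bijective. Surjectivity holds because each $a=\sum_{m\in A}m$ is $\Gamma_{\Fsf}$-equivalent to its truncation $\sum_{m\in A\cap\Fsf}m$. Injectivity follows from the description $\Gamma_{\Fsf}=C_0$ in \cref{lemma:P'}.

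You instead build the truncation $\rho\colon\Bb[\Msf]\to\Bb[\Fsf]$ as a homomorphism, identify $\Ker\rho$ with $\Gamma_{\Fsf}$, and apply \cref{proposition:thm of hom}. The price is checking that $\rho$ is well defined and respects $\oplus$ and $\odot$, using upward-closedness and the first face axiom. The paper avoids this work entirely by mapping the other way.

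What your route buys is an explicit semiring retraction of $\Bb[\Fsf]\hookrightarrow\Bb[\Msf]$, so the inverse isomorphism is written down concretely. It also makes \cref{lemma:P'} unnecessary here. Indeed, $\Gamma_{\Fsf}\subseteq\Ker\rho$ because $\rho(m)=-\infty$ for $m\notin\Fsf$. Conversely, each $g$ is $\Gamma_{\Fsf}$-equivalent to $\sum_{m\in\supp(g)\cap\Fsf}m$, by discarding the terms outside $\Fsf$ via the generators $(m,-\infty)$. So $\Ker\rho\subseteq\Gamma_{\Fsf}$ by transitivity, and your argument even recovers the description of $\Gamma_{\Fsf}$ in \cref{lemma:P'} as a by-product.

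The combinatorial input is the same in both proofs: the two face axioms, which in the paper make $C_0$ a congruence and in your proof make $\rho$ a homomorphism.
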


\begin{proof}
  Let $\phi$ be the composition of 
  the semiring homomorphism $\Bb[\Fsf]\to \Bb[\Msf]$ induced 
  by the inclusion $\Fsf\subseteq \Msf$, 
  and the quotient homomorphism $\Bb[\Msf]\to\Bb[\Msf]/\Gamma_{\Fsf}$. 
  We will show $\phi$ is an isomorphism.

  For any $a=\sum_{m\in A}m\in \Bb[\Msf]$, set $a'=\sum_{m\in A\cap \Fsf}m\in\Bb[\Fsf]$. 
  Since $(a,a')\in C_{\Fsf}$, $\phi$ is surjective.
  Assume $a_1,a_2\in\Bb[\Fsf]$ satisfy $(a_1,a_2)\in C_{\Fsf}$.
  By \cref{lemma:P'}, it is clear that $a_1=a_2$. Thus $\phi$ is injective.
\end{proof}

The space $\CSpec\Bb[\Msf]$ can be written as a disjoint union:
\begin{align}\label{eq:B-str}
  \CSpec\Bb[\Msf]=\coprod_{\Fsf}\{P\mid \text{$P$ is of mobile face $\Fsf$}\},
\end{align}
where $\Fsf$ runs over the set of all faces of $\Msf$. 
We call this the \emph{stratification} of $\CSpec\Bb[\Msf]$.
In \cref{section:B[M]cong}, we give a precise description of the stratification.

\begin{example}
  For $a\in \Rbb$, let $\phi_a$ be a $\Tbb$-algebra homomorphism defined by
  \[\phi_a\colon\Tbb[X,Y]\to \Tbb,\ X\mapsto a,\ Y\mapsto -\infty.\]
  Then its kernel congruence $P_a=\Ker\phi_a$ is a prime congruence on $\Tbb[X,Y]$ 
  and the mobile face of $P_a$
  is equal to $\{rX^u\mid r\in \Rbb,\ u\in\Zbb_{\geq 0}\}\subseteq \Rbb\times \Mp$ where $n=2$.
\end{example}

If an ordered monoid $\Msf$ is totally ordered, 
then $\Bb[\Msf]$ is identified with $\Msf\sqcup \{-\infty\}$.
Using this identification, 
we can simply express the quotient of $\Bb[\Msf]$ by its prime congruence as follows.

\begin{lemma}\label{lemma:prime isomorphism}
  Let $P$ be a prime congruence on $\Bb[\Msf]$ whose mobile face is $\Fsf$.
  Then there is an isomorphism $\Bb[\pi_P(\Fsf)]\cong\Bb[\Msf]/P$ induced 
  by the inclusion $\Fsf\hookrightarrow\Msf$.
\end{lemma}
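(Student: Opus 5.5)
We have to interpret $\pi_P(\Fsf)$: it is the image of $\Fsf$ in $\Bb[\Msf]/P$. By \cref{proposition:JM}, $\Bb[\Msf]/P$ is totally ordered and cancellative. Since every $m\in\Fsf$ has $\pi_P(m)\neq-\infty$, $\pi_P(\Fsf)$ is a submonoid of the multiplicative monoid of nonzero elements of $\Bb[\Msf]/P$. It is totally ordered by the order of $\Bb[\Msf]/P$. So, following the remark before the lemma, $\Bb[\pi_P(\Fsf)]$ is identified with $\pi_P(\Fsf)\sqcup\{-\infty\}$.

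The plan is to build the homomorphism in the direction $\Bb[\Msf]\to\Bb[\pi_P(\Fsf)]$ via factorizations. First, $\Gamma_{\Fsf}\subseteq P$, because every generator $(m,-\infty)$ with $m\notin\Fsf$ lies in $P$ by definition of the mobile face. So $\pi_P$ factors through $\Bb[\Msf]/\Gamma_{\Fsf}\cong\Bb[\Fsf]$ by \cref{lemma:wah}, and it suffices to study the induced surjection $\psi\colon\Bb[\Fsf]\to\Bb[\Msf]/P$.

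Next I would compare $\psi$ with the map $\theta\colon\Bb[\Fsf]\to\Bb[\pi_P(\Fsf)]$ induced by the o-homomorphism $\Fsf\to\pi_P(\Fsf)$, $m\mapsto\pi_P(m)$. This is an o-homomorphism because $\pi_P$ preserves the natural orders. Concretely, $\theta\bigl(\sum_{m\in A}m\bigr)=\max_{m\in A}\pi_P(m)$. Also define $\iota\colon\Bb[\pi_P(\Fsf)]=\pi_P(\Fsf)\sqcup\{-\infty\}\to\Bb[\Msf]/P$ as the inclusion. This is a semiring homomorphism because in a totally ordered $\Bb$-algebra, $\oplus$ is max, and multiplication on $\pi_P(\Fsf)$ agrees with that of the quotient. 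Then $\iota\circ\theta=\psi$, since $\psi\bigl(\sum_{m\in A} m\bigr)=\bigoplus_{m\in A}\pi_P(m)=\max_{m\in A}\pi_P(m)$.

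Finally, bijectivity of $\iota$. Injectivity is clear. For surjectivity, any element of $\Bb[\Msf]/P$ is $\pi_P(a)$ with $a\in\Bb[\Msf]$, which equals $\psi$ of an element of $\Bb[\Fsf]$. That value is $-\infty$ or a max of elements $\pi_P(m)$ with $m\in\Fsf$, and the total order guarantees the max is attained at one of them. So $\iota$ is an isomorphism, and it is induced by the inclusion $\Fsf\hookrightarrow\Msf$ as required.

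The main obstacle is making precise that $\Bb[\pi_P(\Fsf)]$ for a totally ordered monoid really is $\pi_P(\Fsf)\sqcup\{-\infty\}$ with max as addition. This amounts to saying that supports are singletons because any two elements are comparable. One also has to check that $\pi_P(\Fsf)$ carries the induced total order, so that $\theta$ is a well-defined o-homomorphism.
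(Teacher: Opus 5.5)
Your proof is correct and is essentially the paper's argument: by \cref{proposition:JM} the quotient $\Bb[\Msf]/P$ is totally ordered, so every class is either $-\infty$ or $\pi_P(m)$ for some $m\in\Fsf$, and $\Bb$ of a totally ordered monoid is just that monoid with $-\infty$ adjoined. Factoring through $\Bb[\Fsf]\cong\Bb[\Msf]/\Gamma_{\Fsf}$ and checking that the inclusion $\pi_P(\Fsf)\sqcup\{-\infty\}\to\Bb[\Msf]/P$ is a semiring homomorphism are extra bookkeeping that the paper leaves implicit, and they are harmless.
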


\begin{proof}
  Let $\pi_P\colon \Bb[\Msf]\to\Bb[\Msf]/P$ be the quotient map. 
  By \cref{proposition:JM}, the images $\pi_P(\Msf)$ and $\pi_P(\Fsf)$ are totally ordered monoids.
  Since $\Fsf=\{m\in\Msf\mid\pi_{P}(m)\neq -\infty\}$, we have
  \[\Bb[\Msf]/P=\pi_P(\Msf)\cup \{-\infty\}=\pi_P(\Fsf)\sqcup\{-\infty\}=\Bb[\pi_P(\Fsf)].\]
\end{proof}

\subsection{$\Tbb$-algebras associated to ordered monoids}\label{subsection:T[M]}

We define $\Tbb$-algebras associated to ordered monoids using \cref{definition:B-alg}.
As we have seen in \cref{example:monomials}, we can already construct some $\Tbb$-algebra
using suitable ordered monoids. 
However, in general, even a $\Bb$-algebra of the form $\Bb[\Rbb\times \Msf]$ 
may admit several $\Tbb$-algebra structures.
In order to fix a $\Tbb$-algebra structure, we introduce the following notation.

\begin{definition}\label{definition:T[M]}
  For an ordered monoid $\Msf$, we define the order of $\Rbb\times \Msf$ by
  \[(r_1,m_1)\geq (r_2,m_2) \overset{\mathrm{def}}{\iff} r_1\geq r_2 \text{ and }m_1\geq m_2.\]
  Then $\Rbb\times \Msf$ is an ordered monoid. 
  An o-homomorphism 
  \[\Rbb\to \Rbb\times \Msf,\ r\mapsto (r,0)\]
  induces $\Tbb\to \Bb[\Rbb\times \Msf]$. 
  With this homomorphism, we obtain a $\Tbb$-algebra $\Bb[\Rbb\times \Msf]$,
  denoted by $\Tbb[\Msf]$.
\end{definition}

Faces of $\Rbb\times \Msf$ take a simple form.

\begin{lemma}\label{lemma:T face}
  For any face $\Fsf$ of an ordered monoid $\Rbb\times \Msf$, 
  there exists a face $\Fsf'$ of $\Msf$ such that $\Fsf=\Rbb\times \Fsf'$.
\end{lemma}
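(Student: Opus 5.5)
Set $\Fsf'\coloneqq\{m\in\Msf\mid (r,m)\in\Fsf \text{ for some } r\in\Rbb\}$, the projection of $\Fsf$ to $\Msf$. The plan is to show first that $\Fsf=\Rbb\times\Fsf'$, and then that $\Fsf'$ satisfies both conditions of \cref{definition:face}. The one input beyond the definitions is that every element $(r,0)$ of $\Rbb\times\{0\}$ is invertible in $\Rbb\times\Msf$.

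\textbf{Step 1: $\Fsf=\Rbb\times\Fsf'$.} The inclusion $\subseteq$ holds by definition of $\Fsf'$. For $\supseteq$, let $m\in\Fsf'$, so some $(r,m)\in\Fsf$, and let $s\in\Rbb$ be arbitrary. There are two ways to see $(s,m)\in\Fsf$.
\begin{itemize}
  \item \emph{Via the monoid condition.} Since $\Fsf$ is a submonoid, $(0,0)\in\Fsf$. From $(0,0)=(-s,0)+(s,0)$ and the first face condition we get $(s,0)\in\Fsf$ for every $s$. Adding, $(s,m)=(r,m)+(s-r,0)\in\Fsf$.
  \item \emph{Via the order condition.} For $s\geq r$ we have $(s,m)\geq(r,m)$ in the product order, so $(s,m)\in\Fsf$ by the second face condition. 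For $s<r$, write $(r,m)=(s,m)+(r-s,0)$ and apply the first face condition.
\end{itemize}
Either argument gives $(s,m)\in\Fsf$. Hence $\Fsf=\Rbb\times\Fsf'$.

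\textbf{Step 2: $\Fsf'$ is a face of $\Msf$.}
\begin{itemize}
  \item \emph{Submonoid.} $0\in\Fsf'$ because $(0,0)\in\Fsf$. Closure under addition follows from $(0,m_1)+(0,m_2)=(0,m_1+m_2)$ together with Step 1.
  \item \emph{First face condition.} If $m_1+m_2\in\Fsf'$, then $(0,m_1)+(0,m_2)\in\Fsf$, so $(0,m_1),(0,m_2)\in\Fsf$ and therefore $m_1,m_2\in\Fsf'$.
  \item \emph{Second face condition.} If $m_1\in\Fsf'$ and $m_1\leq m_2$, then $(0,m_1)\leq(0,m_2)$ in $\Rbb\times\Msf$, so $(0,m_2)\in\Fsf$ and $m_2\in\Fsf'$.
\end{itemize}

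I expect no real obstacle here. The only points requiring care are that the product order on $\Rbb\times\Msf$ (\cref{definition:T[M]}) compares both coordinates, and that the invertibility of $(r,0)$ is exactly what makes each $\Rbb$-fiber either entirely contained in $\Fsf$ or disjoint from it.
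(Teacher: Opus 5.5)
Your proof is correct and follows essentially the same route as the paper: take $\Fsf'$ to be the projection of $\Fsf$, use $(r,0)+(-r,0)=(0,0)\in\Fsf$ to get $\Rbb\times\{0\}\subseteq\Fsf$, and then decompose $(r,m)=(r,0)+(0,m)$ to conclude $\Fsf=\Rbb\times\Fsf'$. You also write out the face conditions for $\Fsf'$, which the paper leaves as easy to check; your alternative order-based argument in Step 1 is valid but not needed.
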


\begin{proof}
  Let $\Fsf'$ be the image of $\Fsf$ by the projection $\Rbb\times \Msf\to \Msf$.
  It is easy to check that $\Fsf'$ is a face of $\Msf$.
  Since $(r,0)+(-r,0)=(0,0)\in \Fsf$ holds for any $r\in\Rbb$, we have $\Rbb\times \{0\}\subseteq \Fsf$.
  By the definition of faces, for any $r\in \Rbb$ and $m\in \Msf$, 
  $(r,m)\in\Fsf$ if and only if $(r,0),(0,m)\in \Fsf$.
  Thus we have $\Fsf=\Rbb\times \Fsf'$.
\end{proof}

Similarly to \eqref{eq:B-str}, the space $\CSpec\Tbb[\Msf]$ can be also stratified:
\[\CSpec\Tbb[\Msf]=\coprod_{\Fsf}\{P\mid \text{$P$ is of mobile face $\Rbb\times\Fsf$}\},\]
where $\Fsf$ runs over the set of all faces of $\Msf$. 

\medskip

Recall that the space of prime congruences on a $\Tbb$-algebra $S$ has a special open subset, 
denoted by $\CSpec_{\Tbb}S$ (see \cref{definition:P over T}).

\begin{proposition}\label{proposition:special fiber}
  For an ordered monoid $\Msf$, 
  the complement of an open subset $\CSpec_{\Tbb}\Tbb[\Msf]$ is homeomorphic to $\CSpec \Bb[\Msf]$. 
\end{proposition}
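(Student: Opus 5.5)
By \cref{proposition:T open}, the complement of $\CSpec_{\Tbb}\Tbb[\Msf]$ is the fiber $\phi^{-1}(P_0)$ over the closed point $P_0=\Ker(\Tbb\to\Bb)$. It is therefore the closed subset
\[V(P_0\cdot\Tbb[\Msf])=\{P\in\CSpec\Tbb[\Msf]\mid \iota^{*}P\supseteq P_0\}.\]
This description is valid because $\CSpec\Tbb$ has only the two points $\Delta\subsetneq P_0$, so $\iota^*P\neq\Delta$ holds exactly when $\iota^*P\supseteq P_0$. Set $C\coloneqq P_0\cdot\Tbb[\Msf]$, the congruence generated by the pairs $(r,s)$ with $r,s\in\Rbb$. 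By \cref{proposition:quotient induces inj}, the quotient map induces a homeomorphism of $\CSpec(\Tbb[\Msf]/C)$ onto $V(C)$. The proposition therefore reduces to the algebraic statement $\Tbb[\Msf]/C\cong\Bb[\Msf]$.

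For this isomorphism, consider the projection $p\colon\Rbb\times\Msf\to\Msf$, $(r,m)\mapsto m$. It is a monoid homomorphism and, by the product order of \cref{definition:T[M]}, an o-homomorphism. Hence it induces a surjective semiring homomorphism $\psi\colon\Tbb[\Msf]=\Bb[\Rbb\times\Msf]\to\Bb[\Msf]$, which sends every real constant $r$ to $0$. In particular $C\subseteq\Ker\psi$. By \cref{proposition:thm of hom}, it remains to prove $\Ker\psi\subseteq C$.

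This inclusion is the main step. Since $(r,0)\sim_C(0,0)$ for every $r\in\Rbb$, multiplying by $(0,m)$ gives $(r,m)\sim_C(0,m)$ for all $(r,m)$. Take $f=\sum_{a\in A}a$. Replacing each summand by its $r=0$ version and taking sums, we get $f\sim_C f'\coloneqq\sum_{a\in A}(0,p(a))$. The element $f'$ is the image of $\psi(f)$ under the section $\Bb[\Msf]\to\Bb[\Rbb\times\Msf]$, $m\mapsto(0,m)$; this section is an o-homomorphism, since $(0,m)\ge(0,m')$ iff $m\ge m'$. Two points need care. First, maximal elements are compatible with $p$: the class of $f'$ depends only on $\psi(f)$, because $\{0\}\times\Msf$ is o-isomorphic to $\Msf$, so the maximal elements of $\{(0,p(a))\}$ correspond to those of $p(A)$. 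Second, the congruence $C$ respects $\oplus$, so summing the relations $(r,m)\sim_C(0,m)$ over the summands is legitimate. It follows that if $\psi(f)=\psi(g)$, then $f\sim_C f'=g'\sim_C g$. Thus $\Ker\psi=C$ and $\Tbb[\Msf]/C\cong\Bb[\Msf]$.

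Combining these steps, the complement $V(C)$ is homeomorphic to $\CSpec\Bb[\Msf]$, with the homeomorphism given by $\psi^{*}$. I expect no real obstacle beyond the bookkeeping with supports in the previous paragraph. One should also record that $V(C)$, being a closed subset, carries the subspace topology, so that \cref{proposition:quotient induces inj} applies directly.
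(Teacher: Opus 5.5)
Your proof is correct and follows the paper's route. You identify the complement with $V(\iota_{*}P_0)$, pass to the quotient via \cref{proposition:quotient induces inj}, and show that $\iota_{*}P_0$ equals the kernel of the map $\Tbb[\Msf]\to\Bb[\Msf]$ induced by the projection $\Pi\colon\Rbb\times\Msf\to\Msf$.

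Your kernel computation through the section $m\mapsto(0,m)$ is in fact slightly more careful than the paper's. The paper writes both elements over a common index set $\Pi(\supp(a))=\Pi(\supp(b))$, and this equality can fail when $\Msf$ carries a nontrivial order: take $a=(5,0)\oplus(0,1)$ and $b=(0,1)$ for $\Msf=\Nbb$ with its usual order. Your argument only needs that every $f$ is $C$-congruent to the image of $\psi(f)$ under that section, and this holds for any order on $\Msf$.
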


\begin{proof}
  Let $\iota\colon \Tbb\to\Tbb[\Msf]$ be the $\Tbb$-algebra structure and $P_0\in\CSpec\Tbb$ the closed point.
  The complement of $\CSpec_{\Tbb}\Tbb[\Msf]$ is a closed subset $(\iota^{*})^{-1}V(P_0)=V(\iota_{*}P_0)$.
  By \cref{proposition:quotient induces inj}, it is homeomorphic to $\CSpec(\Tbb[\Msf]/\iota_{*}P_0)$.

  The projection $\Pi:\Rbb\times \Msf\to \Msf$ is an o-homomorphism 
  which induces $\Tbb[\Msf]\to \Bb[\Msf]$. Let $C$ be its kernel congruence.
  An inclusion $\iota_{*}P_0\subseteq C$ is clear.
  To show the opposite inclusion, take a pair $(a,b)\in C$. Then $\Pi(\supp(a))$ and $\Pi(\supp(b))$ are same subsets of $\Msf$, denoted by $A$.
  We write $a,b$ by
  \[a=\sum_{m\in A}a_m m,\ b=\sum_{m\in A}b_m m,\]
  where $a_m,b_m \in \Rbb$ for any $m\in A$.
  Then we have
  \[(a,b)=\sum_{m\in A}(a_m,b_m)\odot(m,m)\in \iota_{*}P_0.\]
  Thus we have $\iota_{*}P_0=C$ and $\Tbb[\Msf]/\iota_{*}P_0\cong \Bb[\Msf]$.
\end{proof}

Thus as a set, the space $\CSpec \Tbb[\Msf]$ can be identified with 
a disjoint union of $\CSpec\Bb[\Msf]$ and $\CSpec_{\Tbb}\Tbb[\Msf]$.
We often treat them separately, as in \cref{section:toric}.

\medskip

Recall that a geometric congruence $P\in\CSpec^{\mathrm{geo}}\Tbb[\Msf]$ 
induces a $\Tbb$-algebra homomorphism $\Tbb[\Msf]\to \Tbb$ (see \cref{definition:P over T}).
This map $\Tbb[\Msf]\to \Tbb$ is determined 
by an o-homomorphism $\Msf\to \Tbb$ where we regard $\Tbb$ as a monoid with the operation $\odot$.
In other words, the following map is a bijection:
\begin{align}\label{eq:top emb}
  \Hom(\Msf,\Tbb)\to\CSpec^{\mathrm{geo}}\Tbb[\Msf], \ \phi\mapsto\Ker\overline{\phi},
\end{align}
where $\overline{\phi}:\Tbb[\Msf]\to\Tbb$ is the homomorphism of $\Tbb$-algebras 
induced by 
\[\Rbb\times \Msf \to \Rbb, \ (r,m)\mapsto r\odot \phi(m).\]
We equip the semifield $\Tbb$ with the topology such that 
\begin{align*}
  \Tbb\to \Rbb_{\geq 0}, \ r\mapsto \exp(r)
\end{align*}
is a homeomorphism.
We also equip the set $\Hom(\Msf,\Tbb)$ with the topology of pointwise convergence.
We will show that the map \eqref{eq:top emb} induces 
a topological embedding from $\Hom(\Msf,\Tbb)$ to $\CSpec_{\Tbb}\Tbb[\Msf]$.

\begin{lemma}\label{lemma:TopEmb}
  Assume that the free monoid $\Nbb^k$ is equipped with the trivial order. 
  Then the map 
  \[\iota:\Hom(\Nbb^k,\Tbb)\to\CSpec_{\Tbb}\Tbb[\Nbb^k]\]
  of \eqref{eq:top emb} is a topological embedding onto the subset $\CSpec^{\mathrm{geo}}\Tbb[\Nbb^k]$.
\end{lemma}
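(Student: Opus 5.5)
Throughout, write $\Tbb[\Nbb^k]=\Tbb[Y_1,\dots,Y_k]$, so that $\Hom(\Nbb^k,\Tbb)\cong\Tbb^k$ via $\phi\mapsto(\phi(e_1),\dots,\phi(e_k))$; pointwise convergence is then the product topology on $\Tbb^k$. The plan has three steps: bijectivity onto $\CSpec^{\mathrm{geo}}\Tbb[\Nbb^k]$, continuity of $\iota$, and continuity of the inverse on the image.

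\textbf{Bijectivity.} The map \eqref{eq:top emb} is a bijection onto $\CSpec^{\mathrm{geo}}\Tbb[\Nbb^k]$. Indeed, a $\Tbb$-algebra homomorphism $\Tbb[\Nbb^k]\to\Tbb$ is determined by the images of the monomials. Since the order on $\Nbb^k$ is trivial, every monoid homomorphism $\Nbb^k\to(\Tbb,\odot)$ is an o-homomorphism and extends. Also, $\Ker\overline{\phi}$ recovers $\phi$, because the quotient is identified with $\Tbb$ compatibly with the structure map. So only the topological statements remain.

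\textbf{Continuity of $\iota$.} It suffices to show that preimages of closed sets $V(C)$ are closed. Since $V(C)=\bigcap_{(f,g)\in C}V(\<(f,g)\>)$, the preimage is
\[
\{\phi\mid \overline{\phi}(f)=\overline{\phi}(g)\ \text{for all }(f,g)\in C\}.
\]
Each $f\in\Tbb[\Nbb^k]$ is a finite max of terms $r+\sum u_iY_i$. So $\phi\mapsto\overline{\phi}(f)$ is a tropical polynomial in the coordinates $\phi(e_i)\in\Tbb$, which is continuous $\Tbb^k\to\Tbb$ for the topology transported from $\Rbb_{\ge0}$ via $\exp$. In multiplicative coordinates it is a max of monomials $e^r x^u$ with $u\in\Nbb^k$. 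The set where two continuous $\Tbb$-valued functions agree is closed, since $\Tbb$ is Hausdorff. Hence $\iota^{-1}(V(C))$ is an intersection of closed sets, and $\iota$ is continuous.

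\textbf{Continuity of the inverse (the main obstacle).} We must show every closed set $K\subseteq\Tbb^k$ is of the form $\iota^{-1}(V(C))$ for some congruence $C$.

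1. Reduce to basic closed sets. Points of $\Tbb^k$ are separated by tropical polynomial equations, and $\Tbb^k$ is homeomorphic to $\Rbb_{\ge0}^k$. So it is enough to exhibit, for each point $x\notin K$, a congruence $C$ with $K\subseteq\iota^{-1}V(C)$ and $x\notin\iota^{-1}V(C)$. Alternatively, one can realise enough closed sets directly. For $a\in\Rbb$, the set $\{x_i\le a\}$ is the zero set of the equation $Y_i\oplus a=a$, and $\{x_i\ge a\}$ is the zero set of $Y_i\oplus a=Y_i$. Products of such intervals (including $x_i=-\infty$, given by $Y_i=-\infty$) are therefore closed sets of the form $\iota^{-1}V(C)$.

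2. Check these sets give a subbasis. Their complements $\{x_i>a\}$ and $\{x_i<a\}$ form a subbasis of the topology of $\Tbb^k$. In particular, neighbourhoods of $-\infty$ are exactly of the form $\{x_i<a\}$.

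3. Conclude. By step 2, the images under $\iota$ of subbasic open sets are open in the image, namely $\iota(\text{open})=\iota(\Tbb^k)\setminus V(C)$. This is because $\iota$ is injective, and each open set is a union of finite intersections of subbasic open sets, so images commute with these operations. Hence $\iota$ is open onto its image, and therefore a topological embedding onto $\CSpec^{\mathrm{geo}}\Tbb[\Nbb^k]$.
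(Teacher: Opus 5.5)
Your proof is correct and follows the same route as the paper. You identify $\Hom(\Nbb^k,\Tbb)$ with $\Tbb^k$ (the paper cites Rabinoff for this homeomorphism, which you take as evident), observe that each $\iota^{-1}(V(C))$ is an equalizer of continuous tropical polynomial functions, and realize the subbasic closed sets $\{x_i\le a\}$ and $\{x_i\ge a\}$ as zero sets of the congruences generated by $(a,a\oplus Y_i)$ and $(Y_i,a\oplus Y_i)$. The opening remark of your step 1, that points are separated by tropical equations, is not an argument and can be dropped; the subbasis argument in steps 2--3 is what actually does the work.
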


\begin{proof}
  By \cite[Proposition 3.4]{rabinoff2012tropical}, 
  the natural bijection $\Hom(\Nbb^k,\Tbb)\to\Tbb^k$ is a homeomorphism.
  Thus it is enough to show that the Euclidean topology on $\Tbb^k$ 
  is equal to the topology on $\Tbb^k$ whose closed sets are of the form
  \[V_{\Tbb^k}(C)\coloneqq \{x\in \Tbb^k\mid f(x)=g(x) \text{ for any }(f,g)\in C\}\]
  as $C$ runs over the set of all congruences on $\Tbb[X_1,\dots,X_k]$.
  Since it is immediate that $V_{\Tbb^k}(C)$ is a closed subset under the Euclidean topology,
  it suffices to show that
  any open subset of $\Tbb^k$ under the Euclidean topology is the complement of $V_{\Tbb^k}(C)$ for some congruence $C$.

  The Euclidean topology on $\Tbb^k$ is generated by the subbasis consisting of open subsets of the form
  \[
  U_{i,a}^{+}=\{x\in \Tbb^k\mid  a<x_i\},\ U_{i,a}^{-}=\{x\in \Tbb^k\mid  x_i<a\},
  \]
  for some $i=1,\dots,k$ and $a\in \Rbb$.
  Let $C^{+}_{i,a}$ and $C^{-}_{i,a}$ be the congruences on $\Tbb[X_1,\dots,X_k]$ 
  generated by $(a,a\oplus X_i)$ and $(X_i,a\oplus X_i)$, respectively.
  Then the complement of $U_{i,a}^{\pm}$ is equal to $V_{\Tbb^k}(C^{\pm}_{i,a})$.
  Thus the assertion is proved.
\end{proof}

\begin{proposition}\label{proposition:TopEmb}
  Let $\Msf$ be an ordered monoid whose underlying monoid is finitely generated.
  Then the map 
  \[\Hom (\Msf,\Tbb)\to \CSpec_{\Tbb}\Tbb[\Msf],\,\phi\mapsto \Ker\overline{\phi}\] 
  is an injective map which induces a homeomorphism onto the subset $\CSpec^{\mathrm{geo}}\Tbb[\Msf]$.
\end{proposition}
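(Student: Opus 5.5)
We reduce to \cref{lemma:TopEmb} by choosing generators of $\Msf$. Fix generators $m_1,\dots,m_k$ of the underlying monoid of $\Msf$. This gives a surjective monoid homomorphism $p\colon\Nbb^k\to\Msf$, which is an o-homomorphism when $\Nbb^k$ carries the trivial order. Extending it to $\Rbb\times\Nbb^k\to\Rbb\times\Msf$ (identity on $\Rbb$) yields a surjective $\Tbb$-algebra homomorphism $\psi\colon\Tbb[\Nbb^k]\to\Tbb[\Msf]$. Surjectivity holds because every finite subset of $\Rbb\times\Msf$ lifts.

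The first step is injectivity and the image. By \eqref{eq:top emb} the map $\phi\mapsto\Ker\overline{\phi}$ is a bijection onto $\CSpec^{\mathrm{geo}}\Tbb[\Msf]$; this only needs a direct check. If $\Ker\overline{\phi}=\Ker\overline{\phi'}$, then for each $m$ the pairs $(m,r)$ with $r\in\Tbb$ lie in the kernel for $r=\phi(m)$, hence $\phi(m)=\phi'(m)$.

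The second step is to set up a commutative square. Pull-back $p^{*}\colon\Hom(\Msf,\Tbb)\to\Hom(\Nbb^k,\Tbb)$, $\phi\mapsto\phi\circ p$, is injective and continuous for pointwise convergence. Its image is the set of $\chi$ with $\chi(a)=\chi(b)$ whenever $p(a)=p(b)$ and $\chi(a)\ge\chi(b)$ whenever $p(a)\ge p(b)$. Since $\Hom(\Msf,\Tbb)$ is determined by the values on $m_1,\dots,m_k$, $p^{*}$ is a homeomorphism onto this image: a net converges pointwise on $\Msf$ iff it converges on generators, because every $m$ is a finite sum of generators and $\odot$ is continuous on $\Tbb$. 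On the other side, $\psi^{*}\colon\CSpec\Tbb[\Msf]\to\CSpec\Tbb[\Nbb^k]$ is a topological embedding by \cref{proposition:quotient induces inj}. It preserves geometric congruences and lying over $\Tbb$, since $\psi$ is a surjective $\Tbb$-algebra map. Moreover $\psi^{*}\Ker\overline{\phi}=\Ker\overline{\phi\circ p}$. So the square
\[\Hom(\Msf,\Tbb)\to\Hom(\Nbb^k,\Tbb)\to\CSpec_{\Tbb}\Tbb[\Nbb^k]\]
agrees with
\[\Hom(\Msf,\Tbb)\to\CSpec_{\Tbb}\Tbb[\Msf]\xrightarrow{\psi^{*}}\CSpec_{\Tbb}\Tbb[\Nbb^k]\]
and commutes.

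The third step concludes. By \cref{lemma:TopEmb} the bottom composite is a topological embedding, being a composite of two embeddings. Write $f$ for our map and $g=\psi^{*}$. Since $g\circ f$ is an embedding and $g$ is injective and continuous, $f$ is continuous: its inverse on $f(\Hom(\Msf,\Tbb))$ equals $(g\circ f)^{-1}\circ g$, which is continuous. The continuity of $f$ itself needs the embedding property of $g$: for a subset $A$ of the target, $f^{-1}(A)=(g\circ f)^{-1}(g(A))$, and $g(A)$ is open in $g(\text{target})$ when $A$ is open, because $g$ is an embedding. Hence $f$ is a homeomorphism onto its image $\CSpec^{\mathrm{geo}}\Tbb[\Msf]$.

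The main obstacle is the second step: verifying carefully that $p^{*}$ is an embedding (pointwise topology is determined by generators) and that $\psi^{*}$ intertwines $\Ker\overline{\phi}$ with $\Ker\overline{\phi\circ p}$. Both are straightforward, but the argument relies on them.
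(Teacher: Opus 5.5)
Your proof is correct and follows the paper's argument: you use the same commutative square coming from a surjective o-homomorphism $\Nbb^k\twoheadrightarrow\Msf$, with \cref{proposition:quotient induces inj} for the right vertical arrow and \cref{lemma:TopEmb} for the bottom one, and you also justify that the left vertical arrow $p^{*}$ is an embedding, which the paper only asserts. One small slip: the first sentence of your third step should conclude that $f^{-1}$ is continuous on the image of $f$, not that $f$ is continuous; your next sentence then correctly proves continuity of $f$ from the fact that $g$ is an embedding.
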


\begin{proof}
  Since any $\Msf$ is finitely generated, we can take some $k$ 
  and a surjective o-homomorphism $\Nbb^k\twoheadrightarrow \Msf$ where 
  $\Nbb^k$ is equipped with the trivial order.
  Then we obtain the commutative diagram below.
  \[
  \begin{tikzcd}
    \Hom(\Msf,\Tbb)\arrow[r,hook]\arrow[d,hook] & \CSpec_{\Tbb}\Tbb[\Msf]\arrow[d,hook]\\
    \Hom(\Nbb^k,\Tbb)\arrow[r,hook] & \CSpec_{\Tbb}\Tbb[\Nbb^k]
  \end{tikzcd}
  \]
  By \cref{proposition:quotient induces inj,lemma:TopEmb}, 
  both vertical arrows and the bottom arrow are topological embedding.
  Thus so is the top arrow is also a topological embedding. 
\end{proof}

\begin{remark}
  For a suitable monoid $\Msf$, 
  the space $\Hom(\Msf,\Tbb)$ is used as a local model of a tropical toric variety, 
  which is introduced by \cite{Kajiwara2008TropicalToric} and \cite{payne2009analytification}. 
  We have shown that $\Hom(\Msf,\Tbb)$ can be identified with 
  a subset $\CSpec^{\mathrm{geo}}\Tbb[\Msf]$ of $\CSpec_{\Tbb}\Tbb[\Msf]$.
  In this sense, a tropical toric variety can be regarded as a subspace of the space of prime congruences,
  as we will see in \cref{section:toric}.
\end{remark}

\section{Prime congruences and submonoids}\label{section:B[M]cong}

The aim of this section is to understand $\CSpec\Bb[\Msf]$ as a poset.
We mainly use \cref{proposition:JM}.
In \cref{subsection:The case of ordered groups}, we consider the problem in the case where $\Msf=G$ is an ordered group.
In \cref{subsection:The stratification,subsection:The stratification T}, we consider the problem in the general setting.

\subsection{Convex congruences}\label{subsection:convex congruences}

In this and the following subsection, we assume that $\Msf=G$ is an ordered group.
We first give some observations.
Let $C$ be a congruence on $\Bb[G]$ and $\pi_C:\Bb[G]\to \Bb[G]/C$ be the quotient homomorphism.
Then
\begin{align}\label{eq:right arrow}
  \Lsf(C)\coloneqq \{m\in G\mid \pi_C(m)\geq 0\}
\end{align}
is a submonoid of $G$. 
Since $\pi_C|_G:G\to\pi_{C}(G)$ is an o-homomorphism, 
the submonoid $\Lsf=\Lsf(C)$ satisfies the following conditions.
\begin{align}
  &\{m\in \Msf\mid m\geq 0\}\subseteq \Lsf. \tag{L1}\label{L1}
\end{align}

Conversely, for an arbitrary subset $A$ of $G$, 
we can define a congruence $\<A\geq 0\>$ on $\Bb[G]$ as follows.
\begin{definition}
  For a subset $A$ of $G$, 
  we define a congruence $\<A\geq 0\>$ on $\Bb[G]$ as the congruence generated 
  by $\{(m\oplus 0,m)\mid m\in A\}$, i.e.,
  \[\<A\geq 0\>\coloneqq \<\{(m\oplus 0,m)\mid m\in A\}\>.\]
  A \emph{convex congruence} is a congruence of the form $\<A\geq 0\>$ for a subset $A$ of $G$.
\end{definition}

For a subset $A$ of $G$, let $\Lsf$ be a submonoid generated by
$A$ and $\{m\in G\mid m\geq 0\}$.
Then we have
\begin{align}\label{eq:property1}
  \<A\geq 0\>=\<\Lsf\geq 0\>.
\end{align}
Furthermore, for a family $\{A_i\}_i$ of subsets of $G$, we have
\begin{align}\label{eq:property2}
  \left\<\bigcup_i A_i\geq 0\right\>=\sum_{i}\<A_i\geq 0\>, 
\end{align}
where $\sum_i\<A_i\geq 0\>$ is the congruence generated by $\bigcup_i\<A_i\geq 0\>$.

On the other hand, for an arbitrary submonoid $\Lsf$ of $G$ satisfying \eqref{L1}, 
We define an order of $G/(\Lsf\cap -\Lsf)$ as follows:
\begin{align}\label{eq:order}
  \overline{g_1}\geq \overline{g_2} \overset{\mathrm{def}}{\iff} g_1-g_2\in \Lsf, 
\end{align}
where $\overline{g_i}$ is the image of $g_i\in G$ in $G/(\Lsf\cap -\Lsf)$.
Then $G\to G/(\Lsf\cap -\Lsf)$ is an o-homomorphism and induces the homomorphism 
\begin{align}\label{eq:hoge}
  \pi:\Bb[G]\to \Bb[G/(\Lsf\cap -\Lsf)].
\end{align}

\begin{lemma}\label{lemma:later used}
  Let $A$ be a subset of $G$.
  Then $\<A\geq 0\>$ is equal to the kernel of the homomorphism $\pi$ defined in \eqref{eq:hoge}
  where $\Lsf$ is the submonoid generated by $A$ and $\{m\in G \mid m\geq 0\}$.
\end{lemma}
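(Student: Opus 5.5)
We show two inclusions. Write $K=\Ker\pi$, $\bar G=G/(\Lsf\cap-\Lsf)$, and $\<A\geq 0\>=\<\Lsf\geq 0\>$ by \eqref{eq:property1}.

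\emph{The inclusion $\<\Lsf\geq 0\>\subseteq K$.} Since $K$ is a congruence, it suffices to check the generators. For $m\in\Lsf$ we have $\overline{m}-\overline{0}=m\in\Lsf$, so $\overline{m}\geq\overline{0}$ in the order \eqref{eq:order}. Hence $\pi(m\oplus 0)=\overline m\oplus\overline 0=\overline m=\pi(m)$, because $\supp$ keeps only the maximal elements. Thus $(m\oplus 0,m)\in K$.

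\emph{The inclusion $K\subseteq\<\Lsf\geq 0\>$.} Let $C=\<\Lsf\geq 0\>$. The plan is to show that every $f=\sum_{m\in A}m$ is $C$-equivalent to a normal form depending only on $\pi(f)$.

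First, if $g_1-g_2\in\Lsf$, then $(g_1\oplus g_2,g_1)=g_2\odot((g_1-g_2)\oplus 0,\,g_1-g_2)\in C$, using that $C$ is closed under multiplication by the diagonal pair $(g_2,g_2)$. So $g_2$ can be deleted from any sum containing $g_1$, because $C$ is also closed under adding diagonal pairs.

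Second, if $g_1-g_2\in\Lsf\cap-\Lsf$, then $g_1\oplus g_2$ is $C$-equivalent both to $g_1$ and to $g_2$, so $(g_1,g_2)\in C$.

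Now take $f=\sum_{m\in A}m$. Choose representatives of the maximal elements of $\pi(A)$ in $\bar G$: for each maximal class pick one $m\in A$ lying in it, and call the resulting set $B$. Every $m'\in A$ satisfies $\overline{m'}\leq\overline{m}$ for some $m\in B$, that is, $m-m'\in\Lsf$. By the first step, $f$ is $C$-equivalent to $\sum_{m\in B}m$.

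Suppose $(f,f')\in K$. Then the maximal classes of $\pi(A)$ and $\pi(A')$ coincide, so the chosen sets $B$ and $B'$ are in bijection with elements differing by $\Lsf\cap-\Lsf$. By the second step, $\sum_{m\in B}m$ and $\sum_{m\in B'}m$ are $C$-equivalent. Transitivity then gives $(f,f')\in C$.

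The main obstacle is the bookkeeping in this normal-form reduction. We must verify that $\pi(f)$ is exactly the set of maximal classes, which uses that $\supp$ records maximal elements, as in \eqref{eq:bij}. We must also ensure that the deletions in the first step can be performed simultaneously, which follows from compatibility of $C$ with $\oplus$. The empty-sum case $f=-\infty$ is trivial.
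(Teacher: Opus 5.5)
Your proof is correct and follows essentially the paper's route. The paper reduces to the finite set $A_{(f_1,f_2)}$ of differences $m_+-m_-$ of support elements with $\pi(m_+)\geq\pi(m_-)$, observes that this set lies in $\Lsf$, and simply asserts $(f_1,f_2)\in\<A_{(f_1,f_2)}\geq 0\>$; your normal-form reduction (deleting dominated terms, then identifying representatives of equal classes) is exactly the verification of that asserted step, and it uses only relations from this finite set.
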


\begin{proof}
  By \eqref{eq:property1}, we may assume $A=\Lsf$.
  An inclusion $\<\Lsf\geq 0\>\subseteq \Ker\pi$ is immediate from the definitions.
  To show the opposite inclusion, we take an arbitrary element $(f_1,f_2)\in \Ker\pi$.
  Define a subset
  \begin{align*}
    A_{(f_1,f_2)}\coloneqq \{m_{+}-m_{-}\mid m_{+},m_{-}\in \supp(f_1)\cup\supp(f_2)\text{ such that } \pi(m_{+})\geq \pi(m_{-})\}.
  \end{align*}
  Since the o-homomorphism $G\to G/(\Lsf\cap -\Lsf)$ is a restriction of $\pi$, 
  $A_{(f_1,f_2)}$ is a subset of $\Lsf$.
  Hence we have $(f_1,f_2)\in \<A_{(f_1,f_2)}\geq 0\>\subseteq \<\Lsf\geq 0\>$.
\end{proof}

\begin{lemma}\label{lemma:later used2}
  For a submonoid $\Lsf$ of $G$ satisfying \eqref{L1}, we have $\Lsf(\<\Lsf\geq 0\>)=\Lsf$.
  In particular, the map $\Lsf\mapsto \<\Lsf\geq 0\>$ is injective.
\end{lemma}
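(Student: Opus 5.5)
By \cref{lemma:later used}, applied with $A=\Lsf$, the congruence $\<\Lsf\geq 0\>$ is the kernel of the homomorphism $\pi\colon\Bb[G]\to\Bb[G/(\Lsf\cap -\Lsf)]$ of \eqref{eq:hoge}. Note that $\Lsf$ already contains $\{m\geq 0\}$ by \eqref{L1}, so the submonoid generated by $A=\Lsf$ together with $\{m\in G\mid m\geq 0\}$ is $\Lsf$ itself. By \cref{proposition:thm of hom}, the quotient $\Bb[G]/\<\Lsf\geq 0\>$ is therefore isomorphic to the image of $\pi$, and $\pi_{\<\Lsf\geq 0\>}$ is identified with $\pi$ followed by this isomorphism. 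Since the isomorphism is a semiring isomorphism, it preserves the natural order \eqref{eq:B-alg poset}. Hence
\[\Lsf(\<\Lsf\geq 0\>)=\{m\in G\mid \pi(m)\geq 0 \text{ in } \Bb[G/(\Lsf\cap-\Lsf)]\}.\]

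Next we compute this set. For $m\in G$, the element $\pi(m)$ is the singleton $\overline{m}$ in $\Bb[G/(\Lsf\cap -\Lsf)]$. We use the earlier remark that the order on an ordered monoid agrees with the order induced from its $\Bb$-algebra. This gives $\pi(m)\geq 0$ if and only if $\overline{m}\geq\overline{0}$ in the order \eqref{eq:order}, that is, if and only if $m-0=m\in\Lsf$. Concretely, $\overline m\oplus \overline 0=\overline m$ holds exactly when the maximal elements of $\{\overline m,\overline 0\}$ reduce to $\overline m$, which means $\overline m\geq\overline 0$. This yields $\Lsf(\<\Lsf\geq 0\>)=\Lsf$.

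The only point that needs care is that \eqref{eq:order} really is a partial order compatible with addition, so that $\Bb[G/(\Lsf\cap-\Lsf)]$ is defined. Reflexivity holds because $0\in\Lsf$. Transitivity holds because $\Lsf$ is a submonoid. Antisymmetry holds because $g_1-g_2\in\Lsf\cap-\Lsf$ forces $\overline{g_1}=\overline{g_2}$. Well-definedness and translation invariance follow since $\Lsf+(\Lsf\cap-\Lsf)\subseteq\Lsf$. Finally, the quotient map $G\to G/(\Lsf\cap -\Lsf)$ is an o-homomorphism by \eqref{L1}. The injectivity claim then follows immediately: if $\<\Lsf_1\geq0\>=\<\Lsf_2\geq0\>$, then applying $\Lsf(-)$ to both sides gives $\Lsf_1=\Lsf_2$.
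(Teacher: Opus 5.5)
Your proof is correct and follows essentially the paper's argument: you compare $\<\Lsf\geq 0\>$ with the homomorphism $\pi\colon\Bb[G]\to\Bb[G/(\Lsf\cap-\Lsf)]$ and read off $\pi(m)\geq 0\iff m\in\Lsf$ from the order \eqref{eq:order}. The only difference is that you use the full equality $\<\Lsf\geq 0\>=\Ker\pi$ from \cref{lemma:later used} to get both inclusions at once, whereas the paper gets $\Lsf\subseteq\Lsf(\<\Lsf\geq 0\>)$ directly from the generators and needs only the easy inclusion $\<\Lsf\geq 0\>\subseteq\Ker\pi$ for the reverse direction.
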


\begin{proof}
  We have $\Lsf\subseteq \Lsf(\<\Lsf\geq 0\>)$ from the definition of $\<\Lsf\geq 0\>$.
  To show the opposite inclusion, we take an arbitrary element $m\in \Lsf(\<\Lsf\geq 0\>)$.
  Then we have $\pi(m)\geq 0$ in $\Bb[G/(\Lsf\cap -\Lsf)]$.
  By the definition of the order of $G/(\Lsf\cap -\Lsf)$, we have $m\in \Lsf$.
\end{proof}

Thus two maps $  C \mapsto \Lsf(C)$ and $\Lsf \mapsto \<\Lsf\geq 0\>$ give a one-to-one correspondence:
\begin{align}\label{eq:0-th bijection}
  \{C \mid C \text{ is a convex congruence on } \Bb[G]\} \leftrightarrow 
  \{\Lsf \mid \Lsf \text{ is a submonoid of } G \text{ satisfying } \eqref{L1}\}.
\end{align}
Note that the correspondence preserves inclusions.

The following claim is crucial in \cref{section:fg}.

\begin{proposition}\label{proposition:new fg}
  Let $C$ be a convex congruence on $\Bb[G]$.
  Then the following are equivalent:
  \begin{itemize}
    \item $C$ is finitely generated as a congruence,
    \item there exists a finite subset $A$ of $G$ such that 
    the corresponding submonoid $\Lsf(C)$ is generated by $A$ and $\{m\in G\mid m\geq 0\}$.
  \end{itemize}
\end{proposition}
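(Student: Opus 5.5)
The plan is to prove both directions using the correspondence \eqref{eq:0-th bijection}, together with \eqref{eq:property1}, \eqref{eq:property2} and \cref{lemma:later used}.

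\emph{Second condition implies the first.} Suppose $\Lsf(C)$ is generated by a finite set $A$ together with $\{m\in G\mid m\geq 0\}$. By \eqref{eq:property1} and the bijection \eqref{eq:0-th bijection}, we have $C=\<\Lsf(C)\geq 0\>=\<A\geq 0\>$. The latter congruence is generated by the finite set $\{(m\oplus 0,m)\mid m\in A\}$, so $C$ is finitely generated.

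\emph{First condition implies the second.} Suppose $C=\<(f_1,g_1),\dots,(f_k,g_k)\>$. For each pair $(f_i,g_i)$, the proof of \cref{lemma:later used} provides a finite set
\[A_i\coloneqq A_{(f_i,g_i)}\subseteq \Lsf(C),\]
namely the differences $m_+-m_-$ with $m_\pm\in\supp(f_i)\cup\supp(g_i)$ and $\pi(m_+)\geq\pi(m_-)$. Here $\pi$ is the quotient map attached to $\Lsf(C)$, whose kernel is $C$ by \cref{lemma:later used}, and that proof shows $(f_i,g_i)\in\<A_i\geq 0\>$. Set $A=\bigcup_i A_i$, which is finite. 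Then $C\subseteq\<A\geq 0\>$ because each generator of $C$ lies in the right-hand side. Conversely, $A\subseteq\Lsf(C)$ gives $\<A\geq 0\>\subseteq\<\Lsf(C)\geq 0\>=C$. Hence $C=\<A\geq 0\>$. By \cref{lemma:later used2} applied to the submonoid $\Lsf'$ generated by $A$ and $\{m\geq 0\}$, together with \eqref{eq:property1}, we get $\Lsf(C)=\Lsf(\<\Lsf'\geq 0\>)=\Lsf'$, which is the second condition.

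The main obstacle is the claim $(f_i,g_i)\in\<A_{(f_i,g_i)}\geq 0\>$, which the proof of \cref{lemma:later used} asserts rather briefly. If it needs justification, I would argue as follows. In $\Bb[G]/\<A_i\geq 0\>$, every relevant comparison $\pi(m_+)\geq\pi(m_-)$ holds, because $m_+-m_-\geq 0$ there, which gives $m_+\geq m_-$ after multiplying by $m_-$. Consequently two monomials with equal $\pi$-image become equal. The maximal elements of $\supp(f_i)$ and of $\supp(g_i)$ under this preorder then coincide, since $\pi(f_i)=\pi(g_i)$. Therefore $f_i$ and $g_i$ have the same image in $\Bb[G]/\<A_i\geq 0\>$, which is what is needed.
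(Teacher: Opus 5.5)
Your proposal is correct and follows essentially the same route as the paper. The easy direction goes via \eqref{eq:property1}, and for the converse the finite sets $A_{(f_1,f_2)}$ from the proof of \cref{lemma:later used} are combined into a union $A$ with $C=\<A\geq 0\>$; your justification of $(f_i,g_i)\in\<A_i\geq 0\>$ and your explicit use of \cref{lemma:later used2} to identify $\Lsf(C)$ with the submonoid generated by $A$ and $\{m\in G\mid m\geq 0\}$ fill in steps the paper states only tersely.
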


\begin{proof}
  By \eqref{eq:property1}, it is immediate that the latter implies the former.

  Assume that $C$ is generated by a finite set $\Acal\subseteq \Bb[G]^2$.
  As in the proof of \cref{lemma:later used}, we define a finite subset of $G$ by
  \[A_{(f_1,f_2)}\coloneqq \{g_{+}-g_{-}\in G\mid g_{+}, g_{-}\in\supp(f_1)\cup \supp(f_2),\ \overline{g_{+}}\geq \overline{g_{-}}\text{ in }G/(\Lsf(C)\cap -\Lsf(C))\},\]
  for a pair $(f_1,f_2)$.
  Since $\<A_{(f_1,f_2)}\geq 0\>$ contains a pair $(f_1,f_2)$, 
  a finite subset $A\coloneqq\bigcup_{(f_1,f_2)\in \Acal}A_{(f_1,f_2)}$ of $G$ satisfies
  \[\<A\geq 0\>=\sum_{(f_1,f_2)\in \Acal}\<A_{(f_1,f_2)}\geq 0\>\supseteq C.\]
  The opposite inclusion is clear from the definition of $A$. Thus we have $C=\<A\geq 0\>$.
  By \eqref{eq:property1}, the claim holds.
\end{proof}

\subsection{The case of ordered groups}\label{subsection:The case of ordered groups}

%ここからprime congruencesの話．
Now we consider the case of prime congruences.
Since we assume that $G$ is an ordered group, \cref{proposition:JM} can be stated more simply as follows.

\begin{lemma}\label{lemma:simple JM}
  A congruence $C$ on $\Bb[G]$ is prime if and only if $\Bb[G]/C$ is totally ordered.
\end{lemma}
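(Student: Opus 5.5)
By \cref{proposition:JM}, $C$ is prime if and only if $\Bb[G]/C$ is totally ordered and cancellative. The ``only if'' direction of the lemma is therefore immediate. For the ``if'' direction it suffices to show that, when $G$ is a group and $S\coloneqq\Bb[G]/C$ is totally ordered, $S$ is automatically cancellative.

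\textbf{Step 1: every nonzero element of $S$ is a unit.} Set $\pi=\pi_C$. Since $S$ is totally ordered, the image of an element $f=\bigoplus_{m\in\supp f} m$ is the maximum of the finitely many elements $\pi(m)$, $m\in\supp f$. Hence every element of $S$ is either $-\infty$ or equal to $\pi(m)$ for some $m\in G$. Each $\pi(m)$ is invertible, with inverse $\pi(-m)$.

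\textbf{Step 2: cancellativity.} Let $ab=ac$ with $a\neq-\infty$. By Step 1, $a$ is invertible, and multiplying both sides by $a^{-1}$ gives $b=c$. So $S$ is cancellative, and \cref{proposition:JM} shows that $C$ is prime.

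\textbf{The case $C=\Bb[G]\times\Bb[G]$.} Here the quotient is trivial, and the semiring conventions of the paper have to be checked: semirings are required to be nontrivial, so the degenerate quotient is excluded from ``totally ordered'' in the same way as in \cref{proposition:JM}. No other obstacle is expected.
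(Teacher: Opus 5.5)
Your proof is correct and follows the route the paper intends. The paper only cites \cite[Corollary 2.6]{joo2026varieties} and presents the lemma as \cref{proposition:JM} specialized to ordered groups; your observation that every nonzero element of $\Bb[G]/C$ is the image of some $m\in G$, hence a unit, is exactly why cancellativity becomes automatic, and your treatment of $C=\Bb[G]\times\Bb[G]$ via the nontriviality convention matches the paper's conventions.
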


\begin{proof}
  See \cite[Corollary 2.6]{joo2026varieties}.
\end{proof}

\begin{lemma}
  Prime congruences on $\Bb[G]$ are convex.
\end{lemma}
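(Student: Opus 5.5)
The plan is to show that a prime congruence $P$ on $\Bb[G]$ coincides with the convex congruence $\<\Lsf(P)\geq 0\>$. Set $\Lsf=\Lsf(P)$. By \eqref{L1}, $\Lsf$ contains $\{m\in G\mid m\geq 0\}$. For each $m\in\Lsf$ we have $\pi_P(m)\geq 0$, which means $\pi_P(m\oplus 0)=\pi_P(m)$. Hence $(m\oplus 0,m)\in P$, and this gives the inclusion $\<\Lsf\geq 0\>\subseteq P$.

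For the reverse inclusion I would use \cref{lemma:later used}, which identifies $\<\Lsf\geq 0\>$ with the kernel of $\pi\colon\Bb[G]\to\Bb[G/(\Lsf\cap-\Lsf)]$. It then suffices to show that $\pi_P$ factors through $\pi$, or more directly that $\Ker\pi\supseteq P$.

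The first step is to note that every element of $G$ is invertible, so $\pi_P(g)\neq-\infty$ for all $g\in G$. Since $\Bb[G]/P$ is totally ordered by \cref{lemma:simple JM}, $\pi_P(G)$ is a totally ordered group and $\Bb[G]/P=\pi_P(G)\sqcup\{-\infty\}$, as in \cref{lemma:prime isomorphism}. The kernel of $G\to\pi_P(G)$ is exactly $\Lsf\cap-\Lsf$. Moreover, for $g_1,g_2\in G$ we have $\pi_P(g_1)\geq\pi_P(g_2)$ if and only if $g_1-g_2\in\Lsf$. Therefore $G/(\Lsf\cap-\Lsf)$ with the order \eqref{eq:order} is o-isomorphic to $\pi_P(G)$, and this order is total. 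In particular $\Bb[G/(\Lsf\cap-\Lsf)]$ is the totally ordered semiring $\pi_P(G)\sqcup\{-\infty\}$.

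The second step is to take $(f_1,f_2)\in P$ and apply the homomorphism $\Bb[G]\to\Bb[\pi_P(G)]\cong\Bb[G]/P$ induced by the o-homomorphism $G\to\pi_P(G)$. This homomorphism sends $f=\sum_{m\in A}m$ to $\max_{m\in A}\pi_P(m)$, which equals $\pi_P(f)$ because $\pi_P$ is a semiring homomorphism and $\oplus$ is the maximum in a totally ordered $\Bb$-algebra. Hence this induced map coincides with $\pi_P$ under the identification above, and it also coincides with $\pi$. Thus $\pi(f_1)=\pi_P(f_1)=\pi_P(f_2)=\pi(f_2)$, so $P\subseteq\Ker\pi=\<\Lsf\geq 0\>$.

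The main obstacle is this identification of $\pi$ with $\pi_P$. It requires checking that the quotient order \eqref{eq:order} agrees with the order in $\Bb[G]/P$, in particular that $\overline{g_1}\geq\overline{g_2}$ holds exactly when $g_1-g_2\in\Lsf$, using invertibility of $\pi_P(g_2)$ and translation invariance of the order. Once that is done, the equality $P=\<\Lsf(P)\geq 0\>$ follows, so $P$ is convex.
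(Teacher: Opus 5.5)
Your proposal is correct and follows the paper's proof: the paper also establishes the o-isomorphism $G/(\Lsf(P)\cap-\Lsf(P))\cong\pi_P(G)$ via $\overline{g_1}\geq\overline{g_2}\iff g_1-g_2\in\Lsf(P)\iff\pi_P(g_1)\geq\pi_P(g_2)$, and then concludes $P=\<\Lsf(P)\geq 0\>$ from \cref{lemma:later used}, which implicitly relies on the identification of $\pi$ with $\pi_P$ that you spell out. Your separate check of $\<\Lsf\geq 0\>\subseteq P$ is harmless but redundant, since $\Ker\pi=\Ker\pi_P$ already gives both inclusions.
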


\begin{proof}
  Let $P$ be a prime congruence on $\Bb[G]$ and $\pi_P:\Bb[G]\to \Bb[G]/P$ be the quotient homomorphism.
  By \cref{lemma:prime isomorphism}, there is a natural isomorphism $\Bb[G]/P \cong \Bb[\pi_P(G)]$.

  As we did in \eqref{eq:right arrow}, we can define a submonoid $\Lsf(P)$ of $G$ by 
  \[\Lsf(P)=\{m\in G\mid \pi_P(m)\geq 0\}.\]
  There is an isomorphism of groups:
  \begin{align}\label{eq:cone-thm}
    G/(\Lsf(P)\cap -\Lsf(P))\xrightarrow{\cong} \pi_P(G),\, \overline{g}\mapsto \pi_P(g).
  \end{align}
  Since the order of $G/(\Lsf(P)\cap -\Lsf(P))$ is defined by \eqref{eq:order},
  \[\overline{g_1}\geq \overline{g_2}\iff  g_1-g_2\in \Lsf(P)\iff \pi_P(g_1-g_2)\geq 0\iff \pi_P(g_1)\geq \pi_P(g_2)\]
  holds. Thus \eqref{eq:cone-thm} is an o-isomorphism.
  By \cref{lemma:later used}, we have $P=\<\Lsf(P)\geq 0\>$.
\end{proof}

If $P$ is a prime congruence on $\Bb[G]$, then $\Lsf(P)$ satisfies not only \eqref{L1} 
but also the following condition:
\begin{align}
  &\Lsf\cup -\Lsf=G. \tag{L2}\label{L2}
\end{align}
We denote by $\Lcal_G$ the set of all submonoids of $G$ satisfying \eqref{L1} and \eqref{L2}.

Conversely, if a submonoid $\Lsf$ of $G$ satisfies \eqref{L1} and \eqref{L2}, 
then the congruence $\<\Lsf\geq 0\>$ is prime by \cref{lemma:simple JM}.
Thus we obtain a one-to-one correspondence as a restriction of \eqref{eq:0-th bijection}.

\begin{proposition}\label{proposition:corr inclusion group B}
  The one-to-one correspondence of \eqref{eq:0-th bijection} induces a one-to-one correspondence 
  between $\CSpec \Bb[G]$ and $\Lcal_G$.
  Furthermore, it preserves inclusions.
\end{proposition}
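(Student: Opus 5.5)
The plan is to restrict the bijection \eqref{eq:0-th bijection} between convex congruences on $\Bb[G]$ and submonoids of $G$ satisfying \eqref{L1}. There are three things to check: both maps send the relevant subsets into each other, the two maps are mutually inverse on these subsets, and inclusions are preserved in both directions.

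First, take $P\in\CSpec\Bb[G]$. By the preceding lemma, $P$ is convex, so $P=\<\Lsf(P)\geq 0\>$. I then check that $\Lsf(P)\in\Lcal_G$. Condition \eqref{L1} was already observed for any congruence. For \eqref{L2}, take $g\in G$. By \cref{lemma:simple JM}, the quotient $\Bb[G]/P$ is totally ordered, so $\pi_P(g)\geq 0$ or $\pi_P(g)\leq 0$. In the second case we get $\pi_P(-g)=-\pi_P(g)\geq 0$, using that multiplication by the invertible element $\pi_P(-g)$ preserves the order in the idempotent semiring. Hence $g\in\Lsf(P)\cup-\Lsf(P)$.

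Conversely, let $\Lsf\in\Lcal_G$. By \cref{lemma:later used}, $\<\Lsf\geq 0\>$ is the kernel of $\pi\colon\Bb[G]\to\Bb[G/(\Lsf\cap-\Lsf)]$. So $\Bb[G]/\<\Lsf\geq 0\>$ is isomorphic to the image of $\pi$ (\cref{proposition:thm of hom}), and $\pi$ is surjective since every element of the target is a finite sum of classes $\overline{g}$. Condition \eqref{L2} makes the order \eqref{eq:order} on $G/(\Lsf\cap-\Lsf)$ total: for any $\overline{g_1},\overline{g_2}$, either $g_1-g_2\in\Lsf$ or $g_2-g_1\in\Lsf$. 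For a totally ordered group $H$, every element of $\Bb[H]$ is $-\infty$ or a single element of $H$ by \eqref{eq:bij}, so $\Bb[H]$ is totally ordered. Then \cref{lemma:simple JM} gives that $\<\Lsf\geq 0\>$ is prime. One should also confirm this congruence is proper, i.e.\ $(0,-\infty)\notin\<\Lsf\geq 0\>$, which is clear because $\pi(0)\neq-\infty$.

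The maps are mutually inverse. Indeed, $\Lsf(\<\Lsf\geq 0\>)=\Lsf$ holds by \cref{lemma:later used2}, and $\<\Lsf(P)\geq 0\>=P$ holds by the convexity lemma. This gives the bijection.

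For inclusions, suppose $\Lsf_1\subseteq\Lsf_2$. Then $\<\Lsf_1\geq 0\>\subseteq\<\Lsf_2\geq 0\>$ is immediate from the generators. Suppose instead $P_1\subseteq P_2$. Then there is a surjection $\Bb[G]/P_1\to\Bb[G]/P_2$ compatible with the quotient maps. Any order-preserving homomorphism of idempotent semirings maps $\pi_{P_1}(m)\geq 0$ to $\pi_{P_2}(m)\geq 0$, so $\Lsf(P_1)\subseteq\Lsf(P_2)$.

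The only mildly delicate step is the verification that a totally ordered group quotient makes $\Bb[\cdot]$ totally ordered and that primality follows. Everything else is bookkeeping with the earlier lemmas.
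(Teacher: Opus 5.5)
Your proof is correct and takes essentially the same route as the paper. In the text just before the statement, the paper restricts the bijection \eqref{eq:0-th bijection}: it gets \eqref{L2} from the convexity of prime congruences and the total order on $\Bb[G]/P$, proves primality of $\<\Lsf\geq 0\>$ via \cref{lemma:simple JM}, and inherits inclusion-preservation from \eqref{eq:0-th bijection}. You additionally spell out details the paper leaves implicit, namely the totality of the order on $\Bb[G/(\Lsf\cap -\Lsf)]$ and the properness of $\<\Lsf\geq 0\>$.
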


%proofなし

As illustrated in \cref{figure:RM-flag,figure:red blue}, for a typical ordered group, 
submonoids satisfying \eqref{L1} and \eqref{L2} necessarily have a very restricted form.
See \cref{section:Mflag} for the details.

\begin{proposition} \label{proposition:monoid pull-back} 
  Let $\phi:\Bb[G_1]\to\Bb[G_2]$ be the semiring homomorphism induced by an o-homomorphism $\psi:G_1\to G_2$.
  For $P_2\in\CSpec \Bb[G_2]$, we have
  $\Lsf(\phi^{*}P_2)=\psi^{-1}(\Lsf(P_2))$.
\end{proposition}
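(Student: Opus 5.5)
The plan is to unwind the definitions and compare the two quotient maps directly. Write $P_1=\phi^{*}P_2$. Let $\pi_1\colon\Bb[G_1]\to\Bb[G_1]/P_1$ and $\pi_2\colon\Bb[G_2]\to\Bb[G_2]/P_2$ be the quotient maps. By the definition of the pull-back \eqref{eq:pull-back}, $\Ker(\pi_2\circ\phi)=P_1$. Hence \cref{proposition:thm of hom} gives an injective semiring homomorphism
\[\bar\phi\colon\Bb[G_1]/P_1\to\Bb[G_2]/P_2\quad\text{with}\quad\bar\phi\circ\pi_1=\pi_2\circ\phi.\]
The quantity to compute is $\Lsf(P_1)=\{m\in G_1\mid \pi_1(m)\geq 0\}$, where $\geq$ is the natural order \eqref{eq:B-alg poset} of the $\Bb$-algebra $\Bb[G_1]/P_1$.

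The key step is that an injective homomorphism of $\Bb$-algebras both preserves and reflects the natural order. For $x,y\in\Bb[G_1]/P_1$, the relation $x\geq y$ means $x\oplus y=x$. Applying $\bar\phi$ gives $\bar\phi(x)\oplus\bar\phi(y)=\bar\phi(x)$. Conversely, if $\bar\phi(x\oplus y)=\bar\phi(x)$, then injectivity gives $x\oplus y=x$. Thus $x\geq y$ if and only if $\bar\phi(x)\geq\bar\phi(y)$.

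Next I will use that $\phi$ is induced by $\psi$, so on monomials $\phi(m)=\psi(m)$ for $m\in G_1$; also $\bar\phi(0)=0$. Then for $m\in G_1$:
\[m\in\Lsf(P_1)\iff \pi_1(m)\geq 0\iff \bar\phi(\pi_1(m))\geq 0\iff \pi_2(\psi(m))\geq 0\iff \psi(m)\in\Lsf(P_2).\]
This is exactly $\Lsf(\phi^{*}P_2)=\psi^{-1}(\Lsf(P_2))$.

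No step is a serious obstacle. The only point needing care is that the order used in \eqref{eq:right arrow} is the natural $\Bb$-algebra order on the quotient, so that order reflection under the injective map $\bar\phi$ applies. Primality is not actually used beyond making $\phi^{*}P_2$ a prime congruence, so the identity holds for pull-backs of arbitrary congruences as well.
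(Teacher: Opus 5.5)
Your proof is correct and is essentially the paper's argument: the paper unwinds the same definitions directly at the level of congruences, via $g\in\Lsf(\phi^{*}P_2)\iff (g\oplus 0,g)\in\phi^{*}P_2\iff(\phi(g)\oplus 0,\phi(g))\in P_2\iff \phi(g)\in\Lsf(P_2)$. Your order-reflection step for the injective map $\bar\phi$ is just the quotient-level restatement of the definition of the pull-back, so the two routes coincide.
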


\begin{proof}
  We have
  \begin{align*}
    g\in \Lsf(\phi^{*}P_2) &\iff (g\oplus 0,g)\in \phi^{*}P_2\\
    &\iff (\phi(g)\oplus 0,\phi(g))=\phi((g\oplus 0,g))\in P_2\\
    &\iff \phi(g)\in \Lsf(P_2)\\
    &\iff g\in \phi^{-1}(\Lsf(P_2))\cap G_1=\psi^{-1}(\Lsf(P_2)).
  \end{align*}
  Thus the equality holds.
\end{proof}

%ここからT係数の場合．まずlying over Tについて．
We consider the case of a $\Tbb$-algebra $\Tbb[G]=\Bb[\Rbb\times G]$.
By \cref{proposition:corr inclusion group B}, we have an inclusion-preserving one-to-one correspondence as follows:
\begin{align}\label{eq:TTTT}
  \CSpec \Tbb[G] \longleftrightarrow \Lcal_{\Rbb\times G}=\{\Lsf\subseteq\Rbb\times G\mid \text{$\Lsf$ is a submonoid satisfying \eqref{L1} and \eqref{L2}} \}.  
\end{align}
To describe the structure of $\CSpec \Tbb[G]$ in more detail,
we next decompose an element of $\Lcal_{\Rbb\times G}$ into two submonoids.

Let $\Lsf$ be a submonoid of $\Rbb\times G$ satisfying \eqref{L1} and \eqref{L2}.
Since the order of $\Rbb\times G$ is given by \cref{definition:T[M]}, the projection 
\[\Pi:\Rbb\times G\to G\] 
is an o-homomorphism.
We define submonoids as follows:
\begin{align}
  \begin{split}\label{eq:Lin}
    &\Lsf^{\mathrm{out}} \coloneqq \Pi(\Lsf),\\
    &\Lsf^{\mathrm{inn}} \coloneqq \Lsf\cap (\Rbb\times (\Lsf^{\mathrm{out}}\cap -\Lsf^{\mathrm{out}})).
  \end{split}
\end{align}
It is straightforward to check that the submonoids 
$\Lsf^{\mathrm{out}}\subseteq G$ and $\Lsf^{\mathrm{inn}}\subseteq \Rbb\times (\Lsf^{\mathrm{out}}\cap -\Lsf^{\mathrm{out}})$ 
both satisfy \eqref{L1} and \eqref{L2}.

By translating the first coordinate,
a group $\Rbb$ acts on $\Rbb\times G$, and thus also on $\Rbb\times (\Lsf^{\mathrm{out}}\cap -\Lsf^{\mathrm{out}})$.
Any orbit of the action on $\Rbb\times (\Lsf^{\mathrm{out}}\cap -\Lsf^{\mathrm{out}})$ 
is written as $\Pi^{-1}(g)$ for some $g\in \Lsf^{\mathrm{out}}\cap -\Lsf^{\mathrm{out}}$.
Then the submonoid $\Lsf^{\mathrm{inn}}$ in \eqref{eq:Lin} satisfies the following condition:
\begin{align}
  \text{Every orbit intersects $\Lsf^{\mathrm{inn}}$.} \tag{L3}\label{L3}
\end{align}

Conversely, we take arbitrary submonoids
$\Lsf^{\mathrm{out}}\in \Lcal_{G}$ and $\Lsf^{\mathrm{inn}}\in \Lcal_{\Rbb\times (\Lsf^{\mathrm{out}}\cap -\Lsf^{\mathrm{out}})}$ 
where $\Lsf^{\mathrm{inn}}$ satisfies the condition \eqref{L3}.
Then we define the submonoid of $\Rbb\times G$ as follows:
\begin{align}\label{eq:Lin conv}
  \Lsf\coloneqq (\Rbb\times (\Lsf^{\mathrm{out}}\setminus (\Lsf^{\mathrm{out}}\cap -\Lsf^{\mathrm{out}})))\cup \Lsf^{\mathrm{inn}}.
\end{align}

The construction \eqref{eq:Lin} and \eqref{eq:Lin conv} yield the following one-to-one correspondence:

\begin{proposition}\label{proposition:corr group T}
  Let $G$ be an ordered group.
  There is a bijection between $\CSpec \Tbb[G]$ and 
  \begin{align*}
    \left\{(\Lsf^{\mathrm{out}},\Lsf^{\mathrm{inn}})\ \middle|\  
      \begin{aligned}
        &\text{$\Lsf^{\mathrm{out}}$ is a submonoid of $G$ satisfying \eqref{L1} and \eqref{L2},}\\
        &\text{$\Lsf^{\mathrm{inn}}$ is a submonoid of $\Rbb\times(\Lsf^{\mathrm{out}}\cap -\Lsf^{\mathrm{out}})$ satisfying \eqref{L1}, \eqref{L2}, and \eqref{L3}.} 
      \end{aligned}
    \right\}
  \end{align*}
\end{proposition}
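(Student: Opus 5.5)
By \eqref{eq:TTTT}, $\CSpec\Tbb[G]$ is already in inclusion-preserving bijection with $\Lcal_{\Rbb\times G}$. So it suffices to show that the constructions \eqref{eq:Lin} and \eqref{eq:Lin conv} are mutually inverse maps between $\Lcal_{\Rbb\times G}$ and the set of pairs in the statement. Throughout, write $K\coloneqq\Lsf^{\mathrm{out}}\cap-\Lsf^{\mathrm{out}}$, a subgroup of $G$.

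\emph{Step 1: \eqref{eq:Lin} lands in the set of pairs.} Take $\Lsf\in\Lcal_{\Rbb\times G}$.
\begin{itemize}
\item $\Lsf^{\mathrm{out}}=\Pi(\Lsf)$ is a submonoid. Property \eqref{L1} holds because $g\geq 0$ gives $(0,g)\geq 0$, hence $(0,g)\in\Lsf$. Property \eqref{L2} follows from \eqref{L2} for $\Lsf$ by projecting.
\item $\Lsf^{\mathrm{inn}}$ is an intersection of submonoids, hence a submonoid. Property \eqref{L1} holds because an element $(r,g)\geq 0$ with $g\in K$ already lies in $\Lsf$. Property \eqref{L2} holds because \eqref{L2} for $\Lsf$, restricted to the subgroup $\Rbb\times K$, gives it.
\item \eqref{L3} holds since for $g\in K$ some $(r,g)\in\Lsf$ exists by definition of $\Pi(\Lsf)$.
\end{itemize}

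\emph{Step 2: the key observation.} For $g\in\Lsf^{\mathrm{out}}\setminus K$ I claim $\Rbb\times\{g\}\subseteq\Lsf$.
\begin{itemize}
\item Pick $(r,g)\in\Lsf$. Then $(r+s,g)\in\Lsf$ for all $s\geq 0$, since $(s,0)\geq 0$ lies in $\Lsf$ by \eqref{L1}.
\item For the downward direction, suppose $(t,g)\notin\Lsf$ for some $t$. By \eqref{L2}, $(-t,-g)\in\Lsf$, so $-g\in\Pi(\Lsf)$. This forces $g\in K$, a contradiction.
\end{itemize}
Hence $\Lsf=(\Rbb\times(\Lsf^{\mathrm{out}}\setminus K))\cup\Lsf^{\mathrm{inn}}$, since every element of $\Lsf$ has second coordinate in $\Lsf^{\mathrm{out}}$. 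This shows that \eqref{Lin conv} applied to the output of \eqref{eq:Lin} returns $\Lsf$.

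\emph{Step 3: the converse direction, which I expect to be the main obstacle.} Given a pair $(\Lsf^{\mathrm{out}},\Lsf^{\mathrm{inn}})$, let $\Lsf$ be defined by \eqref{eq:Lin conv}. I must show $\Lsf\in\Lcal_{\Rbb\times G}$ and recover the pair.

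For closure under addition, take $(r,g),(r',g')\in\Lsf$ and split into cases.
\begin{itemize}
\item If both $g,g'\in K$, the sum lies in $\Lsf^{\mathrm{inn}}$.
\item Otherwise $g+g'\in\Lsf^{\mathrm{out}}$. Suppose $g+g'\in K$ with, say, $g\notin K$. Then $-g=g'-(g+g')\in\Lsf^{\mathrm{out}}$, since $g'\in\Lsf^{\mathrm{out}}$ and $-(g+g')\in\Lsf^{\mathrm{out}}$. This gives $g\in K$, a contradiction. So $g+g'\notin K$, and the sum lies in the first piece.
\end{itemize}

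For the remaining properties:
\begin{itemize}
\item \eqref{L1}: if $(r,g)\geq 0$, then $g\geq 0$, so $g\in\Lsf^{\mathrm{out}}$. Either $g\notin K$, and we are done, or $g\in K$ and \eqref{L1} for $\Lsf^{\mathrm{inn}}$ applies.
\item \eqref{L2}: if $g\notin\Lsf^{\mathrm{out}}$, then $-g\in\Lsf^{\mathrm{out}}\setminus K$, so $-(r,g)$ lies in the first piece. If $g\in K$, use \eqref{L2} for $\Lsf^{\mathrm{inn}}$.
\end{itemize}

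Finally I recover the pair. Condition \eqref{L3} gives $\Pi(\Lsf)\supseteq K$, so $\Pi(\Lsf)=\Lsf^{\mathrm{out}}$. Then $\Lsf\cap(\Rbb\times K)=\Lsf^{\mathrm{inn}}$ directly from the definition. This proves the bijection.
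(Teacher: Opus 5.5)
Your proposal is correct and follows the paper's route. The paper only declares the result straightforward from \eqref{eq:TTTT}, with \eqref{eq:Lin} and \eqref{eq:Lin conv} as mutually inverse constructions, and you carry out exactly that verification, the key point being $\Rbb\times\{g\}\subseteq\Lsf$ for $g\in\Lsf^{\mathrm{out}}\setminus K$. Two cosmetic remarks: your \eqref{L2} check omits the (trivial) case $g\in\Lsf^{\mathrm{out}}\setminus K$, and the upward half of Step 2 is redundant, since the contradiction argument already handles every $t\in\Rbb$.
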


\begin{proof}
  It is straightforward by the correspondence \eqref{eq:TTTT}.
\end{proof}

Under this decomposition, $\Lsf^{\mathrm{out}}$ and $\Lsf^{\mathrm{inn}}$ are called 
the \emph{outer part} and the \emph{inner part} of $\Lsf$, respectively.
The decomposition of \cref{proposition:corr group T} is particularly useful 
for prime congruences lying over $\Tbb$.

\begin{lemma}\label{lemma:when lying over T}
  Under the correspondence of \cref{proposition:corr group T},
  $P$ is lying over $\Tbb$ 
  if and only if $\Lsf(P)^{\mathrm{inn}}$ is a proper submonoid of $\Rbb\times (\Lsf(P)^{\mathrm{out}}\cap -\Lsf(P)^{\mathrm{out}})$, 
  i.e., $\Lsf(P)^{\mathrm{inn}}\subsetneq \Rbb\times (\Lsf(P)^{\mathrm{out}}\cap -\Lsf(P)^{\mathrm{out}})$ holds.
\end{lemma}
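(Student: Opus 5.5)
We will reduce the statement to a question about which constants $(r,0)\in\Rbb\times\{0\}\subseteq\Rbb\times G$ become identified in the quotient. Write $\Lsf=\Lsf(P)$, $K=\Lsf^{\mathrm{out}}\cap-\Lsf^{\mathrm{out}}$, and let $\pi_P$ be the quotient map. By \cref{lemma:prime isomorphism} and \cref{lemma:simple JM}, $\Bb[\Rbb\times G]/P\cong \pi_P(\Rbb\times G)\sqcup\{-\infty\}$. Here the mobile face is all of $\Rbb\times G$, since no element of a group can map to $-\infty$. Hence $\Tbb\to\Bb[\Rbb\times G]/P$ is injective exactly when the group homomorphism $\Rbb\to\pi_P(\Rbb\times G)$, $r\mapsto\pi_P(r,0)$, is injective; the element $-\infty$ causes no trouble. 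By the isomorphism \eqref{eq:cone-thm}, this map has kernel $\Rbb\times\{0\}\cap(\Lsf\cap-\Lsf)$. So $P$ lies over $\Tbb$ if and only if $(r,0)\in\Lsf\cap-\Lsf$ forces $r=0$.

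Next we compare this with the inner part. We have $0\in K$, so $\Rbb\times\{0\}\subseteq\Rbb\times K$, and therefore $\Lsf\cap(\Rbb\times\{0\})=\Lsf^{\mathrm{inn}}\cap(\Rbb\times\{0\})$.

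For the forward direction, suppose $\Lsf^{\mathrm{inn}}=\Rbb\times K$. Then $(r,0)\in\Lsf\cap-\Lsf$ for every $r$, so $P$ is not lying over $\Tbb$.

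For the converse, suppose $P$ is not lying over $\Tbb$. Then some $(r,0)$ with $r\neq0$ lies in $\Lsf\cap-\Lsf$. Using \eqref{L1}, $(s,0)\geq0$ for every $s\geq0$, so $(s,0)\in\Lsf$. Adding nonnegative multiples of $(-|r|,0)$, which lies in $\Lsf$, gives $(t,0)\in\Lsf$ for every $t\in\Rbb$; thus $\Rbb\times\{0\}\subseteq\Lsf^{\mathrm{inn}}$. Now fix $g\in K$. By \eqref{L3}, some $(a,g)\in\Lsf^{\mathrm{inn}}$, and adding $(t,0)$ shows $\Rbb\times\{g\}\subseteq\Lsf^{\mathrm{inn}}$. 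Hence $\Lsf^{\mathrm{inn}}=\Rbb\times K$.

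The main step needing care is the first one: identifying injectivity of $\Tbb\to S/P$ with triviality of $\Rbb\times\{0\}\cap(\Lsf\cap-\Lsf)$ via \eqref{eq:cone-thm}. This amounts to checking that $\pi_P(r,0)=\pi_P(0,0)$ means exactly $(r,0)\in\Lsf\cap-\Lsf$. That holds because the quotient order is antisymmetric: $\pi_P(r,0)\geq0$ and $\leq0$ together give equality. The rest is the elementary monoid computation above.
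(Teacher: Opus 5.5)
Your proof is correct and follows the same route as the paper's. Like the paper, you identify lying over $\Tbb$ with triviality of $(\Rbb\times\{0\})\cap(\Lsf\cap-\Lsf)$ via the quotient $\Rbb\times G\to(\Rbb\times G)/(\Lsf\cap-\Lsf)$, then use \eqref{L3} to pass between $\Rbb\times\{0\}\subseteq\Lsf^{\mathrm{inn}}$ and $\Lsf^{\mathrm{inn}}=\Rbb\times(\Lsf^{\mathrm{out}}\cap-\Lsf^{\mathrm{out}})$; your explicit \eqref{L1} argument, that a single nonzero $(r,0)\in\Lsf\cap-\Lsf$ forces all of $\Rbb\times\{0\}$ into $\Lsf\cap-\Lsf$, fills in a step the paper leaves implicit.
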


\begin{proof}
  By \eqref{eq:Lin conv}, we have $\Lsf(P)\cap -\Lsf(P)=\Lsf(P)^{\mathrm{inn}}\cap -\Lsf(P)^{\mathrm{inn}}$.
  Since $\pi_P$ is equal to the homomorphism induced by the o-homomorphism 
  \[\Rbb\times G\to (\Rbb\times G)/(\Lsf(P)\cap -\Lsf(P))=(\Rbb\times G)/(\Lsf(P)^{\mathrm{inn}}\cap -\Lsf(P)^{\mathrm{inn}})\]
  Thus $P$ is lying over $\Tbb$ if and only if $\Rbb\times \{0\}\not\subseteq \Lsf(P)^{\mathrm{inn}}\cap -\Lsf(P)^{\mathrm{inn}}$ holds.
  By \eqref{L3}, the latter condition is equivalent to $\Lsf(P)^{\mathrm{inn}}\neq \Rbb\times (\Lsf(P)^{\mathrm{out}}\cap -\Lsf(P)^{\mathrm{out}})$.
\end{proof}

\begin{lemma}\label{lemma:out and limit}
  Let $G$ be an ordered group. For $P\in\CSpec\Tbb[G]$, we have 
  \[\Lsf(P)^{\mathrm{out}}=\{g\in G\mid \lim_{P}(r,g)\in  \Rbb\cup\{+\infty\}\text{ for any (some) $r\in\Rbb$}\}.\]
\end{lemma}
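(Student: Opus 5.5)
The plan is to unwind both sides through the quotient map $\pi_P\colon \Tbb[G]\to\Tbb[G]/P$ and to use that $\Tbb[G]/P$ is totally ordered (\cref{lemma:simple JM}, or \cref{proposition:JM}). Since $\lim_P$ is only defined in \cref{definition:limit} for $P\in\CSpec_{\Tbb}\Tbb[G]$, I will read the statement for $P$ lying over $\Tbb$. In that case $\Rbb\subseteq \Tbb[G]/P$ embeds as a totally ordered subset.

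First, I check that the phrase ``for any (some) $r$'' is harmless. Every $(r,g)\in\Rbb\times G$ satisfies $(r,g)=(r,0)\odot(0,g)$ in $\Tbb[G]$. Multiplication by the real $r$ is an order automorphism of $\Tbb[G]/P$, because it is invertible and $\oplus$ is preserved. It therefore shifts the set $\{t\in\Rbb\mid t\le \pi_P(0,g)\}$ by $r$. Hence $\lim_P(r,g)=r+\lim_P(0,g)$, with the conventions $r+(\pm\infty)=\pm\infty$. In particular, whether $\lim_P(r,g)\in\Rbb\cup\{+\infty\}$, i.e.\ whether $\lim_P(r,g)\neq-\infty$, does not depend on $r$.

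Next, I unwind $\Lsf(P)^{\mathrm{out}}=\Pi(\Lsf(P))$. By definition, $g\in\Lsf(P)^{\mathrm{out}}$ iff there is $r\in\Rbb$ with $(r,g)\in\Lsf(P)$. By \eqref{eq:right arrow} this means $\pi_P(r,g)\ge 0$ in $\Tbb[G]/P$. Multiplying by $-r$, this is equivalent to $\pi_P(0,g)\ge -r$. So $g\in\Lsf(P)^{\mathrm{out}}$ iff $\pi_P(0,g)$ dominates some real number.

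Finally, I compare this with the definition of the limit. Since $\Tbb[G]/P$ is totally ordered, ``$\pi_P(0,g)$ is not smaller than every real'' is equivalent to ``there is $t\in\Rbb$ with $t\le\pi_P(0,g)$''. By \cref{definition:limit}, the first condition says exactly $\lim_P(0,g)\neq-\infty$: the value is either $+\infty$ or a supremum of a nonempty set bounded above, hence real. Combined with the first step, this gives the claimed equality.

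The only delicate points are the use of total ordering, which is needed to pass from ``not below every real'' to ``above some real'', and the fact that $\pi_P(0,g)\neq-\infty$ because $(0,g)$ is invertible. I expect no serious obstacle; the main care is in handling the $\pm\infty$ cases of $\lim_P$ consistently.
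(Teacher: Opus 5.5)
Your proof is correct and follows the paper's approach: the paper only says the lemma is ``immediate'' from \cref{definition:limit}, and you carry out exactly that unwinding. You show the shift invariance $\lim_P(r,g)=r+\lim_P(0,g)$, then that $g\in\Lsf(P)^{\mathrm{out}}$ iff $\pi_P(0,g)$ dominates some real, which by total orderedness of $\Tbb[G]/P$ is equivalent to $\lim_P(0,g)\neq-\infty$. Reading the statement only for $P$ lying over $\Tbb$ is also right, since $\lim_P$ is only defined there and the paper only applies the lemma in that setting.
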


\begin{proof}
  It is immediate from \cref{definition:limit}.
\end{proof}

Therefore the bijection of \cref{proposition:corr group T} 
preserves inclusions in the following sense:

\begin{proposition}\label{proposition:corr inclusion group T over T}
  Let $G$ be an ordered group.
  There is a bijection between the set $\CSpec_{\Tbb} \Tbb[G]$ and 
  \begin{align*}
    \left\{(\Lsf^{\mathrm{out}},\Lsf^{\mathrm{inn}})\ \middle|\  
      \begin{aligned}
        &\text{$\Lsf^{\mathrm{out}}$ is a submonoid of $G$ satisfying \eqref{L1} and \eqref{L2},}\\
        &\text{$\Lsf^{\mathrm{inn}}$ is a proper submonoid of $\Rbb\times(\Lsf^{\mathrm{out}}\cap -\Lsf^{\mathrm{out}})$ satisfying \eqref{L1}, \eqref{L2}, and \eqref{L3}.} 
      \end{aligned}
    \right\}
  \end{align*}
  Furthermore, for $P_1,P_2\in\CSpec_{\Tbb}\Tbb[G]$, 
  an inclusion $P_1\subseteq P_2$ holds if and only if
  $\Lsf_1^{\mathrm{out}}=\Lsf_2^{\mathrm{out}}$ and $\Lsf_1^{\mathrm{inn}}\subseteq \Lsf_2^{\mathrm{inn}}$ hold.
\end{proposition}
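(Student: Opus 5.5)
The bijection is obtained by restricting the bijection of \cref{proposition:corr group T} to $\CSpec_{\Tbb}\Tbb[G]$. By \cref{lemma:when lying over T}, a prime congruence $P$ lies over $\Tbb$ exactly when its inner part $\Lsf(P)^{\mathrm{inn}}$ is a proper submonoid of $\Rbb\times(\Lsf(P)^{\mathrm{out}}\cap -\Lsf(P)^{\mathrm{out}})$. So the first assertion is immediate, and the work lies in the inclusion criterion.

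For the inclusion statement, I use \cref{proposition:corr inclusion group B}: $P_1\subseteq P_2$ holds if and only if $\Lsf(P_1)\subseteq \Lsf(P_2)$ as submonoids of $\Rbb\times G$.

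\emph{Only if.} Suppose $P_1\subseteq P_2$, with both points in $\CSpec_{\Tbb}\Tbb[G]$.
\begin{enumerate}
  \item By \cref{lemma:limit equal}, $\lim_{P_1}=\lim_{P_2}$ on all of $\Tbb[G]$.
  \item By \cref{lemma:out and limit}, the outer part is determined by these limits, so $\Lsf_1^{\mathrm{out}}=\Lsf_2^{\mathrm{out}}$.
  \item Intersecting $\Lsf(P_1)\subseteq\Lsf(P_2)$ with $\Rbb\times(\Lsf^{\mathrm{out}}\cap-\Lsf^{\mathrm{out}})$ then gives $\Lsf_1^{\mathrm{inn}}\subseteq\Lsf_2^{\mathrm{inn}}$.
\end{enumerate}

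\emph{If.} Suppose the outer parts are equal and $\Lsf_1^{\mathrm{inn}}\subseteq\Lsf_2^{\mathrm{inn}}$. The reconstruction formula \eqref{eq:Lin conv} writes each $\Lsf(P_i)$ as the union of a part depending only on the common outer part and the inner part $\Lsf_i^{\mathrm{inn}}$. Hence $\Lsf(P_1)\subseteq\Lsf(P_2)$, and so $P_1\subseteq P_2$.

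The one point needing care is the equality of outer parts in the ``only if'' direction. This is exactly where the lying-over-$\Tbb$ hypothesis enters: without it, a prime congruence can specialize into the special fiber, where the outer part may grow. I would check the two characterizations directly in $S/P_i$.
\begin{itemize}
  \item $g\in\Lsf^{\mathrm{out}}$ if and only if $\pi_P(r,g)\ge \pi_P(r',0)$ for some $r,r'\in\Rbb$, i.e.\ $\lim_P(0,g)>-\infty$.
  \item Since $P_i$ lies over $\Tbb$, $\Rbb$ embeds order-preservingly in $\Tbb[G]/P_i$, so the comparison of $\pi_{P_i}(0,g)$ with real constants is well defined.
\end{itemize}
Equality of the limit functions therefore yields equality of the outer parts, which closes the remaining gap.
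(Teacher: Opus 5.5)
Your proposal is correct and follows essentially the same route as the paper: you restrict \cref{proposition:corr group T} via \cref{lemma:when lying over T} for the bijection, you use \cref{lemma:limit equal} and \cref{lemma:out and limit} in the ``only if'' direction and then intersect $\Lsf(P_1)\subseteq\Lsf(P_2)$ with $\Rbb\times(\Lsf^{\mathrm{out}}\cap-\Lsf^{\mathrm{out}})$, and you read the ``if'' direction off the reconstruction formula \eqref{eq:Lin conv}. Your extra check that $g\in\Lsf^{\mathrm{out}}$ if and only if $\lim_P(0,g)>-\infty$ is exactly the content of \cref{lemma:out and limit}, which the paper treats as immediate.
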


\begin{proof}
  If $\Lsf_1^{\mathrm{out}}=\Lsf_2^{\mathrm{out}}$ and $\Lsf_1^{\mathrm{inn}}\subseteq \Lsf_2^{\mathrm{inn}}$, 
  then the inclusion $P_1\subseteq P_2$ follows from the definition of the correspondence.

  Conversely, we assume $P_1\subseteq P_2$.
  By \cref{lemma:limit equal} and \cref{lemma:out and limit}, we have $\Lsf_1^{\mathrm{out}}=\Lsf_2^{\mathrm{out}}$ and
  \begin{align*}
    \Lsf_1^{\mathrm{inn}}&=\Lsf_1\cap(\Rbb\times (\Lsf_1^{\mathrm{out}}\cap -\Lsf_1^{\mathrm{out}}))\\
    &\subseteq \Lsf_2\cap (\Rbb\times (\Lsf_2^{\mathrm{out}}\cap -\Lsf_2^{\mathrm{out}}))\\
    &=\Lsf_2^{\mathrm{inn}}.
  \end{align*}
\end{proof}

%geomに含まれるための条件．
\begin{proposition}\label{proposition:when geom}
  Let $G$ be an ordered group and $P\in\CSpec \Tbb[G]$ lying over $\Tbb$.
  Then there exists a geometric congruence which contains $P$ if and only if $\Lsf(P)^{\mathrm{out}}=G$ holds.
\end{proposition}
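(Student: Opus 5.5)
The plan is to reduce the statement to \cref{lemma:geom condition} and then translate its criterion into a condition on $\Lsf(P)^{\mathrm{out}}$ using \cref{lemma:out and limit}. By \cref{lemma:geom condition}, a geometric congruence containing $P$ exists if and only if $\lim_P s\in\Tbb$ for every $s\in\Tbb[G]$. So it suffices to show the following: $\lim_P s\neq+\infty$ for all $s$ if and only if $\Lsf(P)^{\mathrm{out}}=G$.

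First I reduce from arbitrary elements to monomials. By \cref{proposition:JM}, $\Tbb[G]/P$ is totally ordered. Hence for $s=\bigoplus_{(r,g)\in\supp(s)}(r,g)$, the image $\pi_P(s)$ equals the maximum of the images $\pi_P(r,g)$. From \cref{definition:limit} I then get
\[
\lim_P s=\max_{(r,g)\in\supp(s)}\lim_P(r,g),
\]
with the convention that $\lim_P(-\infty)=-\infty$. This uses only that $t\le \pi_P(s)$ holds iff $t\le\pi_P(r,g)$ for some term, which is a routine check on the supremum. Consequently $\lim_P s\in\Tbb$ for all $s$ if and only if $\lim_P(r,g)\neq+\infty$ for every monomial $(r,g)\in\Rbb\times G$.

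Next I translate the monomial condition. The monomial $(r,g)$ is invertible with inverse $(-r,-g)$, so $\pi_P(r,g)$ and $\pi_P(-r,-g)$ are mutually inverse in the totally ordered group $\pi_P(\Rbb\times G)$. Therefore $\lim_P(-r,-g)=-\lim_P(r,g)$, interpreting $-(\pm\infty)=\mp\infty$. This is the one step needing a small verification: $t\le\pi_P(r,g)$ iff $-t\ge\pi_P(-r,-g)$, using that $\Rbb\subseteq\Tbb[G]/P$ embeds because $P$ lies over $\Tbb$. Combining with \cref{lemma:out and limit}, we have $-g\in\Lsf(P)^{\mathrm{out}}$ iff $\lim_P(-r,-g)\neq-\infty$, iff $\lim_P(r,g)\neq+\infty$.

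So $\lim_P(r,g)\neq+\infty$ for all $(r,g)$ holds if and only if every $-g$, i.e.\ every element of $G$, lies in $\Lsf(P)^{\mathrm{out}}$, which means $\Lsf(P)^{\mathrm{out}}=G$. This finishes both directions. The main obstacle is only the bookkeeping of the limit for sums and inverses; it is straightforward given total order and $P$ lying over $\Tbb$.
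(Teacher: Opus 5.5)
Your proof is correct and follows the same route as the paper, which only says that the claim follows immediately from \cref{lemma:geom condition} and \cref{lemma:out and limit}. You also supply the two checks the paper leaves implicit: that $\lim_P$ of a polynomial is the maximum of the limits of its monomials (by the total order on $\Tbb[G]/P$), and the sign flip $\lim_P(-r,-g)=-\lim_P(r,g)$, which turns the ``$\neq-\infty$'' description of $\Lsf(P)^{\mathrm{out}}$ into the ``$\neq+\infty$'' criterion of \cref{lemma:geom condition}.
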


\begin{proof}
  The claim follows immediately from \cref{lemma:geom condition,lemma:out and limit}.
\end{proof}

\subsection{The stratification of $\CSpec\Bb[\Msf]$}\label{subsection:The stratification}

Now we focus again on $\CSpec\Bb[\Msf]$, where $\Msf$ is an ordered monoid, 
not necessarily an ordered group. 
Recall that the mobile face of $P\in\CSpec\Bb[\Msf]$ is the face of $\Msf$ given by
\[\Fsf=\{m\in\Msf\mid (m,-\infty)\notin P\}.\]
We will see that $\CSpec\Bb[\Msf]$ has the canonical stratification along the mobile face of prime congruences.

A face $\Fsf$ of $\Msf$ can be seen as a multiplicatively closed subset of $\Bb[\Msf]$.
Recall that the localization $\Bb[\Msf]\to\Fsf^{-1}\Bb[\Msf]$ induces an inclusion
$\CSpec\Fsf^{-1}\Bb[\Msf]\to\CSpec\Bb[\Msf]$ (See \cref{proposition:loc induces inj}).
As in the notation of \eqref{eq:D(M)}, we denote the image of the inclusion by $D(\Fsf)$.
Namely, we define
\begin{align}\label{eq:UM}
  D(\Fsf)=\{P\in\CSpec\Bb[\Msf]\mid (m,-\infty)\notin P \text{ for all }m\in\Fsf\}.
\end{align}
We also recall that for a face $\Fsf$ of $\Msf$, 
the symbol $O_{\Fsf}$ denotes the set of prime congruences on $\Bb[\Msf]$ of mobile face $\Fsf$.

\begin{lemma}
  Let $\Fsf$ be a face of $\Msf$. Then we have
  \[O_{\Fsf}=V(\Gamma_{\Fsf})\cap D(\Fsf).\]
\end{lemma}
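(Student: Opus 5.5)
We prove the two inclusions separately, working directly from the definitions of the mobile face, of $\Gamma_{\Fsf}$, and of $D(\Fsf)$ in \eqref{eq:UM}. Throughout, let $P\in\CSpec\Bb[\Msf]$ and write $\Fsf(P)=\{m\in\Msf\mid (m,-\infty)\notin P\}$ for its mobile face, as in \cref{lemma:sed of prime}.

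First take $P\in O_{\Fsf}$, so that $\Fsf(P)=\Fsf$.
\begin{itemize}
\item For every $m\in\Msf\setminus\Fsf$ we have $(m,-\infty)\in P$. Since $\Gamma_{\Fsf}$ is by definition the congruence generated by these pairs, this gives $\Gamma_{\Fsf}\subseteq P$, that is, $P\in V(\Gamma_{\Fsf})$.
\item For every $m\in\Fsf$ we have $(m,-\infty)\notin P$. This is exactly the defining condition \eqref{eq:UM}, so $P\in D(\Fsf)$.
\end{itemize}
Hence $O_{\Fsf}\subseteq V(\Gamma_{\Fsf})\cap D(\Fsf)$.

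Conversely, take $P\in V(\Gamma_{\Fsf})\cap D(\Fsf)$. We show $\Fsf(P)=\Fsf$.
\begin{itemize}
\item The condition $P\in D(\Fsf)$ says $(m,-\infty)\notin P$ for all $m\in\Fsf$. Therefore $\Fsf\subseteq\Fsf(P)$.
\item For $m\notin\Fsf$, the pair $(m,-\infty)$ is one of the generators of $\Gamma_{\Fsf}$. Since $\Gamma_{\Fsf}\subseteq P$, we get $(m,-\infty)\in P$, so $m\notin\Fsf(P)$. Therefore $\Fsf(P)\subseteq\Fsf$.
\end{itemize}
Combining the two bullets gives $\Fsf(P)=\Fsf$, so $P\in O_{\Fsf}$.

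There is no real obstacle. The only point needing care is that membership of $(m,-\infty)$ in $\Gamma_{\Fsf}$ for $m\notin\Fsf$ is immediate from the definition of $\Gamma_{\Fsf}$ as the congruence generated by exactly these pairs. \cref{lemma:P'} and \cref{lemma:sed of prime} confirm this: $\Fsf(\Gamma_{\Fsf})=\Fsf$. Primality of $P$ is not needed beyond the fact that $P\in\CSpec\Bb[\Msf]$, which is what makes its mobile face defined in the first place.
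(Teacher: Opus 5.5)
Your proof is correct and is essentially the paper's argument. The mobile face of $P$ equals $\Fsf$ exactly when $(m,-\infty)\in P$ for all $m\in\Msf\setminus\Fsf$ and $(m,-\infty)\notin P$ for all $m\in\Fsf$, and these two conditions are precisely $P\in V(\Gamma_{\Fsf})$ (since $\Gamma_{\Fsf}$ is generated by those pairs) and $P\in D(\Fsf)$.
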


\begin{proof}
  For $P\in\CSpec\Bb[\Msf]$, the mobile face of $P$ is $\Fsf$ 
  if and only if the following two conditions hold:
  \begin{align*}
    m\in\Msf\setminus \Fsf\implies (m,-\infty)\in P,\\
    m\in\Fsf\implies (m,-\infty)\notin P.
  \end{align*}
  Each condition is equivalent to $P\in V(\Gamma_{\Fsf})$ and $P\in D(\Fsf)$, respectively
  Hence we have $O_{\Fsf}=V(\Gamma_{\Fsf})\cap D(\Fsf)$.
\end{proof}

From \cref{proposition:used where}, we have
\[O_{\Fsf}\cong \CSpec (\pi_{\Gamma_{\Fsf}}(\Fsf))^{-1}(\Bb[\Msf]/\Gamma_{\Fsf}).\]
From \cref{lemma:wah} and \cref{proposition:used where}, we have a commutative diagram below.
\[
\begin{tikzcd}
  \Bb[\Fsf]\ar[rr,"\cong"] \ar[dr,hook] \ar[ddd]  &  & \Bb[\Msf]/\Gamma_{\Fsf} \ar[ddd]\\
  & \Bb[\Msf]\ar[ur,two heads] \ar[d]& \\
  & \Fsf^{-1}\Bb[\Msf] \ar[dr,two heads] & \\
  \Bb[\Fsf^{\mathrm{gp}}]\ar[rr,"\cong"] \ar[ur,hook]  &  & \pi_{\Gamma_{\Fsf}}(\Fsf)^{-1}(\Bb[\Msf]/\Gamma_{\Fsf})
\end{tikzcd}
\]
Therefore we obtain the stratification of $\CSpec\Bb[\Msf]$:
\begin{align}\label{eq:process}
  \begin{split}
  &\CSpec\Bb[\Msf]=\coprod_{\Fsf\subseteq \Msf}O_{\Fsf},\\
  &O_{\Fsf}\xrightarrow{\cong} \CSpec\Bb[\Fsf^{\mathrm{gp}}],\ P\mapsto \tilde{P}=(P\cap\Bb[\Fsf])\cdot\Bb[\Fsf^{\mathrm{gp}}].
  \end{split}
\end{align}
Note that $\tilde{P}$ satisfies 
\[P=\<(\tilde{P}\cap \Bb[\Fsf])\cdot \Bb[\Msf],\Gamma_{\Fsf}\>.\]
The symbol $\tilde{P}$ is used in the sense above in the rest of this paper. 

%ここまでstratification．ここからさらにinclusionの条件．

By \cref{proposition:corr inclusion group B}, there is a one-to-one correspondence:
\[\CSpec\Bb[\Msf]\longleftrightarrow \coprod_{\Fsf\subseteq \Msf}\Lcal_{\Fsf^{\mathrm{gp}}}.\] 
To understand how these strata $O_{\Fsf}$ form a whole space,
we have to see when a prime congruence contains another in a distinct stratum. 

For a while, we fix $P_1,P_2$ to be points in $\CSpec\Bb[\Msf]$ whose mobile faces are $\Fsf_1,\Fsf_2$, respectively.

\begin{lemma}\label{lemma:P1P2}
  An inclusion $P_1\subseteq P_2$ holds if and only if 
  $\Fsf_2\subseteq \Fsf_1$ and $P_1\cap \Bb[\Fsf_1] \subseteq P_2\cap \Bb[\Fsf_1]$ hold. 
\end{lemma}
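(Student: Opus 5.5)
The forward direction is immediate from the definitions. Assume $P_1\subseteq P_2$. If $m\notin\Fsf_1$, then $(m,-\infty)\in P_1\subseteq P_2$, and hence $m\notin\Fsf_2$. This gives $\Fsf_2\subseteq\Fsf_1$. Restricting the inclusion $P_1\subseteq P_2$ to pairs of elements of $\Bb[\Fsf_1]$, that is, pulling back along $\Bb[\Fsf_1]\hookrightarrow\Bb[\Msf]$, gives $P_1\cap\Bb[\Fsf_1]\subseteq P_2\cap\Bb[\Fsf_1]$.

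For the converse, assume $\Fsf_2\subseteq\Fsf_1$ and $P_1\cap\Bb[\Fsf_1]\subseteq P_2\cap\Bb[\Fsf_1]$. I will use the description of $P_1$ recorded after \eqref{eq:process}:
\[P_1=\<(\tilde{P}_1\cap\Bb[\Fsf_1])\cdot\Bb[\Msf],\ \Gamma_{\Fsf_1}\>.\]
It then suffices to show that each generating set lies in $P_2$.

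First, $\Gamma_{\Fsf_1}$ is generated by the pairs $(m,-\infty)$ with $m\in\Msf\setminus\Fsf_1$. For such $m$ we have $m\notin\Fsf_2$ because $\Fsf_2\subseteq\Fsf_1$, so $(m,-\infty)\in P_2$. Second, I claim $\tilde{P}_1\cap\Bb[\Fsf_1]=P_1\cap\Bb[\Fsf_1]$. By the stratification \eqref{eq:process}, $\tilde{P}_1$ is the push-out of $P_1\cap\Bb[\Fsf_1]$ to $\Bb[\Fsf_1^{\mathrm{gp}}]$, which is a localization of $\Bb[\Fsf_1]$ by $\Fsf_1$. Since $\Fsf_1$ is the mobile face, $P_1\cap\Bb[\Fsf_1]$ contains no pair $(m,-\infty)$ with $m\in\Fsf_1$. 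The argument in the proof of \cref{proposition:loc induces inj}, namely \cref{lemma:localcong} together with primality, then shows that pulling back $\tilde P_1$ recovers $P_1\cap\Bb[\Fsf_1]$.

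Alternatively, I can avoid $\tilde P$ and argue directly. Take $(f,g)\in P_1$ and write $f=f'\oplus f''$ and $g=g'\oplus g''$, where $f',g'$ collect the support elements lying in $\Fsf_1$ and $f'',g''$ collect those outside $\Fsf_1$. By \cref{lemma:P'}, $(f,f'),(g,g')\in\Gamma_{\Fsf_1}\subseteq P_1$, so $(f',g')\in P_1\cap\Bb[\Fsf_1]\subseteq P_2$. Also $(f,f'),(g,g')\in\Gamma_{\Fsf_1}\subseteq P_2$, and transitivity gives $(f,g)\in P_2$. This direct route is cleaner and I will use it as the main proof.

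The only point needing care is that $\Gamma_{\Fsf_1}\subseteq P_2$ follows from $\Fsf_2\subseteq\Fsf_1$. This holds because $P_2$ contains every $(m,-\infty)$ with $m\notin\Fsf_2$, together with the generator description of $\Gamma_{\Fsf_1}$ given in \cref{lemma:P'}. Apart from this, the argument consists of routine checks.
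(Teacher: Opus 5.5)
Your proof is correct. Your main (direct) argument does element by element what the paper does with quotient maps.

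The paper sets $Q_i=P_i\cdot(\Bb[\Msf]/\Gamma_{\Fsf_1})$. It then combines \cref{lemma:phiphi}, \cref{lemma:pushpush} and the isomorphism $\Bb[\Fsf_1]\cong\Bb[\Msf]/\Gamma_{\Fsf_1}$ of \cref{lemma:wah} to get $\Bb[\Msf]/P_i\cong\Bb[\Fsf_1]/(P_i\cap\Bb[\Fsf_1])$ for $i=1,2$, and reads off $P_1\subseteq P_2$ from the induced map between the right-hand sides.

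You instead take $(f,g)\in P_1$. You use \cref{lemma:P'} to see that $f$ and $g$ are congruent modulo $\Gamma_{\Fsf_1}$ to their $\Fsf_1$-parts; this is exactly the surjectivity step in the proof of \cref{lemma:wah}. Since $\Gamma_{\Fsf_1}$ lies in both $P_1$ and $P_2$, transitivity moves membership through $\Bb[\Fsf_1]$.

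The key fact is the same in both proofs, but your version needs only \cref{lemma:P'}. It also avoids checking that the quotient isomorphisms are compatible with the maps out of $\Bb[\Msf]$. And it shows explicitly that the hypothesis $\Fsf_2\subseteq\Fsf_1$ enters only through $\Gamma_{\Fsf_1}\subseteq P_2$. The paper needs this as well, to apply \cref{lemma:phiphi} and \cref{lemma:pushpush} for $i=2$, but it only mentions $\Gamma_{\Fsf_1}\subseteq P_1$.

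What the paper's formulation buys is the isomorphism $\Bb[\Msf]/P_i\cong\Bb[\Fsf_1]/(P_i\cap\Bb[\Fsf_1])$ itself, phrased in the quotient language used in the lemmas that follow.

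Your first sketch via $\tilde{P}_1$ would also work. However, it relies on the identity $P=\<(\tilde{P}\cap\Bb[\Fsf])\cdot\Bb[\Msf],\Gamma_{\Fsf}\>$, which the paper only notes without proof. Dropping it in favour of the direct argument was the right call.
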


\begin{proof}
  Assume that $P_1\subseteq P_2$. 
  For $i=1,2$ and $m\in\Msf$,
  $m\in\Fsf_i$ if and only if $(m,-\infty)\notin P_i$. Thus we have $\Fsf_2\subseteq \Fsf_1$.
  It is also clear that $P_1\cap \Bb[\Fsf_1] \subseteq P_2\cap \Bb[\Fsf_1]$.

  Assume that $\Fsf_2\subseteq\Fsf_1$ and $P_1\cap \Bb[\Fsf_1] \subseteq P_2\cap \Bb[\Fsf_1]$. 
  Define congruences $Q_i=P_i\cdot (\Bb[\Msf]/\Gamma_{\Fsf_1})$ for $i=1,2$.
  Using \cref{lemma:phiphi} for $\Bb[\Msf]\to\Bb[\Msf]/\Gamma_{\Fsf_1}$, 
  we have $Q_i\cap \Bb[\Msf]=P_i$. Taking the pull-back, $Q_i\cap \Bb[\Fsf_1]=P_i\cap\Bb[\Fsf_1]$.
  By \cref{lemma:pushpush} and $\Gamma_{\Fsf_1}\subseteq P_1$, we have
  \[(\Bb[\Msf]/\Gamma_{\Fsf_1})/Q_i\cong \Bb[\Msf]/\<\Gamma_{\Fsf_1},P_i\>\cong \Bb[\Msf]/P_i\text{ for }i=1,2.\] 
  The isomorphism $\Bb[\Fsf_1]\xrightarrow{\cong} \Bb[\Msf]/\Gamma_{\Fsf_1}$ of \cref{lemma:wah} 
  induces the isomorphism 
  \[(\Bb[\Msf]/\Gamma_{\Fsf_1})/Q_i\cong \Bb[\Fsf_1]/(Q_i\cap \Bb[\Fsf_1]) \text{ for }i=1,2.\]
  Therefore, we obtain the isomorphism $\Bb[\Msf]/P_i\cong\Bb[\Fsf_1]/(P_i\cap \Bb[\Fsf_1])$ for $i=1,2$.
  Then the inclusion $P_1\cap \Bb[\Fsf_1] \subseteq P_2\cap \Bb[\Fsf_1]$ induces $P_1\subseteq P_2$.
\end{proof}

\begin{lemma} \label{lemma:triangle roof}
  If $\Fsf_2\subseteq\Fsf_1$, then the following are equivalent.
  \begin{enumerate}
    \item $P_1\cap\Bb[\Fsf_2] \subseteq P_2\cap\Bb[\Fsf_2]$,
    \item $\tilde{P_1}\cap\Bb[\Fsf_2^{\mathrm{gp}}]\subseteq \tilde{P_2}$.
  \end{enumerate}
\end{lemma}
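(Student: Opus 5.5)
Throughout we take $\Fsf_2\subseteq\Fsf_1$, so $\Fsf_2\subseteq\Fsf_1\subseteq\Fsf_1^{\mathrm{gp}}$, and we view $\Bb[\Fsf_2]\subseteq\Bb[\Fsf_1]\subseteq\Bb[\Fsf_1^{\mathrm{gp}}]$ and $\Bb[\Fsf_2]\subseteq\Bb[\Fsf_2^{\mathrm{gp}}]\subseteq\Bb[\Fsf_1^{\mathrm{gp}}]$ via the induced homomorphisms. I will read $\tilde{P_1}\cap\Bb[\Fsf_2^{\mathrm{gp}}]$ as the pull-back along $\Bb[\Fsf_2^{\mathrm{gp}}]\to\Bb[\Fsf_1^{\mathrm{gp}}]$, and condition (2) as an inclusion of congruences on $\Bb[\Fsf_2^{\mathrm{gp}}]$.

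The key tool is the stratification \eqref{eq:process} together with \cref{proposition:loc induces inj}. First observe that $P_i\cap\Bb[\Fsf_i]$ is a prime congruence whose mobile face is all of $\Fsf_i$. Hence it lies in $D(\Fsf_i)$, and \cref{proposition:loc induces inj} gives
\[P_i\cap\Bb[\Fsf_i]=\tilde{P_i}\cap\Bb[\Fsf_i].\]
Pulling back further along $\Bb[\Fsf_2]\subseteq\Bb[\Fsf_i]$, we get $P_i\cap\Bb[\Fsf_2]=\tilde{P_i}\cap\Bb[\Fsf_2]$ for $i=1,2$. Condition (1) therefore becomes
\[\tilde{P_1}\cap\Bb[\Fsf_2]\subseteq\tilde{P_2}\cap\Bb[\Fsf_2],\]
where the left side is pulled back along $\Bb[\Fsf_2]\to\Bb[\Fsf_1^{\mathrm{gp}}]$.

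For (2)$\Rightarrow$(1), pull back (2) along the localization $\Bb[\Fsf_2]\to\Bb[\Fsf_2^{\mathrm{gp}}]$; pull-backs preserve inclusions, so we are done.

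For (1)$\Rightarrow$(2), set $Q_1=\tilde{P_1}\cap\Bb[\Fsf_2^{\mathrm{gp}}]$. This is a congruence on the localization $\Bb[\Fsf_2^{\mathrm{gp}}]=\Fsf_2^{-1}\Bb[\Fsf_2]$, so \cref{lemma:local phiphi} gives
\[Q_1=(Q_1\cap\Bb[\Fsf_2])\cdot\Bb[\Fsf_2^{\mathrm{gp}}]=(\tilde{P_1}\cap\Bb[\Fsf_2])\cdot\Bb[\Fsf_2^{\mathrm{gp}}].\]
By (1), this is contained in $(\tilde{P_2}\cap\Bb[\Fsf_2])\cdot\Bb[\Fsf_2^{\mathrm{gp}}]$, which equals $\tilde{P_2}$, again by \cref{lemma:local phiphi}.

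Alternatively, one can argue through submonoids. By \cref{proposition:corr inclusion group B} and \cref{proposition:monoid pull-back}, condition (2) reads $\Lsf(\tilde P_1)\cap\Fsf_2^{\mathrm{gp}}\subseteq\Lsf(\tilde P_2)$. Any $g\in\Fsf_2^{\mathrm{gp}}$ can be written $g=a-b$ with $a,b\in\Fsf_2$, and $g$ lies in $\Lsf$ exactly when $a\oplus b$ is identified with $a$. This reduces (2) to a statement about pairs of elements of $\Bb[\Fsf_2]$, which is exactly what (1) controls.

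The main obstacle is bookkeeping rather than mathematics: we must make sure that "$\cap$" in each condition means the pull-back along the correct homomorphism. We must also check that the triangle relating $\Bb[\Fsf_2]\to\Bb[\Fsf_1]\to\Bb[\Fsf_1^{\mathrm{gp}}]$ and $\Bb[\Fsf_2]\to\Bb[\Fsf_2^{\mathrm{gp}}]\to\Bb[\Fsf_1^{\mathrm{gp}}]$ commutes; it does, since all the maps are induced by inclusions of monoids. Finally, we need that $\tilde P_i$ restricts back to $P_i$ on $\Bb[\Fsf_i]$, which follows from the stratification diagram.
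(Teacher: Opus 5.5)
Your proof is correct, but it is organized differently from the paper's.

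\textbf{The paper's route.} The paper argues with quotient semirings. It embeds $\Bb[\Fsf_2]/(P_i\cap\Bb[\Fsf_2])$ into its localization $\pi_i(\Fsf_2)^{-1}(\Bb[\Fsf_2]/(P_i\cap\Bb[\Fsf_2]))$. It identifies these localizations with $\Bb[\Fsf_2^{\mathrm{gp}}]/(\tilde{P_1}\cap\Bb[\Fsf_2^{\mathrm{gp}}])$ and $\Bb[\Fsf_2^{\mathrm{gp}}]/\tilde{P_2}$ via \cref{proposition:used where}. Then (1)$\Rightarrow$(2) is the universal property of localization. For (2)$\Rightarrow$(1), it restricts the induced map, using that the localization maps are injective because the quotients are cancellative (\cref{proposition:JM}).

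\textbf{Your route.} You stay at the level of congruences. You rewrite both conditions using two facts:
\begin{itemize}
\item the identities $P_i\cap\Bb[\Fsf_2]=\tilde{P_i}\cap\Bb[\Fsf_2]$, obtained from \cref{proposition:loc induces inj} and the commuting square of monoid inclusions; this plays the role of the paper's injectivity of localization maps;
\item $\phi_{*}\phi^{*}C=C$ from \cref{lemma:local phiphi}, which plays the role of the universal property.
\end{itemize}
After this rewriting, (2)$\Rightarrow$(1) is just functoriality of pull-backs and (1)$\Rightarrow$(2) is monotonicity of push-outs.

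\textbf{Comparison.} The underlying facts are the same, but your version proves explicitly the equality
\[
\tilde{P_1}\cap\Bb[\Fsf_2^{\mathrm{gp}}]=(P_1\cap\Bb[\Fsf_2])\cdot\Bb[\Fsf_2^{\mathrm{gp}}].
\]
The paper's diagram needs this equality for its lower-left vertical isomorphism, since \cref{proposition:used where} only identifies the localization with the quotient by the push-out. The paper uses it silently there and derives it only later, in the proof of \cref{proposition:inclusion monoid B}, by essentially your computation. The paper's version is shorter and presents the lemma as a statement about maps between totally ordered cancellative semirings. Your version is more self-contained at exactly the step the paper glosses over.

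Your closing ``alternative'' argument via submonoids is only sketched and is not needed for the proof.
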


\begin{proof}
  Denote $\pi_{P_i\cap\Bb[\Fsf_2]}:\Bb[\Fsf_2]\to\Bb[\Fsf_2]/(P_i\cap\Bb[\Fsf_2])$ by $\pi_i$ for $i=1,2$.
  \[
  \begin{tikzcd}
    {} & \Bb[\Fsf_2] \ar[ld,two heads] \ar[rd,two heads] & {}\\
    \Bb[\Fsf_2]/(P_1\cap\Bb[\Fsf_2]) \ar[d,hook] \ar[rr,dashed,"\text{induced by }(1)"] & {} & \Bb[\Fsf_2]/(P_2\cap \Bb[\Fsf_2]) \ar[d,hook]\\
    \pi_1(\Fsf_2)^{-1}(\Bb[\Fsf_2]/(P_1\cap\Bb[\Fsf_2])) \ar[d,"\cong"]  & {} & \pi_2(\Fsf_2)^{-1}(\Bb[\Fsf_2]/(P_2\cap \Bb[\Fsf_2])) \ar[d,"\cong"]\\
    \Bb[\Fsf_2^{\mathrm{gp}}]/(\tilde{P_1}\cap\Bb[\Fsf_2^{\mathrm{gp}}]) \ar[rr,dashed,"\text{induced by }(2)"] & {} & \Bb[\Fsf_2^{\mathrm{gp}}]/\tilde{P_2}
  \end{tikzcd}
  \]
  In the diagram, the two vertical isomorphisms are induced by \cref{proposition:used where},
  and the two localization maps are injective 
  by the cancellativity of $\Bb[\Fsf_2]/(P_i\cap\Bb[\Fsf_2])$ (see \cref{proposition:JM}).

  If $(1)$ holds, the bottom arrow is induced by the universal property of the localization.
  If $(2)$ holds, the upper arrow is induced by the injectivity of the localization maps.
\end{proof}

\begin{lemma}\label{lemma:shiage}
  Assume $\Fsf_2\subseteq \Fsf_1$, $P_1\cap\Bb[\Fsf_2]\subseteq P_2\cap\Bb[\Fsf_2]$, and that for any $m_1\in\Fsf_1\setminus \Fsf_2$ and $m_2\in\Fsf_2$, 
  $m_1-m_2\notin \Lsf(\tilde{P_1})$ in $\Fsf_1^{\mathrm{gp}}$. 
  There is a homomorphism $\Bb[\Fsf_1]/(P_1\cap \Bb[\Fsf_1])\to \Bb[\Fsf_2]/(P_1\cap \Bb[\Fsf_2])$
  which makes the following diagram commutative.
  \[
  \begin{tikzcd}
    \Bb[\Fsf_1]\ar[r,"\cong"]\ar[d] & \Bb[\Msf]/\Gamma_{\Fsf_1}\ar[r] & \Bb[\Msf]/\Gamma_{\Fsf_2} \ar[r,"\cong"] & \Bb[\Fsf_2]\ar[d]\\
    \Bb[\Fsf_1]/(P_1\cap \Bb[\Fsf_1]) \ar[rrr] & {} & {} & \Bb[\Fsf_2]/(P_1\cap\Bb[\Fsf_2])
  \end{tikzcd}
  \]
\end{lemma}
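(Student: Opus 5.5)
The top row of the diagram is the homomorphism $\rho\colon\Bb[\Fsf_1]\to\Bb[\Fsf_2]$, $\sum_{m\in A}m\mapsto\sum_{m\in A\cap\Fsf_2}m$. It is the isomorphism of \cref{lemma:wah} composed with the quotient $\Bb[\Msf]/\Gamma_{\Fsf_1}\to\Bb[\Msf]/\Gamma_{\Fsf_2}$, which exists because $\Gamma_{\Fsf_1}\subseteq\Gamma_{\Fsf_2}$. Write $\pi_1\colon\Bb[\Fsf_1]\to\Bb[\Fsf_1]/(P_1\cap\Bb[\Fsf_1])$ and $\pi_2\colon\Bb[\Fsf_2]\to\Bb[\Fsf_2]/(P_1\cap\Bb[\Fsf_2])$. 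By \cref{proposition:thm of hom}, the desired bottom arrow exists if and only if $P_1\cap\Bb[\Fsf_1]\subseteq\Ker(\pi_2\circ\rho)$. So I will prove: if $(f,g)\in P_1$ with $f,g\in\Bb[\Fsf_1]$, then $(\rho(f),\rho(g))\in P_1$.

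I will do this in the totally ordered quotient $\Bb[\Msf]/P_1\cong\Bb[\Fsf_1]/(P_1\cap\Bb[\Fsf_1])$, using \cref{proposition:JM} and \cref{lemma:prime isomorphism}. There, $\pi_{P_1}(f)=\max_{m\in\supp f}\pi_{P_1}(m)$, and every $m\in\Fsf_1$ has $\pi_{P_1}(m)\neq-\infty$. The hypothesis says that for $m_1\in\Fsf_1\setminus\Fsf_2$ and $m_2\in\Fsf_2$ we have $m_1-m_2\notin\Lsf(\tilde P_1)$. Since the order on $\Bb[\Fsf_1^{\mathrm{gp}}]/\tilde P_1$ is total, this gives $\pi_{\tilde P_1}(m_1)<\pi_{\tilde P_1}(m_2)$. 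Because the localization map is injective (cancellativity, as in the proof of \cref{lemma:triangle roof}), it follows that $\pi_{P_1}(m_1)<\pi_{P_1}(m_2)$ in $\Bb[\Msf]/P_1$. In words, every monomial outside $\Fsf_2$ is strictly smaller than every monomial in $\Fsf_2$.

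Now take $(f,g)\in P_1$ with $f,g\in\Bb[\Fsf_1]$. There are two cases.

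\emph{Case 1: $\supp f\cap\Fsf_2\neq\emptyset$.} By the strict inequality, $\pi_{P_1}(f)=\pi_{P_1}(\rho(f))$, and this value is the class of some element of $\Fsf_2$. Since $\pi_{P_1}(g)=\pi_{P_1}(f)$ and that value is larger than every monomial outside $\Fsf_2$, the support of $g$ must also meet $\Fsf_2$. Hence $\pi_{P_1}(\rho(g))=\pi_{P_1}(g)=\pi_{P_1}(\rho(f))$, so $(\rho(f),\rho(g))\in P_1$.

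\emph{Case 2: $\supp f\cap\Fsf_2=\emptyset$.} By the symmetric argument, $\supp g\cap\Fsf_2=\emptyset$ as well. Then $\rho(f)=\rho(g)=-\infty$, and the pair is trivially in $P_1$.

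Either way, $(\rho(f),\rho(g))\in P_1\cap\Bb[\Fsf_2]$, so the bottom arrow is induced and the diagram commutes by construction.

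The hypothesis $P_1\cap\Bb[\Fsf_2]\subseteq P_2\cap\Bb[\Fsf_2]$ is not needed for this argument.

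The main obstacle is the transfer step: passing from the condition stated in $\Lsf(\tilde P_1)\subseteq\Fsf_1^{\mathrm{gp}}$ to the strict comparison in $\Bb[\Msf]/P_1$. This relies on \eqref{eq:process} and the isomorphism $\Bb[\Fsf_1^{\mathrm{gp}}]/\tilde P_1\cong$ the localization of $\Bb[\Fsf_1]/(P_1\cap\Bb[\Fsf_1])$. It also uses that the differences $m_1-m_2$ are taken in $\Fsf_1^{\mathrm{gp}}$, so the order comparison reduces to membership in $\Lsf(\tilde P_1)$.
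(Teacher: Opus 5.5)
Your proof is correct and rests on the same key fact as the paper's proof: in the totally ordered quotient $\Bb[\Msf]/P_1$, every monomial of $\Fsf_1\setminus\Fsf_2$ lies strictly below every monomial of $\Fsf_2$. The paper phrases this as $\pi_{P_1\cap\Bb[\Fsf_1]}(\Fsf_2)$ being a face of $\pi_{P_1\cap\Bb[\Fsf_1]}(\Fsf_1)$ and then takes the face-truncation map via \cref{lemma:prime isomorphism}, whereas you check directly that the truncation $\rho$ descends to the quotients; this makes explicit both the transfer from $\Lsf(\tilde{P_1})$ and the commutativity that the paper leaves implicit, and you are right that the hypothesis $P_1\cap\Bb[\Fsf_2]\subseteq P_2\cap\Bb[\Fsf_2]$ is never used.
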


\begin{proof}
  The third assumption implies that $\Fsf'_2\coloneqq \pi_{P_1\cap \Bb[\Fsf_1]}(\Fsf_2)$ is 
  a face of $\Fsf'_1\coloneqq \pi_{P_1\cap \Bb[\Fsf_1]}(\Fsf_1)$.
  Thus we have a homomorphism $\Bb[\Fsf'_1]\to \Bb[\Fsf'_1]/\Gamma_{\Fsf'_2}\cong\Bb[\Fsf'_2]$.
  By \cref{lemma:prime isomorphism}, 
  we have $\Bb[\Fsf_1]/(P_1\cap \Bb[\Fsf_1])\to \Bb[\Fsf_2]/(P_1\cap \Bb[\Fsf_2])$.
  The commutativity of the diagram follows from the construction of the homomorphism.
\end{proof}

\begin{proposition}\label{proposition:inclusion monoid B} %inclusion3
  Let $P_1,P_2$ be points in $\CSpec\Bb[\Msf]$ whose mobile faces are $\Fsf_1,\Fsf_2$, respectively.
  For $i=1,2$, we denote by $\tilde{P_i}$ the point in $\CSpec\Bb[\Fsf_i^{\mathrm{gp}}]$ 
  corresponding to $P_i$ by \eqref{eq:process}.
  Then $P_1\subseteq P_2$ if and only if the following hold:
  \begin{enumerate}
    \item $\Fsf_2\subseteq \Fsf_1$,
    \item For any $m_1\in\Fsf_1\setminus \Fsf_2$ and $m_2\in\Fsf_2$, 
    $m_1-m_2\notin \Lsf(\tilde{P_1})$ in $\Fsf_1^{\mathrm{gp}}$, and
    \item $\Lsf(\tilde{P_1})\cap\Fsf_2^{\mathrm{gp}}\subseteq \Lsf(\tilde{P_2})$ in $\Fsf_1^{\mathrm{gp}}$.
  \end{enumerate}
\end{proposition}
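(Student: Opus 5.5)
The plan is to reduce everything to \cref{lemma:P1P2}, \cref{lemma:triangle roof} and \cref{lemma:shiage}, and to translate congruence inclusions on groups into inclusions of submonoids via \cref{proposition:corr inclusion group B} and \cref{proposition:monoid pull-back}.

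\textbf{Necessity.} Assume $P_1\subseteq P_2$. Condition (1) is the first part of \cref{lemma:P1P2}.

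For (2), take $m_1\in\Fsf_1\setminus\Fsf_2$ and $m_2\in\Fsf_2$, and suppose $m_1-m_2\in\Lsf(\tilde{P_1})$. Since $\Bb[\Msf]/P_1$ is totally ordered and cancellative (\cref{proposition:JM}), this means $\pi_{P_1}(m_1)\geq\pi_{P_1}(m_2)$, that is, $(m_1\oplus m_2,m_1)\in P_1\subseteq P_2$. In $\Bb[\Msf]/P_2$ we have $\pi_{P_2}(m_1)=-\infty$, because $m_1\notin\Fsf_2$. Hence $\pi_{P_2}(m_2)\leq -\infty$, which contradicts $m_2\in\Fsf_2$.

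For (3), restrict $P_1\subseteq P_2$ to $\Bb[\Fsf_2]$ and apply \cref{lemma:triangle roof} to get $\tilde{P_1}\cap\Bb[\Fsf_2^{\mathrm{gp}}]\subseteq\tilde{P_2}$. The pull-back of $\tilde{P_1}$ along the o-homomorphism $\Fsf_2^{\mathrm{gp}}\to\Fsf_1^{\mathrm{gp}}$ is prime, and by \cref{proposition:monoid pull-back} its monoid is $\Lsf(\tilde{P_1})\cap\Fsf_2^{\mathrm{gp}}$. The inclusion-preserving correspondence \cref{proposition:corr inclusion group B} then gives (3).

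One point needs checking here: this pull-back is the same as $\tilde{P_1}\cap\Bb[\Fsf_2^{\mathrm{gp}}]$ taken through $\Bb[\Fsf_2]$. I expect this to follow from \cref{lemma:local phiphi}-type arguments, or directly from the description $\Lsf=\{g:\pi(g)\geq0\}$.

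\textbf{Sufficiency.} Assume (1)--(3). Reading the argument above backwards, (3) together with \cref{proposition:monoid pull-back}, \cref{proposition:corr inclusion group B} and \cref{lemma:triangle roof} gives $P_1\cap\Bb[\Fsf_2]\subseteq P_2\cap\Bb[\Fsf_2]$. By \cref{lemma:P1P2}, it remains to upgrade this to $P_1\cap\Bb[\Fsf_1]\subseteq P_2\cap\Bb[\Fsf_1]$.

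\cref{lemma:shiage} uses (2) to produce a homomorphism $\Bb[\Fsf_1]/(P_1\cap\Bb[\Fsf_1])\to\Bb[\Fsf_2]/(P_1\cap\Bb[\Fsf_2])$. Composing with the map $\Bb[\Fsf_2]/(P_1\cap\Bb[\Fsf_2])\to\Bb[\Fsf_2]/(P_2\cap\Bb[\Fsf_2])$ gives a homomorphism out of $\Bb[\Fsf_1]/(P_1\cap\Bb[\Fsf_1])$. Its composite with $\Bb[\Fsf_1]\to\Bb[\Fsf_1]/(P_1\cap\Bb[\Fsf_1])$ is
\[\Bb[\Fsf_1]\cong\Bb[\Msf]/\Gamma_{\Fsf_1}\to\Bb[\Msf]/\Gamma_{\Fsf_2}\cong\Bb[\Fsf_2]\to\Bb[\Fsf_2]/(P_2\cap\Bb[\Fsf_2]),\]
and I claim its kernel is exactly $P_2\cap\Bb[\Fsf_1]$. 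The reason is that $\Gamma_{\Fsf_2}\subseteq P_2$, and \cref{lemma:wah} together with the isomorphism $\Bb[\Msf]/P_2\cong\Bb[\Fsf_2]/(P_2\cap\Bb[\Fsf_2])$ from the proof of \cref{lemma:P1P2} identify this map with $\Bb[\Fsf_1]\to\Bb[\Msf]\to\Bb[\Msf]/P_2$. Since the composite factors through $\Bb[\Fsf_1]/(P_1\cap\Bb[\Fsf_1])$, we get $P_1\cap\Bb[\Fsf_1]\subseteq P_2\cap\Bb[\Fsf_1]$, and \cref{lemma:P1P2} finishes the proof.

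The main obstacle is bookkeeping: confirming that the diagrams commute and that the various pull-backs of $\tilde{P_i}$ along $\Bb[\Fsf_2]\to\Bb[\Fsf_2^{\mathrm{gp}}]\to\Bb[\Fsf_1^{\mathrm{gp}}]$ agree with the intersections used in \cref{lemma:triangle roof}.
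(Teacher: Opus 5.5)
Your proposal is correct and follows essentially the paper's own argument: reduction via \cref{lemma:P1P2}, condition (2) from cancellativity and the $-\infty$ contradiction, condition (3) via \cref{lemma:triangle roof} and \cref{proposition:monoid pull-back}, and sufficiency by composing the maps supplied by \cref{lemma:triangle roof} and \cref{lemma:shiage}; your explicit kernel computation at the end is a slightly more careful version of the paper's last step. The point you flag resolves as you expect: $\tilde{P_1}\cap\Bb[\Fsf_2^{\mathrm{gp}}]$ is by definition the pull-back along $\Bb[\Fsf_2^{\mathrm{gp}}]\to\Bb[\Fsf_1^{\mathrm{gp}}]$, and the paper checks via \cref{lemma:local phiphi} that it equals $(P_1\cap\Bb[\Fsf_2])\cdot\Bb[\Fsf_2^{\mathrm{gp}}]$.
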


\begin{proof}
  From \cref{lemma:P1P2} and \cref{proposition:monoid pull-back}, 
  it is enough to show that the following are equivalent under the assumption that $\Fsf_2\subseteq \Fsf_1$:
  \begin{enumerate}
    \item[(i)] $P_1\cap \Bb[\Fsf_1] \subseteq P_2\cap \Bb[\Fsf_1]$,
    \item[(ii)] $\tilde{P_1}\cap\Bb[\Fsf_2^{\mathrm{gp}}]\subseteq \tilde{P_2}$ and 
    for any $m_1\in\Fsf_1\setminus \Fsf_2$ and $m_2\in\Fsf_2$, 
    $m_1-m_2\notin \Lsf(\tilde{P_1})$ in $\Fsf_1^{\mathrm{gp}}$.
  \end{enumerate}

  First we assume (i).
  Consider the following commutative diagram.
  \[
  \begin{tikzcd}
    \Bb[\Fsf_2]\ar[r,"\iota"]\ar[d,"\phi_2"] & \Bb[\Fsf_1]\ar[d,"\phi_1"]\\
    \Bb[\Fsf_2^{\mathrm{gp}}]\ar[r,"\iota'"] & \Bb[\Fsf_1^{\mathrm{gp}}]
  \end{tikzcd}
  \]
  Then we have
  \[(\phi_2)^{*}(\iota')^{*}\tilde{P_1}=\iota^{*}(\phi_1)^{*}\tilde{P_1}=\iota^{*}(P_1\cap\Bb[\Fsf_1])=P_1\cap\Bb[\Fsf_2].\]
  The second equality follows from \cref{lemma:local phiphi}.
  Taking the push-out by $\phi_2$, we have
  \[(\phi_2)_{*}(\phi_2)^{*}(\iota')^{*}\tilde{P_1}=(\phi_2)_{*}(P_1\cap\Bb[\Fsf_1]).\]
  Applying \cref{lemma:local phiphi} to the left hand side, 
  we have 
  \[\tilde{P_1}\cap\Bb[\Fsf_2^{\mathrm{gp}}]=(P_1\cap\Bb[\Fsf_1])\cdot\Bb[\Fsf_2^{\mathrm{gp}}].\]
  By the assumption (i), we have
  \[\tilde{P_1}\cap\Bb[\Fsf_2^{\mathrm{gp}}]\subseteq (P_2\cap\Bb[\Fsf_2])\cdot\Bb[\Fsf_2^{\mathrm{gp}}]=\tilde{P_2}.\]
  
  Take $m_1\in\Fsf_1\setminus \Fsf_2$ and $m_2\in\Fsf_2$ and assume $m_1-m_2\in \Lsf(\tilde{P_1})$.
  Now we regard $m_1,m_2$ as elements of $\Bb[\Fsf_1]$.
  By \cref{proposition:used where}, the following diagram is commutative.
  \[
  \begin{tikzcd}
    {} & {} & \Bb[\Fsf_1]\ar[d,"\phi_1"] \ar[r,"\pi_{P_1\cap \Bb[\Fsf_1]}"]& \Bb[\Fsf_1]/(P_1\cap \Bb[\Fsf_1])\ar[d,"\phi'_1"]\\
    \Lsf(\tilde{P_1}) \ar[r,phantom,"\subseteq"] & \Fsf_1^{\mathrm{gp}} \ar[r,phantom,"\subseteq"] & \Bb[\Fsf_1^{\mathrm{gp}}]\ar[r,"\pi_{\tilde{P_1}}"] & \Bb[\Fsf_1^{\mathrm{gp}}]/\tilde{P_1}
  \end{tikzcd}
  \]
  Hence we obtain $\pi_{P_1\cap\Bb[\Fsf_1]}(m_1)\geq \pi_{P_1\cap\Bb[\Fsf_1]}(m_2)$ 
  by the cancellativity of $\Bb[\Fsf_1]/(P_1\cap\Bb[\Fsf_1])$.
  By the assumption (i), we have $\pi_{P_2\cap\Bb[\Fsf_1]}(m_1)\geq \pi_{P_2\cap\Bb[\Fsf_1]}(m_2)$.
  However, by the choice of $m_1$ and $m_2$,
  we have $\pi_{P_2\cap\Bb[\Fsf_1]}(m_1)= -\infty$ and $\pi_{P_2\cap\Bb[\Fsf_1]}(m_2)\neq -\infty$, 
  which gives a contradiction.
  Thus we have $m_1-m_2\notin \Lsf(\tilde{P_1})$. 
  Hence we obtain (ii) from (i).

  Conversely, we assume (ii).
  By \cref{lemma:triangle roof} and \cref{lemma:shiage}, 
  there exist homomorphisms 
  \begin{align*}
    \Bb[\Fsf_2]/(P_1\cap\Bb[\Fsf_2])\to \Bb[\Fsf_2]/(P_2\cap\Bb[\Fsf_2]),\\
    \Bb[\Fsf_1]/(P_1\cap\Bb[\Fsf_1])\to\Bb[\Fsf_2]/(P_1\cap\Bb[\Fsf_2]).
  \end{align*}
  The inclusion $\Fsf_2\subseteq \Fsf_1$ induces 
  the homomorphism $\Bb[\Fsf_2]/(P_2\cap \Bb[\Fsf_2])\to\Bb[\Fsf_1]/(P_2\cap\Bb[\Fsf_1])$.
  By composing these homomorphisms, 
  we obtain a homomorphism $\Bb[\Fsf_1]/(P_1\cap\Bb[\Fsf_1])\to \Bb[\Fsf_1]/(P_2\cap\Bb[\Fsf_1])$.
  Thus (i) holds.
\end{proof}

Thus we obtain a way to compare two points belonging to different strata.

\subsection{The stratification of $\CSpec\Tbb[\Msf]$}\label{subsection:The stratification T}

Next we consider a $\Tbb$-algebra $\Tbb[\Msf]$.
Recalling \cref{lemma:T face} and replacing $\Msf$ with $\Rbb\times \Msf$ in \eqref{eq:process}, we have
\begin{align}
  \begin{split}\label{Tprocess} %{proposition:corr, monoid, T} !定理化
    &\CSpec\Tbb[\Msf] = \coprod_{\Fsf\subseteq \Msf}O_{\Rbb\times \Fsf},\\
    &O_{\Rbb\times \Fsf} \xrightarrow{\cong} \CSpec\Tbb[\Fsf^{\mathrm{gp}}],\ P\mapsto \tilde{P}=(P\cap\Tbb[\Fsf])\cdot\Tbb[\Fsf^{\mathrm{gp}}].
  \end{split}
\end{align}

%ここまで普通のstratification．over T，geometricにrefineしていく．

\begin{proposition}\label{proposition:strata over T}
  Let $P$ be a prime congruence on a $\Tbb$-algebra $\Tbb[\Msf]=\Bb[\Rbb\times \Msf]$ over $\Tbb$.
  For the isomorphism in \eqref{Tprocess}, the following hold.
  \begin{itemize}
    \item $P$ is lying over $\Tbb$ if and only if $\tilde{P}$ is lying over $\Tbb$.
    \item $P$ is geometric if and only if $\tilde{P}$ is geometric.
  \end{itemize}
\end{proposition}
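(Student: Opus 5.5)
The idea is to show that the quotient $\Tbb[\Msf]/P$ embeds, as a $\Tbb$-algebra, into $\Tbb[\Fsf^{\mathrm{gp}}]/\tilde{P}$, and that the latter is a localization of the former. Both properties will then transfer through this comparison. Let $\Rbb\times\Fsf$ be the mobile face of $P$. By \cref{lemma:wah}, applied with $\Msf$ replaced by $\Rbb\times\Msf$, we have
\[\Tbb[\Msf]/\Gamma_{\Rbb\times\Fsf}\cong\Tbb[\Fsf].\]
This isomorphism is compatible with the maps from $\Tbb$, since $\Rbb\times\{0\}\subseteq\Rbb\times\Fsf$. Combining it with \cref{lemma:pushpush}, as in the proof of \cref{lemma:P1P2}, gives a $\Tbb$-algebra isomorphism
\[\Tbb[\Msf]/P\cong\Tbb[\Fsf]/(P\cap\Tbb[\Fsf]).\]
Next, \cref{proposition:used where} identifies $\Tbb[\Fsf^{\mathrm{gp}}]/\tilde{P}$ with the localization $\pi(\Rbb\times\Fsf)^{-1}\bigl(\Tbb[\Fsf]/(P\cap\Tbb[\Fsf])\bigr)$, where $\pi$ is the quotient map. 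By \cref{proposition:JM}, the quotient $\Tbb[\Fsf]/(P\cap\Tbb[\Fsf])$ is cancellative, so the localization map
\[\Tbb[\Msf]/P\hookrightarrow\Tbb[\Fsf^{\mathrm{gp}}]/\tilde{P}\]
is injective. It is also compatible with the maps from $\Tbb$.

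For the first assertion, the map $\Tbb\to\Tbb[\Fsf^{\mathrm{gp}}]/\tilde{P}$ factors as $\Tbb\to\Tbb[\Msf]/P$ followed by the injection above. One composite is therefore injective if and only if the other is, which is exactly the statement that $P$ lies over $\Tbb$ if and only if $\tilde{P}$ does.

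For the second assertion, first suppose $\tilde{P}$ is geometric. Then $\Tbb\to\Tbb[\Fsf^{\mathrm{gp}}]/\tilde{P}$ is bijective. Since it factors through the injection from $\Tbb[\Msf]/P$, the map $\Tbb\to\Tbb[\Msf]/P$ is surjective as well as injective, so $P$ is geometric. Conversely, suppose $P$ is geometric, so that $\Tbb[\Msf]/P\cong\Tbb$. Then every element of $\pi(\Rbb\times\Fsf)$ is a nonzero element of $\Tbb$, because elements of the mobile face are not sent to $-\infty$. Since $\Tbb$ is a semifield, these elements are already invertible, so localizing at them changes nothing. Hence $\Tbb[\Fsf^{\mathrm{gp}}]/\tilde{P}\cong\Tbb$, and $\tilde{P}$ is geometric.

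The step that needs the most care is checking that all the identifications used (from \cref{lemma:wah}, \cref{lemma:pushpush} and \cref{proposition:used where}) respect the $\Tbb$-algebra structures. Once that is verified, the rest of the argument is formal.
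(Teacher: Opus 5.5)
Your proof is correct and follows essentially the paper's route: you use \cref{proposition:used where} to identify $\Tbb[\Fsf^{\mathrm{gp}}]/\tilde{P}$ with a localization of $\Tbb[\Msf]/P$, and then transfer both properties along the $\Tbb$-compatible localization map; passing first through $\Tbb[\Fsf]/(P\cap\Tbb[\Fsf])$ is a harmless difference. You are also more explicit than the paper on the geometric case, which only invokes injectivity on the image of $\Tbb$: you rightly use injectivity of the whole localization map (from cancellativity) for $\tilde{P}$ geometric $\Rightarrow$ $P$ geometric, and invertibility in the semifield $\Tbb$ for the converse.
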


\begin{proof}
  By \cref{proposition:used where}, we have a following commutative diagram:
  \[
  \begin{tikzcd}
    \Bb[\Rbb\times \Msf]\ar[rr]\ar[dd] &{} &(\Rbb\times \Fsf)^{-1}\Bb[\Rbb\times \Msf] \ar[dd]\\
    {} & \ar[lu,hook]\ar[ld] \Tbb \ar[ru]\ar[rd]& {}\\
    \Bb[\Rbb\times M]/P\ar[rr] &{}& \pi_P(\Rbb\times \Fsf)^{-1}(\Bb[\Msf]/P) \ar[r,"\cong"] & \Bb[\Rbb\times \Fsf^{\mathrm{gp}}]/\tilde{P}
  \end{tikzcd}
  \]
  Since the localization map $\Bb[\Rbb\times \Fsf]/P\to \pi_P(\Rbb\times \Fsf)^{-1}(\Bb[\Msf]/P)$ is injective 
  on the subset $\pi_P(\Rbb\times \{0\})$ of $\Bb[\Rbb\times \Msf]/P$, the claims follow.
\end{proof}

By \cref{proposition:strata over T}, the stratification \eqref{Tprocess} is refined as follows:
\begin{align}
  \begin{split}\label{Tprocess2} %{proposition:corr, monoid, T, over T}
    &\CSpec_{\Tbb}\Tbb[\Msf]=\coprod_{\Fsf\subseteq \Msf}(O_{\Rbb\times \Fsf}\cap \CSpec_{\Tbb}\Tbb[\Msf]),\\
    &O_{\Rbb\times \Fsf}\cap \CSpec_{\Tbb}\Tbb[\Msf] \xrightarrow{\cong} \CSpec_{\Tbb}\Tbb[\Fsf^{\mathrm{gp}}],\ P\mapsto \tilde{P}=(P\cap\Tbb[\Fsf])\cdot\Tbb[\Fsf^{\mathrm{gp}}].
  \end{split}
\end{align}

\begin{proposition}\label{proposition:inclusion monoid T over T}
  Let $P_1,P_2$ be points in $\CSpec_{\Tbb}\Tbb[\Msf]$ whose mobile faces are 
  $\Rbb\times \Fsf_1,\Rbb\times \Fsf_2$, respectively.
  For $i=1,2$, we denote by $\tilde{P_i}$ the point in $\CSpec_{\Tbb}\Tbb[\Fsf_i^{\mathrm{gp}}]$ 
  corresponding to $P_i$ by \eqref{Tprocess2}.
  Then $P_1\subseteq P_2$ if and only if the following hold:
  \begin{enumerate}[label=(\roman*)]
    \item $\Fsf_2\subseteq \Fsf_1$,
    \item For any $m_1\in\Fsf_1\setminus \Fsf_2$ and $m_2\in\Fsf_2$, 
    $m_1-m_2\notin \Lsf(\tilde{P_1})^{\mathrm{out}}$ in $\Fsf_1^{\mathrm{gp}}$,
    \item $\Lsf(\tilde{P_1})^{\mathrm{out}}\cap \Fsf_2^{\mathrm{gp}}=\Lsf(\tilde{P_2})^{\mathrm{out}}$ 
    and $\Lsf(\tilde{P_1})^{\mathrm{inn}}\cap (\Rbb\times \Fsf_2^{\mathrm{gp}})\subseteq \Lsf(\tilde{P_2})^{\mathrm{inn}}$.
  \end{enumerate}
\end{proposition}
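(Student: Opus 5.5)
The plan is to deduce the statement from \cref{proposition:inclusion monoid B}, applied with $\Rbb\times\Msf$ in place of $\Msf$. The mobile faces are then $\Rbb\times\Fsf_i$ by \cref{lemma:T face}, and $(\Rbb\times\Fsf_i)^{\mathrm{gp}}=\Rbb\times\Fsf_i^{\mathrm{gp}}$. That proposition says $P_1\subseteq P_2$ if and only if
(a) $\Rbb\times\Fsf_2\subseteq\Rbb\times\Fsf_1$;
(b) for $(r_1,m_1)\in\Rbb\times(\Fsf_1\setminus\Fsf_2)$ and $(r_2,m_2)\in\Rbb\times\Fsf_2$, we have $(r_1-r_2,m_1-m_2)\notin\Lsf(\tilde P_1)$;
(c) $\Lsf(\tilde P_1)\cap(\Rbb\times\Fsf_2^{\mathrm{gp}})\subseteq\Lsf(\tilde P_2)$.
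It therefore suffices to translate (a), (b), (c) into (i), (ii), (iii), using the decomposition of \cref{proposition:corr group T} and \cref{proposition:corr inclusion group T over T}. Condition (a) is clearly equivalent to (i).

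For (b) versus (ii), note that $r_1-r_2$ ranges over all of $\Rbb$. Since $\Lsf^{\mathrm{out}}=\Pi(\Lsf)$, the statement ``$(r,m_1-m_2)\notin\Lsf(\tilde P_1)$ for every $r$'' is exactly ``$m_1-m_2\notin\Lsf(\tilde P_1)^{\mathrm{out}}$''. Hence (b) is equivalent to (ii).

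The substantive step is (c) versus (iii), assuming (i) and (ii).

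\emph{(iii) implies (c).} Take $x=(r,g)\in\Lsf(\tilde P_1)$ with $g\in\Fsf_2^{\mathrm{gp}}$. By \eqref{eq:Lin conv}, either $g\in\Lsf_1^{\mathrm{out}}\setminus(\Lsf_1^{\mathrm{out}}\cap-\Lsf_1^{\mathrm{out}})$ or $x\in\Lsf_1^{\mathrm{inn}}$.
In the first case, $g\in\Lsf_1^{\mathrm{out}}\cap\Fsf_2^{\mathrm{gp}}=\Lsf_2^{\mathrm{out}}$. Moreover $-g\notin\Lsf_2^{\mathrm{out}}$, since $\Lsf_2^{\mathrm{out}}\subseteq\Lsf_1^{\mathrm{out}}$ by the equality in (iii). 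So $x\in\Lsf(\tilde P_2)$.
In the second case, $g\in\Lsf_1^{\mathrm{out}}\cap-\Lsf_1^{\mathrm{out}}\cap\Fsf_2^{\mathrm{gp}}$, so $x\in\Lsf_1^{\mathrm{inn}}\cap(\Rbb\times\Fsf_2^{\mathrm{gp}})\subseteq\Lsf_2^{\mathrm{inn}}$.

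\emph{(c) implies (iii).} This direction needs the lying-over-$\Tbb$ hypothesis. Projecting (c) by $\Pi$ gives $\Lsf_1^{\mathrm{out}}\cap\Fsf_2^{\mathrm{gp}}\subseteq\Lsf_2^{\mathrm{out}}$.

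For the reverse inclusion, suppose $g\in\Lsf_2^{\mathrm{out}}$ but $g\notin\Lsf_1^{\mathrm{out}}$. By \eqref{L2} for $\Lsf_1^{\mathrm{out}}$, we get $-g\in\Lsf_1^{\mathrm{out}}\setminus(\Lsf_1^{\mathrm{out}}\cap-\Lsf_1^{\mathrm{out}})$. Then $\Rbb\times\{-g\}\subseteq\Lsf(\tilde P_1)$, so by (c) $\Rbb\times\{-g\}\subseteq\Lsf(\tilde P_2)$. Combining this with some $(s,g)\in\Lsf(\tilde P_2)$ yields $\Rbb\times\{0\}\subseteq\Lsf(\tilde P_2)\cap-\Lsf(\tilde P_2)$. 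This contradicts $\tilde P_2$ lying over $\Tbb$, by \cref{lemma:when lying over T} (whose proof shows lying over $\Tbb$ means $\Rbb\times\{0\}\not\subseteq\Lsf\cap-\Lsf$).

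The inner inclusion then follows by intersecting (c) with $\Rbb\times(\Lsf^{\mathrm{out}}\cap-\Lsf^{\mathrm{out}}\cap\Fsf_2^{\mathrm{gp}})$, using the equality of outer parts just proved.

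This reverse outer inclusion is the step I expect to be the main obstacle. It is the one place where lying over $\Tbb$ is genuinely used, rather than the purely $\Bb$-algebraic statement, and it is why (iii) asserts an equality instead of an inclusion. An alternative for this step is to argue via limits, combining \cref{lemma:limit equal} and \cref{lemma:out and limit} with the quotient maps of \cref{lemma:P1P2}.
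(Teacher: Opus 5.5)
Your proposal is correct and follows the paper's route: apply \cref{proposition:inclusion monoid B} to $\Rbb\times\Msf$, note that (1)$\Leftrightarrow$(i) and (2)$\Leftrightarrow$(ii) are immediate, and translate (3)$\Leftrightarrow$(iii) through the outer/inner decomposition \eqref{eq:Lin conv}. The only difference is in the equality of outer parts: you argue directly that it must hold, since otherwise $\Rbb\times\{0\}\subseteq\Lsf(\tilde{P_2})$, contradicting that $\tilde{P_2}$ lies over $\Tbb$; the paper instead gets it from \cref{proposition:corr inclusion group T over T}, whose proof uses limits. Your write-up is therefore a more explicit version of the paper's one-line argument.
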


\begin{proof}
  By \cref{proposition:inclusion monoid B}, $P_1\subseteq P_2$ if and only if the following hold.
  \begin{enumerate}
    \item $\Rbb\times \Fsf_2\subseteq \Rbb\times \Fsf_1$,
    \item For any $(r_1,m_1)\in(\Rbb\times \Fsf_1)\setminus (\Rbb\times \Fsf_2)$ and $(r_2,m_2)\in\Rbb\times \Fsf_2$, 
    $(r_1,m_1)-(r_2,m_2)\notin \Lsf(\tilde{P_1})$ in $\Rbb\times \Fsf_1^{\mathrm{gp}}$,
    \item $\Lsf(\tilde{P_1})\cap(\Rbb\times \Fsf_2^{\mathrm{gp}})\subseteq \Lsf(\tilde{P_2})$ in $\Rbb\times \Fsf_1^{\mathrm{gp}}$.
  \end{enumerate}
  The equivalence of (1) and (i) is immediate, and so is that of (2) and (ii).
  By \eqref{eq:Lin conv} and \cref{proposition:corr inclusion group T over T}, 
  the conditions (3) and (iii) are equivalent.
\end{proof}

Note that by \cref{proposition:abstract geom}, 
any prime congruence $P$ on $\Tbb[\Msf]$ is contained in at most one geometric congruence.

\begin{corollary}\label{proposition:abstract proper}
  Let $P\in\CSpec_{\Tbb}\Tbb[\Msf]$ be a point of mobile face $\Fsf\subseteq \Msf$.
  Then there exists a (unique) geometric congruence containing $P$ if and only if
  the image of the localization $\Fsf\to \Fsf^{\mathrm{gp}}$ is contained in $-\Lsf(\tilde{P})^{\mathrm{out}}$.
\end{corollary}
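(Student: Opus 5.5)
Throughout, $P$ is a prime congruence on $\Tbb[\Msf]$ lying over $\Tbb$ with mobile face $\Rbb\times\Fsf$. The criterion to aim for is \cref{lemma:geom condition}: a geometric congruence containing $P$ exists if and only if $\lim_P s\in\Tbb$ for every $s\in\Tbb[\Msf]$. Uniqueness is already given by \cref{proposition:abstract geom}. Since $\lim_P$ of a sum is the maximum of the limits of its monomials (because $\Tbb[\Msf]/P$ is totally ordered), it suffices to check monomials $(r,m)$ with $r\in\Rbb$, $m\in\Msf$. The condition $\lim_P(r,m)\in\Tbb$ means $\lim_P(r,m)\neq+\infty$.

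First I handle $m\notin\Fsf$. Then $(m,-\infty)\in P$, so $\pi_P(r,m)=-\infty$ and $\lim_P=-\infty\in\Tbb$. Thus only $m\in\Fsf$ matters.

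For $m\in\Fsf$, I transfer to the stratum via \eqref{Tprocess2}. The commutative diagram in the proof of \cref{proposition:strata over T} gives an injective map
\[\Bb[\Rbb\times\Msf]/P \supseteq \pi_P(\Rbb\times\Fsf)\to\Bb[\Rbb\times\Fsf^{\mathrm{gp}}]/\tilde P.\]
This map is compatible with $\Tbb$ and order-preserving; it is an order embedding because both sides are totally ordered. Hence $\lim_P(r,m)=\lim_{\tilde P}(r,\bar m)$, where $\bar m$ is the image of $m$ in $\Fsf^{\mathrm{gp}}$. This compatibility of limits under localization is the step needing the most care: I would argue directly from \cref{definition:limit}, using that comparisons $t\le\pi(s)$ with $t\in\Rbb$ are preserved and reflected by an injective order-preserving homomorphism between totally ordered algebras.

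Finally, apply \cref{lemma:out and limit} to the group $\Fsf^{\mathrm{gp}}$. It says $\lim_{\tilde P}(r,g)\in\Rbb\cup\{+\infty\}$ exactly when $g\in\Lsf(\tilde P)^{\mathrm{out}}$. Dually, $\lim_{\tilde P}(r,g)\neq+\infty$ exactly when $g\in-\Lsf(\tilde P)^{\mathrm{out}}$. To see this, note $\lim(r,g)=+\infty$ iff $\lim(-r,-g)=-\infty$, because $\pi(r,g)+\pi(-r,-g)=0$ in the totally ordered group. So $\lim(r,g)\neq+\infty$ iff $\lim(-r,-g)\neq-\infty$ iff $-g\in\Lsf^{\mathrm{out}}$, using (L2) and the description from \cref{lemma:out and limit}.

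Combining the steps, every monomial has finite-or-$-\infty$ limit iff $\bar m\in-\Lsf(\tilde P)^{\mathrm{out}}$ for all $m\in\Fsf$, which is the stated condition.
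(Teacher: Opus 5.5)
Your proof is correct, but it takes a different route from the paper.

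\textbf{The paper's route.} The paper never invokes the limit criterion directly here. It works through the cross-strata inclusion test \cref{proposition:inclusion monoid T over T}.
\begin{itemize}
\item For the forward direction, it takes the geometric $P'\supseteq P$ with mobile face $\Rbb\times\Fsf'$. Condition (iii) together with \cref{proposition:when geom} sends $\Fsf'$ into $-\Lsf(\tilde P)^{\mathrm{out}}$. Condition (ii) together with (L2) sends $\Fsf\setminus\Fsf'$ into $\Fsf^{\mathrm{gp}}\setminus\Lsf(\tilde P)^{\mathrm{out}}\subseteq-\Lsf(\tilde P)^{\mathrm{out}}$.
\item For the converse, it builds an auxiliary point $P''$ in the stratum of $\Fsf'=\Fsf\cap(\Lsf(\tilde P)^{\mathrm{out}}\cap-\Lsf(\tilde P)^{\mathrm{out}})$ from explicit outer and inner parts. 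It then checks $P\subseteq P''$ and applies \cref{proposition:when geom} on that stratum.
\end{itemize}

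\textbf{Your route.} You go straight to the valuative criterion \cref{lemma:geom condition} and reduce to monomials. You transport $\lim_P$ to $\lim_{\tilde P}$ along the localization $\Tbb[\Msf]/P\to\Tbb[\Fsf^{\mathrm{gp}}]/\tilde P$, which is injective by cancellativity and order-preserving. You finish with \cref{lemma:out and limit} and the symmetry $\lim(r,g)=+\infty\iff\lim(-r,-g)=-\infty$.

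\textbf{Comparison.} Your argument is shorter and handles both directions at once. It also spares you from checking that the auxiliary pair $(\Fsf'^{\mathrm{gp}},\Lsf(\tilde P)\cap(\Rbb\times\Fsf'^{\mathrm{gp}}))$ really defines a point lying over $\Tbb$. The paper's version has the advantage of exhibiting the stratum containing the geometric point, which it later quotes in the proof of \cref{theorem:properness}. Your approach recovers this as well. The geometric congruence is $\Ker\lim_P$, whose mobile face is $\Rbb\times\{m\in\Fsf\mid \bar m\in\Lsf(\tilde P)^{\mathrm{out}}\}$. Under the hypothesis, this is exactly $\Rbb\times\Fsf'$.

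\textbf{Small fixes.}
\begin{itemize}
\item In the case $m\notin\Fsf$, the pair lying in $P$ is $((r,m),-\infty)$, not $(m,-\infty)$.
\item (L2) is not needed in your last step. The equivalence $\lim(-r,-g)\neq-\infty\iff -g\in\Lsf(\tilde P)^{\mathrm{out}}$ is exactly \cref{lemma:out and limit}.
\item You should state explicitly that the image of $\pi_P(r,m)$ under the localization is $\pi_{\tilde P}(r,\bar m)$. This follows from the commutative diagram in \cref{subsection:The stratification} combined with \cref{proposition:used where}.
\end{itemize}
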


\begin{proof}
  Let $\phi:\Fsf\to\Fsf^{\mathrm{gp}}$ be the localization.
  Assume that there is a geometric congruence $P'$ satisfying $P\subseteq P'$.
  By the condition (i) of \cref{proposition:inclusion monoid T over T},
  the mobile face $\Fsf'$ of $P'$ is a face of $\Fsf$.
  By the condition (iii) of \cref{proposition:inclusion monoid T over T},
  we have 
  \[\Lsf(\tilde{P'})^{\mathrm{out}}=\Lsf(\tilde{P})^{\mathrm{out}}\cap \Fsf'^{\mathrm{gp}}
  \subseteq \Lsf(\tilde{P})^{\mathrm{out}}.\]
  By \cref{proposition:when geom,proposition:strata over T},
  the submonoid $\Lsf(\tilde{P'})^{\mathrm{out}}$ is not proper, 
  that is, it is equal to $\Fsf'^{\mathrm{gp}}$.
  Thus we have 
  \[\phi(\Fsf')\subseteq \Fsf'^{\mathrm{gp}}=\Lsf(\tilde{P'})^{\mathrm{out}}=-\Lsf(\tilde{P'})^{\mathrm{out}}\subseteq -\Lsf(\tilde{P})^{\mathrm{out}}.\]
  On the other hand, by the condition (ii) of \cref{proposition:inclusion monoid T over T},
  we have 
  \[\phi(\Fsf\setminus \Fsf')\subseteq \Fsf^{\mathrm{gp}}\setminus \Lsf(\tilde{P})^{\mathrm{out}}\subseteq -\Lsf(\tilde{P})^{\mathrm{out}}.\]
  Thus we have $\phi(\Fsf)\subseteq -\Lsf(\tilde{P})^{\mathrm{out}}$.

  Conversely we assume $\phi(\Fsf)\subseteq -\Lsf(\tilde{P})^{\mathrm{out}}$.
  Then define a face of $\Fsf$ by 
  \[\Fsf'\coloneqq \Fsf\cap (\Lsf(\tilde{P})^{\mathrm{out}}\cap-\Lsf(\tilde{P})^{\mathrm{out}}).\]
  We define two submonoids by 
  \begin{align*}
    &\Lsf^{\mathrm{out}} \coloneqq \Fsf'^{\mathrm{gp}},\\
    &\Lsf^{\mathrm{inn}} \coloneqq \Lsf(\tilde{P})\cap(\Rbb\times\Fsf'^{\mathrm{gp}}).
  \end{align*}
  Using the correspondence of \cref{proposition:corr inclusion group T over T},
  we obtain a prime congruence $P''$ on $\Tbb[\Msf]$.
  By \cref{proposition:inclusion monoid T over T}, we have $P\subseteq P''$.
  By \cref{proposition:when geom}, there is a geometric congruence $\tilde{P'}$ on $\Tbb[\Fsf'^{\mathrm{gp}}]$ 
  which is contained in $\tilde{P''}$.
  By \cref{proposition:strata over T}, 
  $P'$ is a geometric congruence on $\Tbb[\Msf]$ containing $P''$, and hence $P$.
\end{proof}

\section{Representation of submonoids}\label{section:Mflag}

In this section, we examine the set of prime congruences on typical tropical algebras, such as
\[\Blau,\ \lau,\ \Bpol,\ \pol.\]

%\subsection{The Laurent polynomial semirings over $\Bb$ and $\Tbb$} 
\subsection{The Laurent polynomial semirings over $\Bb$} 

In this subsection, we consider the set of prime congruences 
on the Laurent polynomial semiring $\Blau$.
It is represented as an algebra introduced in \cref{definition:B-alg}:
\[\Blau=\Bb[M].\]
Applying $G=M$ to \cref{proposition:corr inclusion group B},
we obtain a one-to-one correspondence:
\begin{align*}
  \CSpec\Blau \longleftrightarrow \Lcal_M.
\end{align*}
Since $M$ is equipped with the trivial order, 
we may impose only the condition \eqref{L2} on submonoids of $M$. Namely,
\[\Lcal_M=\{\Lsf\subseteq M\mid \Lsf\cup -\Lsf=M\}.\]
As we show in this subsection, 
such a submonoid $\Lsf$ admits a lexicographic stratification described by an \emph{$M$-flag}.

Let $V$ be an $\Rbb$-linear space and $W=\{v\in V\mid l(v)=0\}$ be a linear hyperplane. 
An \emph{orientation} of $W$ is defined as either the set $\{v\in V\mid l(v)> 0\}$ or $\{v\in V\mid l(v)< 0\}$.

\begin{definition}\label{definition:M-flag}
  Let $M$ be the group of Laurent monomials.
  Then $M$ can be naturally embedded into $M_{\Rbb}\cong \Rbb^n$.
  An \emph{$M$-flag} is a collection of the following data:
  \begin{itemize}
    \item $l\in\Zbb_{\geq 0}$;
    \item A family of linear subspaces of $M_{\Rbb}$
    \[\{H_i\}_{i=0}^l:M_{\Rbb}=H_0\supsetneq H_1\supsetneq \dots\supsetneq H_l,\]
    such that $H_i$ is a hyperplane of the linear span $\Rbb(H_{i-1}\cap M)$ of $H_{i-1}\cap M$ for each $i\in \{1,\dots,l\}$;
    \item An orientation $H_{i,+}\subseteq \Rbb(H_{i-1}\cap M)$ of $H_i$ as a hyperplane for each $i\in \{1,\dots,l\}$.
  \end{itemize}
  We often simply write this collection of data as $\{H_i\}_{i=0}^l$ or $H_{\bullet}$.
  The number $l$ is called the \emph{length} of the $M$-flag.
  Denote the set of all $M$-flags by $\Fcal_M$.
\end{definition}

It follows directly that the codimension of $H_i$ in $M_{\Rbb}$ is greater than or equal to $i$.
Thus we have $l\leq n$.

First, for an $M$-flag $H_{\bullet}$, we define a submonoid of $M$:
\[\Psi_1(H_{\bullet})\coloneqq \left(\bigcup_{i=1}^l H_{i,+}\cup H_{l}\right)\cap M.\]

\begin{lemma}
  $\Psi_1(H_{\bullet})$ satisfies \eqref{L2}, i.e. $\Psi_1(H_{\bullet})\cup -\Psi_1(H_{\bullet})=M$.
\end{lemma}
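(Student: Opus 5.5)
Fix $m\in M$; we will show that $m$ or $-m$ lies in $\Psi_1(H_{\bullet})$. The argument walks down the flag until $m$ is first separated from a hyperplane, using two facts about the data of \cref{definition:M-flag}. First, every lattice point of $H_{i-1}$ lies in $\Rbb(H_{i-1}\cap M)$, the ambient space in which $H_i$ is a hyperplane. Second, $-H_{i,+}$ is exactly the opposite open half-space of $\Rbb(H_{i-1}\cap M)$ cut out by $H_i$.

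Let $k$ be the largest index in $\{0,\dots,l\}$ with $m\in H_k$. Such a $k$ exists because $m\in M\subseteq M_{\Rbb}=H_0$, and it is well defined because the $H_i$ are nested.

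If $k=l$, then $m\in H_l\cap M\subseteq\Psi_1(H_{\bullet})$ and we are done.

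If $k<l$, then $m\in H_k\cap M$. Hence $m\in\Rbb(H_k\cap M)$, the space in which $H_{k+1}$ is a hyperplane with orientation $H_{k+1,+}$. Write $H_{k+1}=\{v\in\Rbb(H_k\cap M)\mid \ell(v)=0\}$ with $H_{k+1,+}=\{\ell>0\}$, after replacing $\ell$ by $-\ell$ if necessary. Since $m\notin H_{k+1}$ by maximality of $k$, we have $\ell(m)\neq 0$. If $\ell(m)>0$, then $m\in H_{k+1,+}\cap M\subseteq\Psi_1(H_{\bullet})$. If $\ell(m)<0$, then $-m\in\Rbb(H_k\cap M)$ satisfies $\ell(-m)>0$ and $-m\in M$, so $-m\in H_{k+1,+}\cap M\subseteq\Psi_1(H_{\bullet})$. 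In either case $m\in\Psi_1(H_{\bullet})\cup-\Psi_1(H_{\bullet})$.

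This gives $M\subseteq \Psi_1(H_{\bullet})\cup -\Psi_1(H_{\bullet})$. The reverse inclusion is trivial, since $\Psi_1(H_{\bullet})\subseteq M$ and $M=-M$.

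The only point needing care is that $m\in H_k$ implies $m\in\Rbb(H_k\cap M)$. This holds precisely because $m$ is a lattice point, and it is what makes the orientation $H_{k+1,+}$ applicable to $m$. The statement only asks for \eqref{L2}, so there is no need to verify here that $\Psi_1(H_{\bullet})$ is a submonoid.
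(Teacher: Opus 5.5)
Your proof is correct and takes essentially the paper's approach. The paper writes $\Rbb(H_{i-1}\cap M)=H_{i,+}\cup H_i\cup -H_{i,+}$, intersects with $M$, and iterates this decomposition down the flag; you run the same descent element by element, stopping at the last $H_k$ that contains $m$.
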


\begin{proof}
  For each $i\in\{1,\dots,l\}$,
  \[\Rbb(H_{i-1}\cap M)=H_{i,+}\cup H_{i}\cup -H_{i,+}\]
  then by taking the intersection with $M$, we have
  \[H_{i-1}\cap M=((H_{i,+}\cup -H_{i,+})\cap M)\cup (H_i\cap M).\]
  By applying the equation inductively, we obtain the claim. 
\end{proof}

Thus we obtain a map
\[\Psi_1\colon \Fcal_M \to \{\Lsf\subseteq M\mid \text{$\Lsf$ is a submonoid satisfying \eqref{L2}}\}.\]

\begin{example}\label{example:red blue}
  Set $M\cong \Zbb^2$. Define an $M$-flag as follows:
  \begin{align*}
    &H_1=\{(0,y)\mid y\in\Rbb\} \text{ with } H_{1,+}=\{(x,y)\mid x,y\in\Rbb,x>0\},\\ 
    &H_2=\{(0,0)\} \text{ with } H_{2,+}=\{(0,y)\mid y\in\Rbb,y>0\}.
  \end{align*}
  Then the corresponding submonoid $\Psi_1(H_0\supsetneq H_1\supsetneq H_2)$ of $M$ is represented by
  the set of all red points and the origin $O$ in \cref{figure:red blue}. 

\begin{figure}[htbp]
  \centering
  \begin{tikzpicture}[scale=0.5]
    \draw[->,>=stealth,semithick] (-4.5,0)--(4.5,0)node[above]{$x$}; %x軸
    \draw[->,>=stealth,semithick] (0,-4.5)--(0,4.5)node[right]{$y$}; %y軸
    \filldraw[black] (0,0) circle (3pt) node[below right]{O}; %原点

    \foreach \x in {1,2,3,4}{
      \foreach \y in {-4,...,4}{
        \filldraw[red] (\x,\y) circle (3pt);
      }
    }
    \foreach \x/\y in {0/1, 0/2, 0/3, 0/4}{
        \filldraw[red] (\x,\y) circle (3pt);
    }

    \foreach \x in {-1,-2,-3,-4}{
      \foreach \y in {-4,...,4}{
        \filldraw[blue] (\x,\y) circle (3pt);
      }
    }
    \foreach \x/\y in {0/-1, 0/-2, 0/-3, 0/-4}{
        \filldraw[blue] (\x,\y) circle (3pt);
    }
  \end{tikzpicture}

  \caption{An example of a submonoid satisfying \eqref{L2}}
  \label{figure:red blue}
\end{figure}

In this case, the length of the $M$-flag is 2. 
\end{example}

\begin{example}
  In \cref{example:red blue}, if $H_1$ is chosen as a hyperplane with an irrational slope, 
  then $H_1\cap M=\{O\}$. Thus the $M$-flag cannot be extended any further.
\end{example}

Next, we construct the inverse map of $\Psi_1$.
Let $\Lsf$ be a submonoid of $M$ satisfying \eqref{L2}.
We denote by $\overline{\cone(\Lsf)}$ the closure of the convex cone spanned by $\Lsf$ in $M_{\Rbb}$.
The subset $\overline{\cone(\Lsf)}\cap-\overline{\cone(\Lsf)}$ is the linear subspace of $M_{\Rbb}$.

\begin{lemma}\label{lemma:codim0or1}
  In the above setting, $\overline{\cone(\Lsf)}\cap-\overline{\cone(\Lsf)}$ 
  is a linear space in $M_{\Rbb}$ of codimension 0 or 1.
\end{lemma}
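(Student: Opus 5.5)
Set $K\coloneqq\overline{\cone(\Lsf)}$. It is a closed convex cone, and $K\cap -K$ is its lineality space, which is a linear subspace. The plan is to show that the dual cone $K^{\vee}=\{\xi\in M_{\Rbb}^{*}\mid \xi(v)\geq 0 \text{ for all } v\in K\}$ has dimension at most $1$. This suffices because duality for closed convex cones gives $K\cap -K=(K^{\vee})^{\perp}$, so $\operatorname{codim}(K\cap -K)=\dim \Rbb K^{\vee}$, and a $K^{\vee}$ of dimension at most $1$ gives codimension $0$ or $1$.

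First I will use \eqref{L2} to see that $K\cup -K=M_{\Rbb}$. Since $\Lsf\cup -\Lsf=M$, every rational ray lies in $\cone(\Lsf)$ or in $-\cone(\Lsf)$: a rational vector $v$ has a positive integer multiple in $M$, and that multiple lies in $\Lsf$ or in $-\Lsf$. Rational points are dense and $K$ is closed, so taking limits along a sequence of rational vectors (passing to a subsequence that stays entirely in $K$ or entirely in $-K$) gives $K\cup -K=M_{\Rbb}$.

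Next I will show that $K^{\vee}$ contains no two linearly independent functionals. Suppose $\xi_1,\xi_2\in K^{\vee}$ are linearly independent. Then there is $v\in M_{\Rbb}$ with $\xi_1(v)>0$ and $\xi_2(v)<0$; for instance, choose $v$ with $\xi_1(v)=1$ and $\xi_2(v)=-1$, which is possible because $(\xi_1,\xi_2)\colon M_{\Rbb}\to\Rbb^2$ is surjective. Now $v\notin K$ because $\xi_2(v)<0$, and $v\notin -K$ because $\xi_1(-v)<0$. This contradicts $K\cup -K=M_{\Rbb}$. Hence $K^{\vee}$ is contained in a line, so $\dim\Rbb K^{\vee}\leq 1$, and the codimension bound follows from the duality $K\cap -K=(K^{\vee})^{\perp}$.

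The step I expect to need the most care is the duality identity itself. The inclusion $(K^{\vee})^{\perp}\supseteq K\cap -K$ is immediate. For the reverse inclusion, if $w\in (K^{\vee})^{\perp}$ then $w$ and $-w$ both lie in $K^{\vee\vee}=K$, using the bipolar theorem for the closed cone $K$. This is standard, so I would simply cite it.
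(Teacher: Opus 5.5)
Your proof is correct, but it takes a different route from the paper's. The paper passes to the quotient by $H=K\cap -K$. There the image $F$ of $K$ is a closed cone with $F\cup -F=M_{\Rbb}/H$ and $F\cap -F=0$, and the paper simply asserts that no such $F$ exists when $\dim M_{\Rbb}/H\geq 2$. The natural justification would be connectedness: $(M_{\Rbb}/H)\setminus\{0\}$ would split into the disjoint nonempty relatively closed sets $F\setminus\{0\}$ and $-F\setminus\{0\}$.

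You instead stay in $M_{\Rbb}$ and replace both the quotient and this topological step with a dual-cone argument. Two independent functionals in $K^{\vee}$ produce a vector lying in neither $K$ nor $-K$, and the bipolar theorem turns $\dim\Rbb K^{\vee}\leq 1$ into the codimension bound. You also make explicit the density argument for $K\cup -K=M_{\Rbb}$, which the paper uses tacitly.

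Your approach also gives slightly more for free. When the codimension is $1$, $K^{\vee}$ is a single ray $\Rbb_{\geq 0}\xi$. It cannot contain $-\xi$, since then $K\subseteq\ker\xi$, and $K\cup -K=M_{\Rbb}$ would force $\xi=0$. Hence $K=\{\xi\geq 0\}$ is a closed half-space, and $K\setminus H=\{\xi>0\}$ really is an orientation of $H$, which the paper needs right after the lemma. The price is relying on the bipolar (separation) theorem. The paper's route needs only elementary facts about cones plus connectedness, though it leaves those facts, including the closedness and pointedness of $F$, unverified.
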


\begin{proof}
  Set $H=\overline{\cone(\Lsf)}\cap-\overline{\cone(\Lsf)}$.
  Denote the images of $\overline{\cone(\Lsf)}$ in $M_{\Rbb}/H $ by $F$.
  Then $F$ is a closed cone satisfying conditions:
  \[F\cup -F=M_{\Rbb}/H,\, F\cap -F=0.\]
  If the dimension of $M_{\Rbb}/H $ is larger than 1, such $F$ cannot exist.
\end{proof}

If the codimension of $H=\overline{\cone(\Lsf)}\cap-\overline{\cone(\Lsf)}$ is 1, we obtain a new submonoid
\[\Lsf'\coloneqq \Lsf\cap H.\]
It also satisfies \eqref{L2} 
as a submonoid of $M\cap H$.
Furthermore, 
the hyperplane $H$ of $M_{\Rbb}$ has a natural orientation $\overline{\cone(\Lsf)}\setminus H$.

Repeating this procedure yields a sequence of subspaces.
That is, for a submonoid $\Lsf$ of $M$ satisfying \eqref{L2}, we set
\begin{align*}
  &H_0=M_{\Rbb},\\
  &H_{i+1}=\overline{\cone(H_i\cap \Lsf)}\cap -\overline{\cone(H_i\cap \Lsf)},\\
  &H_{i+1,+}=\overline{\cone(H_i\cap \Lsf)}\setminus H_{i+1}.
\end{align*}
Since the dimension of $M_{\Rbb}$ is finite, 
there exists an integer $i$ such that 
\begin{align*}
  &H_i \text{ is a hyperplane of }\Rbb(H_{i-1}\cap M) \text{ if $i\leq l$,}\\
  &H_i=\Rbb(H_{i-1}\cap M)\text{ if $i> l$.}
\end{align*}
Then we obtain a sequence $\{H_i\}_{i=0}^l$, which is actually an $M$-flag.
Thus we obtain the map
\[\Psi_2\colon\{\Lsf\subseteq M\mid \text{$\Lsf$ is a submonoid satisfying \eqref{L2}}\}\to\Fcal_M.\]

\begin{theorem}\label{theorem:flag corr group B}
  There is a bijection between $\CSpec \Blau$ and 
  the set $\Fcal_M$ of all $M$-flags. 
\end{theorem}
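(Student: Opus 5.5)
By \cref{proposition:corr inclusion group B} applied to $G=M$ with the trivial order, $\CSpec\Blau$ is in bijection with $\Lcal_M=\{\Lsf\subseteq M\mid \Lsf\cup-\Lsf=M\}$. It therefore suffices to show that $\Psi_1\colon\Fcal_M\to\Lcal_M$ and $\Psi_2\colon\Lcal_M\to\Fcal_M$ are mutually inverse. Before that, I will check that $\Psi_1(H_\bullet)$ is a submonoid. Write $\Psi_1(H_\bullet)$ as the disjoint union of the sets $H_{i,+}\cap M$ for $i=1,\dots,l$, together with $H_l\cap M$. Here $H_{i,+}\cap M\subseteq H_{i-1}\setminus H_i$. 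Take $a\in H_{i,+}$ and $b\in H_{j,+}$ with $i\le j$, or $b\in H_l$. Then $b\in H_{i-1}$, and $b$ lies in $H_i\cup H_{i,+}$. So $a+b$ lies in the open half-space $H_{i,+}$ of $\Rbb(H_{i-1}\cap M)$, because $a$ has positive value under the defining functional and $b$ has nonnegative value. Closure under addition follows.

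For $\Psi_2\circ\Psi_1=\mathrm{id}$, let $\Lsf=\Psi_1(H_\bullet)$. The key computation is $\overline{\cone(\Lsf)}=H_{1,+}\cup H_1$, the closed half-space. The inclusion $\subseteq$ is clear, since $\Lsf$ lies in that closed half-space. For $\supseteq$, I use that $H_{1,+}\cap M$ is dense in direction in the open half-space of $M_\Rbb$: every rational ray in the open half-space meets $M$ there, rational points are dense, and the closure then gives the whole closed half-space. Hence $\overline{\cone(\Lsf)}\cap-\overline{\cone(\Lsf)}=H_1$ and its orientation is $H_{1,+}$. Moreover, $\Lsf\cap H_1$ is exactly $\Psi_1$ of the truncated flag $H_1\supsetneq\cdots\supsetneq H_l$, viewed inside the lattice $H_1\cap M$ of $\Rbb(H_1\cap M)$. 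Induction on $l$ then recovers every $H_i$ and $H_{i,+}$, and the process stops at step $l$. At that stage $\Lsf\cap H_l=H_l\cap M$ spans $\Rbb(H_l\cap M)$, so its closed cone is the whole space and the codimension is $0$.

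For $\Psi_1\circ\Psi_2=\mathrm{id}$, let $\Lsf\in\Lcal_M$, and let $H_1$ and $H_{1,+}$ be as produced by \cref{lemma:codim0or1} in the codimension-one case. First, $\Lsf\cap H_{1,+}=M\cap H_{1,+}$. Indeed, if $m\in M\cap H_{1,+}$ were not in $\Lsf$, then $-m\in\Lsf\subseteq\overline{\cone(\Lsf)}$. But $-m$ lies in $-H_{1,+}$, and this set is disjoint from $\overline{\cone(\Lsf)}=H_{1,+}\cup H_1$, a contradiction. Similarly, $\Lsf\cap -H_{1,+}=\emptyset$. Hence $\Lsf=(H_{1,+}\cap M)\cup(\Lsf\cap H_1)$. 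I then recurse on $\Lsf\cap H_1\in\Lcal_{H_1\cap M}$, working in the ambient space $\Rbb(H_1\cap M)$; note that $H_{i+1}$ is computed inside $\Rbb(H_i\cap M)$, which is consistent with the definition of an $M$-flag. At the terminal step the codimension is $0$, meaning $\overline{\cone(\Lsf\cap H_l)}$ is all of $\Rbb(H_l\cap M)$. I must show that this forces $\Lsf\cap H_l=H_l\cap M$.

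I expect this terminal step to be the main obstacle. It amounts to showing that a submonoid $\Lsf'$ of a lattice $N$ with $\Lsf'\cup-\Lsf'=N$ and $\overline{\cone(\Lsf')}=N_\Rbb$ must equal $N$. Suppose some $m\in N$ has $m\notin\Lsf'$; then $-m\in\Lsf'$. Since the closed cone is everything and $\Lsf'$ is a monoid, I expect $\cone(\Lsf')$ itself to be the whole space: a convex cone whose closure is all of $N_\Rbb$ has nonempty interior containing $0$, so it is all of $N_\Rbb$. Hence $m=\sum r_i l_i$ with $l_i\in\Lsf'$ and $r_i\ge0$ rational, and choosing the $l_i$ in the interior makes this possible with integral coefficients after clearing denominators, giving $km\in\Lsf'$ for some $k\ge1$. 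The remaining difficulty is to pass from $km\in\Lsf'$ to $m\in\Lsf'$. For this I plan to use that $m+l\in\Lsf'$ or $-(m+l)\in\Lsf'$ for suitable $l\in\Lsf'$, combined with the presence of $-m\in\Lsf'$. A cleaner route may be a Carathéodory argument: write $m$ as a positive integer combination of elements of $\Lsf'$ plus $-m$ terms, exploiting that $\Lsf'$ surrounds $0$. If necessary, I will instead show directly that $\Lsf'\cap-\Lsf'$ is a subgroup of finite index and that $\Lsf'$ is a union of its cosets, which yields equality with $N$.
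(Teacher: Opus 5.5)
Your proposal is correct and follows essentially the paper's route: you reduce via \cref{proposition:corr inclusion group B} to showing that $\Psi_1$ and $\Psi_2$ are mutually inverse, using that $\overline{\cone(H_{i,+}\cap M)}$ is the closed half-space, the decomposition $\Lsf=(H_{1,+}\cap M)\cup(\Lsf\cap H_1)$, and induction along the flag (you are in fact more careful than the paper, which asserts the base case ``$l=0$ implies $\Lsf=M$'' as immediate). The terminal step you flag closes in one line: once $-m\in\Lsf'$ and $km\in\Lsf'$ for some $k\geq 1$ (rational coefficients exist because $\{r\in\Rbb_{\geq 0}^k\mid \sum_i r_il_i=m\}$ is a nonempty polyhedron defined over $\mathbb{Q}$), you get $m=km+(k-1)(-m)\in\Lsf'$, which is the desired contradiction.
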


\begin{proof}
  By \cref{proposition:corr inclusion group B}, 
  it is enough to show that $\Psi_1$ and $\Psi_2$ give a one-to-one correspondence. 

  First we show that $\Psi_1\Psi_2$ is the identity by induction on the rank $n$ of $M$.
  For a submonoid $\Lsf$ of $M$ satisfying \eqref{L2}, 
  we define an $M$-flag $H_{\bullet}=\{H_i\}_{i=0}^l=\Psi_2(\Lsf)$ and a submonoid $\Lsf'=\Psi_1(H_{\bullet})$ of $M$.
  If $l=0$, then $\Lsf=M$ and the assertion is immediate. Thus we may assume that $l>0$.
  The inclusion $M\cap H_1\subseteq M$ induces $\Fcal_{M\cap H_1}\subseteq \Fcal_M$ 
  by extending $(M\cap H_1)$-flags. 
  In particular, $(M\cap H_1)$-flag $K_{\bullet}=\{K_i\}_{i=0}^{l-1}$ defined by
  \begin{align*}
    &K_{i} =
      \begin{cases*}
        \Rbb(H_1\cap M),  &\text{ if $i=0$},\\
        H_{i+1}, &\text{ if $i>0$},
      \end{cases*}\\
    &K_{i,+} =H_{i+1,+} \text{ for $i=1,\dots,l-1$}.
  \end{align*}
  corresponds to $\{H_i\}_{i=0}^l$ via the inclusion.
  Then we have
  \[\Lsf'\cap H_1=\Psi_1(H_{\bullet})\cap H_1
  =\Psi_1(K_{\bullet})=\Psi_1\Psi_2(\Lsf\cap H_1)=\Lsf\cap H_1.\] 
  The last equality follows from the induction hypothesis.
  Then
  \begin{align*}
    \Lsf'&=M\cap \left(\bigcup_{i=1}^l H_{i,+}\cup H_l\right)\\
    &=(M\cap \left(\bigcup_{i=2}^l H_{i,+}\cup H_l\right))\cup (M\cap H_{1,+})\\
    &=(\Lsf'\cap H_1)\cup (M\cap H_{1,+})\\
    &=(\Lsf\cap H_1)\cup (M\cap H_{1,+})\\
    &=(\Lsf\cap H_1)\cup (\Lsf\setminus H_1)=\Lsf.
  \end{align*}
  Therefore $\Psi_1\Psi_2$ is the identity.

  Second, we show that 
  an $M$-flag $H'_{\bullet }\coloneqq \Psi_2\Psi_1(H_{\bullet})$ 
  is equal to $H_{\bullet}$ for any $H_{\bullet}\in \Fcal_M$.
  To show the claim by induction on $i$, we assume that $H'_{i_0}=H_{i_0}$ holds for $i_0\in\{0,\dots,l-1\}$. 
  Then we have
  \[
  \overline{\cone(H'_{i_0}\cap \Psi_1(H_{\bullet}))}=\overline{\cone(H_{i_0}\cap \Psi_1(H_{\bullet}))}
  =\overline{\cone\left(\left(\bigcup_{i=i_0+1} H_{i,+}\cup H_l\right)\cap M\right)}
  =\overline{\cone(H_{i_0+1,+}\cap M)},
  \]
  and
  \begin{align*}
    H'_{i_0+1}&=\overline{\cone(H'_{i_0}\cap \Psi_1(H_{\bullet}))}\cap -\overline{\cone(H'_{i_0}\cap \Psi_1(H_{\bullet}))}\\
    &=\overline{\cone(H_{i_0+1,+}\cap M)}\cap -\overline{\cone(H_{i_0+1,+}\cap M)}\\
    &=H_{i_0+1},\\
    H'_{i_0+1,+}&=\overline{\cone(H'_{i_0}\cap \Psi_1(H_{\bullet}))}\setminus H'_{i_0+1}\\
    &=\overline{\cone(H_{i_0+1,+}\cap M)}\setminus H_{i_0+1}\\
    &=H_{i_0+1,+}.
  \end{align*}
  Thus $\Psi_2\Psi_1$ is the identity.
\end{proof}

We often denote by $H(P)_{\bullet}$ the $M$-flag corresponding to $P$:
\begin{alignat*}{2}
  \CSpec\Bb[M] &\xrightarrow{1:1} &\Lcal_M \xrightarrow{1:1} &\Fcal_M,\\
  P &\longmapsto &\Lsf(P) \longmapsto &H(P)_{\bullet}.
\end{alignat*}
Note that by definition of $\Psi_1$, we have
\[\Lsf(P)\cap-\Lsf(P)=H(P)_l\cap M.\]

For an $M$-flag $H_{\bullet }=\{H_i\}_{i=0}^{l}$ and $l'\in \{0,\dots,l\}$, 
the sequence $H'_{\bullet}=\{H_i\}_{i=0}^{l'}$ is also an $M$-flag.
We call $H'_{\bullet}$ a \emph{truncation} of $H_{\bullet}$.

\begin{proposition}\label{proposition:flag inclusion group B}
  For $i=1,2$, let $P_i$ be a prime congruence on $\Bb[M]$ and $H(P_i)_{\bullet}$ be the corresponding $M$-flag.
  Then $P_1\subseteq P_2$ if and only if $H(P_2)_{\bullet}$ is a truncation of $H(P_1)_{\bullet}$.
  Furthermore, if the length of the $M$-flag $H(P)_{\bullet}$ corresponding a prime congruence $P$
  is equal to $l$, we have $\het P=\rk(H(P)_l\cap M)$ and $\coht P=l$.
\end{proposition}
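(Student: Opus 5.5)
By \cref{proposition:corr inclusion group B}, $P_1\subseteq P_2$ holds if and only if $\Lsf(P_1)\subseteq\Lsf(P_2)$. It therefore suffices to compare the submonoids $\Lsf_i=\Psi_1(H(P_i)_\bullet)$ and to use the bijection of \cref{theorem:flag corr group B}.

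\textbf{Truncation implies inclusion.} Suppose $H(P_2)_\bullet$ is the truncation $\{H_i\}_{i=0}^{l'}$ of $H_\bullet=H(P_1)_\bullet$. Then
\[\Psi_1(H(P_1)_\bullet)=\Bigl(\bigcup_{i\le l}H_{i,+}\cup H_l\Bigr)\cap M\subseteq\Bigl(\bigcup_{i\le l'}H_{i,+}\cup H_{l'}\Bigr)\cap M,\]
because $H_{i,+}\subseteq H_{i-1}\subseteq H_{l'}$ for $i>l'$ and $H_l\subseteq H_{l'}$. So $\Lsf_1\subseteq\Lsf_2$.

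\textbf{Inclusion implies truncation.} Suppose $\Lsf_1\subseteq\Lsf_2$. I will compare the recursive construction $\Psi_2$ applied to both monoids, showing by induction on $i$ that either $H(P_2)_\bullet$ has stopped at step $i$, or $H(P_2)_j=H(P_1)_j$ and the orientations agree for $j\le i$.

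Assume $H^1_i=H^2_i=:H$. Write $C_k=\overline{\cone(H\cap\Lsf_k)}$. Then $C_1\subseteq C_2$, and each $C_k$ is either all of $\Rbb(H\cap M)$ or a closed half-space of it. The half-space shape holds by \cref{lemma:codim0or1} together with \eqref{L2} applied inside $H\cap M$; \eqref{L2} for $\Lsf_k\cap H$ follows from \eqref{L2} for $\Lsf_k$.

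If $C_2$ is everything, the flag of $P_2$ stops at this step. If not, $C_2$ is a half-space containing the half-space or full space $C_1$. Then $C_1$ must be a half-space as well, so $C_1=C_2$. Consequently $H_{i+1}$ and $H_{i+1,+}$ coincide for both flags.

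One subtlety is that $\Psi_2$ should stop exactly when the cone becomes the whole span; I take this as the intended reading of the construction. The main obstacle I expect is ensuring that $C_1$ cannot be the whole space while $C_2$ is only a half-space; this is immediate from $C_1\subseteq C_2$.

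\textbf{Height and coheight.} Prime congruences containing $P$ correspond to truncations of $H(P)_\bullet$, and these are totally ordered by length. Hence the longest chain above $P$ has length $l$, and $\coht P=l$.

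For the height, chains $P_0\subsetneq\cdots\subsetneq P$ correspond to flags that extend $H(P)_\bullet$ by strictly longer flags. Each extension step passes to a hyperplane of $\Rbb(H_{j-1}\cap M)$ and so drops the rank of $H_j\cap M$ by at least $1$. Starting from $\rk(H_l\cap M)$, at most that many steps are possible. Choosing rational hyperplanes at each step, which exist in $\Rbb(H_j\cap M)$, drops the rank by exactly $1$ each time, giving a flag of length $l+\rk(H_l\cap M)$. Thus $\het P=\rk(H(P)_l\cap M)$.
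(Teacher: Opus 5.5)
Your proof is correct and takes the same route as the paper: reduce to $\Lsf(P_1)\subseteq\Lsf(P_2)$ via \cref{proposition:corr inclusion group B}, then read everything off the constructions $\Psi_1$ and $\Psi_2$. The paper does this in one line; your half-space comparison ($C_1\subseteq C_2$ forces $C_1=C_2$ unless $C_2$ is the whole span), the strict rank drop along a flag, and the extension by rational hyperplanes are exactly the details it leaves implicit.
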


\begin{proof}
  Recall that $P_1\subseteq P_2$ holds if and only if $\Lsf(P_1)\subseteq \Lsf(P_2)$.
  Thus the claims follow directly by the definition of $\Psi_1$ and $\Psi_2$.
\end{proof}

We give an alternative proof of the following proposition, which was established \cite{joo2018prime}.

\begin{corollary}\label{corollary:JM result}
  The dimension of $\Blau$ is equal to $n$.
\end{corollary}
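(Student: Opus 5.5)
By \cref{proposition:flag inclusion group B}, for $P\in\CSpec\Blau$ with flag $H(P)_\bullet$ of length $l$ we have $\coht P=l$. The dimension of $\Blau$ is therefore the supremum of the coheights of its points, which is the supremum of the lengths of $M$-flags. So it suffices to show two things: every $M$-flag has length at most $n$, and some $M$-flag has length exactly $n$.

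For the upper bound, we use the observation made after \cref{definition:M-flag}. Each $H_i$ is a proper hyperplane of $\Rbb(H_{i-1}\cap M)\subseteq H_{i-1}$. Hence $\dim H_i\le \dim H_{i-1}-1$, and by induction the codimension of $H_i$ in $M_\Rbb$ is at least $i$. Since $H_l$ is a subspace of the $n$-dimensional space $M_\Rbb$, this gives $l\le n$. Thus every chain of prime congruences has length at most $n$, i.e.\ $\dim\Blau\le n$.

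For the lower bound, we construct the coordinate flag explicitly. Set $H_i=\Rbb\{X_{i+1},\dots,X_n\}$ for $i=0,\dots,n$, so that $H_n=0$, and take the orientation $H_{i,+}=\{v\in H_{i-1}\mid \text{the $X_i$-coordinate of $v$ is positive}\}$. Each $H_{i-1}$ is spanned by lattice vectors, so $\Rbb(H_{i-1}\cap M)=H_{i-1}$, and $H_i$ is a hyperplane in it. Hence this is an $M$-flag of length $n$. Its truncations at $n,n-1,\dots,0$ correspond, via \cref{theorem:flag corr group B} and \cref{proposition:flag inclusion group B}, to a strictly increasing chain $P_0\subsetneq P_1\subsetneq\dots\subsetneq P_n$ of prime congruences. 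The inclusions are strict because distinct flags correspond to distinct points. This gives $\dim\Blau\ge n$.

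No step is a real obstacle. The only point needing care is strictness of the chain, which follows from the bijectivity in \cref{theorem:flag corr group B}, together with checking that the coordinate data satisfies the hyperplane condition relative to $\Rbb(H_{i-1}\cap M)$.
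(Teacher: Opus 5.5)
Your proposal is correct and follows the paper's proof: it reduces, via \cref{proposition:flag inclusion group B}, to showing that every $M$-flag has length at most $n$ and that some $M$-flag has length exactly $n$. The paper calls both assertions immediate; you supply the dimension count $\dim H_i\le \dim H_{i-1}-1$ and the explicit coordinate flag, which fill in those steps correctly.
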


\begin{proof}
  By \cref{proposition:flag inclusion group B}, the claim is equivalent to showing that
  every $M$-flag has length at most $n$ and there exists an $M$-flag of length $n$.
  Both assertions are immediate.
\end{proof}

\begin{remark}\label{remark:difference}
  In \cite{joo2018prime} and some research, 
  the calculation is carried out by using (t-)admissible matrix.
  Concretely, they show the existence of a surjective map
  \begin{align} \label{eq:admissible}
    \{\text{admissible matrices}\}\twoheadrightarrow \CSpec\Blau, \   U\mapsto P(U),
  \end{align}
  where $U$ is $l\times n$ admissible matrix with coefficients in $\Rbb$.
  However, some distinct admissible matrices can represent the same prime congruence,
  that is, the map \eqref{eq:admissible} is not injective.

  We construct $M$-flags from these matrices.
  By choosing a basis of $M_{\Rbb}$, 
  each row vector $u_1,\dots,u_l$ of $U$ can be seen as an element of $\Hom_{\Rbb}(M,\Rbb)$.
  Then we can define an $M$-flag as follows:
  \begin{align*}
    &H_0=M_{\Rbb},\\
    &H_{i+1}=\Rbb(H_i\cap M)\cap\{m\in M_{\Rbb}\mid u_i(m)=0\},\\
    &H_{i+1,+}=\Rbb(H_i\cap M)\cap \{m\in M_{\Rbb} \mid u_i(m)>0\}.
  \end{align*}
  Then this $M$-flag corresponds to the prime congruence $P(U)$ determined by $U$.
  That is, the following diagram is commutative:
  \[
  \begin{tikzcd}
    \{\text{admissible matrices}\}\ar[r, two heads]\ar[d,two heads] & \CSpec\Blau\\
    \{\text{$M$-flags}\}\ar[r,leftrightarrow,"\text{\cref{theorem:flag corr group B}}","1:1"'] & \Lcal_{M}\ar[u,leftrightarrow,"1:1","\text{(the proof of) \cref{proposition:corr inclusion group B}}"']
  \end{tikzcd}
  \]
  In this sense, an $M$-flag can be regarded as an equivalence class of admissible matrices 
  representing the same prime congruence.
\end{remark}

\subsection{The Laurent polynomial semirings over $\Tbb$} 

In this subsection, we consider the set of prime congruences 
on the Laurent polynomial semiring $\lau=\Bb[\Rbb\times M]$.
Applying $G=\Rbb\times M$ to \cref{proposition:corr inclusion group B},
we obtain a one-to-one correspondence:
\begin{align*}
  \CSpec\lau\longleftrightarrow \Lcal_{\Rbb\times M}.
\end{align*}
In a manner similar to \cref{theorem:flag corr group B}, 
the space $\CSpec\lau$ can be described by the sequence of linear spaces.
However since $\Rbb\times M$ has the non-trivial order, its non-negative cone is not zero.
For this reason, we have to impose a condition on the choice of each orientation.

\begin{definition}\label{definition:RM-flag}
  An \emph{$(\Rbb\times M)$-flag} is a collection of the following data:
  \begin{itemize}
    \item $l\in\Zbb_{\geq 0}$,
    \item A family of linear subspaces of $\Rbb\times M_{\Rbb}$
    \[\Rbb\times M_{\Rbb}=H_0\supsetneq H_1\supsetneq \dots\supsetneq H_l,\]
    such that $H_i$ is a hyperplane of $\Rbb(H_{i-1}\cap (\Rbb\times M))$ 
    for each $i\in \{1,\dots,l\}$,
    \item An orientation $H_{i,+}$ of $H_i$ as a hyperplane for each $i\in \{1,\dots,l\}$ such that 
  \[\Rbb_{\geq 0}\times \{0\} \subseteq \bigcup_{i=1}^l H_{i,+} \cup H_l.\]
  \end{itemize}
  The number $l$ is called the \emph{length} of the $(\Rbb\times M)$-flag. 
  Denote the set of all $(\Rbb\times M)$-flags by $\Fcal_{\Rbb\times M}$.
\end{definition}

\begin{theorem}\label{theorem:flag corr group T}
  There is a bijection between $\CSpec \lau$ and 
  the set $\Fcal_{\Rbb\times M}$ of all $(\Rbb\times M)$-flags.
\end{theorem}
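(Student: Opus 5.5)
By \cref{proposition:corr inclusion group B} applied to $G=\Rbb\times M$, it suffices to construct mutually inverse maps between $\Lcal_{\Rbb\times M}$ and $\Fcal_{\Rbb\times M}$. I would run the construction of \cref{theorem:flag corr group B} with $M$ replaced by the lattice-like group $\Rbb\times M$ inside $\Rbb\times M_{\Rbb}\cong\Rbb^{n+1}$, and track the extra condition \eqref{L1}. Here \eqref{L1} says exactly that $\Rbb_{\geq 0}\times\{0\}\subseteq\Lsf$, because the nonnegative elements of $\Rbb\times M$ are the $(r,0)$ with $r\geq 0$.

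\textbf{The map $\Psi_1$.} Given an $(\Rbb\times M)$-flag $H_\bullet$, set
\[\Psi_1(H_\bullet)=\left(\bigcup_{i=1}^l H_{i,+}\cup H_l\right)\cap(\Rbb\times M).\]
This is a submonoid, and the inductive argument of the earlier lemma gives \eqref{L2} verbatim. The orientation condition in \cref{definition:RM-flag} is precisely the statement that $\Rbb_{\geq0}\times\{0\}\subseteq\Psi_1(H_\bullet)$, which is \eqref{L1}.

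\textbf{The map $\Psi_2$.} Given $\Lsf\in\Lcal_{\Rbb\times M}$, define
\[H_{i+1}=\overline{\cone(H_i\cap\Lsf)}\cap-\overline{\cone(H_i\cap\Lsf)},\qquad H_{i+1,+}=\overline{\cone(H_i\cap\Lsf)}\setminus H_{i+1},\]
and stop once $H_{i+1}=\Rbb(H_i\cap(\Rbb\times M))$. The proof of \cref{lemma:codim0or1} only uses that the linear span is finite dimensional and that $\Lsf\cup-\Lsf$ is the whole group. Hence, applied inside $\Rbb(H_i\cap(\Rbb\times M))$ to the group $H_i\cap(\Rbb\times M)$, each step has codimension $0$ or $1$. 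Note that $H_i\cap(\Rbb\times M)$ is either of the form $\Rbb\times(\text{lattice})$ or a discrete group, depending on whether $H_i$ contains $\Rbb\times\{0\}$. The process terminates since the dimensions drop. The orientation condition holds because each point of $\Rbb_{\geq0}\times\{0\}\subseteq\Lsf$ either leaves $H_i$ at some stage, and then lies in $\overline{\cone(H_i\cap\Lsf)}\setminus H_{i+1}=H_{i+1,+}$, or stays in $H_l$.

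\textbf{Inverse identities.} Both $\Psi_1\Psi_2=\mathrm{id}$ and $\Psi_2\Psi_1=\mathrm{id}$ follow by the same inductions as in \cref{theorem:flag corr group B}. For the first, induct on $\dim\Rbb(\Rbb\times M)$: restrict to $H_1\cap(\Rbb\times M)$, where $\Lsf\cap H_1$ again satisfies \eqref{L1} and \eqref{L2}. For the second, induct on $i$, using
\[\overline{\cone\bigl(H_{i,+}\cap(\Rbb\times M)\bigr)}=H_{i,+}\cup H_i.\]

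\textbf{Main obstacle.} The delicate point is this density statement, together with the identity $\Lsf\setminus H_1=(\Rbb\times M)\cap H_{1,+}$ used in the first inductive step. When $\Rbb(H_{i-1}\cap(\Rbb\times M))$ contains the $\Rbb$-direction, the group $H_{i-1}\cap(\Rbb\times M)$ is not discrete, so the lattice argument must be replaced. I would handle the identity $\Lsf\setminus H_1=(\Rbb\times M)\cap H_{1,+}$ as follows. Take $x\in(\Rbb\times M)\cap H_{1,+}$ and suppose $x\notin\Lsf$; then $-x\in\Lsf$ by \eqref{L2}. Adding elements of $\Lsf$ approximating a direction of $\cone(\Lsf)$ in $H_{1,+}$, and using that $\Lsf+\Rbb_{\geq0}\times\{0\}\subseteq\Lsf$ to absorb small errors in the $\Rbb$-coordinate, this would place $-x$ in $H_1\cup -H_{1,+}$. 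That contradicts $x\in H_{1,+}$. The density of $H_{i,+}\cap(\Rbb\times M)$ in $H_{i,+}$ is routine: rational density in the $M$-directions combined with the full $\Rbb$-direction.
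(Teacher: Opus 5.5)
Your route is the paper's: its proof just says the result follows as in \cref{theorem:flag corr group B}. Several of your checks are right.
The orientation condition is exactly \eqref{L1} for $\Psi_1(H_\bullet)$, and the codimension $0$/$1$ dichotomy survives. Each $H_i\cap(\Rbb\times M)$ is either $\Rbb\times(\text{lattice})$ or a lattice in its span. The density $\overline{\cone(H_{i,+}\cap(\Rbb\times M))}=H_{i,+}\cup H_i$ is routine.

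However, you have put the difficulty in the wrong place. The identity $\Lsf\setminus H_1=(\Rbb\times M)\cap H_{1,+}$ needs no approximation. Suppose $x\in H_{1,+}\cap(\Rbb\times M)$ and $x\notin\Lsf$. Then $-x\in\Lsf\subseteq C\coloneqq\overline{\cone(\Lsf)}$, so $x\in C\cap -C=H_1$, a contradiction. Your argument for this identity ends with $-x\in H_1\cup -H_{1,+}$, which follows from $x\in H_{1,+}$ rather than contradicting it.

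The step that genuinely changes in the non-discrete setting, and which you do not treat, is the stopping case. This is the base case $l=0$ of your first induction, which the paper dismisses as immediate for lattices. Put $G_i=H_i\cap(\Rbb\times M)$ and $\Lsf_i=\Lsf\cap H_i$. If $\overline{\cone(\Lsf_i)}=\Rbb G_i$, you must show $\Lsf_i=G_i$, because $\Psi_1$ of any flag contains all of $H_l\cap(\Rbb\times M)$.

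For a lattice this is a short rationality argument. For $G_i=\Rbb\times L$ it is false without \eqref{L1}. For $n=0$, take $\Lsf=\{t\in\Rbb\mid \phi(t)\geq 0\}$ with $\phi\colon\Rbb\to\mathbb{Q}$ a nonzero $\mathbb{Q}$-linear map. It satisfies \eqref{L2} and $\cone(\Lsf)=\Rbb$, yet $\Lsf\neq\Rbb$.

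This is where your idea of absorbing errors in the $\Rbb$-coordinate belongs.
Since $\cone(\Lsf_i)=\Rbb G_i$, write $(-1,0)=\sum_j r_j\ell_j$ with $r_j\geq 0$ and $\ell_j=(a_j,u_j)\in\Lsf_i$. The vectors $r\geq 0$ with $\sum_j r_ju_j=0$ form a rational polyhedral cone. A nearby rational point, after clearing denominators, gives $(c,0)\in\Lsf_i$ with $c<0$. Then \eqref{L1} yields $\Rbb\times\{0\}\subseteq\Lsf_i$, so $\Lsf_i=\Rbb\times\Pi(\Lsf_i)$. Now apply the lattice case to $\Pi(\Lsf_i)\subseteq L$, which satisfies \eqref{L2} and has full closed cone; this gives $\Lsf_i=G_i$.

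State this base case for both kinds of groups, so the induction can pass to $H_1\cap(\Rbb\times M)$ of either type. With that, your argument goes through.
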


\begin{proof}
  It is proved in a similar way to \cref{theorem:flag corr group B}.
\end{proof}

As in the case of $M$-flags, we often denote by $H(P)_{\bullet}$ the $(\Rbb\times M)$-flag corresponding to $P$:
\begin{alignat*}{2}
  \CSpec\lau &\xrightarrow{1:1} &\Lcal_{\Rbb\times M} \xrightarrow{1:1} &\Fcal_{\Rbb\times M},\\
  P &\longmapsto &\Lsf(P) \longmapsto &H(P)_{\bullet}.
\end{alignat*}
Note that $P$ is obtained by defining the order of $(\Rbb\times M)/(H_l\cap(\Rbb\times M))$ as follows: 
\begin{align}\label{eq:order by flag over T}
  \overline{m_1}\geq \overline{m_2} \overset{\mathrm{def}}{\iff} m_1-m_2\in \bigcup_{i=1}^l H_{i,+}\cup H_l.
\end{align}

Truncation of an $(\Rbb\times M)$-flag is defined in the same way as those of an $M$-flag.
The following proposition and corollary are proved 
in a similar way to \cref{proposition:flag inclusion group B} and \cref{corollary:JM result}.

\begin{proposition}\label{proposition:flag inclusion group T}
  For $i=1,2$, let $P_i$ be a prime congruence on $\Tbb[M]$ and 
  $H(P_i)_{\bullet}$ be the corresponding $(\Rbb\times M)$-flag.
  Then $P_1\subseteq P_2$ if and only if $H(P_2)_{\bullet}$ is a truncation of $H(P_1)_{\bullet}$.
  Furthermore, if the length of the $(\Rbb\times M)$-flag $H(P)_{\bullet}$ corresponding to a prime congruence $P$
  is equal to $l$, we have $\het P=\dim \Rbb(H(P)_l\cap (\Rbb\times M))$ where the spanning is taken in $\Rbb\times M_{\Rbb}$, and $\coht P=l$.
\end{proposition}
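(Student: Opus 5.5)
The plan is to reduce everything to the submonoid side via \cref{proposition:corr inclusion group B}. By that result, $P_1\subseteq P_2$ holds if and only if $\Lsf(P_1)\subseteq\Lsf(P_2)$ in $\Lcal_{\Rbb\times M}$. By \cref{theorem:flag corr group T}, $\Lsf(P_i)=\Psi_1(H(P_i)_\bullet)=\bigl(\bigcup_{j} H(P_i)_{j,+}\cup H(P_i)_{l_i}\bigr)\cap(\Rbb\times M)$, and $H(P_i)_\bullet=\Psi_2(\Lsf(P_i))$, exactly as in the proof of \cref{theorem:flag corr group B}. So the inclusion statement becomes a purely combinatorial fact: for two $(\Rbb\times M)$-flags, $\Psi_1(H_\bullet)\subseteq\Psi_1(H'_\bullet)$ if and only if $H'_\bullet$ is a truncation of $H_\bullet$.

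\emph{Truncation implies inclusion.} Let $H'_\bullet=\{H_i\}_{i=0}^{l'}$. Then $\bigcup_{i\le l} H_{i,+}\cup H_l\subseteq\bigcup_{i\le l'}H_{i,+}\cup H_{l'}$, since for $i>l'$ we have $H_{i,+}\subseteq H_{l'}$ and $H_l\subseteq H_{l'}$. The orientation condition on $\Rbb_{\geq 0}\times\{0\}$ is inherited by the truncation for the same reason.

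\emph{Inclusion implies truncation.} Suppose $\Lsf\subseteq\Lsf'$, both in $\Lcal_{\Rbb\times M}$. I would compare the $\Psi_2$-procedures step by step, inducting on $i$. Assume $H_j=H'_j$ and $H_{j,+}=H'_{j,+}$ for $j\le i$, and that $i<l'$. Then $\overline{\cone(H_i\cap\Lsf)}\subseteq\overline{\cone(H_i\cap\Lsf')}$, and both are closed half-spaces of $\Rbb(H_i\cap(\Rbb\times M))$ whose boundaries are hyperplanes, since $i<l'$ gives codimension $1$ for $\Lsf'$ by \cref{lemma:codim0or1}. Hence the cone for $\Lsf$ cannot be the whole space, and an inclusion of two closed half-spaces forces equality, so $H_{i+1}=H'_{i+1}$ with the same orientation. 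If instead $i=l'$, then $H_{l'}\cap(\Rbb\times M)\subseteq\Lsf'$, so the process for $\Lsf'$ stops, and $H'_\bullet$ is the truncation at $l'$. We also need $l'\le l$: if $l<l'$, then $\Lsf\supseteq H_l\cap(\Rbb\times M)=H'_l\cap(\Rbb\times M)\not\subseteq\Lsf'$, which is a contradiction. The main obstacle is this half-space comparison step; one must be careful that the cones live in $\Rbb(H_i\cap(\Rbb\times M))$ rather than in $H_i$, but that issue is already handled in $\Psi_2$.

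For the height and coheight, chains of primes above $P$ correspond to chains of truncations. These are totally ordered and have exactly $l$ proper steps, so $\coht P=l$. Chains below $P$ correspond to extensions of the flag $H(P)_\bullet$. Each extension step passes to a hyperplane of $\Rbb(H_{i}\cap(\Rbb\times M))$, so it drops $\dim\Rbb(H_i\cap(\Rbb\times M))$ by at least one; thus the length of such a chain is bounded by $d=\dim\Rbb(H(P)_l\cap(\Rbb\times M))$. To realise a chain of length $d$, I would extend by rational hyperplanes, so that each step drops the dimension by exactly one. The only care needed is the orientation constraint on $\Rbb_{\geq 0}\times\{0\}$. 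If $\Rbb\times\{0\}\subseteq H_l$, then at each step the hyperplane is chosen either to contain $\Rbb\times\{0\}$ or with orientation making $(1,0)$ positive; this is always possible, which gives $\het P=d$.
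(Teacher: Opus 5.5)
Your proposal is correct and follows the same route as the paper: you reduce to inclusion of the submonoids $\Lsf(P_i)$ via \cref{proposition:corr inclusion group B} and then read everything off from $\Psi_1$ and $\Psi_2$, exactly as in the proof of \cref{proposition:flag inclusion group B}. The paper only says this follows ``directly from the definitions,'' and your argument supplies those details correctly: the step-by-step comparison of closed half-spaces in the $\Psi_2$ procedure, and the height count (the dimension drops at each extension step, and rational extensions realise the bound while respecting the $\Rbb_{\geq 0}\times\{0\}$ orientation constraint).
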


%proofなし

\begin{corollary}\label{corollary:JM result2}
  The dimension of $\lau$ is equal to $n+1$.
\end{corollary}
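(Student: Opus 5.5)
By \cref{proposition:flag inclusion group T}, the inclusion relation on $\CSpec\lau$ becomes the truncation relation on $\Fcal_{\Rbb\times M}$, and $\coht P$ is the length of $H(P)_\bullet$. So $\dim\lau$ is the supremum of the lengths of chains of truncations. It therefore suffices to show two things: every $(\Rbb\times M)$-flag has length at most $n+1$, and some $(\Rbb\times M)$-flag has length exactly $n+1$.

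For the upper bound, note that $H_i$ is a hyperplane of $\Rbb(H_{i-1}\cap(\Rbb\times M))\subseteq H_{i-1}$. Hence $\dim H_i\le \dim H_{i-1}-1$, and inductively $\dim H_i\le n+1-i$. Since $H_l$ is a linear subspace, its dimension is nonnegative, which gives $l\le n+1$. Consequently any chain $P_0\subsetneq\dots\subsetneq P_k$ gives flags $H(P_0)_\bullet,\dots,H(P_k)_\bullet$, each a strict truncation of the previous one. Their lengths strictly decrease and all lie in $[0,n+1]$, so $k\le n+1$.

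For the lower bound, I will construct an explicit flag of length $n+1$ with $H_{n+1}=\{0\}$. The steps are as follows.
\begin{itemize}
\item Set $H_1=\Rbb\times M_\Rbb$ minus the $\Rbb$-direction, that is, $H_1=\{0\}\times M_\Rbb$, with $H_{1,+}=\{(r,m)\mid r>0\}$. This is a rational hyperplane of $H_0$, so $\Rbb(H_1\cap(\Rbb\times M))=H_1$.
\item For $i\ge 2$, take $H_i=\{0\}\times\Rbb\{X_i,\dots,X_n\}$, oriented by the sign of the coefficient of $X_{i-1}$.
\end{itemize}
Each $H_i$ is rational, so $\Rbb(H_{i-1}\cap(\Rbb\times M))=H_{i-1}$ at every step, and the dimensions drop by exactly one. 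The orientation condition holds because $\Rbb_{\ge0}\times\{0\}\subseteq H_{1,+}\cup\{0\}\subseteq H_{1,+}\cup H_{n+1}$.

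This flag gives a prime congruence $P$ with $\coht P=n+1$. Its truncations give the chain $P=P_{n+1}\subsetneq\cdots\subsetneq P_0$ of length $n+1$, where $P_0$ corresponds to the length-$0$ flag, i.e.\ the maximum congruence. Hence $\dim\lau=n+1$.

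The only delicate point is verifying that this flag satisfies the orientation axiom of \cref{definition:RM-flag}. The choice of $H_1$ and its orientation is made specifically so that the axiom holds, and the check above is immediate.
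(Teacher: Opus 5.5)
Your proof is correct and takes the same approach as the paper. The paper proves this corollary just as it proves \cref{corollary:JM result}: it reduces via \cref{proposition:flag inclusion group T} to showing that every $(\Rbb\times M)$-flag has length at most $n+1$ and that one of length $n+1$ exists, and calls both facts immediate. Your dimension count and your explicit flag supply those two details; the choice $H_1=\{0\}\times M_{\Rbb}$ secures the orientation axiom, and its truncations give the chain up to the maximum congruence.
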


As the submonoid $\Lsf(P)$ is decomposed in \cref{proposition:corr group T},
an $(\Rbb\times M)$-flag we obtain another description of $\CSpec\Tbb[M]$.

\begin{lemma}\label{lemma:flag L3}
  Let $\Lsf$ be a submonoid of $\Rbb\times M$ satisfying \eqref{L1} and \eqref{L2} 
  and $H_{\bullet}$ the corresponding $(\Rbb\times M)$-flag of length $l$.
  The following are equivalent:
  \begin{enumerate}
    \item $\Lsf$ satisfies the condition \eqref{L3}, that is, 
    $\Lsf\cap (\Rbb\times \{m\})\neq \emptyset$ for any $m\in M$,
    \item $l=0$ or $\Rbb\times \{0\}\not\subseteq H_1$ holds.
  \end{enumerate}
\end{lemma}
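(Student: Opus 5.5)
We use the flag description $\Lsf=\Psi_1(H_\bullet)=\bigl(\bigcup_{i=1}^l H_{i,+}\cup H_l\bigr)\cap(\Rbb\times M)$, which holds by \cref{theorem:flag corr group T} (proved as in \cref{theorem:flag corr group B}). If $l=0$ then $\Lsf=\Rbb\times M$ and (1) is trivial, so assume $l\geq 1$. The key fact is that $H_1$ is a hyperplane of $\Rbb\times M_{\Rbb}$ whose positive side $H_{1,+}$ lies entirely in $\Lsf$, because every lattice point of $H_{1,+}$ belongs to $\Psi_1(H_\bullet)$.

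For (2)$\Rightarrow$(1), suppose $\Rbb\times\{0\}\not\subseteq H_1$. Write $H_1=\{\ell=0\}$ with $H_{1,+}=\{\ell>0\}$ for a linear functional $\ell$. Then $\ell(1,0)\neq 0$. By the flag condition $\Rbb_{\geq 0}\times\{0\}\subseteq\bigcup_i H_{i,+}\cup H_l$, the point $(1,0)$ lies in $\Lsf$. Since $(1,0)\notin H_1$ and the only piece of that union not contained in $H_1$ is $H_{1,+}$, we get $(1,0)\in H_{1,+}$, so $\ell(1,0)>0$. For any $m\in M$, the value $\ell(r,m)=r\,\ell(1,0)+\ell(0,m)$ is therefore positive for $r$ large. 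Hence $(r,m)\in H_{1,+}\cap(\Rbb\times M)\subseteq\Lsf$, which is (L3).

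For (1)$\Rightarrow$(2), argue by contrapositive. Suppose $l\geq1$ and $\Rbb\times\{0\}\subseteq H_1$. Then $H_1=\Rbb\times H$ for a hyperplane $H$ of $M_{\Rbb}$, and every set $\Rbb\times\{m\}$ lies either in $H_1$, in $H_{1,+}$, or in $-H_{1,+}$. Since $H_1$ is a hyperplane in $\Rbb(H_0\cap(\Rbb\times M))=\Rbb\times M_{\Rbb}$ and $M$ spans $M_{\Rbb}$, some $m\in M$ lies off $H$. Replacing $m$ by $-m$ if necessary, we get $\Rbb\times\{m\}\subseteq -H_{1,+}$. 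Every point of $-H_{1,+}$ lies outside $H_{1,+}$ and outside $H_1\supseteq H_i$ for all $i\geq 2$, so $-H_{1,+}$ is disjoint from $\bigcup_i H_{i,+}\cup H_l$. Thus $\Lsf\cap(\Rbb\times\{m\})=\emptyset$, and (L3) fails.

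The only delicate point is the step using the orientation condition to show that $(1,0)$ lies on the positive side of $H_1$ rather than on the negative side. This follows because all $H_i$ with $i\geq2$, and $H_l$ itself, are contained in $H_1$. The rest is bookkeeping with $\Psi_1$.
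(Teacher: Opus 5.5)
Your proof is correct and matches the paper's argument. The contrapositive for (1)$\Rightarrow$(2) is the same as the paper's: write $H_1=\Rbb\times H$ and pick a lattice point $m$ with $\Rbb\times\{m\}\subseteq -H_{1,+}$, which then misses $\Lsf$. Your argument for (2)$\Rightarrow$(1), which uses the orientation condition to place $(1,0)$ in $H_{1,+}$ and then takes $r$ large, correctly fills in the direction the paper simply calls clear.
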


\begin{proof}
  The implication (2) $\Rightarrow$ (1) is clear.
  We show (1) $\Rightarrow$ (2) by contraposition.
  Assume that $l\neq 0$ and $\Rbb\times \{0\}\subseteq H_1$.
  Then there exist a hyperplane $H'_1$ of $M_{\Rbb}$ and its orientation $H'_{1,+}$
  satisfying $H_1=\Rbb\times H'_1$ and $H_{1,+}=\Rbb\times H'_{1,+}$.
  For an element $m\in -H'_{1,+}$, 
  the orbit $\Rbb\times\{m\}$ does not intersect with $H_1\cup H_{1,+}\supseteq \Lsf$ and 
  hence $\Lsf$ does not satisfy \eqref{L3}.
\end{proof}

\begin{proposition}\label{proposition:flag corr group T innout} %proposition:flag corr, group, T
  There is a bijection between the set $\CSpec\Tbb[M]$ and the following set:
  \begin{align*}
    \left\{
    (H_{\bullet}^{\mathrm{out}},H_{\bullet}^{\mathrm{inn}})\ \middle|\  
      \begin{aligned}
        &H_{\bullet}^{\mathrm{out}}\in\Fcal_M, \ H_{\bullet}^{\mathrm{inn}}\in\Fcal_{\Rbb\times (H_{l'}^{\mathrm{out}}\cap M)}\\ 
        &\text{the length $l''$ of $H_{\bullet}^{\mathrm{inn}}$ is $0$ or $\Rbb\times \{0\}\not\subseteq H_1^{\mathrm{inn}}$ holds.} 
      \end{aligned}
    \right\}
  \end{align*}
\end{proposition}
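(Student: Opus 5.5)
The plan is to compose three bijections already established. First, \cref{proposition:corr group T} applied with $G=M$ gives a bijection between $\CSpec\Tbb[M]$ and the set of pairs $(\Lsf^{\mathrm{out}},\Lsf^{\mathrm{inn}})$. Here $\Lsf^{\mathrm{out}}\in\Lcal_M$, and $\Lsf^{\mathrm{inn}}$ is a submonoid of $\Rbb\times(\Lsf^{\mathrm{out}}\cap-\Lsf^{\mathrm{out}})$ satisfying \eqref{L1}, \eqref{L2} and \eqref{L3}. So it suffices to translate each component into flag language.

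\textbf{Outer part.} By \cref{theorem:flag corr group B}, the maps $\Psi_1,\Psi_2$ identify $\Lcal_M$ with $\Fcal_M$. Write $H_\bullet^{\mathrm{out}}=\Psi_2(\Lsf^{\mathrm{out}})$, of length $l'$. As noted after \cref{theorem:flag corr group B}, we then have $\Lsf^{\mathrm{out}}\cap-\Lsf^{\mathrm{out}}=H^{\mathrm{out}}_{l'}\cap M$.

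\textbf{Inner part.} The ambient group for the inner part is therefore $\Rbb\times M'$ with $M'\coloneqq H^{\mathrm{out}}_{l'}\cap M$. This $M'$ is a free abelian group of finite rank, being a subgroup of $M$. It carries the trivial order induced from $M$, so $\Rbb\times M'$ carries exactly the order of the guiding example (compare with $\Rbb\times M$, with $M'$ in place of $M$). Hence \cref{theorem:flag corr group T} applies verbatim to $M'$. It identifies submonoids of $\Rbb\times M'$ satisfying \eqref{L1} and \eqref{L2} with $\Fcal_{\Rbb\times M'}=\Fcal_{\Rbb\times(H^{\mathrm{out}}_{l'}\cap M)}$. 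One point needs a remark: that theorem was stated for the group generated by $X_1,\dots,X_n$. Since its proof only uses that $M$ is a lattice with its $\Rbb$-span, I would either choose a basis of $M'$ or note that the argument of \cref{theorem:flag corr group B} is basis-free.

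\textbf{Condition \eqref{L3}.} It remains to see that \eqref{L3} for $\Lsf^{\mathrm{inn}}$ corresponds exactly to the condition that $l''=0$ or $\Rbb\times\{0\}\not\subseteq H^{\mathrm{inn}}_1$. This is precisely \cref{lemma:flag L3}, again applied with $M'$ in place of $M$. The orbits of the $\Rbb$-action on $\Rbb\times M'$ are the sets $\Rbb\times\{m\}$ for $m\in M'$, so \eqref{L3} has the same form as in that lemma. Composing the three identifications gives the claimed bijection.

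\textbf{Main obstacle.} I expect the only delicate step to be the legitimacy of replacing $M$ by the sublattice $M'$ in \cref{theorem:flag corr group T} and \cref{lemma:flag L3}. This includes the degenerate case $M'=0$, where $\Fcal_{\Rbb}$ consists of the length-$0$ flag and the flags with $H_1=0$ and $H_{1,+}=\Rbb_{>0}$. I would handle it by checking that every ingredient used is intrinsic to the lattice: the closed cones, the linear spans $\Rbb(H_{i-1}\cap(\Rbb\times M))$, and the orientations, with positivity of $\Rbb_{\geq0}\times\{0\}$ guaranteed by \eqref{L1}.
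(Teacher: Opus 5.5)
Your proposal is correct and is essentially the paper's own argument. The paper likewise starts from \cref{proposition:corr group T} with $G=M$, encodes $\Lsf^{\mathrm{out}}$ as an $M$-flag and $\Lsf^{\mathrm{inn}}$ as an $(\Rbb\times(\Lsf^{\mathrm{out}}\cap-\Lsf^{\mathrm{out}}))$-flag, and translates \eqref{L3} via \cref{lemma:flag L3}; your remarks on applying \cref{theorem:flag corr group T} and \cref{lemma:flag L3} to the sublattice $H^{\mathrm{out}}_{l'}\cap M$, including the rank-zero case, make explicit what the paper leaves implicit.
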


\begin{proof}
  The correspondence is obtained by describing 
  the submonoids $\Lsf^{\mathrm{out}}$ and $\Lsf^{\mathrm{inn}}$ of \cref{proposition:corr group T}
  in terms of an $M$-flag and an $(\Rbb\times (\Lsf^{\mathrm{out}}\cap-\Lsf^{\mathrm{out}}))$-flag, respectively.
  The condition \eqref{L3} is translated by \cref{lemma:flag L3}.
\end{proof}

%ここに先にlying over Tのcorrespondenceを入れてもいいかもしれない

By  \cref{proposition:corr inclusion group T over T},
the decomposition of \cref{proposition:flag corr group T innout} is refined as follows:

\begin{proposition}\label{proposition:flag corr group T over T}
  There is a bijection between the set $\CSpec_{\Tbb}\Tbb[M]$ and the following set:
  \begin{align*}
    \left\{(H_{\bullet}^{\mathrm{out}},H_{\bullet}^{\mathrm{inn}})\ \middle|\  
    \begin{aligned}
      &H_{\bullet}^{\mathrm{out}}\in \Fcal_{M}, H_{\bullet}^{\mathrm{inn}}\in \Fcal_{\Rbb\times (H_{l'}^{\mathrm{out}}\cap M)},\\
      &\text{ $l''>0$, and $\Rbb\times \{0\}\not\subseteq H_1^{\mathrm{inn}}$ holds.} 
    \end{aligned}
    \right\}
  \end{align*}
  For $P_1,P_2\in \CSpec_{\Tbb}\Tbb[M]$,
  the inclusion $P_1\subseteq P_2$ holds if and only if the following hold:
  \begin{itemize}
    \item $H(P_1)_{\bullet}^{\mathrm{out}}=H(P_2)_{\bullet}^{\mathrm{out}}$,
    \item $H(P_2)_{\bullet}^{\mathrm{inn}}\text{ is a truncation of } H(P_1)_{\bullet}^{\mathrm{inn}}$.
  \end{itemize}
\end{proposition}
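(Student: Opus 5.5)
The plan is to combine three earlier results: the bijection of \cref{proposition:flag corr group T innout}, the lying-over criterion of \cref{lemma:when lying over T}, and the inclusion criterion of \cref{proposition:corr inclusion group T over T}. The flag data are then handled by the truncation statement of \cref{proposition:flag inclusion group B}, applied once to $M$-flags and once, in its $\Rbb\times M$ form (\cref{proposition:flag inclusion group T}), to the lattice $\Rbb\times(H_{l'}^{\mathrm{out}}\cap M)$.

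\textbf{Step 1: the set.} Let $P\in\CSpec\Tbb[M]$, with outer part $\Lsf^{\mathrm{out}}$ and inner part $\Lsf^{\mathrm{inn}}$ as in \cref{proposition:corr group T}. By the construction $\Psi_1$ of \cref{theorem:flag corr group B} we have $\Lsf^{\mathrm{out}}\cap-\Lsf^{\mathrm{out}}=H^{\mathrm{out}}_{l'}\cap M$. Hence $\Lsf^{\mathrm{inn}}$ is an element of $\Lcal_{\Rbb\times(H^{\mathrm{out}}_{l'}\cap M)}$ satisfying \eqref{L3}, and it corresponds to the flag $H^{\mathrm{inn}}_\bullet$.

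By \cref{lemma:when lying over T}, $P$ lies over $\Tbb$ exactly when $\Lsf^{\mathrm{inn}}$ is a proper submonoid. Under $\Psi_1$, the whole group corresponds exactly to the flag of length $0$, since $\Psi_1$ of a length-$0$ flag is everything and $\Psi_1$ is a bijection. So properness is equivalent to $l''>0$.

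Given $l''>0$, the alternative in \cref{lemma:flag L3} reduces to $\Rbb\times\{0\}\not\subseteq H^{\mathrm{inn}}_1$. Restricting the bijection of \cref{proposition:flag corr group T innout} therefore gives the stated set.

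\textbf{Step 2: inclusions.} By \cref{proposition:corr inclusion group T over T}, $P_1\subseteq P_2$ if and only if $\Lsf_1^{\mathrm{out}}=\Lsf_2^{\mathrm{out}}$ and $\Lsf_1^{\mathrm{inn}}\subseteq\Lsf_2^{\mathrm{inn}}$.

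The first condition is equivalent to $H(P_1)^{\mathrm{out}}_\bullet=H(P_2)^{\mathrm{out}}_\bullet$, because $\Psi_1$ and $\Psi_2$ are mutually inverse.

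Once the outer flags agree, both inner monoids live in the same ordered group $\Rbb\times(H^{\mathrm{out}}_{l'}\cap M)$, which is o-isomorphic to some $\Rbb\times M'$ with $M'$ free of finite rank. \cref{proposition:flag inclusion group T} applied to $M'$ then says that $\Lsf_1^{\mathrm{inn}}\subseteq\Lsf_2^{\mathrm{inn}}$ holds if and only if $H(P_2)^{\mathrm{inn}}_\bullet$ is a truncation of $H(P_1)^{\mathrm{inn}}_\bullet$.

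The main point to check is the last step. \cref{proposition:flag inclusion group T} is phrased for prime congruences on $\Tbb[M]$, so we must identify inclusion of inner submonoids in $\Lcal_{\Rbb\times M'}$ with inclusion of the corresponding primes on $\Tbb[M']$. This identification is given by \cref{proposition:corr inclusion group B}. We must also confirm that the truncation criterion is purely a statement about monoids in $\Lcal_{\Rbb\times M'}$ and does not use \eqref{L3}. Once that is confirmed, the argument is just bookkeeping of the previously established bijections.
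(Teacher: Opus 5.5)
Your proposal is correct and follows essentially the paper's route. The paper gives no separate proof: it obtains the statement by refining the decomposition of \cref{proposition:flag corr group T innout} via \cref{proposition:corr inclusion group T over T}, which is exactly your bookkeeping, namely that lying over $\Tbb$ means the inner part is proper, i.e.\ $l''>0$, and that inclusion of inner parts corresponds to truncation as in \cref{proposition:flag inclusion group T} applied to $\Rbb\times(H_{l'}^{\mathrm{out}}\cap M)$. Your worry about \eqref{L3} is harmless, since that truncation criterion holds on all of $\Lcal_{\Rbb\times M'}$.
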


Thus we obtain a corollary.

\begin{corollary}\label{cor:good dimension}
  The dimension of $\CSpec_{\Tbb}\lau$ is equal to $n$.
\end{corollary}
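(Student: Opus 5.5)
By \cref{lemma:dim height}, the dimension of the space $\CSpec_{\Tbb}\lau$ is the supremum of lengths of chains of irreducible closed subsets. I first reduce these to chains of prime congruences lying over $\Tbb$. Since $\CSpec_{\Tbb}\lau$ is open in $\CSpec\lau$ (\cref{proposition:T open}), irreducible closed subsets of the subspace are exactly $V(P)\cap\CSpec_{\Tbb}\lau$ for $P\in\CSpec_{\Tbb}\lau$. Indeed, the closure in $\CSpec\lau$ of an irreducible closed subset of the open subspace is irreducible, hence equal to some $V(P_0)$. Its generic point $P_0$ must lie in the open set, because every point of $V(P_0)$ specializes from $P_0$ and pull-backs of $\Delta$ under $\iota^*$ can only come from $\Delta$. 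Moreover, $V(P)\cap\CSpec_{\Tbb}\lau$ determines $P$ as the intersection of its members. Thus $\dim\CSpec_{\Tbb}\lau$ equals the supremum of lengths $l$ of chains $P_0\subsetneq\dots\subsetneq P_l$ in $\CSpec_{\Tbb}\lau$.

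Next I compute such chains with \cref{proposition:flag corr group T over T}. Along a chain, $H^{\mathrm{out}}_\bullet$ is constant, say of length $l'$, and each inner flag is a truncation of the previous one. All inner flags have length $\geq 1$ and satisfy $\Rbb\times\{0\}\not\subseteq H_1^{\mathrm{inn}}$; this condition depends only on $H_1^{\mathrm{inn}}$, so it is preserved under truncation to length $\ge1$. The inner flags live in $\Rbb\times (H^{\mathrm{out}}_{l'}\cap M)$, whose real span has dimension $r+1$, where $r=\rk(H^{\mathrm{out}}_{l'}\cap M)$. Hence their lengths lie in $\{1,\dots,r+1\}$, and a chain has length at most $r$. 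Since $r\le n$, we get $\dim\le n$.

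For the lower bound, I take $H^{\mathrm{out}}_\bullet$ of length $0$, so $r=n$. I then build an $(\Rbb\times M)$-flag of length $n+1$ with $H_1$ not containing $\Rbb\times\{0\}$:
\begin{itemize}
\item take $H_1=\{0\}\times M_{\Rbb}$ with $H_{1,+}=\Rbb_{>0}\times M_{\Rbb}$;
\item continue with any complete rational flag of $M_\Rbb$ inside $\{0\}\times M_{\Rbb}$, with arbitrary orientations.
\end{itemize}
The positivity condition $\Rbb_{\ge0}\times\{0\}\subseteq\bigcup H_{i,+}\cup H_l$ holds because $\Rbb_{>0}\times\{0\}\subseteq H_{1,+}$ and $0\in H_l$. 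The truncations to lengths $n+1,n,\dots,1$ give a chain of length $n$ in $\CSpec_{\Tbb}\lau$.

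I expect the main obstacle to be the first step, namely justifying that topological dimension of the open subspace equals the length of chains of primes lying over $\Tbb$. The flag counting itself is routine given \cref{proposition:flag corr group T over T}.
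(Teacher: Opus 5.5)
Your proof is correct and follows the paper's approach. The paper deduces the corollary directly from the flag description in Proposition~\ref{proposition:flag corr group T over T}: the outer flag is constant along a chain, inner flags are truncated, and inner lengths run from $1$ to $\rk(H^{\mathrm{out}}_{l'}\cap M)+1\le n+1$, which is exactly your second and third steps. Your first step, which identifies the irreducible closed subsets of the open subspace with the sets $V(P)\cap\CSpec_{\Tbb}\lau$, spells out a reduction the paper leaves implicit; the chain-of-irreducible-closed-subsets description you cite there comes from the definition of topological dimension rather than from Lemma~\ref{lemma:dim height}.
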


We will describe the decomposition of flags explicitly. 
Let $\Pi\colon \Rbb\times M_{\Rbb}\to M_{\Rbb}$ be the projection.
For an $(\Rbb\times M)$-flag $H_{\bullet}$ of length $l$,
let $l'\geq 0$ be the largest integer such that $\Rbb\times \{0\}\subseteq H_{l'}$.
For the following lemma, we temporarily define an $M$-flag $H_{\bullet}^{\mathrm{(out)}}$ by
\begin{align*}
  &H_i^{\mathrm{(out)}}=\Pi(H_i)\text{ for }i=0,\dots,l'\\
  &H_{i,+}^{\mathrm{(out)}}=\Pi(H_{i,+})\text{ for }i=1,\dots,l',
\end{align*}
and an $(H_{l'}\cap(\Rbb\times M))$-flag $H_{\bullet}^{\mathrm{(inn)}}$ by
\begin{align*}
  &H_i^{\mathrm{(inn)}}=
  \begin{cases*}
    \Rbb\times \Rbb(H_{l'}^{\mathrm{out}}\cap M) &\text{ if $i=0$},\\
    H_{l'+i} &\text{ if $i=1,\dots,l-l'$},
  \end{cases*}\\
  &H_{i,+}^{\mathrm{(inn)}}=H_{l'+i,+}\text{ for }i=1,\dots,l-l'.
\end{align*}

\begin{lemma}\label{lemma:out flag}
  Let $\Lsf$ be the submonoid of $\Rbb\times M$ corresponding to $H_{\bullet}$.
  The sequence $H^{\mathrm{(out)}}_{\bullet}$ is the $M$-flag of length $l'$
  which corresponds to $\Lsf^{\mathrm{out}}$.
  The sequence $H^{\mathrm{(inn)}}_{\bullet}$ is the $(\Rbb\times (H_{l'}^{\mathrm{out}}\cap M))$-flag of length $l-l'$
  which corresponds to $\Lsf^{\mathrm{inn}}$.
\end{lemma}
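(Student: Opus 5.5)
The plan is to work directly with the description $\Lsf=\Psi_1(H_\bullet)=\bigl(\bigcup_{i=1}^l H_{i,+}\cup H_l\bigr)\cap(\Rbb\times M)$ and compute the two pieces $\Lsf^{\mathrm{out}}=\Pi(\Lsf)$ and $\Lsf^{\mathrm{inn}}=\Lsf\cap(\Rbb\times(\Lsf^{\mathrm{out}}\cap-\Lsf^{\mathrm{out}}))$ from \eqref{eq:Lin}. Once both are computed, the conclusion follows from the bijectivity of $\Psi_1$: \cref{theorem:flag corr group B} for $M$-flags, and the analogue of \cref{theorem:flag corr group T} for the lattice $\Rbb\times(H^{\mathrm{(out)}}_{l'}\cap M)$.

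\textbf{Step 1: the outer flag.} First I check that $H^{\mathrm{(out)}}_\bullet$ is an $M$-flag.
\begin{itemize}
\item For $i\le l'$ we have $\Rbb\times\{0\}\subseteq H_i$, so $H_i=\Rbb\times\Pi(H_i)$ and $H_{i,+}=\Rbb\times\Pi(H_{i,+})$.
\item Consequently $H_{i-1}\cap(\Rbb\times M)=\Rbb\times(\Pi(H_{i-1})\cap M)$, whose span is $\Rbb\times\Rbb(\Pi(H_{i-1})\cap M)$.
\item Hence $\Pi(H_i)$ is a hyperplane of $\Rbb(\Pi(H_{i-1})\cap M)$, with orientation $\Pi(H_{i,+})$.
\end{itemize}

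Next I show $\Pi(\Lsf)=\Psi_1(H^{\mathrm{(out)}}_\bullet)$.
\begin{itemize}
\item Elements of $\Lsf$ lying in $H_{i,+}$ for some $i\le l'$ project into $\Pi(H_{i,+})\cap M$.
\item All remaining elements lie in $H_{l'}$, so they project into $H^{\mathrm{(out)}}_{l'}\cap M$.
\item For the reverse inclusion: an element of $\Pi(H_{i,+})\cap M$ has full fibre $\Rbb\times\{m\}$ inside $H_{i,+}$. For $m\in H^{\mathrm{(out)}}_{l'}\cap M$, the fibre $\Rbb\times\{m\}$ meets $\Lsf$.
\end{itemize}

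\textbf{Step 2: the fibre claim (main obstacle).} The point to justify is that $\Rbb\times\{m\}$ meets $\Lsf$ whenever $m\in\Pi(H_{l'})\cap M$.
\begin{itemize}
\item If $l'=l$ this is immediate, since then $\Rbb\times\{m\}\subseteq H_l$.
\item Otherwise $\Rbb\times\{0\}\not\subseteq H_{l'+1}$, and $H_{l'+1}$ is a hyperplane of $\Rbb(H_{l'}\cap(\Rbb\times M))=\Rbb\times\Rbb(\Pi(H_{l'})\cap M)$. So $H_{l'+1}$ is the graph of a linear functional in the $\Rbb$-direction, and it meets each line $\Rbb\times\{m\}$ in exactly one point.
\item The orientation condition $\Rbb_{\ge0}\times\{0\}\subseteq\bigcup H_{i,+}\cup H_l$ forces $(1,0)\in H_{l'+1,+}$. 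Therefore all sufficiently large $r$ give $(r,m)\in H_{l'+1,+}\cap(\Rbb\times M)\subseteq\Lsf$.
\end{itemize}
This shows that condition \eqref{L3} is exactly what makes this step work, and it also gives $\Lsf^{\mathrm{out}}\cap-\Lsf^{\mathrm{out}}=H^{\mathrm{(out)}}_{l'}\cap M$.

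\textbf{Step 3: the inner flag.} Now $\Lsf^{\mathrm{inn}}=\Lsf\cap(\Rbb\times(H^{\mathrm{(out)}}_{l'}\cap M))=\Lsf\cap H_{l'}$. This equals $\bigl(\bigcup_{i>l'}H_{i,+}\cup H_l\bigr)\cap(\Rbb\times M)$, which is $\Psi_1(H^{\mathrm{(inn)}}_\bullet)$ once I know that $H^{\mathrm{(inn)}}_\bullet$ is a flag. That holds for three reasons.
\begin{itemize}
\item Its zeroth term $\Rbb\times\Rbb(H^{\mathrm{(out)}}_{l'}\cap M)$ is the span of $H_{l'}\cap(\Rbb\times M)$.
\item The successive hyperplane conditions are inherited from $H_\bullet$.
\item The orientation condition holds because $\Rbb_{\ge0}\times\{0\}$ avoids $H_{i,+}$ for $i\le l'$, as these sets lie in $H_{i-1}\setminus H_i$ while $\Rbb\times\{0\}\subseteq H_{i}$.
\end{itemize}
The lengths are $l'$ and $l-l'$ by construction, and injectivity of $\Psi_1$ gives the identification of both flags.
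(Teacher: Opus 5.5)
Your proposal is correct and follows essentially the same route as the paper. Both arguments check that $H^{\mathrm{(out)}}_\bullet$ is an $M$-flag via $H_i=\Rbb\times\Pi(H_i)$ for $i\le l'$, compute $\Lsf^{\mathrm{out}}=\Pi(\Lsf)$ level by level and $\Lsf^{\mathrm{inn}}=\Lsf\cap H_{l'}$, and conclude via the bijection $\Psi_1$. Your Step 2 (each fibre $\Rbb\times\{m\}$ with $m\in H^{\mathrm{(out)}}_{l'}\cap M$ meets $H_{l'+1,+}$, because $H_{l'+1}$ is a graph over the $\Rbb$-direction and $(1,0)\in H_{l'+1,+}$) and your check of the orientation condition for $H^{\mathrm{(inn)}}_\bullet$ usefully justify points the paper only asserts: the case $i=l'+1$ of its projection formula, and ``it is clear''.
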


\begin{proof}
  Since $H_{\bullet}$ is an $(\Rbb\times M)$-flag, 
  $H_i=\Rbb\times H_{i}^{\mathrm{(out)}}$ is a hyperplane of 
  \begin{align*}
    \Rbb(H_{i-1}\cap (\Rbb\times M))&=\Rbb((\Rbb\times H_{i-1}^{\mathrm{(out)}})\cap (\Rbb\times M))\\
    &=\Rbb(\Rbb\times (H_{i-1}^{\mathrm{(out)}}\cap M))\\
    &=\Rbb\times \Rbb(H_{i-1}^{\mathrm{(out)}}\cap M).
  \end{align*}
  for $i=1,\dots,l'$. Thus $H_i^{\mathrm{(out)}}$ is a hyperplane of $\Rbb(H_{i-1}^{\mathrm{(out)}}\cap M)$, and hence
  $H_{\bullet}^{\mathrm{(out)}}$ is an $M$-flag. Since 
  \begin{align*}
    \Pi(H_{i,+}\cap (\Rbb\times M))
    \begin{cases*}
      =H_{i,+}^{\mathrm{(out)}}\cap M &\text{ if $i<l'+1$},\\
      =H_{l'}^{\mathrm{(out)}}\cap M &\text{ if $i=l'+1$},\\
      \subseteq H_{l'}^{\mathrm{(out)}}\cap M &\text{ if $i>l'+1$}
    \end{cases*}
  \end{align*}
  holds, we have
  \begin{align*}
    \Lsf^{\mathrm{out}}
    &=\Pi\left(\left(\bigcup_{i=1}^{l}H_{i,+}\cup H_l\right)\cap (\Rbb\times M)\right)\\
    &=\Pi\left(\bigcup_{i=1}^{l}(H_{i,+}\cap (\Rbb\times M))\cup (H_{l'}\cap (\Rbb\times M))\right)\\
    &=\bigcup_{i=1}^{l}\Pi(H_{i,+}\cap (\Rbb\times M))\cup \Pi(H_{l'}\cap (\Rbb\times M))\\
    &=\bigcup_{i=1}^{l'}(H_{i,+}^{\mathrm{(out)}}\cap M)\cup (H_{l'}^{\mathrm{(out)}}\cap M)\\
    &=\left(\bigcup_{i=1}^{l'}H_{i,+}^{\mathrm{(out)}}\cup H_{l'}^{\mathrm{(out)}}\right)\cap M.
  \end{align*}
  By \cref{theorem:flag corr group B}, $\Lsf^{\mathrm{out}}$ is the submonoid represented by $H_{\bullet}^{\mathrm{(out)}}$.

  It is clear that the sequence $H_{\bullet}^{\mathrm{(inn)}}$ is $\Rbb\times (H_{l'}^{\mathrm{out}}\cap M)$-flag.
  \begin{align*}
    \Lsf^{\mathrm{out}}\cap -\Lsf^{\mathrm{out}}&=\left(\left(\bigcup_{i=1}^{l'}H_{i,+}^{\mathrm{out}}\cup H_{l'}^{\mathrm{out}}\right)\cap M \right)\cap -\left(\left(\bigcup_{i=1}^{l'}H_{i,+}^{\mathrm{out}}\cup H_{l'}^{\mathrm{out}}\right)\cap M \right)\\
    &=H_{l'}^{\mathrm{out}}\cap M,\\
    \Lsf^{\mathrm{inn}}&=\Lsf\cap (\Rbb\times (\Lsf^{\mathrm{out}}\cap -\Lsf^{\mathrm{out}}))\\
    &=\Lsf\cap H_{l'}\\
    &=\left(\bigcup_{i=1}^{l}H_{i,+}\cup H_l\right)\cap (\Rbb\times M)\cap H_{l'}\\
    &=\left(\bigcup_{i=l'+1}^{l}H_{i,+}\cup H_l\right)\cap (\Rbb\times M).
  \end{align*}
  Then $\Lsf^{\mathrm{inn}}$ corresponds to $H_{\bullet}^{\mathrm{(inn)}}$.
\end{proof}

Then the bijection of  \cref{proposition:flag corr group T innout} is described as follows:
\begin{align}
  \begin{aligned}\label{eq:bij inn out flag}
    H_{\bullet}=\{H_i\}_{i=0}^l &\longmapsto (H_{\bullet}^{\mathrm{(out)}},H_{\bullet}^{\mathrm{(inn)}})=(\{\Pi(H_i)\}_{i=0}^{l'},\{H_{l'+i}\}_{i=0}^{l''}),\\
    [\Rbb\times H_0^{\mathrm{out}}\supsetneq \dots\supsetneq \Rbb\times H_{l'}^{\mathrm{out}}\supsetneq H_1^{\mathrm{inn}}\supsetneq \dots\supsetneq H_{l''}^{\mathrm{inn}}] &\longmapsfrom (H_{\bullet}^{\mathrm{out}},H_{\bullet}^{\mathrm{inn}}).
  \end{aligned}
\end{align}
Note that via the bijection, the length $l$ of $H_{\bullet}$ is equal to $l'+l''$.

%以下はexplicitなbijectionより前に置くべきかも
\begin{proposition}\label{proposition:flag when geom}
  Let $(H_{\bullet}^{\mathrm{out}},H_{\bullet}^{\mathrm{inn}})$ be an $(\Rbb\times M)$-flag 
  which corresponds to $P\in \CSpec\Tbb[M]$ by  \cref{proposition:flag corr group T innout}.
  Then the following hold:
  \begin{enumerate}
    \item $P$ is contained in some geometric congruence if and only if $l'=0$.
    \item $P$ is geometric if and only if $l'=0$ and $l''=1$.
  \end{enumerate}
\end{proposition}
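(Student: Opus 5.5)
Throughout I assume that $P$ is lying over $\Tbb$. By \cref{lemma:when lying over T} and \cref{proposition:flag corr group T over T}, this means $l''>0$ and $\Rbb\times\{0\}\not\subseteq H_1^{\mathrm{inn}}$. The assumption is harmless. If $P$ is contained in a geometric congruence $P_0$, then $\Tbb\to S/P\to S/P_0$ is injective, so $\Tbb\to S/P$ is injective and $P$ lies over $\Tbb$. Also every geometric congruence lies over $\Tbb$. (The degenerate flag with $l'=l''=0$ is the maximum congruence, whose quotient is $\Bb$; it is excluded by this assumption.)

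For (1), I will invoke \cref{proposition:when geom} with $G=M$: $P$ is contained in a geometric congruence if and only if $\Lsf(P)^{\mathrm{out}}=M$. By \cref{lemma:out flag}, $\Lsf(P)^{\mathrm{out}}=\Psi_1(H_\bullet^{\mathrm{out}})$, so it remains to show that $\Psi_1(H_\bullet^{\mathrm{out}})=M$ if and only if the $M$-flag has length $l'=0$. If $l'=0$, then $\Psi_1=H_0\cap M=M$ by definition. If $l'\geq 1$, then $\Psi_1(H^{\mathrm{out}}_\bullet)\subseteq (H^{\mathrm{out}}_{1,+}\cup H^{\mathrm{out}}_1)\cap M$. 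This misses every lattice point of $-H^{\mathrm{out}}_{1,+}$, and such points exist because $H_1^{\mathrm{out}}$ is a hyperplane of the rational space $\Rbb(H_0\cap M)=M_\Rbb$. Equivalently, one can appeal to the bijectivity in \cref{theorem:flag corr group B}, since $M=\Psi_1$ of the length-$0$ flag.

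For (2), I will use the description \eqref{eq:order by flag over T}. The quotient $\Tbb[M]/P$ is $\Bb$ of the ordered group $(\Rbb\times M)/(H_l\cap(\Rbb\times M))$. Hence $P$ is geometric exactly when the composite $\Rbb\times\{0\}\to(\Rbb\times M)/(H_l\cap(\Rbb\times M))$ is bijective; it is automatically an o-homomorphism, and injectivity is the lying-over condition. By (1), a geometric $P$ has $l'=0$, so I may assume $l'=0$ and $l=l''\geq1$, with $\Rbb\times\{0\}\not\subseteq H_1$.

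If $l''=1$, then $H_1$ is a linear hyperplane of $\Rbb\times M_\Rbb$ transversal to $\Rbb\times\{0\}$. For each $m\in M$ there is a unique $r$ with $(r,m)\in H_1$. Therefore every class in $(\Rbb\times M)/(H_1\cap(\Rbb\times M))$ has a unique representative in $\Rbb\times\{0\}$, which gives surjectivity and injectivity.

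If $l''\geq 2$, surjectivity fails. The subgroup $H_1\cap(\Rbb\times M)$ spans $H_1$, because $H_2$ is a hyperplane of that span. It is therefore nonzero, and it strictly contains $H_l\cap(\Rbb\times M)$ since $H_2\subsetneq H_1$. Take $x\in(H_1\setminus H_2)\cap(\Rbb\times M)$. If $x\equiv(r,0)$ modulo $H_l\cap(\Rbb\times M)\subseteq H_1$, then $(r,0)\in H_1$. This forces $r=0$ (as $\Rbb\times\{0\}\not\subseteq H_1$), and hence $x\in H_l\subseteq H_2$, a contradiction.

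The main point requiring care is this last step: checking that $H_1\cap(\Rbb\times M)$ genuinely leaves the class of $x$ outside the image of $\Rbb\times\{0\}$.

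Alternatively, (2) can be obtained from (1) together with maximality. A geometric congruence is maximal in $\CSpec_\Tbb\Tbb[M]$, and inclusions correspond to truncations of the inner flag with $l''\geq1$ by \cref{proposition:flag corr group T over T}. So the maximal points with $l'=0$ are exactly those with $l''=1$. Combining this with the uniqueness in \cref{proposition:abstract geom} gives the same conclusion.
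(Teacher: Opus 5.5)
Your proof is correct and follows essentially the paper's route. Part (1) is reduced to \cref{proposition:when geom} via the outer flag. Part (2) compares $\Rbb$ with $(\Rbb\times M)/(H_l\cap(\Rbb\times M))$. The only variation is how you exclude $l''\geq 2$: you use the transversality $H_1\cap(\Rbb\times\{0\})=\{0\}$, whereas the paper observes that elements of $H_1\cap(\Rbb\times M)$ are infinitesimal in the quotient order and so must map to $0$ when $P$ is geometric.

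Your lying-over hypothesis is not merely harmless: it is necessary for the \emph{if} direction of (1). The maximum congruence has $l'=l''=0$ yet lies in no geometric congruence, and the paper's proof tacitly makes the same assumption by citing \cref{proposition:when geom}. Separately, the element $x\in(H_1\setminus H_2)\cap(\Rbb\times M)$ exists simply because $H_2\subsetneq\Rbb(H_1\cap(\Rbb\times M))$. So your claim that $H_1\cap(\Rbb\times M)$ spans $H_1$ is not needed, and it does not follow from the reason you give (it happens to be true here).
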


\begin{proof}
  The claim (1) follows from \cref{proposition:when geom}.
  If $l'=0$ and $l''=1$ hold, then $P$ is the kernel congruence of
  \[\lau=\Bb[\Rbb\times M]\to \Bb[(\Rbb\times M)/(H_1\cap (\Rbb\times M))]\cong \Bb[\Rbb]\cong \Tbb,\]
  and hence $P$ is geometric.

  To show the remaining part, assume that $P$ is geometric.
  By (1) and  \cref{proposition:flag corr group T over T}, we have $l''>0$ and $l'=0$.
  Let $(r_1,m_1)$ be an arbitrary element of $H_{1}\cap (\Rbb\times M)$.
  Since the order of $(\Rbb\times M)/(H_{l''}\cap(\Rbb\times M))$ 
  is defined by \eqref{eq:order by flag over T},
  \[-\epsilon<\pi_P((r_1,m_1))<\epsilon\text{ for any }\epsilon \in\Rbb_{>0},\]
  holds.
  Since $P$ is geometric, we have $\pi_P((r_1,m_1))=0$ and hence 
  \[H_1=\{(r,m)\in \Rbb\times M_{\Rbb}\mid \pi_P((r,m))=0\}=H_{l''}\]
  where $\pi_P$ is linearly extended to $\Rbb\times M_{\Rbb}$.
  Thus we have $H_1=H_{l''}$ and $l''=1$.
\end{proof}

\begin{corollary}\label{corollary:laurent geom dim}
  For a point $P\in X= \CSpec_{\Tbb}\lau$, $P$ is geometric if and only if $\dim_P X=n$.
\end{corollary}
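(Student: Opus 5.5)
The plan is to compute $\dim_P X$ for $X=\CSpec_{\Tbb}\lau$ directly in terms of the flag data of $P$, and then compare with the criterion for geometricity in \cref{proposition:flag when geom}.

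First, I would identify $\dim_P X$ with a height. By \cref{proposition:T open}, $X$ is an open subset of $\CSpec\lau$. By \cref{lemma:dim height} and \eqref{eq:dim at point open nbhd}, $\dim_P X$ is then the supremum of the lengths of chains $P_0\subsetneq\dots\subsetneq P_k=P$ with every $P_j\in X$. The point is that irreducible closed subsets of $X$ through $P$ correspond to primes of $X$ contained in $P$: an irreducible closed subset of the open set $X$ has irreducible closure in $\CSpec\lau$, of the form $V(Q)$ with $Q$ prime, and $Q$ lies in $X$ because $X$ meets $V(Q)$ and $X$ is open. Consequently $\dim_P X$ is the height of $P$ computed inside the poset $X$.

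Second, I would compute this height using \cref{proposition:flag corr group T over T}. Write $P\leftrightarrow(H^{\mathrm{out}}_\bullet,H^{\mathrm{inn}}_\bullet)$ with lengths $l'$ and $l''\geq 1$. The primes of $X$ contained in $P$ are exactly those with the same outer flag whose inner flag has $H^{\mathrm{inn}}(P)_\bullet$ as a truncation, still subject to $\Rbb\times\{0\}\not\subseteq H_1^{\mathrm{inn}}$. Hence the maximal chain length is the maximal number of additional steps by which the inner flag can be extended. The inner flag lives in $\Rbb\times(H^{\mathrm{out}}_{l'}\cap M)$, whose rank is $1+\rk(H^{\mathrm{out}}_{l'}\cap M)\leq 1+n-l'$. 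A full-length inner flag can always be obtained by refining with rational hyperplanes, which keeps the orientation condition of \cref{definition:RM-flag} satisfiable, since it only constrains the first inner step. The resulting height is $\dim\Rbb(H^{\mathrm{inn}}_{l''}\cap(\Rbb\times H^{\mathrm{out}}_{l'}\cap M))$, mirroring \cref{proposition:flag inclusion group T}. One checks that the maximal value $n$ occurs exactly when $l'=0$ and $H^{\mathrm{inn}}_{l''}$ is the span of a corank-one sublattice of $\Rbb\times M$. Every inner step after the first forces the inner lattice rank down, so a maximal chain from $P$ downward realizes $\dim_P X=\rk(H^{\mathrm{inn}}_{l''}\cap(\Rbb\times M'))$, where $M'=H^{\mathrm{out}}_{l'}\cap M$. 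This rank is at most $\rk M'+1-l''\leq n-l'+1-l''$, so $\dim_P X=n$ forces $l'=0$ and $l''=1$.

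Conversely, if $l'=0$ and $l''=1$, then $H_1$ is a hyperplane meeting $\Rbb\times M$ in a lattice of rank $n$. This holds because the quotient $(\Rbb\times M)/(H_1\cap(\Rbb\times M))\cong\Rbb$ must receive $\Rbb\times\{0\}$ isomorphically, and $\Rbb\times M\cong\Rbb\oplus\Zbb^n$. Hence the height is $n$. Combined with \cref{proposition:flag when geom}(2), this gives the equivalence.

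The main obstacle is the height formula inside $X$: one has to verify that the rank of $H_{l''}\cap(\Rbb\times M)$ equals the maximal number of refinements, in the presence of irrational hyperplanes. I would handle this by the same dimension count as in \cref{proposition:flag inclusion group T}, applied to the inner flag with fixed outer part. I would also check that the lower bound for the rank, $\rk(H_1\cap(\Rbb\times M))\geq n$ when $\Rbb\times\{0\}\not\subseteq H_1$, follows because the projection of $H_1$ to $M_\Rbb$ is then an isomorphism.
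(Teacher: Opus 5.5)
Your argument is correct and is essentially the paper's proof. You identify $\dim_P X$ with the height of $P$ using that $X$ is open. You then bound that height by the flag dimension count $\het P\le n+1-l'-l''$, note that the value $n$ is attained when $l'=0$ and $l''=1$, and conclude with \cref{proposition:flag when geom}. The only difference is that you compute the height inside $X$ via the outer/inner description of \cref{proposition:flag corr group T over T}. The paper instead quotes $\het P=\dim\Rbb(H_l\cap(\Rbb\times M))$ from \cref{proposition:flag inclusion group T} and leaves the final count implicit.
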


\begin{proof}
  By \cref{proposition:flag inclusion group T},
  \[\het P= \dim  \Rbb(H_l\cap (\Rbb\times M))\]
  holds. Since $X$ is an open subset of $\CSpec\lau$ containing $P$, we have
  \[\dim_P X=\dim_P \CSpec \lau=\het P,\]
  by \cref{lemma:dim height} and \eqref{eq:dim at point open nbhd}.
  Thus the claim follows immediately by \cref{proposition:flag when geom}.
\end{proof}

\subsection{The polynomial semirings over $\Bb$ and $\Tbb$}\label{subsection:Bpol Tpol}

In this subsection, we consider the set of prime congruences
on the polynomial semiring $\Bpol$ and $\pol$.
By the notation \eqref{monomial notation}, we have
\[\Bpol=\Bb[\Mp],\ \pol=\Bb[\Rbb\times \Mp].\]

For $I\subseteq \{1,\dots,n\}$, we define a submonoid $\Fsf_I$ of $\Mp$ as follows:
\[\Fsf_I=\{uX=u_1X_1+\dots+u_nX_n \mid u_i=0 \text{ for all }i\notin I\},\]
which is isomorphic to $(\Nbb)^{I}$.

\begin{lemma}
  Any face of $\Mp$ is equal to $\Fsf_I$ for some $I$.
  Any face of $\Rbb\times \Mp$ is equal to $\Rbb\times \Fsf_I$ for some $I$. 
\end{lemma}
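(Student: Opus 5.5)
The plan is to treat $\Mp$ directly from \cref{definition:face} and then deduce the second assertion from \cref{lemma:T face}. Recall that $\Mp$ carries the trivial order. Hence the second condition of \cref{definition:face} is vacuous: $m_1\leq m_2$ forces $m_1=m_2$. So a face of $\Mp$ is just a submonoid $\Fsf$ such that $m_1+m_2\in\Fsf$ implies $m_1,m_2\in\Fsf$.

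Given such a face $\Fsf$, put $I\coloneqq\{i\mid X_i\in\Fsf\}$; I claim $\Fsf=\Fsf_I$. First, $\Fsf_I\subseteq\Fsf$, because $\Fsf$ is a submonoid containing $0$ and every $X_i$ with $i\in I$, and $\Fsf_I$ is the submonoid generated by these $X_i$. Conversely, let $uX=\sum_i u_iX_i\in\Fsf$. Writing $uX$ as the sum of $X_j$ and $uX-X_j\in\Mp$ for any $j$ with $u_j>0$, the face condition gives $X_j\in\Fsf$, so $j\in I$. Thus $u_j=0$ for every $j\notin I$, i.e. $uX\in\Fsf_I$. We should also note that each $\Fsf_I$ really is a face: if $m_1+m_2\in\Fsf_I$, then since all coordinates are nonnegative, $m_1$ and $m_2$ both vanish outside $I$. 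This last point is not needed for the statement as phrased, but it makes the classification complete.

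For $\Rbb\times\Mp$, the order is that of \cref{definition:T[M]} with $\Msf=\Mp$, which coincides with the order in \eqref{monomial notation}. By \cref{lemma:T face}, any face of $\Rbb\times\Mp$ has the form $\Rbb\times\Fsf'$ for some face $\Fsf'$ of $\Mp$. By the first part, $\Fsf'=\Fsf_I$ for some $I$, which gives the claim.

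There is no real obstacle here. The only point needing care is that in the decomposition $uX=X_j+(uX-X_j)$ the element $uX-X_j$ actually lies in $\Mp$, which holds precisely because $u_j\geq 1$.
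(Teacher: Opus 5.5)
Your proof is correct and takes the same route as the paper: set $I=\{i\mid X_i\in\Fsf\}$, apply the face condition to $uX=X_j+(uX-X_j)$ to get $\Fsf\subseteq\Fsf_I$, and deduce the $\Rbb\times\Mp$ case from \cref{lemma:T face}. You also write out details the paper leaves implicit, namely the inclusion $\Fsf_I\subseteq\Fsf$, why the order condition is vacuous, and that each $\Fsf_I$ is indeed a face.
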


\begin{proof}
  Take a face $\Fsf$ of $\Mp$ and an element $uX=u_1X_1+\dots+u_nX_n$ of $\Fsf$.
  By the definition of faces, $X_i\in\Fsf$ holds if $u_i>0$.
  So we have $\Fsf=\Fsf_{I}$ for $I=\{i\mid X_i\in\Fsf\}$.  
  The claim about $\Rbb\times M$ follows from \cref{lemma:T face}. 
\end{proof}

The stratification \eqref{eq:process} is described as follows: For $\Msf=\Mp$, 
\begin{align}
  \begin{aligned}\label{eq:flag corr monoid B}
    &\CSpec\Bb[X_1,\dots,X_n]=\coprod_{I\subseteq \{1,\dots,n\}}O_{\Fsf_I},\\
    &O_{\Fsf_I}\xrightarrow{\cong} \CSpec\Bb[\Fsf_I^{\mathrm{gp}}],\,P\mapsto \tilde{P}=(P\cap \Bb[\Fsf_I])\cdot \Bb[\Fsf_I^{\mathrm{gp}}],\\
    &\CSpec\Bb[\Fsf_I^{\mathrm{gp}}]=\CSpec\Bb[X_i^{\pm}\mid i\in I]\longleftrightarrow \Fcal_{\Zbb^I},
  \end{aligned}
\end{align}
and for $\Msf=\Rbb\times \Mp$,
\begin{align}
  \begin{aligned}\label{eq:flag corr monoid T}
    &\CSpec\Tbb[X_1,\dots,X_n]=\coprod_{I\subseteq \{1,\dots,n\}}O_{\Rbb\times\Fsf_I},\\
    &O_{\Rbb\times \Fsf_I}\xrightarrow{\cong} \CSpec\Tbb[\Fsf_I^{\mathrm{gp}}],\,P\mapsto \tilde{P}=(P\cap \Tbb[\Fsf_I])\cdot \Tbb[\Fsf_I^{\mathrm{gp}}],\\
    &\CSpec\Tbb[\Fsf_I^{\mathrm{gp}}]=\CSpec\Tbb[X_i^{\pm}\mid i\in I]\longleftrightarrow \Fcal_{\Rbb\times \Zbb^I},
  \end{aligned}
\end{align}

\cref{theorem:flag corr group B} provides 
the complete description of each stratum $O_{\Fsf_I}$ and $O_{\Rbb\times\Fsf_I}$.
Moreover, by \cref{proposition:inclusion monoid B}, 
we know precisely how these strata form the whole space.

We introduce \emph{restriction} of flags, which is a basic operation on flags on a par with truncation.
For a prime congruence $P$ on $\Bb[M]$, let $P'$ be the pull-back of $P$ by the inclusion $\Bb[M']\subseteq \Bb[M]$,
where $M'$ is a subgroup of $M$ satisfying $M'_{\Rbb}\cap M=M'$.
We compare the $M$-flag $H_{\bullet}$ corresponding to $P$ and 
the $M'$-flag corresponding to $P'$.

By \cref{proposition:monoid pull-back}, we have $\Lsf(P')=\Lsf(P)\cap M'$.
\begin{align}\label{eq:restriction}
  \Lsf(P')\cap M'=\left(\bigcup_{i=1}^{l}H_{i,+}\cup H_l \right)\cap M'=\left(\bigcup_{i=1}^{l}(H_{i,+}\cap {M'_{\Rbb}})\cup (H_l\cap M'_{\Rbb}) \right)\cap M'.
\end{align}
Although the sequence $\{H_{i}\cap M'_{\Rbb}\}_{i=0}^l$ is not an $M'$-flag in general,
the codimension of $H_i\cap M'_{\Rbb}$ in $\Rbb((H_{i-1}\cap M'_{\Rbb})\cap M')$ 
is at most 1 for $i=1,\dots,l$.
Then we obtain an $M'$-flag by eliminating $H_i\cap M'_{\Rbb}$ such that
\[H_i\cap M'_{\Rbb}=\Rbb((H_{i-1}\cap M'_{\Rbb})\cap M')\]
from the sequence $\{H_{i}\cap M'_{\Rbb}\}_{i=0}^l$. 

\begin{definition}\label{definition:restriction}
  The $M'$-flag defined above is denoted by $H_{\bullet}\cap^{*}M'$, 
  called the \emph{restriction} of $H_{\bullet}$ to $M'$.
\end{definition}

By the assumption that $M'_{\Rbb}\cap M=M'$ and \eqref{eq:restriction},
the $M'$-flag $H_{\bullet}\cap^{*}M'$ corresponds to 
the submonoid $\Lsf\cap M'$ and the prime congruence $P'=P\cap \Bb[\Msf']$.
An $M'$-flag $H_{\bullet}\cap^{*}M'$ has length at most the length of $H_{\bullet}$.
%In particular, if $M'\subsetneq M$, then the inequality is strict.

For an $(\Rbb\times M)$-flag $H_{\bullet}$ and a subgroup $M'$ of $M$ satisfying $M'_{\Rbb}\cap M=M'$,
we define an $(\Rbb\times M')$-flag $H_{\bullet}\cap^{*}(\Rbb\times M')$ similarly.

\begin{lemma}\label{lemma:strict inclusion}
  For $i=1,2$, let $P_i$ be a point of $\CSpec \Bpol$ and $l_i$ the length of 
  the corresponding $\Fsf^{\mathrm{gp}}$-flag $H(P_i)_{\bullet}$ by \eqref{eq:flag corr monoid B}.
  If $P_1\subsetneq P_2$ holds, we have $l_1>l_2$.
  The same holds for points of $\CSpec \pol$.
\end{lemma}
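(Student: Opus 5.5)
The plan is to reduce to \cref{proposition:inclusion monoid B} and then split into cases according to whether the mobile face changes. Write $\Fsf_1=\Fsf_{I_1}$ and $\Fsf_2=\Fsf_{I_2}$ for the mobile faces of $P_1$ and $P_2$, and $\tilde{P_1},\tilde{P_2}$ for the corresponding points of $\CSpec\Bb[\Fsf_1^{\mathrm{gp}}]$ and $\CSpec\Bb[\Fsf_2^{\mathrm{gp}}]$. By \cref{proposition:inclusion monoid B}, $P_1\subsetneq P_2$ gives $\Fsf_2\subseteq\Fsf_1$, condition (2), and
\[\Lsf(\tilde{P_1})\cap\Fsf_2^{\mathrm{gp}}\subseteq\Lsf(\tilde{P_2}).\]
The group $M'=\Fsf_2^{\mathrm{gp}}$ is a coordinate subgroup of $\Fsf_1^{\mathrm{gp}}$, so it is saturated. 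By \eqref{eq:restriction} and \cref{definition:restriction}, $\Lsf(\tilde{P_1})\cap\Fsf_2^{\mathrm{gp}}$ corresponds to the restricted flag $H(P_1)_{\bullet}\cap^{*}\Fsf_2^{\mathrm{gp}}$, whose length is at most $l_1$. By \cref{proposition:flag inclusion group B}, the inclusion above means that $H(P_2)_{\bullet}$ is a truncation of this restricted flag. Hence $l_2\leq l_1$ always, and the real task is to show that the inequality is strict.

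\emph{Case $\Fsf_1=\Fsf_2$.} Here \cref{lemma:P1P2} together with the isomorphism \eqref{eq:process} gives $\tilde{P_1}\subsetneq\tilde{P_2}$. So $H(P_2)_{\bullet}$ is a proper truncation of $H(P_1)_{\bullet}$, and since distinct truncations have distinct lengths, $l_2<l_1$.

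\emph{Case $\Fsf_2\subsetneq\Fsf_1$.} I will show that the restriction itself already loses length, i.e. that $H(P_1)_{\bullet}\cap^{*}\Fsf_2^{\mathrm{gp}}$ has length strictly less than $l_1$. This is the main obstacle, and condition (2) is what should settle it. Pick $i\in I_1\setminus I_2$, so $X_i\in\Fsf_1\setminus\Fsf_2$. Condition (2) with $m_2=0$ gives $X_i\notin\Lsf(\tilde{P_1})$. More generally it gives $X_i-m_2\notin\Lsf(\tilde{P_1})$ for every $m_2\in\Fsf_2$; by \eqref{L2}, $m_2-X_i\in\Lsf(\tilde{P_1})$ strictly, for all $m_2\in\Fsf_2$.

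Now let $k$ be the first index with $X_i\notin H_k$, where $H_\bullet=H(P_1)_\bullet$. Then $X_i\in H_{k-1}$ and $X_i\in -H_{k,+}$. Suppose the restriction had the same length $l_1$. Then no step is eliminated, so for every $j$ we would have $H_j\cap M'_{\Rbb}\subsetneq\Rbb((H_{j-1}\cap M'_{\Rbb})\cap M')$. In particular, at step $k$ there would be some $m_2\in M'\cap H_{k-1}$ with $m_2\in -H_{k,+}$. Since $\Fsf_2$ generates $M'$, I expect this $m_2$ can be chosen in $\Fsf_2$: adding large multiples of coordinate vectors in $H_{k}$ should adjust it, though this needs checking. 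Adding a large multiple of $m_2$ to $-X_i$ then produces an element $N m_2-X_i$ that lies in $-H_{k,+}$. If everything else lies in $H_{k-1}$, this element is not in $\Lsf(\tilde{P_1})$, contradicting the strict positivity above. The delicate point is to arrange the choice of $m_2$ and $N$ so that the comparison is really decided at level $k$. If that proves awkward, I will instead use the dimension count from \cref{proposition:flag inclusion group B}, namely $\coht=l$, combined with a chain through the intermediate point with face $\Fsf_1$.

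For $\CSpec\pol$ the same argument applies using \cref{proposition:inclusion monoid T over T}, $(\Rbb\times\Zbb^I)$-flags and \eqref{eq:flag corr monoid T}. Condition (ii) there is formulated with $(r,m)$ differences, and in the case $\Fsf_2\subsetneq\Fsf_1$ the real coordinate can be absorbed by choosing $r$ freely.
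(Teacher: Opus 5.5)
Your reduction (the restricted flag $H(P_1)_\bullet\cap^{*}\Fsf_2^{\mathrm{gp}}$ has length at most $l_1$, and $H(P_2)_\bullet$ is a truncation of it) is fine and matches the paper, and so is your case $\Fsf_1=\Fsf_2$. The gap is in the case $\Fsf_2\subsetneq\Fsf_1$.

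From condition (2) you only use the instances with $m_1=X_i$, i.e. that $m_2-X_i$ is strictly positive for $m_2\in\Fsf_2$. You then need some $m_2\in\Fsf_2\cap H_{k-1}\cap(-H_{k,+})$. That information is too weak, and such an $m_2$ need not exist. Take $n=3$, $\Fsf_1=\Mp$, $\Fsf_2=\Fsf_{\{2,3\}}$, $X_i=X_1$, and the flag of length $1$ with $H_1=\{-x_1+x_2+x_3=0\}$ and $H_{1,+}=\{-x_1+x_2+x_3>0\}$. Every $m_2-X_1$ with $m_2\in\Fsf_2$ lies in $H_{1,+}$, so everything you extracted holds. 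Yet $H_1\cap M'_{\Rbb}=\{x_2+x_3=0\}$ is a proper hyperplane of $M'_{\Rbb}$, so the restriction keeps length $1=l_1$. Moreover $\Fsf_2\cap(-H_{1,+})=\emptyset$, so no adjustment by coordinate vectors can produce your $m_2$. Your fallback does not close this either: $\coht=l$ from \cref{proposition:flag inclusion group B} concerns chains inside one stratum $\CSpec\Bb[\Fsf^{\mathrm{gp}}]$, while controlling chains that cross strata is exactly what this lemma is for.

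The missing idea is to use all of condition (2). Since $\Fsf_2$ is a face, $X_i+m'\in\Fsf_1\setminus\Fsf_2$ for every $m'\in\Fsf_2$. So (2) gives $X_i+m'-m_2\notin\Lsf(\tilde{P_1})$ for all $m',m_2\in\Fsf_2$, i.e. $X_i+g\notin\Lsf(\tilde{P_1})$ for every $g\in\Fsf_2^{\mathrm{gp}}$. In the example above this fails at $X_1+2X_2\in H_{1,+}$, which is what excludes it.

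Now $X_i\notin\Lsf(\tilde{P_1})$ forces $l_1\ge 1$. Suppose some $g\in\Fsf_2^{\mathrm{gp}}$ lay outside $H(P_1)_1$; choose its sign so that $g\in H(P_1)_{1,+}$. Then $X_i+Ng\in H(P_1)_{1,+}\cap\Fsf_1^{\mathrm{gp}}\subseteq\Lsf(\tilde{P_1})$ for large $N$, a contradiction. Hence $\Fsf_2^{\mathrm{gp}}\subseteq H(P_1)_1$, the first step is eliminated in the restriction, and $l_2\le l_1-1$. This is the paper's argument. Your level-$k$ scheme would also work with this stronger input and $m_2\in M'$ arbitrary, but level $1$ already suffices.

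For $\CSpec\pol$, note that \cref{proposition:inclusion monoid T over T} only covers points lying over $\Tbb$, while the statement concerns all points. Apply \cref{proposition:inclusion monoid B} to $\Msf=\Rbb\times\Mp$ with the $(\Rbb\times\Zbb^{I})$-flags of \eqref{eq:flag corr monoid T} instead. The same face argument, using the differences $(r,X_i+g)$, gives $\Rbb\times\Fsf_2^{\mathrm{gp}}\subseteq H(P_1)_1$.
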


\begin{proof}
  By \cref{proposition:inclusion monoid B}, we have
  \[\Lsf(\tilde{P_1})\cap\Fsf_2^{\mathrm{gp}}\subseteq \Lsf(\tilde{P_2}).\]
  Thus $H(P_2)_{\bullet}$ is a truncation of 
  the restriction $H(P_1)_{\bullet}\cap^{*}\Fsf_2^{\mathrm{gp}}$.

  To derive a contradiction, we assume $P_1\subsetneq P_2$ and $l_1=l_2$.
  Then a truncation is trivial, that is, $H(P_2)_{\bullet}=H(P_1)_{\bullet}\cap^{*}\Fsf_2^{\mathrm{gp}}$.
  If the inclusion $\Fsf_2\subseteq \Fsf_1$ is proper, then $\Fsf_2^{\mathrm{gp}}$ is contained in $H(P_1)_1$
  by the condition (ii) of \cref{proposition:inclusion monoid B}.
  It contradicts the assumption $l_1=l_2$, and thus $\Fsf_2$ is equal to $\Fsf_1$.
  Hence we have $P_1=P_2$, which also contradicts the assumption.
  In the case of $\pol$, the claim can be proved similarly by replacing $\Fsf_i$ with $\Rbb\times \Fsf_i$.
\end{proof}

\begin{corollary}\label{corollary:JM result3}
  The dimension of $\Bpol$ is equal to $n$.
  The dimension of $\pol$ is equal to $n+1$.
\end{corollary}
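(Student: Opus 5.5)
We prove the upper bounds from \cref{lemma:strict inclusion} and then exhibit chains attaining them.

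For the upper bound on $\dim\Bpol$, take a chain $P_0\subsetneq P_1\subsetneq\dots\subsetneq P_l$ in $\CSpec\Bpol$. Let $l_i$ denote the length of the $\Fsf^{\mathrm{gp}}$-flag $H(P_i)_\bullet$ attached to $P_i$ by \eqref{eq:flag corr monoid B}. By \cref{lemma:strict inclusion} we have $l_0>l_1>\dots>l_l\geq 0$. Since $P_0$ lies in some stratum $O_{\Fsf_I}$ and its flag is a $\Zbb^I$-flag, the remark after \cref{definition:M-flag} gives $l_0\leq |I|\leq n$. Hence $l\leq n$.

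The same argument applies to $\pol$. The flag of $P_0$ is now an $(\Rbb\times\Zbb^I)$-flag, and its length is at most $\dim(\Rbb\times\Rbb^I)=|I|+1\leq n+1$. This is because each $H_i$ has codimension at least $i$ in $\Rbb\times M_\Rbb$, exactly as in the $M$-flag case. Hence $\dim\pol\leq n+1$.

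For the lower bounds, we use the open stratum $I=\{1,\dots,n\}$, where $\Fsf_I=\Mp$ and $O_{\Mp}\cong\CSpec\Bb[M]$ via \eqref{eq:flag corr monoid B}. By \cref{proposition:loc induces inj} applied to the localization by $\Mp$, this stratum is $D(\Mp)$, embedded order-preservingly. Inclusions among primes of $\Bb[M]$ correspond to inclusions of their pull-backs, since pull-back preserves inclusions and $\phi^{*}$ is injective. By \cref{corollary:JM result} there is a chain of length $n$ in $\CSpec\Bb[M]$, and pulling it back gives a strict chain of length $n$ in $\CSpec\Bpol$. Similarly, \cref{corollary:JM result2} gives a chain of length $n+1$ in $\CSpec\lau$, which pulls back along $\pol\to\lau$ to a strict chain of length $n+1$ in $\CSpec\pol$.

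The main point to check is that the length bound for flags in the $\Tbb$ case is $n+1$ rather than $n$. The proof of \cref{lemma:strict inclusion} uses flags of the strata groups $\Fsf_i^{\mathrm{gp}}$ and $\Rbb\times\Fsf_i^{\mathrm{gp}}$ with possibly different $I$, so the length of every flag must be bounded by the rank of the ambient group. This follows from the codimension count; the orientation condition in \cref{definition:RM-flag} only restricts which flags occur and does not affect the bound.
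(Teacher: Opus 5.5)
Your proof is correct and follows the paper's argument. The upper bound comes from \cref{lemma:strict inclusion} combined with the bound on flag lengths, and the lower bound comes from the open stratum $O_{\Fsf_{\{1,\dots,n\}}}\cong\CSpec\Blau$ (resp.\ $\CSpec\lau$) together with \cref{corollary:JM result,corollary:JM result2}. The only difference is cosmetic: you pull back a maximal chain directly along the localization, whereas the paper passes through \cref{lemma:dim height} and the topological dimension of an open subset.
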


\begin{proof}
  The space $\CSpec \Bpol$ contains open subset $O_{\Fsf_{\{1,\dots,n\}}}$ 
  which is homeomorphic to the space $\CSpec \Blau$.
  By \cref{lemma:dim height} and \cref{corollary:JM result}, we have 
  \[\dim \Bpol=\dim \CSpec \Bpol\geq \dim\CSpec \Blau =\dim \Blau=n.\]

  Since the length of flags appeared in \eqref{eq:flag corr monoid B} is at most $n$,
  the dimension of $\Bpol$ is at most $n$ by \cref{lemma:strict inclusion}.
  The equality $\dim\pol=n+1$ is proved similarly.  
\end{proof}

In fact, these assertions are generalized as follows:
\begin{align*}
  \dim\Bb[\Msf_{\sigma}]=n,\ \dim\Tbb[\Msf_{\sigma}]=n+1,
\end{align*}
where $\sigma$ is a cone in $N$ and $\Msf_{\sigma}=M\cap \sigma^{\vee}$ 
(see \cref{section:toric} for details).
In particular, properties of the open subset $\CSpec_{\Tbb}\lau\subseteq \CSpec \lau$ are
examined in \cref{subsection:affine toric}. 

Here we present a concrete example.

\begin{example}\label{example:affine line}
  \cref{figure:A^1} is the Hasse diagram of the affine line 
  $X=\CSpec \Tbb[X]$.
  Each arrow represents an inclusion.

  \begin{figure}[h]
  \centering
  \begin{tikzpicture}[<-]
  \node (Q) {$Q_{\ast}$}
    child { node (Q') {$Q$} 
  };
  \node at (4,0) {$P_{\ast}$}
    child { node (P-) {$P_{\ast,-}$} 
      child { node (P'-) [xshift=-5mm]{$P_{-}$} }
    }
    child { node {$P_a$} 
      child { node {$P_{a,-}$} }
      child { node {$P_{a,+}$} }
    }
    child { node {$P_{\ast,+}$} 
      child { node [xshift=5mm]{$P_{+}$} }
    };

  % added lines
  \draw (Q) -> (P-);
  \draw (Q') -> (P'-);

  \node[right] at (7,-2) {$\ast$ : not lying over $\Tbb$};

  \end{tikzpicture}
  \caption{The Hasse diagram of the tropical affine line.}
  \label{figure:A^1}
  \end{figure}

  We explain each point (see also \cref{figure:RM-flag}).
  The space $X$ is decomposed into two strata:
  \begin{align*}
    O_{\Fsf_{\emptyset}}\cong \CSpec \Tbb\longleftrightarrow \Fcal_{\Rbb\times \{0\}},\quad
    O_{\Fsf_{\{1\}}}\cong \CSpec \Tbb[X^{\pm}]\longleftrightarrow \Fcal_{\Rbb\times \Zbb},
  \end{align*}

  The $(\Rbb\times \{0\})$-flags, which correspond to points on $O_{\Fsf_{\emptyset}}$, 
  are classified as follows:
  \begin{itemize}
    \item length 0, which is uniquely determined, denoted by $Q_{\ast}$.
    \item length 1, which is uniquely determined, denoted by $Q$.
  \end{itemize}
  Via the homeomorphism $O_{\Fsf_{\emptyset}}\cong \CSpec \Tbb$, 
  the points $Q_{\ast}$ and $Q$ correspond to
  the kernel congruence of the surjective homomorphism $\Tbb\to \Bb$ and the diagonal congruence $\Delta$ on $\Tbb$, respectively.

  Similarly, the $(\Rbb\times \Zbb)$-flags, which correspond to points on $O_{\Fsf_{1}}$, 
  are classified into the following types:
  \begin{itemize}
    \item length 0, which is uniquely determined, denoted by $P_{\ast}$ 
    Via the homeomorphism $O_{\Fsf_{1}}\cong \CSpec\Tbb[X^{\pm}]$, 
    $P_{\ast}$ corresponds to $\Ker(\Tbb[X^{\pm}]\to\Bb)$.
    \item length 1 and $H_1$ does not contain $\Rbb\times \{0\}$, 
    which are determined by a singleton $\{(a,-1)\}=H_1\cap (\Rbb\times \{-1\})$.
    We denote the corresponding point by $P_a$. 
    Via the homeomorphism $O_{\Fsf_{1}}\cong \CSpec\Tbb[X^{\pm}]$, $P_a$ corresponds to $\Ker(\Tbb[X^{\pm}]\to \Tbb,\, X\mapsto a)$.
    \item length 1 and $H_1=\Rbb\times \{0\}$, which are determined by the orientation of $H_1$.
    We denote corresponding points by $P_{\ast,+}$ and $P_{\ast,-}$, respectively.
    \item length 2 and $H_1$ does not contain $\Rbb\times \{0\}$, 
    which is determined by $a$ of (2) and the orientation of $H_2=\{0\}$.
    We denote corresponding points by $P_{a,+}$ and $P_{a,-}$, respectively.
    \item length 2 and $H_1=\Rbb\times \{0\}$, which are uniquely determined by the orientation of $H_1$.
    We denote corresponding points by $P_{+}$ and $P_{-}$, respectively. 
  \end{itemize}

  By \cref{proposition:inclusion monoid B}, we know the inclusion relation of these points.
  In this way, we obtain \cref{figure:A^1}.
  It follows that $\dim\Tbb[X]$ and $\dim\CSpec \Tbb[X]$ are equal to $2$.
  Since the points without $\ast$ are lying over $\Tbb$, $\dim\CSpec_{\Tbb}\Tbb[X]$ is $1$.
  Note that in $\CSpec_{\Tbb}\Tbb[X]$, $P_a$ and $Q$ are geometric points and hence closed points,
  but a closed point $P_{+}$ is not geometric.
  This is analogous to the fact that the classical affine line is not proper.
\end{example}

\section{Toric construction}\label{section:toric}

In this section, we glue together sets of prime congruences using techniques of toric geometry.
Most of the discussion below generalizes that of \cref{section:Mflag}.

First we recall the usual setting in toric geometry.
Let $M$ and $N$ be free abelian groups of rank $n$ with a pairing 
\[\<\ \cdot\ ,\ \cdot\ \>\colon M\times N\to \Zbb, \, (m,n)\mapsto \<m,n\>.\]
Set $M_{\Rbb}=M\otimes_{\Zbb} \Rbb$ and $N_{\Rbb}=N\otimes_{\Zbb} \Rbb$.
A \emph{(convex) cone} $\sigma$ is a subset of $N_{\Rbb}$ represented by
\[\sigma=\left\{\sum_{v\in S}a_v v\  \middle|\  a_v\in\Rbb_{\geq0}\right\},\]
for some finite subset $S\subseteq N$, 
and its \emph{dimension} is the dimension of the linear span of $\sigma$ in $N_{\Rbb}$.
A cone $\sigma$ is \emph{strongly convex} if $\sigma\cap -\sigma=\{0\}$.
A subset $\tau$ of $\sigma$ is called a \emph{face} of $\sigma$ 
if the following condition holds for any $v_1,v_2\in \sigma$:
\[v_1+v_2\in \tau\implies v_1,v_2\in \tau.\]
We denote it by $\tau\prec\sigma$.
Note that if we regard cones as ordered monoids with the trivial order, 
\cref{definition:face} agrees with the present definition.

A \emph{fan} $\Sigma$ is a polyhedral complex in $N_{\Rbb}$ consisting of strongly convex cones.
For a cone $\sigma\subseteq N_{\Rbb}$, define subsets of $M$:
\begin{align*}
  &\sigma^{\perp}\coloneqq \{m\in M_{\Rbb}\mid \<m,n\>= 0 \text{ for all }n\in \sigma\},\\
  &\sigma^{\vee}\coloneqq \{m\in M_{\Rbb}\mid \<m,n\>\geq 0 \text{ for all }n\in \sigma\},\\
  &\Msf_{\sigma}\coloneqq \sigma^{\vee}\cap M.
\end{align*}

\subsection{Affine toric varieties}\label{subsection:affine toric}

As in standard toric geometry, we construct local models from cones in $N_{\Rbb}$.
For a cone $\sigma$, we define
\[U_{\sigma}^{\con}\coloneqq \CSpec_{\Tbb}\Tbb[\Msf_{\sigma}]=\CSpec_{\Tbb}\Bb[\Rbb\times \Msf_{\sigma}].\]

\begin{lemma}\label{lemma:toric face}
  Let $\sigma$ be a cone in $N_{\Rbb}$.
  For any face $\Fsf$ of the monoid $\Rbb\times \Msf_{\sigma}$, there exists a face $\tau$ of $\sigma$ such that
  \[\Fsf=\Rbb\times(\Msf_{\sigma}\cap\tau^{\perp}),\]
  and its group completion $\Fsf^{\mathrm{gp}}$ is equal to $\Rbb\times (\tau^{\perp}\cap M)$.
\end{lemma}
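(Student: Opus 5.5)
First I reduce to the monoid $\Msf_{\sigma}$. By \cref{lemma:T face}, every face of $\Rbb\times\Msf_{\sigma}$ has the form $\Rbb\times\Fsf'$ with $\Fsf'$ a face of $\Msf_{\sigma}$. Here $\Msf_{\sigma}$ carries the trivial order, so $\Fsf'$ is a face in the combinatorial sense: a submonoid such that $m_1+m_2\in\Fsf'$ implies $m_1,m_2\in\Fsf'$. It therefore suffices to prove two things. First, every such $\Fsf'$ equals $\Msf_{\sigma}\cap\tau^{\perp}$ for some face $\tau\prec\sigma$. Second, $(\Msf_{\sigma}\cap\tau^{\perp})^{\mathrm{gp}}=\tau^{\perp}\cap M$. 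Note that $\Rbb\times\{0\}$ is already a group, so the group completion of $\Rbb\times\Fsf'$ is $\Rbb\times\Fsf'^{\mathrm{gp}}$.

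For the first claim, I use the standard duality between faces of $\sigma$ and faces of $\sigma^{\vee}$. Given $\Fsf'$, I consider $\cone(\Fsf')\subseteq\sigma^{\vee}$. Since $\Msf_{\sigma}$ is finitely generated by Gordan's lemma, $\Fsf'$ is generated by those generators of $\Msf_{\sigma}$ lying in it. I claim $\cone(\Fsf')$ is a face of the rational polyhedral cone $\sigma^{\vee}$.

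To see this, pick $m_0$ in $\Fsf'$ as the sum of these generators, so that $m_0$ lies in the relative interior of $\cone(\Fsf')$. Let $\rho$ be the smallest face of $\sigma^{\vee}$ containing $m_0$. For any $m\in\rho\cap M$, some multiple $km_0-m$ lies in $\rho\subseteq\sigma^{\vee}$, and we may take it integral for $k\gg0$. Then $m+(km_0-m)=km_0\in\Fsf'$, and the face condition gives $m\in\Fsf'$. Hence $\Fsf'=\rho\cap M$.

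Finally, faces of $\sigma^{\vee}$ are exactly the sets $\sigma^{\vee}\cap\tau^{\perp}$ for $\tau\prec\sigma$; this is standard toric duality, which I cite. Therefore $\Fsf'=\Msf_{\sigma}\cap\tau^{\perp}$. The step I expect to need care is the rounding argument producing $km_0-m\in\rho\cap M$. The point is that $m_0$ is in the relative interior of $\rho$ and $\rho$ is rational, so $km_0-m\in\rho$ for large $k$.

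For the group completion, the face $\rho=\sigma^{\vee}\cap\tau^{\perp}$ spans $\tau^{\perp}$, since it is a face of a full-dimensional cone in $\tau^{\perp}$; this may require $\sigma$ strongly convex, or else arguing via $\sigma^{\vee}$ generally. So any $m\in\tau^{\perp}\cap M$ can be written as $m=(m+km_0)-km_0$, where $m_0$ is in the relative interior of $\rho$ and $m+km_0\in\rho\cap M$ for $k\gg0$. This gives $\tau^{\perp}\cap M\subseteq\Fsf'^{\mathrm{gp}}$. The reverse inclusion is clear, because $\Fsf'$ lies in the group $\tau^{\perp}\cap M$ and the induced map $\Fsf'^{\mathrm{gp}}\to M$ is injective since $\Fsf'$ is cancellative.
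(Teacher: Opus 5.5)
Your proof is correct and follows the paper's route: reduce via \cref{lemma:T face} to a face $\Fsf'$ of $\Msf_{\sigma}$, then invoke the face duality of \cite[Proposition 1.2.10]{cox2011toric}. The paper compresses everything after that reduction into this citation together with $(\sigma^{\vee})^{\mathrm{gp}}=M_{\Rbb}$. You actually supply the bridge from monoid faces to cone faces via $m_0\in\relint\rho$; there the inclusion $\Fsf'\subseteq\rho$, left implicit, follows from the face property of $\rho$ applied to $m_0$. You also supply the group-completion computation. For the spanning step, the clean justification is part (c) of the same proposition, $\dim\tau+\dim(\sigma^{\vee}\cap\tau^{\perp})=n$, which needs no strong convexity, rather than the ``face of a full-dimensional cone'' remark.
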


\begin{proof}
  By \cref{lemma:T face}, for any face $\Fsf$ of $\Rbb\times \Msf_{\sigma}$, 
  there exists a face $\Fsf'$ of $\Msf_{\sigma}$ such that $\Fsf=\Rbb\times \Fsf'$.
  Thus the claim follows from \cite[Proposition 1.2.10]{cox2011toric} and $(\sigma^{\vee})^{\mathrm{gp}}=M_{\Rbb}$.
\end{proof}

By \eqref{Tprocess2} and \cref{lemma:toric face},
we obtain the stratification of $U_{\sigma}^{\con}$:
\begin{align}\label{eq:toric process}
  \begin{split}
    &U_{\sigma}^{\con} = \coprod_{\tau\prec \sigma}(O_{\Rbb\times(\Msf_{\sigma}\cap\tau^{\perp})}\cap \CSpec_{\Tbb}\Tbb[\Msf_{\sigma}])\\
    &O_{\Rbb\times (\Msf_{\sigma}\cap\tau^{\perp})}\cap \CSpec_{\Tbb}\Tbb[\Msf_{\sigma}] \cong \CSpec_{\Tbb}\Tbb[\tau^{\perp}\cap M] 
  \end{split}
\end{align}
We write each stratum $O_{\Rbb\times (\Msf_{\sigma}\cap\tau^{\perp})}\cap \CSpec_{\Tbb}\Tbb[\Msf_{\sigma}]$ briefly as $N^{\con}(\tau)$.

Each element of $N^{\con}(\tau)$ corresponds to a pair of two flags by 
\cref{proposition:flag corr group T innout}:
\begin{align}\label{eq:toric process2}
  \begin{split}
    &N^{\con}(\tau) \longleftrightarrow \left\{(H_{\bullet}^{\mathrm{out}},H_{\bullet}^{\mathrm{inn}})=(\{H_{i}^{\mathrm{out}}\}_{i=0}^{l'},\{H_{i}^{\mathrm{inn}}\}_{i=0}^{l''})\ \middle|\  
  \begin{aligned}
    &H_{\bullet}^{\mathrm{out}}\in \Fcal_{\tau^{\perp}\cap M}, H_{\bullet}^{\mathrm{inn}}\in \Fcal_{\Rbb\times (H_{l'}^{\mathrm{out}}\cap M)},\\
    &\text{$l''>0$ and $\Rbb\times \{0\}\not\subseteq H_1^{\mathrm{inn}}$ holds.} 
  \end{aligned}
  \right\}
  \end{split}
\end{align}

To characterize points of maximal height,
we provide a way to construct a sequence of prime congruences.

\begin{lemma}\label{lemma:strict inclusion2}
  For $i=1,2$, let $P_i$ be a point of $\CSpec_{\Tbb}\Tbb[\Msf_{\sigma}]$ 
  and denote by $l'_i$ and $l''_i$ the lengths of the outer part and the inner part, respectively.
  If $P_1\subseteq P_2$, 
  then $l_2'\leq l_1'$ and $l_2''\leq l_1''$. 
  If $P_1\subsetneq P_2$, 
  then at least one of the inequalities is strict.
\end{lemma}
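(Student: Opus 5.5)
We would apply \cref{proposition:inclusion monoid T over T} to the pair $P_1\subseteq P_2$. Let $\tau_1,\tau_2\prec\sigma$ be the faces with $\Fsf_i=\Msf_\sigma\cap\tau_i^\perp$, so that $\Fsf_2\subseteq\Fsf_1$, i.e.\ $\tau_1\prec\tau_2$. Set $\tilde{P_i}\in\CSpec_{\Tbb}\Tbb[\tau_i^\perp\cap M]$. Via \cref{lemma:out flag}, the lengths $l'_i$ and $l''_i$ are the lengths of the $M$-flag for $\Lsf(\tilde{P_i})^{\mathrm{out}}$ and of the inner flag for $\Lsf(\tilde{P_i})^{\mathrm{inn}}$.

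\textbf{The inequalities.} Condition (iii) gives $\Lsf(\tilde{P_2})^{\mathrm{out}}=\Lsf(\tilde{P_1})^{\mathrm{out}}\cap\Fsf_2^{\mathrm{gp}}$. By \eqref{eq:restriction} and \cref{definition:restriction}, the outer flag of $\tilde{P_2}$ is therefore the restriction $H(\tilde{P_1})^{\mathrm{out}}_\bullet\cap^{*}(\tau_2^\perp\cap M)$. Note that $\tau_2^\perp\cap M$ is saturated, so the hypothesis $M'_{\Rbb}\cap M=M'$ holds. Restriction does not increase length, so $l'_2\le l'_1$. Condition (iii) also gives
\[\Lsf(\tilde{P_1})^{\mathrm{inn}}\cap(\Rbb\times\Fsf_2^{\mathrm{gp}})\subseteq\Lsf(\tilde{P_2})^{\mathrm{inn}}.\]
Both sides are submonoids of the same group $\Rbb\times(H_{l'_2}^{\mathrm{out}}(\tilde{P_2})\cap M)$, because $\Lsf^{\mathrm{out}}\cap-\Lsf^{\mathrm{out}}$ is compatible with intersecting with $\Fsf_2^{\mathrm{gp}}$. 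Both satisfy (L1) and (L2). Hence, by \cref{proposition:flag inclusion group T}, the inner flag of $\tilde{P_2}$ is a truncation of the restriction of the inner flag of $\tilde{P_1}$. This gives $l''_2\le l''_1$.

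\textbf{Strictness.} Suppose $l'_1=l'_2$ and $l''_1=l''_2$. Then both restrictions preserve length and the truncation is trivial. We will show that $\Fsf_1=\Fsf_2$; once this holds, the restrictions are the identity, so $\tilde{P_1}=\tilde{P_2}$ by \cref{proposition:flag corr group T over T} and hence $P_1=P_2$.

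Suppose instead that $\Fsf_2\subsetneq\Fsf_1$. Pick $m_1\in\Fsf_1\setminus\Fsf_2$. Condition (ii) with $m_2=0$, and more generally with $m_2$ ranging over $\Fsf_2$, forces $m_1-m_2\notin\Lsf(\tilde{P_1})^{\mathrm{out}}$. Thus $-m_1+\Fsf_2$ lies in the strictly negative part, and $m_1$ is strictly positive. Since $\Fsf_2$ spans $\Fsf_2^{\mathrm{gp}}$, this should force $\Fsf_2^{\mathrm{gp}}_{\Rbb}\subseteq H(\tilde{P_1})^{\mathrm{out}}_1$ whenever $l'_1>0$. The argument is as in \cref{lemma:strict inclusion}: if some element of $\Fsf_2$ were strictly negative at level $1$, then adding large multiples would make $m_1$ minus it positive, a contradiction. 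In that case the first step of the outer flag disappears in the restriction, so $l'_2<l'_1$, a contradiction.

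If $l'_1=0$, then $\Lsf(\tilde{P_1})^{\mathrm{out}}$ is the whole group, and (ii) cannot hold for $m_2=0$ unless $\Fsf_1\setminus\Fsf_2=\emptyset$, again a contradiction.

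\textbf{Main obstacle.} The delicate step is the claim that $\Fsf_2^{\mathrm{gp}}\subseteq H(\tilde{P_1})_1^{\mathrm{out}}$ when $l'_1>0$. Here we need that $\Fsf_2$ generates $\tau_2^\perp\cap M$ as a group, and that $m_1+k m_2$ stays in $\Fsf_1$. Both follow from $\Fsf_2$ being a face of the saturated monoid $\Fsf_1$, and we will check them directly from \cref{lemma:toric face}.
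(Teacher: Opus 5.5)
Your proposal is correct and follows the paper's proof. Condition (iii) of \cref{proposition:inclusion monoid T over T} makes the outer flag of $\tilde{P_2}$ the restriction of that of $\tilde{P_1}$, and its inner flag a truncation of a restriction. Strictness comes from separating two cases. If $\Fsf_2=\Fsf_1$, equal lengths force a trivial truncation and hence $P_1=P_2$. If $\Fsf_2\subsetneq\Fsf_1$, condition (ii) forces $\Fsf_2^{\mathrm{gp}}\subseteq H(\tilde{P_1})^{\mathrm{out}}_1$ and so shortens the outer flag. The paper only asserts this last step, whereas you justify it by applying (ii) to the pairs $(m_1,km_2)$ and $(m_1+km_2,0)$.

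One slip to fix: (ii) says that $m_1-\Fsf_2$, not $-m_1+\Fsf_2$, lies in the strictly negative part. So $m_1$ is strictly negative, which is in fact the sign your subsequent contradiction uses.
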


\begin{proof}
  Let $\Rbb\times \Fsf_i$ be the mobile face of $P_i$ for $i=1,2$.
  By the conditions (i) and (iii) of \cref{proposition:flag corr group T innout}, 
  the inclusion $\Fsf_2\subseteq \Fsf_1$ and
  the following two conditions hold:
  \begin{itemize}
    \item $\Lsf(\tilde{P_1})^{\mathrm{out}}\cap \Fsf_2^{\mathrm{gp}}=\Lsf(\tilde{P_2})^{\mathrm{out}}$,
    \item $\Lsf(\tilde{P_1})^{\mathrm{inn}}\cap (\Rbb\times \Fsf_2^{\mathrm{gp}})\subseteq \Lsf(\tilde{P_2})^{\mathrm{inn}}$.
  \end{itemize}
  In terms of flags,
  \begin{itemize}
    \item $H(\tilde{P_2})^{\mathrm{out}}_{\bullet}$ is 
    the restriction $H(\tilde{P_1})^{\mathrm{out}}_{\bullet}\cap^{*} \Fsf_2^{\mathrm{gp}}$,
    \item $H(\tilde{P_2})^{\mathrm{inn}}_{\bullet}$ is 
    a truncation of the restriction $H(\tilde{P_1})^{\mathrm{inn}}_{\bullet}\cap^{*}(\Rbb\times \Fsf_2^{\mathrm{gp}})$.
  \end{itemize}
  Thus we have $l_2'\leq l_1'$ and $l_2''\leq l_1''$.

  Assume $P_1\subsetneq P_2$.
  If the inclusion $\Fsf_2\subseteq \Fsf_1$ is strict, 
  the length of $H(P_1)^{\mathrm{out}}_{\bullet}\cap^{*} \Fsf_2^{\mathrm{gp}}$ is smaller than 
  that of $H(P_1)^{\mathrm{out}}_{\bullet}$. 
  Thus we have $l_2'<l_1'$.
  If $\Fsf_2=\Fsf_1$ holds, then
  \begin{itemize}
    \item $H(\tilde{P_2})^{\mathrm{out}}_{\bullet}=H(\tilde{P_1})^{\mathrm{out}}_{\bullet}$,
    \item $H(\tilde{P_2})^{\mathrm{inn}}_{\bullet}$ is a truncation of $H(\tilde{P_1})^{\mathrm{inn}}_{\bullet}$.
  \end{itemize}
  By the assumption $P_1\neq P_2$, this truncation is nontrivial and thus $l_2''<l_1''$ holds.
\end{proof}

Let $P$ be a point of a stratum $N^{\con}(\tau)$ where $\tau$ is a face of $\sigma$.
We assume that $P$ is geometric.
By \cref{proposition:strata over T,proposition:flag when geom}, $\tilde{P}$ is geometric and 
corresponds to an $(\Rbb\times (\tau^{\perp}\cap M))$-flag of length 1:
\begin{align*}
  &H(\tilde{P})_0=\Rbb\times \tau^{\perp}\supsetneq H(\tilde{P})_1,
\end{align*}
where $H(\tilde{P})_1$ is a hyperplane of $\Rbb\times \tau^{\perp}$ 
satisfying $\Rbb\times \{0\}\not\subseteq H(\tilde{P})_1$.

%outer part
We will show that there exists a sequence of prime congruences of length $n$ contained in $P$.
Take a sequence 
\[\{0\}=\tau_{0}\prec \tau_{1}\prec \dots\prec \tau_{\dim\tau}=\tau,\ \dim\tau_i=i,\]
of faces in $\tau$. Then we obtain a sequence of linear subspaces of $M_{\Rbb}$
\[M_{\Rbb}=\tau_{0}^{\perp}\supsetneq \tau_{1}^{\perp}\supsetneq \dots\supsetneq \tau_{\dim\tau-1}^{\perp} \supsetneq \tau^{\perp}.\]
We define an $M$-flag by
\begin{alignat*}{2}
  &H_i^{\mathrm{out}}\coloneqq\tau_i^{\perp} &\text{ for }i=0,\dots,\dim\tau,\\
  &H_{i,+}^{\mathrm{out}}\coloneqq\tau_{i-1}^{\perp}\cap(-\tau_i^{\vee}\setminus \tau_i^{\perp}) &\text{ for }i=1,\dots,\dim\tau,
\end{alignat*}
For each $j=0,\dots,\dim\tau$, the restriction $H_{\bullet}^{\mathrm{out}}\cap^{*}(\tau_{\dim\tau-j}^{\perp}\cap M)$ is of the form
\[\tau_{\dim\tau-j}^{\perp}\supsetneq \dots\supsetneq \tau^{\perp}\]
and has length $j$.

%inner part
On the other hand, since the rank of $H(\tilde{P})_1\cap (\Rbb\times M)\cong \tau^{\perp}\cap M$ 
is equal to $\dim\tau^{\perp}$, the flag $H(\tilde{P})_{\bullet}$ 
can be extended to one of length $1+\dim\tau^{\perp}$ arbitrarily.
That is, there exists an $(\Rbb\times (\tau^{\perp}\cap M))$-flag $H^{\mathrm{inn}}_{\bullet}$ 
of length $1+\dim\tau^{\perp}$ such that $H^{\mathrm{inn}}_1=H(\tilde{P})_1$.
In particular, for each $k=1,\dots,\dim\tau^{\perp}+1$, 
we obtain an $(\Rbb\times (\tau^{\perp}\cap M))$-flag $\{H^{\mathrm{inn}}_{i}\}_{i=0}^{k}$ of length $k$.

Thus we obtain a family of pairs of flags 
\[(H_{\bullet}^{\mathrm{out}}\cap^{*}(\tau_{\dim\tau-j}^{\perp}\cap M), \{H^{\mathrm{inn}}_{i}\}_{i=0}^{k}), 
\text{ for }j=0,\dots,\dim\tau, \text{ and }k=1,\dots,\dim\tau^{\perp}+1.\]
Let $P_{j,k}$ be the corresponding point in $N^{\con}(\tau_{\dim\tau-j})$ 
under the correspondence \eqref{eq:toric process2}.
Note that the mobile face of $P_{j,k}$ is $\Rbb\times (\tau_{\dim\tau-j}^{\perp}\cap \Msf_{\sigma})$.

\begin{lemma}\label{lemma:a sequence of maximal length}
  In the above setting, if $j'\leq j$ and $k'\leq k$, we have $P_{j,k}\subseteq P_{j',k'}$.
\end{lemma}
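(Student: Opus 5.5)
We check the three conditions of \cref{proposition:inclusion monoid T over T} for the pair $P_1=P_{j,k}$, $P_2=P_{j',k'}$. Write $\Fsf_1=\tau_{\dim\tau-j}^{\perp}\cap\Msf_{\sigma}$ and $\Fsf_2=\tau_{\dim\tau-j'}^{\perp}\cap\Msf_{\sigma}$, so that $\Fsf_i^{\mathrm{gp}}=\tau_{\dim\tau-j}^{\perp}\cap M$ (resp.\ with $j'$) by \cref{lemma:toric face}. For clarity we may treat the two directions separately, $P_{j,k}\subseteq P_{j,k'}\subseteq P_{j',k'}$, and use transitivity.

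Condition (i) is immediate. Since $j'\leq j$, we have $\tau_{\dim\tau-j}\prec\tau_{\dim\tau-j'}$, hence $\tau_{\dim\tau-j'}^{\perp}\subseteq\tau_{\dim\tau-j}^{\perp}$ and $\Fsf_2\subseteq\Fsf_1$.

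For condition (iii), the outer parts are given by construction as restrictions of one fixed $M$-flag $H^{\mathrm{out}}_{\bullet}$. The restriction $H_{\bullet}^{\mathrm{out}}\cap^{*}(\tau_{\dim\tau-j}^{\perp}\cap M)$ is the chain $\tau_{\dim\tau-j}^{\perp}\supsetneq\dots\supsetneq\tau^{\perp}$ with the induced orientations. Restricting it further to $\tau_{\dim\tau-j'}^{\perp}\cap M$ gives the corresponding restriction for $j'$. Via \eqref{eq:restriction}, this says $\Lsf(\tilde{P_1})^{\mathrm{out}}\cap\Fsf_2^{\mathrm{gp}}=\Lsf(\tilde{P_2})^{\mathrm{out}}$. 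The inner parts are both $(\Rbb\times(\tau^{\perp}\cap M))$-flags, since the last space $H^{\mathrm{out}}_{l'}=\tau^{\perp}$ is the same for all $j$. Moreover, $\{H^{\mathrm{inn}}_i\}_{i=0}^{k'}$ is a truncation of $\{H^{\mathrm{inn}}_i\}_{i=0}^{k}$. Because $\Rbb\times\tau^{\perp}\subseteq\Rbb\times\Fsf_2^{\mathrm{gp}}$, intersecting with $\Rbb\times\Fsf_2^{\mathrm{gp}}$ does not change $\Lsf(\tilde{P_1})^{\mathrm{inn}}$, and truncation gives the required inclusion of inner monoids.

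The main work is condition (ii). Take $m_1\in\Fsf_1\setminus\Fsf_2$ and $m_2\in\Fsf_2$; we must show $m_1-m_2\notin\Lsf(\tilde{P_1})^{\mathrm{out}}$. Here $m_1\in\sigma^{\vee}\cap\tau_{\dim\tau-j}^{\perp}$ but $m_1\notin\tau_{\dim\tau-j'}^{\perp}$. Let $i$ be the smallest index with $\dim\tau-j<i\leq\dim\tau-j'$ and $m_1\notin\tau_i^{\perp}$. Then $m_1\in\tau_{i-1}^{\perp}$, and since $m_1\in\sigma^{\vee}\subseteq\tau_i^{\vee}$, it lies in $\tau_{i-1}^{\perp}\cap(\tau_i^{\vee}\setminus\tau_i^{\perp})=-H^{\mathrm{out}}_{i,+}$. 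On the other hand, $m_2\in\tau_{\dim\tau-j'}^{\perp}\subseteq\tau_i^{\perp}=H^{\mathrm{out}}_i$. So $m_1-m_2\in\Rbb(H^{\mathrm{out}}_{i-1}\cap M)$ lies strictly on the negative side of $H^{\mathrm{out}}_i$, that is, in $-H^{\mathrm{out}}_{i,+}$, and in particular it is not in $H^{\mathrm{out}}_i$. By the description of $\Psi_1$, the flag submonoid meets $H^{\mathrm{out}}_{i-1}\setminus H^{\mathrm{out}}_i$ only in $H^{\mathrm{out}}_{i,+}$. Hence $m_1-m_2\notin\Lsf(\tilde{P_1})^{\mathrm{out}}$.

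I expect the main obstacle to be the sign bookkeeping in this step: verifying that the chosen orientation $H^{\mathrm{out}}_{i,+}=\tau_{i-1}^{\perp}\cap(-\tau_i^{\vee}\setminus\tau_i^{\perp})$ is exactly what makes elements of $\sigma^{\vee}$ negative. A second point to check is that the restricted flag's orientation at level $i$ is still the induced one. With (i)–(iii) established, \cref{proposition:inclusion monoid T over T} gives $P_{j,k}\subseteq P_{j',k'}$.
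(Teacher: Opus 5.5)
Your proposal is correct and follows essentially the paper's route: you verify conditions (i)--(iii) of \cref{proposition:inclusion monoid T over T}, with (i) and (iii) coming directly from how restriction and truncation behave, and (ii) from the fact that an element of $\sigma^{\vee}$ lying in $\tau_{i-1}^{\perp}\setminus\tau_i^{\perp}$ is in $-H^{\mathrm{out}}_{i,+}$. The only difference is that the paper first reduces to the adjacent inclusions $P_{j,k+1}\subseteq P_{j,k}$ and $P_{j+1,k}\subseteq P_{j,k}$, so the level of the outer flag used in (ii) is forced, whereas you handle arbitrary $j'\leq j$ in one step by taking the minimal index $i$ at which $m_1$ leaves $\tau_i^{\perp}$.
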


\begin{proof}
  It is enough to show the following two claims:
  \begin{itemize}
    \item $P_{j,k+1}\subseteq P_{j,k}$ for $0\leq j\leq \dim \tau$ and $1\leq k\leq \dim \tau^{\perp}$,
    \item $P_{j+1,k}\subseteq P_{j,k}$ for $0\leq j\leq \dim \tau-1$ and $1\leq k\leq \dim \tau^{\perp}+1$.
  \end{itemize}

  To show the first claim,
  among the conditions from \cref{proposition:inclusion monoid T over T},
  only the condition 
  \[\Lsf(\tilde{P_{j,k+1}})^{\mathrm{inn}}\cap (\Rbb\times (\tau_{\dim\tau-j}^{\perp}\cap M))\subseteq \Lsf(\tilde{P_{j,k}})^{\mathrm{inn}}\] 
  is nontrivial.
  Since $\Lsf(\tilde{P_{j,k+1}})$ is a submonoid of $\Rbb\times (\tau^{\perp}\cap M)$, 
  which is a subgroup of $\Rbb\times (\tau_{\dim\tau-j}^{\perp}\cap M)$, 
  the left hand side is equal to $\Lsf(\tilde{P_{j,k+1}})^{\mathrm{inn}}$
  and thus the above condition holds.
  
  The second claim is slightly complicated. There are three nontrivial conditions to check:
  \begin{enumerate}[label=(\roman*)]
    \item $\tau_{\dim\tau-j}^{\perp}\cap \Msf_{\sigma}\subseteq \tau_{\dim\tau-j-1}^{\perp}\cap \Msf_{\sigma}$,
    \item For any $m\in(\tau_{\dim\tau-j-1}^{\perp}\setminus \tau_{\dim\tau-j}^{\perp})\cap \Msf_{\sigma}$ 
    and $m'\in\tau_{\dim\tau-j}^{\perp}\cap \Msf_{\sigma}$, 
    $m-m'\notin \Lsf(\tilde{P_{j+1,k}})^{\mathrm{out}}$,
    \item $\Lsf(\tilde{P_{j+1,k}})^{\mathrm{out}}\cap (\tau_{\dim\tau-j}^{\perp}\cap M)=\Lsf(\tilde{P_{j,k}})^{\mathrm{out}}$.
  \end{enumerate}
  The conditions (i) and (iii) are immediate from the definitions (see \cref{definition:restriction}). 
  For $m$ and $m'$ of the condition (ii), we have
  \begin{align*}
    &m\in (\tau_{\dim\tau-j-1}^{\perp}\setminus \tau_{\dim\tau-j}^{\perp})\cap \Msf_{\sigma}
    \subseteq (\tau_{\dim\tau-j-1}^{\perp}\setminus \tau_{\dim\tau-j}^{\perp})\cap \tau_{\dim\tau-j}^{\vee}=-H_{\dim\tau-j,+}^{\mathrm{out}},\\
    &m'\in \tau_{\dim\tau-j}^{\perp}\cap \Msf_{\sigma}\subseteq \tau_{\dim\tau-j}^{\perp}=H_{\dim\tau-j}^{\mathrm{out}}.
  \end{align*}
  By \cref{lemma:out flag},
  \[m-m'\in -H_{\dim\tau-j,+}^{\mathrm{out}}\cap M\subseteq M\setminus \Lsf(\tilde{P_{j+1,k}})^{\mathrm{out}},\]
  holds. Thus the condition (ii) follows.
\end{proof}

\begin{theorem}\label{theorem:affine toric geometric ht}
  For a point $P$ of $\CSpec_{\Tbb}\Tbb[\Msf_{\sigma}]$, $P$ is geometric if and only if $\het P=n$.
\end{theorem}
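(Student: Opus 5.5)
The forward direction (geometric $\Rightarrow$ $\het P=n$) uses the chain $P_{j,k}$ constructed just before the statement. The reverse direction bounds the height of any point by $l'+l''-1$ via \cref{lemma:strict inclusion2}, and then reads off geometricity from \cref{proposition:flag when geom}.

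Suppose first that $P$ is geometric, and let $\tau\prec\sigma$ be the face with $P\in N^{\con}(\tau)$. I would check that $P=P_{0,1}$. Indeed, $P_{0,1}$ lies in $N^{\con}(\tau)$, its outer flag has length $0$, and its inner flag is $\{H(\tilde{P})_0\supsetneq H(\tilde{P})_1\}$, which is the flag of $\tilde{P}$. Since \eqref{eq:toric process2} is a bijection, $P_{0,1}=P$. By \cref{lemma:a sequence of maximal length}, the points
\[P_{\dim\tau,\dim\tau^{\perp}+1}\subseteq\dots\subseteq P_{\dim\tau,1}\subseteq P_{\dim\tau-1,1}\subseteq\dots\subseteq P_{0,1}=P\]
form a chain: first decrease $k$ with $j=\dim\tau$ fixed, then decrease $j$ with $k=1$. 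Consecutive terms have different pairs $(j,k)$, hence different pairs of flag lengths, so they are distinct points. The chain has $\dim\tau^{\perp}+\dim\tau=n$ strict inclusions, so $\het P\geq n$.

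For the upper bound, and for the converse, take any chain $Q_0\subsetneq\dots\subsetneq Q_h=P$ in $\CSpec_{\Tbb}\Tbb[\Msf_\sigma]$. By \cref{lemma:strict inclusion2}, the quantity $l'+l''$ drops by at least $1$ at each step. Every point lying over $\Tbb$ has $l''\geq1$, so $l'_0+l''_0\geq h+l'(P)+l''(P)$ and $l'(P)+l''(P)\geq 1$.

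For any point $Q\in N^{\con}(\tau')$, the outer flag is an $(\tau'^{\perp}\cap M)$-flag, so $l'\leq\dim\tau'^{\perp}$, and the inner flag has length at most $1+\operatorname{rk}(H^{\mathrm{out}}_{l'}\cap M)\leq 1+\dim\tau'^{\perp}-l'$. Hence $l'+l''\leq n+1$ everywhere, which gives $h\leq n+1-(l'(P)+l''(P))$. If $\het P=n$, then $l'(P)+l''(P)=1$, and since $l''(P)\geq 1$ this forces $l'(P)=0$ and $l''(P)=1$. Then $\tilde{P}$ is geometric by \cref{proposition:flag when geom}, and so $P$ is geometric by \cref{proposition:strata over T}.

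The main obstacle is the inner-length bound $l''\leq 1+\operatorname{rk}(H^{\mathrm{out}}_{l'}\cap M)$. It holds because an $(\Rbb\times L)$-flag with $L$ of rank $r$ lives in an $(r+1)$-dimensional space, and each hyperplane step lowers the dimension of the span by at least one. Separately, one should double-check that the chain above really lies in $\CSpec_{\Tbb}$. This is fine because every $P_{j,k}$ has $k\geq1$ and its inner $H_1=H(\tilde{P})_1$ does not contain $\Rbb\times\{0\}$, so each $P_{j,k}$ belongs to the set described in \eqref{eq:toric process2}.
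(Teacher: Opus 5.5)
Your proof is correct and is essentially the paper's argument: the lower bound $\het P\geq n$ comes from the chain of points $P_{j,k}$ supplied by \cref{lemma:a sequence of maximal length}, ending at $P_{0,1}=P$. The upper bound and the converse both come from \cref{lemma:strict inclusion2} together with $l'+l''\leq n+1$, followed by \cref{proposition:flag when geom,proposition:strata over T}. Your explicit checks that $P_{0,1}=P$, that $l'+l''\leq n+1$, and that every $P_{j,k}$ lies over $\Tbb$ fill in details the paper leaves implicit. The only further point worth stating is that any prime congruence contained in $P$ automatically lies over $\Tbb$, so computing the height inside $\CSpec_{\Tbb}\Tbb[\Msf_{\sigma}]$ loses nothing.
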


\begin{proof}
  Assume that $P$ is geometric and lies in $N^{\con}(\tau)$.
  By \cref{lemma:a sequence of maximal length}, we have prime congruences:
  \[
  \begin{array}{ccccccc}
  P_{0,\dim\tau^{\perp}+1} & \subsetneq & \dots & \subsetneq & P_{0,1} &=& P \\[0.8em]
  \rotatebox{90}{$\subsetneq$} &&&& \rotatebox{90}{$\subsetneq$} &&\\
  \vdots &&\iddots&& \vdots &&\\[0.8em]
  \rotatebox{90}{$\subsetneq$} &&&& \rotatebox{90}{$\subsetneq$}&& \\
  P_{\dim\tau,\dim\tau^{\perp}+1} & \subsetneq & \dots & \subsetneq & P_{\dim\tau,1}&&
  \end{array}
  \]
  Note that these inclusions are strict since these prime congruences correspond to different flags.
  Any path from $P_{\dim\tau,\dim\tau^{\perp}+1}$ to $P_{0,1}$ has length $\dim\tau+\dim\tau^{\perp}=n$.
  Thus we have $\het P\geq n$.
  Recall that the length of $H(\tilde{P})_{\bullet}$ is equal to 1 and 
  the value $l'+l''$ of flags in \eqref{eq:toric process2} is at most $n+1$. 
  By \cref{lemma:strict inclusion2}, we have $\het P\leq n$.
  Thus every geometric congruence is of height $n$.

  Conversely, we assume that $\het P$ is equal to $n$.
  Let $l'\geq 0$ and $l''>0$ be the lengths of \eqref{eq:toric process2}.
  Since the length of flags appeared in \eqref{eq:toric process2} is at most $n+1$,
  we have $l'+l''\leq 1$ by \cref{lemma:strict inclusion2}.
  Thus $(l',l'')=(0,1)$ holds, 
  which implies that $\tilde{P}$ is geometric by \cref{proposition:flag when geom}.
  By \cref{proposition:strata over T}, $P$ is also geometric.
\end{proof}

We provide a technical lemma.

\begin{lemma}\label{lemma:cone}
  For an $M$-flag $\{H_i\}_{i=0}^l$, we have
  \[\cone \left(\left(\bigcup_{i=1}^{l}H_{i,+}\cup H_l\right)\cap M\right)=\bigcup_{i=1}^{l}H_{i,+}\cup \Rbb(H_l\cap M).\]
\end{lemma}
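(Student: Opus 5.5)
We prove the two inclusions separately, by induction on the length $l$. Write $\Lsf=\left(\bigcup_{i=1}^{l}H_{i,+}\cup H_l\right)\cap M$ and $K=\bigcup_{i=1}^{l}H_{i,+}\cup \Rbb(H_l\cap M)$. If $l=0$, then $\Lsf=M$ and $\cone(M)=M_{\Rbb}=\Rbb(H_0\cap M)=K$, so the claim holds.

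\emph{The inclusion $\cone(\Lsf)\subseteq K$.} We first show that $K$ is a convex cone; since $\Lsf\subseteq K$, the inclusion then follows. Set $V_i=\Rbb(H_{i-1}\cap M)$, so that $H_{i,+}$ is an open half-space of $V_i$ with boundary $H_i$, and $V_{i+1}\subseteq H_i$. Put $K_i=\bigcup_{j\ge i}H_{j,+}\cup\Rbb(H_l\cap M)$ for $i\geq 1$, and $K_{l+1}=\Rbb(H_l\cap M)$. Then $K_i=H_{i,+}\cup K_{i+1}$ with $K_{i+1}\subseteq H_i$. By descending induction on $i$, each $K_i$ is closed under addition and positive scaling. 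The base case $K_{l+1}$ is a linear space. For the inductive step, take $a\in H_{i,+}$ and $b\in H_i$. Then $a+b\in V_i$ still has positive value under the defining functional of $H_{i,+}$, so $a+b\in H_{i,+}$. Also $H_{i,+}$ is itself a convex cone. Hence $K=K_1$ is a convex cone containing $\Lsf$, which gives $\cone(\Lsf)\subseteq K$.

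\emph{The inclusion $K\subseteq\cone(\Lsf)$.} The subspace $\Rbb(H_l\cap M)$ is spanned by $H_l\cap M\subseteq\Lsf$, and this set is a subgroup. Therefore $\Rbb(H_l\cap M)=\cone(H_l\cap M)\subseteq\cone(\Lsf)$. It remains to show $H_{i,+}\subseteq\cone(\Lsf)$ for each $i$; this is the main step. Fix $v\in H_{i,+}$.
\begin{itemize}
\item Since $V_i$ is a rational subspace spanned by $H_{i-1}\cap M$, we can pick a lattice vector $w\in H_{i,+}\cap M$; such a vector exists because a half-space of a rational space contains lattice points.
\item Write $v=tw+h$ with $t>0$ and $h\in H_i$.
\item The vector $h$ lies in $H_i$, which may be irrational, so $h$ need not belong to $\cone(\Lsf)$ a priori. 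However, $H_i\subseteq V_i$, and $V_i$ is spanned by lattice points.
\end{itemize}
Instead of splitting $v$ this way, we argue directly. Choose lattice points $m_1,\dots,m_k\in H_{i-1}\cap M$ spanning $V_i$, and write $v=\sum c_j m_j$. Let $\lambda$ be the defining functional of $H_{i,+}$, so $\lambda(v)>0$. Put $w_j=m_j+Nw$, and take $N\in\Zbb$ large enough that $\lambda(w_j)>0$ and $\lambda(-w_j+2Nw)>0$ for all $j$. Then both $w_j$ and $2Nw-w_j$ lie in $H_{i,+}\cap M\subseteq\Lsf$. Since $m_j=w_j-Nw$, we can rewrite
\[v=\sum_{j:\,c_j\geq 0} c_j w_j+\sum_{j:\,c_j<0}|c_j|\,(2Nw-w_j)+s\,w\]
for a real number $s$ determined by the coefficients. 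Evaluating $\lambda$ on both sides does not by itself force $s\geq0$, so we adjust the construction: we perturb $v$ within the open set $H_{i,+}$ (open in $V_i$), and use that $H_{i,+}\cap M$ is dense in directions. Concretely, the rays through points of $H_{i,+}\cap M$ are dense in $H_{i,+}$, because $V_i$ is rational. So $v$ lies in the interior of a simplicial cone spanned by lattice points of the open cone $H_{i,+}$: take lattice directions close to $v$ that surround it in $V_i$. This gives $v\in\cone(H_{i,+}\cap M)\subseteq\cone(\Lsf)$.

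Hence $K\subseteq\cone(\Lsf)$, and the two inclusions give the claimed equality. The main obstacle is the last step, showing $H_{i,+}\subseteq\cone(\Lsf)$: it requires the density of rational rays inside an open half-space of a rational subspace, which is exactly where the hypothesis that each $H_i$ is a hyperplane of the rational span $\Rbb(H_{i-1}\cap M)$ is used.
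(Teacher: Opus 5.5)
Your proof is correct and follows the same route as the paper. You show that the right-hand side is a convex cone containing $\left(\bigcup_{i=1}^{l}H_{i,+}\cup H_l\right)\cap M$, and you get the reverse inclusion from $\cone(H_l\cap M)=\Rbb(H_l\cap M)$ and $H_{i,+}\subseteq\cone(H_{i,+}\cap M)$. The paper simply asserts the latter; your density-of-rational-rays argument justifies it, and you should delete the abandoned computation with $w_j$, $2Nw-w_j$ and the sign of $s$, since it plays no role.
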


\begin{proof}
  Since the right hand side is closed under addition and multiplication by nonnegative real numbers, 
  it clearly contains the left hand side.
  The opposite inclusion is proved by the following inclusions.
  \begin{align*}
    &\cone \left(\left(\bigcup_{i=1}^{l}H_{i,+}\cup H_l\right)\cap M\right)\supseteq \cone (H_{j,+}\cap M)=H_{j,+}\cup \{0\} \text{ for }j=1,\dots,l,\\
    &\cone \left(\left(\bigcup_{i=1}^{l}H_{i,+}\cup H_l\right)\cap M\right)\supseteq \cone ( H_l\cap M)=\Rbb(H_l\cap M).
  \end{align*}
\end{proof}

Note that by \cref{proposition:abstract geom}, for any point $P\in U_{\sigma}$, 
its closure $\overline{\{P\}}$ contains at most one geometric congruence.

\begin{proposition}\label{proposition:local proper}
  Let $P$ be a point of $N^{\con}(\tau)\subseteq U_{\sigma}^{\con}$.
  Then $P$ is contained in a (unique) geometric congruence 
  if and only if the following condition holds:
  \[\sigma^{\vee}\cap\tau^{\perp}\subseteq \bigcup_{i=1}^{l'}-H(\tilde{P})_{i,+}^{\mathrm{out}}\cup H(\tilde{P})_{l'}^{\mathrm{out}}.\]
\end{proposition}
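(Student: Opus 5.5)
The plan is to deduce this from \cref{proposition:abstract proper}, specialized to $\Msf=\Msf_{\sigma}$. By \cref{lemma:toric face}, the mobile face of $P\in N^{\con}(\tau)$ is $\Rbb\times\Fsf$ with $\Fsf=\Msf_{\sigma}\cap\tau^{\perp}$, and $\Fsf^{\mathrm{gp}}=\tau^{\perp}\cap M$. \cref{proposition:abstract proper} says that a geometric congruence containing $P$ exists if and only if the image of $\Fsf\to\Fsf^{\mathrm{gp}}$ lies in $-\Lsf(\tilde{P})^{\mathrm{out}}$. Since $\Fsf^{\mathrm{gp}}$ is a group, the localization map is the inclusion. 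The criterion therefore reads
\[\sigma^{\vee}\cap\tau^{\perp}\cap M\subseteq -\Lsf(\tilde{P})^{\mathrm{out}}.\]

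The next step translates $\Lsf(\tilde{P})^{\mathrm{out}}$ into the outer flag. By \cref{lemma:out flag} together with \cref{theorem:flag corr group B} (applied to the lattice $\tau^{\perp}\cap M$),
\[\Lsf(\tilde{P})^{\mathrm{out}}=\Bigl(\bigcup_{i=1}^{l'}H(\tilde{P})^{\mathrm{out}}_{i,+}\cup H(\tilde{P})^{\mathrm{out}}_{l'}\Bigr)\cap M .\]
Negating this gives $-\Lsf(\tilde{P})^{\mathrm{out}}=\bigl(\bigcup_i -H^{\mathrm{out}}_{i,+}\cup H^{\mathrm{out}}_{l'}\bigr)\cap M$. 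So the lattice-point version of the claimed condition is exactly the criterion above.

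It remains to pass between lattice points and the real cone $\sigma^{\vee}\cap\tau^{\perp}$. This is the only real obstacle. The inclusion of real sets trivially implies the lattice version, so only the converse needs an argument. For the converse, take $K=\bigcup_i -H^{\mathrm{out}}_{i,+}\cup\Rbb(H^{\mathrm{out}}_{l'}\cap M)$. By \cref{lemma:cone}, applied to the negated flag (still an $M$-flag on $\tau^{\perp}\cap M$), $K$ is a convex cone equal to $\cone(-\Lsf(\tilde{P})^{\mathrm{out}})$. Since $\sigma^{\vee}\cap\tau^{\perp}$ is a rational polyhedral cone, it is generated by its lattice points. The lattice-point inclusion then gives $\sigma^{\vee}\cap\tau^{\perp}\subseteq K$.

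One must then check that $K$ can be replaced by the union with $H^{\mathrm{out}}_{l'}$ itself. Here $\Rbb(H^{\mathrm{out}}_{l'}\cap M)\subseteq H^{\mathrm{out}}_{l'}$ holds trivially, so the inclusion into the displayed union follows. I expect the care needed to lie only in this rationality bookkeeping and in ensuring that the orientations and sign conventions match \cref{lemma:out flag}. Uniqueness follows from \cref{proposition:abstract geom}.
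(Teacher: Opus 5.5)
Your proposal is correct and follows essentially the same route as the paper. Both arguments reduce via \cref{proposition:abstract proper} to the lattice-point inclusion $\Msf_{\sigma}\cap\tau^{\perp}\subseteq\bigl(\bigcup_{i=1}^{l'}-H_{i,+}\cup H_{l'}\bigr)\cap M$, then pass to the real cone using $\cone(\Msf_{\sigma}\cap\tau^{\perp})=\sigma^{\vee}\cap\tau^{\perp}$, \cref{lemma:cone}, and $\Rbb(H_{l'}\cap M)\subseteq H_{l'}$; the one difference is that you spell out the identification of $-\Lsf(\tilde{P})^{\mathrm{out}}$ with the negated outer flag via \cref{lemma:out flag}, which the paper leaves implicit.
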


\begin{proof}
  Set $H_{\bullet}=H(\tilde{P})_{\bullet}^{\mathrm{out}}$.
  By \cref{proposition:abstract proper}, it is enough to show that 
  the following are equivalent:
  \begin{enumerate}
    \item $\sigma^{\vee}\cap\tau^{\perp}\subseteq \bigcup_{i=1}^{l'}-H_{i,+}\cup H_{l'}$,
    \item $\Msf_{\sigma}\cap \tau^{\perp}\subseteq \left(\bigcup_{i=1}^{l'}-H_{i,+}\cup H_{l'}\right)\cap M$.
  \end{enumerate}
  The inclusion (2) is obtained by taking the intersection of both sides of (1) with $M$.
  
  Assume the inclusion (2).
  Since $\sigma$ and $\tau$ are rational cones, we have
  \[\cone(\Msf_{\sigma}\cap \tau^{\perp})=\sigma^{\vee}\cap\tau^{\perp}.\]
  By \cref{lemma:cone}, we have
  \[\sigma^{\vee}\cap\tau^{\perp}\subseteq \bigcup_{i=1}^{l'}-H_{i,+}\cup \Rbb(H_{l'}\cap M)
  \subseteq \bigcup_{i=1}^{l'}-H_{i,+}\cup H_{l'}.\]
\end{proof}

\subsection{Toric varieties and their properties}

%まず従来のtrop toric
We now turn to the study of global objects.
The usual tropical toric varieties are introduced 
by \cite{Kajiwara2008TropicalToric} and \cite{payne2009analytification}.
We follow the definition of \cite{maclagan2015introduction}.

\begin{definition}[\cite{maclagan2015introduction}]\label{definition:trop toric}
  Let $\Sigma$ be a fan in $N_{\Rbb}$.
  As a set, the tropical toric variety $X_{\Sigma}^{\trop}$ is the following:
  \[X_{\Sigma}^{\trop}=\coprod_{\sigma\in\Sigma}N(\sigma),\]
  where $N(\sigma)=N_{\Rbb}/span(\sigma)$.
  The topology of $X_{\Sigma}^{\trop}$ is defined as follows:
\end{definition}

To give this a topology, we define the space of all homomorphisms of monoids 
from $(\sigma^{\vee}\cap M,+)$ to $(\Tbb,\odot)$:
\[U_{\sigma}^{\trop}=\Hom(\Msf_{\sigma},\Tbb),\]
where $\Msf_{\sigma}=\sigma^{\vee}\cap M$.
We equip $U_{\sigma}^{\trop}$ with the topology of pointwise convergence 
and $X_{\Sigma}^{\trop}$ with the topology induced by the covering 
$X_{\Sigma}^{\trop}=\bigcup_{\sigma\in\Sigma}U_{\sigma}^{\trop}$.

\medskip

Now we construct a topological space $X_{\Sigma}^{\con}$ from spaces of prime congruences.
As defined in the previous subsection, define $U_{\sigma}^{\con}$ for each cone $\sigma$ in $\Sigma$:
\[U_{\sigma}^{\con}=\CSpec_{\Tbb} \Tbb[\sigma^{\vee}\cap M].\]
If $\tau$ is a face of $\sigma$, 
we can find $m\in \sigma^{\vee}\cap M$ such that $\tau =m^{\perp}\cap \sigma$ 
where $m^{\perp}=\{n\in N_{\Rbb}\mid \<m,n\>=0\}$.
Then $\Msf_{\sigma}+\Zbb(-m)=\Msf_{\tau}$ holds.
The localization induces 
\[
\Tbb[\Msf_{\sigma}]\to\{0,m,2m,\dots\}^{-1}\Tbb[\Msf_{\sigma}]=\Tbb[\Msf_{\tau}].
\]
It induces an open immersion $U_{\tau}^{\con}\hookrightarrow U_{\sigma}^{\con}$.
Using these maps, we obtain a topological space $X_{\Sigma}^{\con}$.

As in the definition of $X_{\Sigma}^{\trop}$, 
the space $X_{\Sigma}^{\con}$ also has the stratification.
Recall that $U_{\sigma}^{\con}$ admits a stratification given by \eqref{eq:toric process}.
If $\tau$ is a face of $\sigma_1$ and $\sigma_2$, the construction of $X_{\Sigma}^{\con}$
identifies two strata $N^{\con}(\tau_i)$ for $i=1,2$, denoted by $N^{\con}(\tau)$.
Then we have the stratification:
\begin{align}\label{eq:strati toric}
  \begin{split}
    &X_{\Sigma}^{\con} = \coprod_{\tau\in\Sigma}N^{\con}(\tau),\\
    &N^{\con}(\tau) \xrightarrow{\cong} \CSpec_{\Tbb}\Tbb[\tau^{\perp}\cap M], \ P\mapsto \tilde{P}.
  \end{split}
\end{align}
Thus we obtain a stratification of $X_{\Sigma}^{\con}$ analogous to that of $X_{\Sigma}^{\trop}$.
By \eqref{eq:toric process2}, $X_{\Sigma}^{\con}$ can also be seen as a set of flags.

\begin{remark}
  In \eqref{eq:strati toric}, 
  one should be careful that different points may correspond to the same flags.
  Indeed, for any maximal cone $\tau'$, 
  the correspondence of \eqref{eq:strati toric} maps points on $N^{\con}(\tau')$ to $\CSpec_{\Tbb}\Tbb$.
  To avoid this ambiguity, we should record from which stratum each flag is mapped.
  More precisely, for a point $P$ on $N^{\con}(\tau)$, the corresponding objects must be a pair $(\tau,\tilde{P})$ and a triple
  $(\tau,H(\tilde{P})^{\mathrm{out}}_{\bullet},H(\tilde{P})^{\mathrm{inn}}_{\bullet})$.
  In this way, the correspondence \eqref{eq:strati toric} remains bijective, 
  although we often omit this additional data when it is clear from the context.
\end{remark}

Recall the map \eqref{eq:top emb}:
\[\iota_{\sigma}\colon\Hom(\Msf_{\sigma},\Tbb)\to \CSpec_{\Tbb}\Tbb[\Msf_{\sigma}].\]
By \cref{proposition:TopEmb} and \cref{theorem:affine toric geometric ht}, 
the image of $\iota_{\sigma}$ is the set of points of height $n$.
The morphisms $\{\iota_{\sigma}\}_{\sigma\in\Sigma}$ glue together 
to give an injective map $\iota:X_{\Sigma}^{\trop}\to X_{\Sigma}^{\con}$.
Recalling \cref{definition:dim of top space}, the map $\iota$ satisfies the following theorem.
\begin{theorem}\label{theorem:trop is height n toric}
  The inclusion $\iota\colon X_{\Sigma}^{\trop}\to X_{\Sigma}^{\con}$ induces 
  a homeomorphism 
  \[X_{\Sigma}^{\trop}\cong\{P\in X_{\Sigma}^{\con}\mid \dim_P X_{\Sigma}^{\con}=n\}. \]
\end{theorem}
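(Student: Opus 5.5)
The plan is to reduce everything to the affine charts $U_{\sigma}^{\con}$, where the needed facts are already established, and then glue. The two properties "$\dim_P=n$" and "homeomorphism onto the image" are both local, so the chart-by-chart statements should assemble into the global one.

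First, the set-theoretic image. For $P\in X_{\Sigma}^{\con}$, pick $\sigma\in\Sigma$ with $P\in U_{\sigma}^{\con}$. By construction $U_{\sigma}^{\con}$ is an open subset of $X_{\Sigma}^{\con}$: it is glued along the open immersions $U_{\tau}^{\con}\hookrightarrow U_{\sigma}^{\con}$ coming from \cref{proposition:loc induces inj}, applied to a localization by a finitely generated monoid. Hence \eqref{eq:dim at point open nbhd} gives
\[\dim_P X_{\Sigma}^{\con}=\dim_P U_{\sigma}^{\con}.\]
Next I use the openness of $\CSpec_{\Tbb}$ inside $\CSpec$ (\cref{proposition:T open}) together with \cref{lemma:dim height}. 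Together they give $\dim_P U_{\sigma}^{\con}=\dim_P\CSpec\Tbb[\Msf_{\sigma}]=\het P$, where the height is computed in $\CSpec\Tbb[\Msf_{\sigma}]$.

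There is one subtlety here: chains below $P$ in the full $\CSpec$ might pass through congruences not lying over $\Tbb$. This causes no problem, because any prime contained in a prime lying over $\Tbb$ also lies over $\Tbb$: injectivity of $\Tbb\to S/P'$ is inherited from $S/P$ for $P'\subseteq P$. So the height agrees with the one used in \cref{theorem:affine toric geometric ht}. That theorem then says $\dim_P X_{\Sigma}^{\con}=n$ if and only if $P$ is geometric in $\Tbb[\Msf_{\sigma}]$. By \cref{proposition:TopEmb}, this holds if and only if $P\in\iota_{\sigma}(U_{\sigma}^{\trop})$.

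Since $U_{\sigma}^{\trop}$ is exactly $\iota^{-1}(U_{\sigma}^{\con})$, taking the union over $\sigma$ identifies $\iota(X_{\Sigma}^{\trop})$ with $\{P\mid \dim_PX_{\Sigma}^{\con}=n\}$.

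Second, the topology. Injectivity and compatibility of the $\iota_{\sigma}$ with the gluing maps is the step I expect to need the most care. I need, for $\tau\prec\sigma$, a commutative square in which the restriction $\Hom(\Msf_{\sigma},\Tbb)\supseteq\Hom(\Msf_{\tau},\Tbb)$ matches $U_{\tau}^{\con}\subseteq U_{\sigma}^{\con}$ under the $\iota$'s. Concretely, a homomorphism $\phi\colon\Msf_{\sigma}\to\Tbb$ with $\phi(m)\neq-\infty$ extends uniquely to $\Msf_{\tau}=\Msf_{\sigma}+\Zbb(-m)$. Its kernel congruence is then the push-out of $\Ker\overline{\phi}$ along the localization, which by \cref{proposition:loc induces inj} corresponds to the same point. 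I also need that $\iota_{\sigma}^{-1}(U_{\tau}^{\con})=U_{\tau}^{\trop}$, i.e. that $(m,-\infty)\notin\Ker\overline\phi$ exactly when $\phi(m)\neq-\infty$; this is immediate.

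With this in hand, $\iota$ is well defined and injective, since each $\iota_{\sigma}$ is injective and the charts are compatible. It is continuous and open onto its image locally: on each open piece $U_{\sigma}^{\trop}$, the map $\iota$ restricts to $\iota_{\sigma}$, which by \cref{proposition:TopEmb} is a homeomorphism onto $\iota(X_{\Sigma}^{\trop})\cap U_{\sigma}^{\con}$. Because the sets $U_{\sigma}^{\trop}$ form an open cover of $X_{\Sigma}^{\trop}$, and the sets $U_{\sigma}^{\con}\cap\iota(X_{\Sigma}^{\trop})$ form an open cover of the image, these local homeomorphisms glue to a global one.
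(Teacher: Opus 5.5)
Your proposal is correct and takes the same route as the paper. The paper argues chart by chart: \cref{proposition:TopEmb} and \cref{theorem:affine toric geometric ht} identify $\iota_\sigma(U_\sigma^{\trop})$ with the points of height $n$, which by \cref{lemma:dim height} and \eqref{eq:dim at point open nbhd} are exactly the points with $\dim_P X_\Sigma^{\con}=n$; it then glues the $\iota_\sigma$. You also make explicit several points the paper leaves implicit: that heights computed in $\CSpec$ and in $\CSpec_{\Tbb}$ agree, that the $\iota_\sigma$ are compatible with the gluing maps with $\iota^{-1}(U_\sigma^{\con})=U_\sigma^{\trop}$, and that the local homeomorphisms assemble into a global one.
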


In what follows, we regard $X_{\Sigma}^{\trop}$ as a subspace of $X_{\Sigma}^{\con}$.
A point $P$ of $X_{\Sigma}^{\con}$ is called a \emph{closed point} 
if a singleton $\{P\}$ is a closed subset.
Since $\dim X_{\Sigma}^{\con}$ is equal to $n$, any point $P$ in $X_{\Sigma}^{\trop}$ is closed.
However, a closed point is not of height $n$ in general.
Indeed, in \cref{example:affine line}, a point $P_{+}$ is closed in $\CSpec_{\Tbb}\Tbb[X^{\pm}]$ but of height 0.
Next, we examine when the two notions coincide, that is, when every closed point is of height $n$.

\begin{lemma}\label{lemma:help}
  For $i=1,2$, let $P_i$ be a point in $N^{\con}(\tau_i)$.
  If $P_2\in \overline{\{P_1\}}$, then $\tau_1$ is a face of $\tau_2$.
\end{lemma}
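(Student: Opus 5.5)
The plan is to reduce the claim to an affine chart and then compare mobile faces. Under the $V(C)$ topology, $P_2\in\overline{\{P_1\}}$ means that every closed set containing $P_1$ also contains $P_2$. On an affine piece $\CSpec\Tbb[\Msf_\sigma]$, where $\overline{\{P_1\}}=V(P_1)$, this says exactly $P_1\subseteq P_2$.

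\textbf{Reduction to a chart.} We choose a cone $\sigma\in\Sigma$ with $\tau_2\prec\sigma$ (for instance $\sigma=\tau_2$), so that $P_2\in U_{\sigma}^{\con}$. Since $U_{\sigma}^{\con}$ is open in $X_{\Sigma}^{\con}$ and contains $P_2$, it must meet $\{P_1\}$; hence $P_1\in U_{\sigma}^{\con}$ as well. The closure of $\{P_1\}$ inside $U_{\sigma}^{\con}$ is then $\overline{\{P_1\}}\cap U_{\sigma}^{\con}$, which contains $P_2$.

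\textbf{Passing to an inclusion of congruences.} Now $U_{\sigma}^{\con}=\CSpec_{\Tbb}\Tbb[\Msf_\sigma]$ carries the subspace topology of $\CSpec\Tbb[\Msf_\sigma]$ (\cref{proposition:T open}). There the closure of $\{P_1\}$ is $V(P_1)$, because any closed $V(C)$ containing $P_1$ satisfies $C\subseteq P_1$. Therefore $P_1\subseteq P_2$.

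\textbf{Comparing faces of $\sigma$.} By the stratification \eqref{eq:toric process}, the two points have mobile faces $\Rbb\times(\Msf_\sigma\cap\tau_1^{\perp})$ and $\Rbb\times(\Msf_\sigma\cap\tau_2^{\perp})$, where $\tau_1,\tau_2$ are faces of $\sigma$. These are the same strata as in the glued space: the stratum of a point does not depend on the chosen chart, by the construction of \eqref{eq:strati toric}. Condition (i) of \cref{proposition:inclusion monoid T over T} (equivalently \cref{lemma:P1P2}) then gives
\[\Msf_\sigma\cap\tau_2^{\perp}\subseteq \Msf_\sigma\cap\tau_1^{\perp}.\]

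\textbf{Conclusion.} The order-reversing bijection between faces of $\sigma$ and faces $\sigma^\vee\cap\tau^\perp$ of $\sigma^\vee$ (cf.\ \cite[Proposition 1.2.10]{cox2011toric}, used in \cref{lemma:toric face}) now applies. Since $\Msf_\sigma\cap\tau^\perp$ spans $\sigma^\vee\cap\tau^\perp$ by rationality, the inclusion above becomes $\sigma^\vee\cap\tau_2^\perp\subseteq\sigma^\vee\cap\tau_1^\perp$. Taking duals yields $\tau_1\subseteq\tau_2$, and since both are faces of $\sigma$, $\tau_1\prec\tau_2$.

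\textbf{Main obstacle.} The only delicate point is making sure the gluing is compatible with everything used above. Concretely, the open immersions $U_\tau^{\con}\hookrightarrow U_\sigma^{\con}$ must preserve the labelling of strata by cones of $\Sigma$, and closures must be computable chartwise. Both follow because the immersions are the embeddings $D(T)$ of \cref{proposition:loc induces inj}, which are homeomorphisms onto open sets and are compatible with the stratifications.
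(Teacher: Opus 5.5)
Your proof is correct and follows the paper's own argument. You pass to a chart $U_{\sigma}^{\con}$ containing $P_2$, which must also contain $P_1$ because it is open. There you read off $P_1\subseteq P_2$ and apply condition (i) of \cref{proposition:inclusion monoid T over T} to compare mobile faces. Your extra steps fill in points the paper leaves implicit: that the closure in the chart is $V(P_1)$, and the face-duality argument turning $\Msf_\sigma\cap\tau_2^{\perp}\subseteq\Msf_\sigma\cap\tau_1^{\perp}$ into $\tau_1\prec\tau_2$.
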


\begin{proof}
  Assume $P_2\in \overline{\{P_1\}}$.
  By the construction of $X_{\Sigma}^{\con}$, there exists a cone $\sigma\in \Sigma$ such that 
  $P_2\in U_{\sigma}^{\con}$. Since $U_{\sigma}^{\con}$ is an open subset of $X_{\Sigma}^{\con}$ 
  and $P_2\in \overline{\{P_1\}}$, 
  then $P_1\in U_{\sigma}^{\con}$. 
  For each $i=1,2$, $\Rbb\times (\Msf_{\sigma}\cap\tau_i^{\perp})$ be the mobile face
  of $P_i$ in $U_{\sigma}$.
  By $P_1\subseteq P_2$ and \cref{proposition:inclusion monoid T over T}, 
  we have $\Msf_{\sigma}\cap \tau_2^{\perp}\subseteq \Msf_{\sigma}\cap \tau_1^{\perp}$.
  Thus $\tau_1\prec \tau_2$ holds.
\end{proof}

For $m\in M$, we define $m^{\perp}=\{n\in N_{\Rbb}\mid \<m,n\>=0\}$.

\begin{lemma}\cite[Lemma 1.2.13]{cox2011toric} \label{lemma:cox lemma}
  Let $\sigma_1$, $\sigma_2$ be cones in $N_{\Rbb}$.
  Then
  \[\sigma_1\cap \sigma_2=m^{\perp}\cap \sigma_1=m^{\perp}\cap \sigma_2\]
  for any $m\in \relint(\sigma_1^{\vee}\cap -\sigma_2^{\vee})$.
\end{lemma}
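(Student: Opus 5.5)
The statement is the standard fact about polyhedral cones from \cite[Lemma 1.2.13]{cox2011toric}: for cones $\sigma_1,\sigma_2$ meeting along a common face (as in a fan, where $\sigma_1\cap\sigma_2$ is a face of each), and any $m$ in the relative interior of $\sigma_1^{\vee}\cap(-\sigma_2)^{\vee}$, one has $\sigma_1\cap\sigma_2=m^{\perp}\cap\sigma_1=m^{\perp}\cap\sigma_2$. I will follow the classical separation argument, working in $N_{\Rbb}$ and $M_{\Rbb}$ with the pairing $\<\cdot,\cdot\>$. Write $\gamma=\sigma_1\cap\sigma_2$ and $\rho=\sigma_1-\sigma_2=\{v_1-v_2\mid v_i\in\sigma_i\}$. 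Then $\rho$ is a convex polyhedral cone, and $\rho^{\vee}=\sigma_1^{\vee}\cap(-\sigma_2)^{\vee}$.

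\emph{Step 1 (easy inclusions).} Take $m\in\rho^{\vee}$. Then $\<m,v\>\geq 0$ for $v\in\sigma_1$ and $\<m,v\>\leq 0$ for $v\in\sigma_2$. Hence $m$ vanishes on $\gamma$, so $\gamma\subseteq m^{\perp}\cap\sigma_1$ and $\gamma\subseteq m^{\perp}\cap\sigma_2$. Moreover $m^{\perp}\cap\sigma_1$ is a face of $\sigma_1$, since $m\in\sigma_1^{\vee}$. It remains to prove the reverse inclusions, and by symmetry it suffices to prove $m^{\perp}\cap\sigma_1\subseteq\sigma_2$ for $m$ in the relative interior.

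\emph{Step 2 (key fact).} I will first show $\gamma-\gamma=\rho\cap(-\rho)$, i.e.\ that the lineality space of $\rho$ equals the span of $\gamma$. The inclusion $\supseteq$ is clear. For $\subseteq$, suppose $v_1-v_2=-(w_1-w_2)$ with $v_i,w_i\in\sigma_i$. Then $v_1+w_1=v_2+w_2$ lies in $\gamma$. Because $\gamma$ is a face of $\sigma_1$ and of $\sigma_2$, we get $v_1,w_1,v_2,w_2\in\gamma$, and so $v_1-v_2\in\gamma-\gamma$. \textbf{This is where the hypothesis that $\gamma$ is a face of both cones enters, and I expect it to be the main obstacle.} The statement as quoted suppresses this hypothesis, and without it the identity fails; in the application the two cones lie in a fan, so the hypothesis is available.

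\emph{Step 3 (conclusion).} Since $m\in\relint(\rho^{\vee})$, duality gives $m^{\perp}\cap\rho=\rho\cap(-\rho)$, the minimal face of $\rho$. Now let $v\in m^{\perp}\cap\sigma_1$. Then $v=v-0\in\rho$ and $\<m,v\>=0$, so $v\in\rho\cap(-\rho)=\gamma-\gamma$. Write $v=a-b$ with $a,b\in\gamma$. Then $v+b=a\in\gamma$, and $v,b\in\sigma_1$. Since $\gamma$ is a face of $\sigma_1$, this forces $v\in\gamma$. Symmetrically $m^{\perp}\cap\sigma_2\subseteq\gamma$, which completes the proof. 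The duality fact used in Step 3, that a relative-interior element of $\rho^{\vee}$ cuts out exactly the lineality space of $\rho$, is standard: decompose $\rho$ into its lineality space plus a pointed cone, and note that such an $m$ is strictly positive on the pointed part outside $0$.
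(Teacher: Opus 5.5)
Your argument is correct and is essentially the separation argument of \cite[Lemma 1.2.13]{cox2011toric}, which the paper cites rather than reproves. That argument passes to $\rho=\sigma_1-\sigma_2$ with $\rho^{\vee}=\sigma_1^{\vee}\cap-\sigma_2^{\vee}$, uses that a relative-interior point of $\rho^{\vee}$ cuts out the lineality space $\rho\cap(-\rho)$, and finishes with the face property of $\sigma_1\cap\sigma_2$.

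You are also right that the paper's statement drops the hypothesis that $\sigma_1\cap\sigma_2$ is a face of both cones, and it is false without it. For example, $\sigma_1=\cone(e_1,e_2)$ and $\sigma_2=\cone(e_1+e_2)$ give $\rho=N_{\Rbb}$ and $m=0$, so $m^{\perp}\cap\sigma_1=\sigma_1\neq\sigma_1\cap\sigma_2$. The hypothesis does hold where the lemma is used in \cref{theorem:separatedness}, since there $\tau_0,\tau_1\in\Sigma$.
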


%separated
\begin{theorem}\label{theorem:separatedness}
  Let $\Sigma$ be a fan in $N_{\Rbb}$ and $P$ an arbitrary point in $X_{\Sigma}^{\con}$.
  Then there is at most one point $P_0$ in $X_{\Sigma}^{\trop}$ contained 
  in $\overline{\{P\}}$.
\end{theorem}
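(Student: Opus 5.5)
Suppose $P_1,P_2\in X_{\Sigma}^{\trop}$ both lie in $\overline{\{P\}}$. The plan is to reduce to a single affine chart $U_{\sigma}^{\con}$ and then apply the uniqueness statement of \cref{proposition:abstract geom}. Points of $X_{\Sigma}^{\trop}$ are exactly the geometric points of the charts, by \cref{proposition:TopEmb} and \cref{theorem:affine toric geometric ht}. Inside a chart, closure corresponds to inclusion of prime congruences, and a prime congruence is contained in at most one geometric congruence. So the whole difficulty is to find one cone $\sigma\in\Sigma$ with $P,P_1,P_2\in U_{\sigma}^{\con}$.

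\textbf{Locating strata and charts.} Let $P\in N^{\con}(\tau)$ and $P_k\in N^{\con}(\tau_k)$. By \cref{lemma:help}, $\tau\prec\tau_1$ and $\tau\prec\tau_2$. Choose $\sigma_k\in\Sigma$ with $P_k\in U_{\sigma_k}^{\con}$; one may take $\sigma_k=\tau_k$, since $N^{\con}(\tau_k)\subseteq U_{\tau_k}^{\con}$. Each chart $U_{\tau_k}^{\con}$ is open and contains $P_k\in\overline{\{P\}}$, so it also contains $P$. Inside $U_{\tau_k}^{\con}$ the closure of $\{P\}$ is $V(P)$, hence $P\subseteq P_k$ as congruences on $\Tbb[\Msf_{\tau_k}]$. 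Consequently $\tilde P$ determines $\tilde P_k$. The outer flag of $\tilde P_k$ is the restriction $H(\tilde P)^{\mathrm{out}}_{\bullet}\cap^{*}(\tau_k^{\perp}\cap M)$, and it must be trivial because $P_k$ is geometric (\cref{proposition:flag when geom}). The inner hyperplane is determined by $\tilde P$ via \cref{proposition:inclusion monoid T over T}(iii).

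\textbf{Reducing to one chart.} Put $\rho=\tau_1\cap\tau_2\in\Sigma$. Pick $m\in\relint(\tau_1^{\vee}\cap-\tau_2^{\vee})$, so that by \cref{lemma:cox lemma} $\rho=m^{\perp}\cap\tau_1=m^{\perp}\cap\tau_2$. The key step is to show that $P_1\in U_{\rho}^{\con}$, which forces $\tau_1=\rho$ and symmetrically $\tau_2=\rho$. I would argue by contradiction using condition (ii) of \cref{proposition:inclusion monoid T over T} in the chart $U_{\tau_1}^{\con}$, and the analogous condition in $U_{\tau_2}^{\con}$. The element $m$ lies in $\Msf_{\tau_1}$, and $-m$ lies in $\Msf_{\tau_2}$, with both in $\tau^{\perp}$ since $\tau\prec\rho$.

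If $m\notin\tau_1^{\perp}$, then (ii) in chart $\tau_1$, taking $m_2=0$, gives $m\notin\Lsf(\tilde P)^{\mathrm{out}}$. Similarly, if $-m\notin\tau_2^{\perp}$, then $-m\notin\Lsf(\tilde P)^{\mathrm{out}}$. Both cannot hold, since $\Lsf(\tilde P)^{\mathrm{out}}\cup-\Lsf(\tilde P)^{\mathrm{out}}$ is everything by \eqref{L2}.

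This shows that $m\in\tau_1^{\perp}$ or $-m\in\tau_2^{\perp}$, i.e. $m\in\tau_1^{\perp}$ or $m\in\tau_2^{\perp}$. If $m\in\tau_1^{\perp}$, then $\tau_1\subseteq m^{\perp}$, so $\tau_1=\rho\prec\tau_2$. Then both $P_1$ and $P_2$ lie in the single chart $U_{\tau_2}^{\con}$. The other case is symmetric.

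\textbf{Conclusion and main obstacle.} In that common chart we have $P\subseteq P_1$ and $P\subseteq P_2$, with $P_1,P_2$ geometric, so \cref{proposition:abstract geom} gives $P_1=P_2$. The step I expect to be delicate is the sign bookkeeping in condition (ii), which is stated with $\Lsf(\tilde P)^{\mathrm{out}}$ and differences $m_1-m_2$. I also need to confirm that the representative $\tilde P$ computed in the two charts $U_{\tau_1}^{\con}$ and $U_{\tau_2}^{\con}$ is the same element of $\CSpec_{\Tbb}\Tbb[\tau^{\perp}\cap M]$. This compatibility follows from the gluing via the strata identification \eqref{eq:strati toric}.
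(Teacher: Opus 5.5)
Your proposal is correct and is essentially the paper's proof. Cox's lemma supplies $m$; condition (ii) of \cref{proposition:inclusion monoid T over T} with $m_2=0$, applied in the charts $U_{\tau_1}^{\con}$ and $U_{\tau_2}^{\con}$, would give $m,-m\notin\Lsf(\tilde P)^{\mathrm{out}}$ against \eqref{L2}, so the cones are comparable and \cref{proposition:abstract geom} concludes in a single chart. Two small fixes: choose $m$ as a lattice point of $\relint(\tau_1^{\vee}\cap-\tau_2^{\vee})$, which is needed for $m\in\Msf_{\tau_1}$; and note that your case split yields only one of $\tau_1=\rho$, $\tau_2=\rho$ rather than both, which is all the argument uses.
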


\begin{proof}
  Assume that $\overline{\{P\}}\cap X_{\Sigma}^{\trop}$ contains two points $P_0$ and $P_1$.
  Let $\tau,\tau_0,$ and $\tau_1$ be the cones of $\Sigma$ giving the strata containing these points:
  \[P\in N^{\con}(\tau),\ P_0\in N^{\con}(\tau_0),\ P_1\in N^{\con}(\tau_1).\]
  By \cref{lemma:help}, we have $\tau$ is a face of $\tau_0$ and $\tau_1$.
  By \cref{lemma:cox lemma}, for any $m\in \relint(\tau_0^{\vee}\cap -\tau_1^{\vee})$,
  \begin{align}\label{eq:separation}
    \tau_0\cap\tau_1=\tau_0\cap m^{\perp}=\tau_1\cap m^{\perp}
  \end{align}
  holds. Now we fix $m\in M\cap \relint(\tau_0^{\vee}\cap -\tau_1^{\vee})$. 
  Note that $m\in \tau^{\perp}\cap M$ holds since $\tau$ is contained in $\tau_0\cap \tau_1$.

  To derive a contradiction, 
  we assume that $\tau_0$ and $\tau_1$ are not comparable, i.e., $\tau_0\not\prec\tau_1$ and $\tau_1\not\prec\tau_0$ hold.
  From \eqref{eq:separation}, we have $m\neq 0$.
  Considering in the chart $U_{\tau_0}$,
  $P$ and $P_0$ are prime congruences on $\Tbb[\Msf_{\tau_0}]$ and satisfy $P\subseteq P_0$.
  Since $m\in (\tau^{\perp}\cap M)\setminus (\tau_0^{\perp}\cap M)$ holds,
  \[m=m-0\notin \Lsf(\tilde{P})^{\mathrm{out}}\]
  by the condition (ii) of \cref{proposition:inclusion monoid T over T}.
  By applying the same argument to $U_{\tau_1}$, we have 
  \[-m\notin \Lsf(\tilde{P})^{\mathrm{out}}.\]
  This contradicts the condition \eqref{L2} of $\Lsf(\tilde{P})^{\mathrm{out}}$. 
  Thus $\tau_0$ and $\tau_1$ are comparable and 
  hence $P_0$ and $P_1$ lie in the same chart $U_{\tau_0}$ or $U_{\tau_1}$.
  By \cref{proposition:abstract geom}, we have $P_0=P_1$.
\end{proof}

%proper
\begin{theorem}\label{theorem:properness}
  Let $\Sigma$ be a fan in $N_{\Rbb}$.
  Then the following are equivalent:
  \begin{enumerate}
    \item for any $P\in X_{\Sigma}^{\con}$, there is a point $P_0$ in $X_{\Sigma}^{\trop}$ contained 
    in $\overline{\{P\}}$, 
    \item the fan $\Sigma$ is complete.
  \end{enumerate}
\end{theorem}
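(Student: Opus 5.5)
The plan is to reduce both directions to the local criterion of \cref{proposition:local proper}. A point $P\in N^{\con}(\tau)$ lies in some chart $U_{\sigma}^{\con}$ with $\tau\prec\sigma$, and its closure meets $X_{\Sigma}^{\trop}$ exactly when some chart containing $P$ contains a geometric congruence containing $P$. Indeed, by \cref{lemma:help} and the openness of charts, any $P_0\in\overline{\{P\}}$ lies in a chart $U_{\sigma}^{\con}$ that also contains $P$. Moreover, the points of $X_{\Sigma}^{\trop}$ in $U_{\sigma}^{\con}$ are precisely the geometric congruences, by \cref{theorem:affine toric geometric ht} and \cref{proposition:TopEmb}. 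Write $H_{\bullet}=H(\tilde{P})^{\mathrm{out}}_{\bullet}$, an $M$-flag inside $\tau^{\perp}$ of length $l'$, and set $K=\bigcup_{i=1}^{l'}-H_{i,+}\cup H_{l'}$. Then condition (1) says: for every $\tau\in\Sigma$ and every such flag, there is $\sigma\succ\tau$ in $\Sigma$ with $\sigma^{\vee}\cap\tau^{\perp}\subseteq K$.

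For (2)$\Rightarrow$(1), fix $P\in N^{\con}(\tau)$ and choose a generic vector encoding the flag. I will take a point $v\in N_{\Rbb}$, in the relative interior of $\tau$ plus a small perturbation, that is ``lexicographically negative'' on $-K$. Concretely, pick linear functionals $u_1,\dots,u_{l'}$ on $\tau^{\perp}$ defining $H_{1,+},\dots$ as in \cref{remark:difference}, and take $v=v_\tau+\epsilon_1 w_1+\epsilon_2 w_2+\cdots$ with $0<\epsilon_{l'}\ll\dots\ll\epsilon_1\ll1$. Here $v_\tau\in\relint\tau$, and the $w_i\in N_{\Rbb}$ restrict to $-u_i$ on $\tau^{\perp}$. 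By completeness, the curve $\epsilon\mapsto v$ eventually stays in some cone $\sigma\in\Sigma$. Since $v_\tau\in\relint\tau$ and $\sigma$ is closed, $\tau\prec\sigma$ after passing to the right face. Then for $m\in\sigma^{\vee}\cap\tau^{\perp}$ we have $\<m,v\>\ge0$ along the whole curve. Expanding in the $\epsilon_i$ shows that the first nonzero $u_i(m)$ is negative, or all vanish. This means exactly $m\in K$, so \cref{proposition:local proper} gives the geometric point.

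For (1)$\Rightarrow$(2), I argue by contraposition. Suppose $\Sigma$ is not complete and pick $v\in N$ (rational) not in $|\Sigma|$. Take $\tau=\{0\}$ and build an $M$-flag starting with $H_1=v^{\perp}$, $H_{1,+}=\{\<m,v\>>0\}$, extended arbitrarily, for example to length $n$ with rational choices. Lift it to a point of $N^{\con}(\{0\})\cong\CSpec_{\Tbb}\Tbb[M]$ using \cref{proposition:flag corr group T over T}, with any admissible inner flag. For each $\sigma\in\Sigma$ we have $v\notin\sigma$, so by duality there is $m\in\sigma^{\vee}\cap M$ with $\<m,v\><0$; this $m$ lies in $H_{1,+}$ up to sign. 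With these conventions we get $m\in-(-H_{1,+})$, so $m\notin K$ and the criterion fails in every chart.

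The main obstacle is the sign bookkeeping and the genericity in (2)$\Rightarrow$(1). One must ensure that the curve eventually lies in a single cone having $\tau$ as a face, and that the lexicographic sign condition matches $K$ with the paper's orientation conventions. If the signs come out reversed, I will swap $v$ with the opposite perturbation, since completeness is symmetric under this choice.
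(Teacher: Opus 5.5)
Your reduction to \cref{proposition:local proper} matches the paper's. Charts are open, so any point of $X_{\Sigma}^{\trop}$ in $\overline{\{P\}}$ is a geometric congruence containing $P$ in some $U_{\sigma}^{\con}$ with $\tau\prec\sigma$. Your overall strategy is sound, but two points need repair, and both directions then follow routes different from the paper's.

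\textbf{The sign in (1)$\Rightarrow$(2) is wrong as written.} With $H_{1,+}=\{m\mid \<m,v\>>0\}$, the element $m\in\sigma^{\vee}$ with $\<m,v\><0$ lies in $-H_{1,+}\subseteq K$, so it does not violate the criterion. You must orient $H_{1,+}=\{m\mid \<m,v\><0\}$. Then $m\in H_{1,+}$, which is disjoint from $K\subseteq -H_{1,+}\cup H_1$, so $\sigma^{\vee}\not\subseteq K$ for every $\sigma\in\Sigma$. Your stated justification for a sign swap is not the right one: in this direction $\Sigma$ is not complete, and $-v$ may lie in $|\Sigma|$. The correct reason is simply that the orientation is free data of the flag. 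In (2)$\Rightarrow$(1) your signs are already correct, and only that sign works there.

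\textbf{In (2)$\Rightarrow$(1), the sign condition matches $K$ only for lattice points.} The condition ``the first nonzero $u_i(m)$ is negative, or all vanish'' is equivalent to $m\in K$ for lattice points, not for all real $m$. Suppose some $H_i$ with $1\le i<l'$ is irrational. A real $m\in H_i\setminus\Rbb(H_i\cap M)$ with $u_{i+1}(m)<0$ satisfies your sign condition but is not in $K$. So run the argument for $m\in\Msf_{\sigma}\cap\tau^{\perp}$. Then conclude via the equivalence of the lattice and real conditions shown in the proof of \cref{proposition:local proper}, or directly via \cref{proposition:abstract proper}.

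You should also make the perturbation an honest curve, e.g.\ $\epsilon_i=\epsilon^i$. Then each $\<m,v(\epsilon)\>$ is a polynomial in $\epsilon$ with eventually constant sign. Finiteness and completeness of $\Sigma$ then give a single $\sigma$ containing $v(\epsilon)$ for all small $\epsilon>0$. Finally, $\tau\prec\sigma$ follows from $v_\tau\in\relint\tau\cap\sigma$, with no need to pass to a face.

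\textbf{Comparison with the paper.} For (2)$\Rightarrow$(1), the paper inducts on the codimension of $\tau$. It uses completeness of $\Sigma/\tau$ to choose $\tau_1\succ\tau$ whose image $\pi(\tau_1)$ contains in its relative interior the normal vector of $H_1$ pointing into $-H_{1,+}$. It then recurses on the restricted flag $H_{\bullet}\cap^{*}(\tau_1^{\perp}\cap M)$. Your lexicographic curve does this descent in one limiting step, at the cost of the eventual-sign argument and the lattice-point caveat. For (1)$\Rightarrow$(2), the paper embeds $\Sigma$ in a complete fan $\Sigma'$ and finds a geometric point in $\overline{\{P\}}$ inside a chart $U_{\tau'}^{\con}$ with $\tau'\notin\Sigma$. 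It then invokes \cref{theorem:separatedness} in $X_{\Sigma'}^{\con}$. Your duality argument ($v\notin\sigma=(\sigma^{\vee})^{\vee}$) checks the failure chart by chart. It avoids both the existence of a completion, which is a nontrivial input the paper uses without comment, and the separatedness theorem. The paper's version does give extra information: the ``missing'' limit point actually exists in the larger space $X_{\Sigma'}^{\con}$.
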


\begin{proof}
  First we show that (2) implies (1).
  By \cref{proposition:local proper}, it is enough to show that 
  for a cone $\tau\in \Sigma$ and a $(\tau^{\perp}\cap M)$-flag $H_{\bullet}$ of length $l$, 
  there exists a cone $\sigma\in\Sigma$ containing $\tau$ such that
  \begin{align}\label{eq:tau}
    \sigma^{\vee}\cap\tau^{\perp}\subseteq \bigcup_{i=1}^{l}-H_{i,+}\cup H_{l}.
  \end{align}

  If $l=0$, then the assertion is immediate. Thus we assume that $l>0$. 
  We show the claim by induction on the codimension $k$ of $\tau$.
  If $k$ is equal to $0$, then the claim clearly holds.
  Assume that the claim holds if the codimension is smaller than $k$.
  Note that the pairing $M\times N\to \Zbb$ induces 
  a pairing $(\tau^{\perp}\cap M)\times N/(\Rbb\tau\cap N)\to \Zbb$.
  With this pairing, consider a normal vector $v\in N_{\Rbb}/\Rbb\tau$ of $H_1$ pointing $-H_{1,+}$.
  Let $\pi\colon N_{\Rbb}\to N_{\Rbb}/\Rbb\tau$ be the projection.
  Since the fan 
  \[\Sigma/\tau\coloneqq \{\pi(\tau')\mid \tau'\in \Sigma,\ \tau\subseteq \tau'\}\] 
  is complete, 
  there is a unique cone $\tau_1$ of $\Sigma$ satisfying $\tau\prec \tau_1$ and $v\in\relint\pi(\tau_1)$.

  Using the induction assumption 
  for a $(\tau_1^{\perp}\cap M)$-flag $H_{\bullet}\cap^{*}(\tau_1^{\perp}\cap M)$,
  we obtain a cone $\sigma\in \Sigma$ satisfying $\tau_1\prec \sigma$ and
  \begin{align}\label{eq:tau_1}
    \sigma^{\vee}\cap\tau_1^{\perp}\subseteq \left(\bigcup_{i=1}^{l}-H_{i,+}\cup H_{l}\right)\cap \tau_1^{\perp}.
  \end{align}

  Since $\tau_1\prec\sigma$ and $v\in\relint\pi(\tau_1)$ hold, we have
  \begin{align}\label{eq:tautau}
    (\sigma^{\vee}\cap\tau^{\perp})\setminus \tau_1^{\perp}\subseteq \tau^{\perp}\cap\tau_1^{\vee}\setminus \tau_1^{\perp}\subseteq -H_{1,+}.
  \end{align}
  Combining \eqref{eq:tau_1} and \eqref{eq:tautau}, we obtain the desired inclusion \eqref{eq:tau}.

  Conversely we assume that the fan $\Sigma$ is not complete.
  Let $\Sigma'$ be a complete fan which contains $\Sigma$ as a subfan.
  Then $X_{\Sigma'}^{\con}$ contains $X_{\Sigma}^{\con}$ as an open subset.

  We fix a maximal cone $\tau'\in\Sigma'\setminus \Sigma$.
  Since $\tau'$ is maximal, $\tau'^{\vee}$ is a strictly convex cone of $M_{\Rbb}$.
  Hence we can take a hyperplane $H$ of $M_{\Rbb}$ and its orientation $H_{+}$
  satisfying $H\cap \tau'^{\vee}=\{0\}$ and $H_{+}\cap \tau'^{\vee}=\emptyset$.
  Define an $M$-flag $H^{\mathrm{out}}_{\bullet}$ of length 1 by 
  \begin{align*}
    &H_1^{\mathrm{out}}\coloneqq H,\\
    &H_{1,+}^{\mathrm{out}}\coloneqq H_{+}.
  \end{align*}
  Let $H_{\bullet}^{\mathrm{inn}}$ be an arbitrary $\Rbb\times (H\cap M)$-flag satisfying 
  $\Rbb\times \{0\}\not \subseteq H_1^{\mathrm{inn}}$.

  By \eqref{Tprocess2}, we obtain a point $P$ of $N^{\con}(\{0\})\subseteq X_{\Sigma}^{\con}$.
  Using \cref{proposition:local proper} in the chart $U_{\tau'}^{\con}$,
  there exists a point $P_0\in  U_{\tau'}^{\trop}$ which is contained in $\overline{\{P\}}$.
  As can be seen from the proof of \cref{proposition:local proper}, $P_0$ lies on a stratum $N^{\con}(\tau')$, 
  which is not contained in $X_{\Sigma}^{\con}$.
  By \cref{theorem:separatedness}, there is no point of $X_{\Sigma}^{\trop}$ 
  which is contained in $\overline{\{P\}}$.
\end{proof}

\begin{corollary}\label{corollary:closed points}
  Let $\Sigma$ be a fan in $N_{\Rbb}$.
  Then the subset $X_{\Sigma}^{\trop}\subseteq X_{\Sigma}^{\con}$ is equal to the set of closed points
  if and only if $\Sigma$ is complete.
\end{corollary}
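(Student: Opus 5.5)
We prove the two implications separately, using \cref{theorem:separatedness,theorem:properness} together with the fact, noted after \cref{theorem:trop is height n toric}, that every point of $X_{\Sigma}^{\trop}$ is closed. Throughout we use the observation that, in a space built from spaces of prime congruences, the closure of a point $P$ consists exactly of the points containing $P$. This holds chartwise because $V(P)=\overline{\{P\}}$ in each $\CSpec$. It transfers to the open charts $U_{\sigma}^{\con}$ because each $U_{\sigma}^{\con}$ is open, as used in \cref{lemma:help}.

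Assume first that $\Sigma$ is complete. One inclusion is immediate: every point of $X_{\Sigma}^{\trop}$ is closed. For the converse, let $P$ be a closed point of $X_{\Sigma}^{\con}$. By \cref{theorem:properness}, $(2)\Rightarrow(1)$, there is a point $P_0\in X_{\Sigma}^{\trop}\cap\overline{\{P\}}$. Since $\{P\}$ is closed, $\overline{\{P\}}=\{P\}$, so $P=P_0\in X_{\Sigma}^{\trop}$. Hence the closed points are exactly $X_{\Sigma}^{\trop}$.

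Now assume $\Sigma$ is not complete. By \cref{theorem:properness}, $(1)\Rightarrow(2)$ fails, so there is a point $P$ whose closure meets no point of $X_{\Sigma}^{\trop}$. We want a closed point outside $X_{\Sigma}^{\trop}$, and we plan to find it inside $\overline{\{P\}}$.

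The closure $\overline{\{P\}}$ consists of the points $P'\supseteq P$ (computed chartwise). Chains of strict inclusions have bounded length: by \cref{lemma:strict inclusion2}, each strict inclusion strictly decreases $l'+l''$, and $l'+l''\le n+1$. So we can take a maximal element $Q$ of $\overline{\{P\}}$ under inclusion, i.e., a point in the closure that is not properly contained in another point.

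The main obstacle is showing that $Q$ is a closed point of the glued space. Points in $\overline{\{Q\}}$ contain $Q$ within some chart, so by maximality $\overline{\{Q\}}\cap U_{\sigma}^{\con}=\{Q\}$ for every chart $U_{\sigma}^{\con}$ containing $Q$. We must also rule out points of $\overline{\{Q\}}$ in charts that do not contain $Q$. Any point of $\overline{\{Q\}}$ lies in some open chart $U_{\sigma}^{\con}$, and since that chart is an open neighbourhood of a point in the closure of $\{Q\}$, it must contain $Q$. This is the same argument as in \cref{lemma:help}. Hence $\overline{\{Q\}}=\{Q\}$, so $Q$ is closed. Since $Q\in\overline{\{P\}}$ and the closure of $P$ contains no point of $X_{\Sigma}^{\trop}$, we get $Q\notin X_{\Sigma}^{\trop}$. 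Therefore the set of closed points strictly contains $X_{\Sigma}^{\trop}$, which completes the equivalence.
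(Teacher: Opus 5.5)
Your proof is correct. The paper gives no written proof of this corollary and presents it as a consequence of \cref{theorem:separatedness,theorem:properness}. Your complete case is exactly that deduction. In the non-complete case you supply a step the paper leaves implicit: producing a \emph{closed} point inside $\overline{\{P\}}$. You get it from a finite-length argument. Any specialization in $X_{\Sigma}^{\con}$ takes place inside a single chart, and there, by \cref{lemma:strict inclusion2}, it strictly lowers $l'+l''\in\{1,\dots,n+1\}$. So a maximal specialization exists, and it is closed.

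A route closer to the paper is to take the inner flag $H^{\mathrm{inn}}_{\bullet}$ of length $1$ in the construction from the proof of \cref{theorem:properness}. Then $P$ has $(l',l'')=(1,1)$. Any strict specialization would have $(l',l'')=(0,1)$, hence be geometric by \cref{proposition:flag when geom,proposition:strata over T}, i.e.\ lie in $X_{\Sigma}^{\trop}$. That proof rules this out, so this $P$ is itself closed, and it is not tropical since $l'=1$. That version is more explicit but requires reopening the proof of \cref{theorem:properness}, whereas yours uses the theorem only as a black box.

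A small wording slip: in the second part you mean that condition (1) of \cref{theorem:properness} fails, by the contrapositive of $(1)\Rightarrow(2)$, not that the implication itself fails.
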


\section{Finitely generated prime congruences}\label{section:fg}

Finally, we consider a purely algebraic problem: determining which prime congruences are finitely generated.
First we refer to a general result.

\begin{lemma} \label{lemma:fg of ext}
  Let $\phi\colon S_1\to S_2$ be a homomorphism of semirings and 
  $C$ be a finitely generated congruence on $S_1$.
  Then the push-out $C\cdot S_2$ is also finitely generated.
\end{lemma}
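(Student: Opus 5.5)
The plan is to show directly that if $C=\langle \Acal\rangle$ for a finite subset $\Acal\subseteq S_1\times S_1$, then
\[C\cdot S_2=\langle (\phi\times\phi)(\Acal)\rangle.\]
The right-hand side is generated by the finite set $(\phi\times\phi)(\Acal)$, which gives the claim. By definition \eqref{eq:push-out}, $C\cdot S_2$ is the congruence generated by $(\phi\times\phi)(C)$. So it suffices to prove two inclusions: $(\phi\times\phi)(C)\subseteq D$ and $D\subseteq C\cdot S_2$, where
\[D\coloneqq \langle (\phi\times\phi)(\Acal)\rangle.\]

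The inclusion $D\subseteq C\cdot S_2$ is immediate, since $\Acal\subseteq C$ gives $(\phi\times\phi)(\Acal)\subseteq (\phi\times\phi)(C)\subseteq C\cdot S_2$.

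For the other inclusion, consider the pull-back $\phi^{*}D$ from \eqref{eq:pull-back}. We check that $\phi^{*}D$ is a congruence on $S_1$.
\begin{itemize}
\item It is an equivalence relation because $D$ is one.
\item It is a subsemiring of $S_1\times S_1$ because $\phi\times\phi$ is a semiring homomorphism and $D$ is a subsemiring of $S_2\times S_2$.
\end{itemize}
Alternatively, $\phi^{*}D$ is the kernel of $S_1\to S_2\to S_2/D$, with the degenerate case $D=S_2\times S_2$ treated directly, since then $\phi^{*}D=S_1\times S_1$.

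Since $\Acal\subseteq \phi^{*}D$ and $C$ is the minimal congruence containing $\Acal$, we get $C\subseteq\phi^{*}D$. This means $(\phi\times\phi)(C)\subseteq D$. Taking generated congruences then yields $C\cdot S_2\subseteq D$.

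The only point requiring care, which I expect to be the main (mild) obstacle, is verifying that the pull-back of an arbitrary congruence, not just a prime one, is again a congruence. The paper only notes this for prime congruences. As indicated above, it follows from a direct check of reflexivity, symmetry, transitivity and closure under the operations.
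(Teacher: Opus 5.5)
Your proof is correct, but it takes a different route from the paper's.

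The paper does not argue directly. It cites \cite[Corollary 2.15]{BertramEaston2017TropicalNullstellensatz} for the case where $\phi$ is surjective. It then reduces the general case to that one by factoring $\phi$ as $S_1\twoheadrightarrow\Im\phi\hookrightarrow S_2$. This leaves implicit both the injective step and the fact that push-outs along $S_1\twoheadrightarrow\Im\phi\hookrightarrow S_2$ compose correctly.

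You instead prove the explicit identity $C\cdot S_2=\langle(\phi\times\phi)(\Acal)\rangle$ for any generating set $\Acal$ of $C$. You use only two ingredients: the minimality of $\langle\Acal\rangle$, and the fact that the pull-back $\phi^{*}D$ of an arbitrary congruence is again a congruence. Your verification of the latter is fine, including the remark about the degenerate case $D=S_2\times S_2$.

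Your argument is self-contained and treats surjective and non-surjective $\phi$ uniformly. It also produces an explicit finite generating set. The injective step that the paper leaves unproved is exactly an instance of your argument. The paper's version gains only brevity, by outsourcing the surjective case to the literature.
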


\begin{proof}
  The surjective case is \cite[Corollary 2.15]{BertramEaston2017TropicalNullstellensatz}.
  The general case follows by factoring $S_1\to S_2$ as $S_1\twoheadrightarrow \Im\phi\hookrightarrow S_2$.
\end{proof}

Let $\Msf$ be an ordered monoid and $P$ a point of $\CSpec \Bb[\Msf]$ whose mobile face is $\Fsf\subseteq \Msf$. 
Recall the relation of $P$ and $\tilde{P}$:
\begin{align*}
  &\tilde{P}=(P\cap \Bb[\Fsf])\cdot \Bb[\Fsf^{\mathrm{gp}}],\\
  &P=\<(\tilde{P}\cap \Bb[\Fsf])\cdot \Bb[\Msf],\Gamma_{\Fsf}\>.
\end{align*}
In general, the congruence $\Gamma_{\Fsf}$ is not finitely generated.
However, in the case of $\Msf=\Msf_{\sigma}$ or $\Rbb\times \Msf_{\sigma}$ (see \cref{section:toric}),
$\Gamma_{\Fsf}$ is finitely generated.
Indeed, the quotient of $\Msf$ by any face is a finitely generated monoid.

\begin{lemma}\label{lemma:consider orbit}
  Let $P$ be a prime congruence on $\Bb[\Msf_{\sigma}]$ or $\Tbb[\Msf_{\sigma}]$.
  Then $P$ is finitely generated if and only if $\tilde{P}$ is finitely generated.
\end{lemma}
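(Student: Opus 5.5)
We prove the two implications separately, using the two displayed relations between $P$ and $\tilde{P}$ together with \cref{lemma:fg of ext}. Throughout, write $\Msf$ for $\Msf_{\sigma}$ or $\Rbb\times\Msf_{\sigma}$ and let $\Fsf$ be the mobile face of $P$.

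\emph{$P$ finitely generated $\Rightarrow$ $\tilde{P}$ finitely generated.} First consider $P\cap\Bb[\Fsf]$. By \cref{lemma:wah}, $\Bb[\Fsf]\cong\Bb[\Msf]/\Gamma_{\Fsf}$, and $\Gamma_{\Fsf}\subseteq P$. The proof of \cref{lemma:P1P2} identifies $P\cap\Bb[\Fsf]$ with the push-out $P\cdot(\Bb[\Msf]/\Gamma_{\Fsf})$ under this isomorphism. Applying \cref{lemma:fg of ext} to the surjection $\Bb[\Msf]\to\Bb[\Msf]/\Gamma_{\Fsf}$ shows that $P\cap\Bb[\Fsf]$ is finitely generated. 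A second application of \cref{lemma:fg of ext}, now to the localization $\Bb[\Fsf]\to\Bb[\Fsf^{\mathrm{gp}}]$, shows that $\tilde{P}=(P\cap\Bb[\Fsf])\cdot\Bb[\Fsf^{\mathrm{gp}}]$ is finitely generated.

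\emph{$\tilde{P}$ finitely generated $\Rightarrow$ $P$ finitely generated.} Since $P=\<(\tilde{P}\cap\Bb[\Fsf])\cdot\Bb[\Msf],\Gamma_{\Fsf}\>$, it suffices to show that each of the two generating pieces is finitely generated modulo the other.

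For $\Gamma_{\Fsf}$, note that $\Msf\setminus\Fsf$ is an ideal of the monoid $\Msf$. In the toric setting, \cref{lemma:toric face} gives $\Fsf=\Rbb\times(\Msf_{\sigma}\cap\tau^{\perp})$ (or $\Msf_{\sigma}\cap\tau^{\perp}$). Since $\Msf_{\sigma}$ is finitely generated by Gordan's lemma, the ideal $\Msf\setminus\Fsf$ is generated by the finitely many generators $m_1,\dots,m_k$ of $\Msf_{\sigma}$ lying outside $\tau^{\perp}$, with each $m_j$ taken as $(0,m_j)$ in the $\Tbb$-case. Because $(m,-\infty)$ multiplied by any element $m'$ gives $(m+m',-\infty)$, the congruence $\Gamma_{\Fsf}$ is generated by $(m_1,-\infty),\dots,(m_k,-\infty)$.

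For the other piece, let $\tilde{P}$ be generated by finitely many pairs $(f_i,g_i)$ in $\Bb[\Fsf^{\mathrm{gp}}]$. Since $\Fsf$ is finitely generated and $\Fsf^{\mathrm{gp}}=\Fsf-\Fsf$, we can clear denominators: there is $u\in\Fsf$ with $(uf_i,ug_i)\in\Bb[\Fsf]\times\Bb[\Fsf]$ for all $i$. Let $Q$ be the congruence on $\Bb[\Msf]$ generated by the pairs $(uf_i,ug_i)$ together with the generators of $\Gamma_{\Fsf}$. Clearly $Q\subseteq P$. For the reverse inclusion, take $(a,b)\in\tilde{P}\cap\Bb[\Fsf]$. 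By \cref{lemma:localcong}, applied to the finitely generated congruence on $\Bb[\Fsf]$ generated by the pairs $(uf_i,ug_i)$, there is $t\in\Fsf$ such that $(ta,tb)$ lies in that congruence, and hence in $Q$. We then need to cancel $t$ modulo $Q$.

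The cancellation step is the main obstacle. The plan is to show that $Q=P$ by comparing quotients. Since $Q\subseteq P$ and $Q\supseteq\Gamma_{\Fsf}$, \cref{lemma:pushpush} and \cref{lemma:wah} identify $\Bb[\Msf]/Q$ with a quotient of $\Bb[\Fsf]$. The congruence $Q\cap\Bb[\Fsf]$ has the same localization to $\Bb[\Fsf^{\mathrm{gp}}]$ as $P\cap\Bb[\Fsf]$, namely $\tilde{P}$. Elements of $\Fsf$ are non-zero in $\Bb[\Fsf]/(P\cap\Bb[\Fsf])$, because $\Fsf$ is the mobile face of $P$. The remaining difficulty is that $Q$ need not be prime, so $\Bb[\Fsf]/(Q\cap\Bb[\Fsf])$ might fail to be cancellative. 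To handle this, we will enlarge the finite generating set: for each of the finitely many generators $x$ of $\Fsf$, add finitely many pairs realizing the relations $(x\cdot\,\cdot\,)$ that appear in the generation of $\tilde{P}$. Concretely, we use the description of $\tilde{P}$ as a convex congruence $\<A\geq0\>$ with $A$ finite (\cref{proposition:new fg}) and replace each $m\in A$ by a representative $m=m_+-m_-$ with $m_\pm\in\Fsf$. The generators $(m_+\oplus m_-,m_+)$ then lie in $P\cap\Bb[\Fsf]$. Finally, we will check directly, as in the proof of \cref{lemma:later used}, that any pair in $P\cap\Bb[\Fsf]$ is generated by these pairs together with $\Gamma_{\Fsf}$ and the inner relations. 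This is the step we expect to require the most care.
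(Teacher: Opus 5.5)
Your forward implication and your argument that $\Gamma_{\Fsf}$ is finitely generated are correct and agree with the paper. The gap is in the converse, at exactly the step you defer. You take a finite $A$ with $\tilde{P}=\langle A\geq 0\rangle$ and one lift $m=m_+-m_-$ per $m\in A$. The resulting pairs $(m_+\oplus m_-,m_+)$ need not generate $P\cap\Bb[\Fsf]$, so no ``direct check as in \cref{lemma:later used}'' can succeed without further input.

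For a counterexample, take $\sigma=\Rbb_{\geq 0}$, so $\Bb[\Msf_\sigma]=\Bb[X]$, and let $P$ be the kernel of $X\mapsto 0$. Then $\Fsf=\Msf_\sigma$, $\Lsf(\tilde{P})=\Zbb$, and $A=\{1,-1\}$. The lifts $1=2-1$ and $-1=1-2$ give the pairs $(X^2\oplus X,X^2)$ and $(X\oplus X^2,X)$, both in $P$. The homomorphism $\Bb[X]\to\Bb$, $X\mapsto-\infty$, respects both pairs but sends $(X,0)\in P$ to $(-\infty,0)$, so they generate a proper subcongruence of $P$.

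The underlying issue is the cancellation problem you noticed. The generated congruence visibly contains $(a\oplus b,a)$ only for $(a,b)$ in the submonoid of $\Fsf^2$ generated by the diagonal and the pairs $(m_+,m_-)$, together with its transitive closure. But $P\cap\Bb[\Fsf]$ needs these relations for every $(a,b)\in\Fsf^2$ with $a-b\in\Lsf(\tilde{P})$. Since $\Fsf$ is not a group, $a-b=m_+-m_-$ does not give $a-m_+\in\Fsf$. Adding ``inner relations'' does not address this, and you give no rule for choosing lifts that would.

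The missing idea, which is the paper's, is to take generators of the whole monoid
$\Delta=\{(m,m')\in\Fsf^2\mid m-m'\in\Lsf(\tilde{P})\}$
instead of lifts of generators of $\Lsf(\tilde{P})$. Here $\Fsf$ is the set of lattice points of the rational cone $\sigma^\vee\cap\tau^\perp$. By \cref{proposition:new fg}, $\Lsf(\tilde{P})$ is finitely generated, hence it is the whole group or the lattice points of a rational half-space (as in the proof of \cref{theorem:fg}). So $\Delta$ is finitely generated by Gordan's lemma, say by $(m_1,m_1'),\dots,(m_k,m_k')$.

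Let $C=\langle (m_i\oplus m_i',m_i)\mid 1\leq i\leq k\rangle$ on $\Bb[\Fsf]$. The natural order of $\Bb[\Fsf]/C$ is compatible with multiplication. Hence the set of $(a,b)\in\Fsf^2$ with $(a\oplus b,a)\in C$ is a submonoid containing the generators of $\Delta$, and therefore contains all of $\Delta$; no cancellation is ever needed.

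Now take $(f,g)\in P\cap\Bb[\Fsf]$, which equals $\tilde{P}\cap\Bb[\Fsf]$ by \cref{proposition:loc induces inj}. Choose $m_f\in\supp(f)$ and $m_g\in\supp(g)$ with maximal image in $\Fsf^{\mathrm{gp}}/(\Lsf\cap-\Lsf)$. Then $(m_f,m)$ and $(m_g,m')$ lie in $\Delta$ for all $m\in\supp(f)$ and $m'\in\supp(g)$. Also both $(m_f,m_g)$ and $(m_g,m_f)$ lie in $\Delta$. This gives $f\sim_C m_f\sim_C m_g\sim_C g$, so $C=P\cap\Bb[\Fsf]$. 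Pushing $C$ forward to $\Bb[\Msf]$ via \cref{lemma:fg of ext} and adding your generators of $\Gamma_{\Fsf}$ finishes the proof.
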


\begin{proof}
  We only show the assertion for $\Bb[\Msf_{\sigma}]$ since the other case is similar.
  Let $\Fsf$ be the mobile face of $P$. 
  By the isomorphism $\Bb[\Fsf]\stackrel{\cong}{\longrightarrow}\Bb[\Msf_{\sigma}]/\Gamma_{\Fsf}$, 
  $P\cap \Bb[\Fsf]$ is pushed forward to $P\cdot (\Bb[\Msf_{\sigma}]/\Gamma_{\Fsf})$.
  Then by \cref{lemma:fg of ext}, 
  if $P$ is finitely generated, then $P\cap \Bb[\Fsf]$ and $\tilde{P}$ are also finitely generated.

  Conversely, assume that $\tilde{P}$ is finitely generated.
  Since $\Gamma_{\Fsf}$ is finitely generated, 
  it is enough to show that $\tilde{P}\cap \Bb[\Fsf]$ is finitely generated.
  By \cref{proposition:new fg}, the submonoid $\Lsf=\Lsf(\tilde{P})$ is a finitely generated monoid.
  Since $\Msf_{\sigma}$ is torsion-free, $\Fsf$ can be seen as a submonoid of $\Fsf^{\mathrm{gp}}$.
  By Gordan's Lemma (\cite[Proposition 1.2.17]{cox2011toric}), a submonoid
  \[\Delta\coloneqq \{(m,m')\in \Fsf^2\mid m-m'\in \Lsf\}\]
  of $(\Fsf^{\mathrm{gp}})^2$ is also finitely generated.
  Assume that $\Delta$ is generated by $(m_1,m'_1),\dots,(m_k,m'_k)$.
  We define a congruence $C$ on $\Bb[\Fsf]$ generated by
  \[\{(m_i\oplus m'_i,m_i)\mid 1\leq i\leq k\}.\]
  Then $\tilde{P}\cap \Bb[\Fsf]$ is equal to $C$ and hence finitely generated.
\end{proof}

By \cref{lemma:consider orbit}, it is enough to consider the case of $\Blau$ and $\lau$.

\begin{theorem} \label{theorem:fg}
  For a prime congruence $P$ on $\Blau$, the following are equivalent:
  \begin{enumerate}
    \item $P$ is finitely generated,
    \item $\Lsf(P)$ is a finitely generated monoid,
    \item $(\het P,\coht P)$ is equal to $(n,0)$ or $(n-1,1)$,
    \item $P$ is one of the following two: (i) the maximum congruence or (ii) $P$ corresponds to an $M$-flag of length 1
    where $H_1$ is a rational hyperplane.
  \end{enumerate}
\end{theorem}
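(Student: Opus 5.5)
The plan is to go through the chain (1)$\Leftrightarrow$(2), then (2)$\Leftrightarrow$(4), then (3)$\Leftrightarrow$(4). Each step rests on the flag description of $\Lsf(P)$ from \cref{theorem:flag corr group B} and \cref{proposition:flag inclusion group B}.

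\emph{(1)$\Leftrightarrow$(2).} By the lemma that prime congruences on $\Bb[G]$ are convex, $P=\<\Lsf(P)\geq 0\>$. Since $M$ carries the trivial order, $\{m\in M\mid m\geq 0\}=\{0\}$. Hence \cref{proposition:new fg} says exactly that $P$ is finitely generated if and only if $\Lsf(P)$ is generated by a finite subset $A\subseteq M$, i.e.\ $\Lsf(P)$ is a finitely generated monoid.

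\emph{(3)$\Leftrightarrow$(4).} Let $H_\bullet=H(P)_\bullet$ have length $l$. By \cref{proposition:flag inclusion group B}, $\coht P=l$ and $\het P=\rk(H_l\cap M)$. If $(\het P,\coht P)=(n,0)$, then $l=0$ and $\Lsf(P)=M$, so $P$ is the maximum congruence. If it is $(n-1,1)$, then $l=1$ and $\rk(H_1\cap M)=n-1$; this forces the hyperplane $H_1$ to be spanned by lattice points, i.e.\ to be rational. For the converse, note that the maximum congruence has $l=0$, giving $(n,0)$. A rational hyperplane $H_1$ has $\rk(H_1\cap M)=n-1$, giving $(n-1,1)$.

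\emph{(2)$\Leftrightarrow$(4).} For the maximum congruence, $\Lsf=M$ is finitely generated, by $\pm X_i$. For a rational $H_1$ with orientation $H_{1,+}$, we have $\Lsf=(H_{1,+}\cup H_1)\cap M$, a closed rational half-space intersected with $M$. This is finitely generated by Gordan's lemma, as in \cref{lemma:consider orbit}. Conversely, suppose $\Lsf=\Lsf(P)$ is generated by a finite set $A$. Then $\cone(\Lsf)=\cone(A)$ is a closed rational polyhedral cone, and by \cref{lemma:cone} it equals $\bigcup_{i\geq1}H_{i,+}\cup\Rbb(H_l\cap M)$. If $l=0$ we are done. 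If $l\geq 1$, this cone contains the open half-space $H_{1,+}$ and is contained in $H_{1,+}\cup H_1$. Being closed, it must be the closed half-space $H_{1,+}\cup H_1$. A rational polyhedral cone that is a half-space has a rational boundary, so $H_1$ is rational.

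It remains to show $l=1$, which is the main obstacle. The closed cone $\cone(A)$ contains all of $H_1$. So for every $v\in H_1\cap M$ we can write $v=\sum r_a a$ with $r_a\geq 0$ and $a\in A$. Any generator $a$ with $r_a>0$ must lie in $H_1$, since a generator in $H_{1,+}$ would push the sum off $H_1$. Thus $H_1\cap M$ is contained in the cone spanned by $A\cap H_1\subseteq\Lsf\cap H_1$. Hence $\cone(\Lsf\cap H_1)=\Rbb(H_1\cap M)=H_1$, and this cone is closed since $A\cap H_1$ is finite. Now recall the construction of $\Psi_2$: $H_2=\overline{\cone(H_1\cap\Lsf)}\cap-\overline{\cone(H_1\cap\Lsf)}$. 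This equals all of $H_1=\Rbb(H_1\cap M)$, so the flag stops at $l=1$. Care is needed precisely in this step: one must use that $\Psi_1\Psi_2$ is the identity, so the flag recovered from $\Lsf$ is really $H(P)_\bullet$.
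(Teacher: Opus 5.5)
Your proof is correct and follows the paper's route. The decomposition is the same: (1)$\Leftrightarrow$(2) via \cref{proposition:new fg} with the trivial order, (3)$\Leftrightarrow$(4) via the height/coheight formula of \cref{proposition:flag inclusion group B}, and (2)$\Leftrightarrow$(4) via closedness of $\cone(\Lsf(P))$ and \cref{lemma:cone}; you use Gordan's lemma where the paper exhibits the generators $m_1,\pm m_2,\dots,\pm m_n$.

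Your ``main obstacle'' step can be shortened, and this is how the paper argues. \cref{lemma:cone} directly gives $\cone(\Lsf)\cap-\cone(\Lsf)=\Rbb(H_l\cap M)$, and because $\cone(\Lsf)$ is closed this set equals $H_1$. Then $H_1=\Rbb(H_l\cap M)\subseteq H_l\subseteq H_1$, which gives $l=1$ and the rationality of $H_1$ at once.
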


\begin{proof}
  The equivalence of (1) and (2) is already proven in \cref{proposition:new fg}.
  The equivalence of (3) and (4) is also immediate by \cref{theorem:flag corr group B}.

  Then we will show the equivalence of (2) and (4).
  Assume the condition (2). 
  Then $\cone(\Lsf(P))$ is a closed cone. 
  Let $\{H_i\}_{i=0}^l$ be the $M$-flag corresponding to $P$.
  If the length $l$ is equal to $0$, then $P$ is the maximum congruence. 
  If the length $l$ is larger than $0$, by \cref{theorem:flag corr group B}, we have 
  \begin{align*}
    H_1&=\overline{\cone(\Lsf(P))}\cap\overline{-\cone(\Lsf(P))}\\
    &=\cone(\Lsf(P))\cap-\cone(\Lsf(P))\\
    &=\cone\left(\left(\bigcup_{i=1}^l H_{i,+}\cup H_l\right)\cap M\right)\cap-\cone\left(\left(\bigcup_{i=1}^l H_{i,+}\cup H_l\right)\cap M\right)\\
    &=\Rbb(H_l\cap M),
  \end{align*}
  which implies that $H_1$ is rational and the length $l$ is equal to $1$.
  Thus we have one implication (2)$\implies$(4).

  Finally we show the opposite implication.
  For (i) the maximum congruence $P_0$, 
  the corresponding submonoid $\Lsf(P_0)$ is equal to $M$, which is finitely generated.
  Let $P$ be (ii) a prime congruence whose $M$-flag is $M_{\Rbb}=H_0\supsetneq H_1$ 
  where $H_1$ is a rational hyperplane.
  Then we can take a finite subset $\{m_1,\dots,m_n\}$ of $M$ such that
  \begin{itemize}
    \item $m_1\in H_{1,+}$ whose image generates $M/(H_1\cap M)\cong \Zbb$ as a group, and
    \item $m_2,\dots,m_n$ generate $H_1\cap M$,
  \end{itemize}
  as groups.
  Thus the submonoid $\Lsf(P)$ of $M$ is generated by $\{m_1,\pm m_2,\dots,\pm m_n\}$.
\end{proof}

%% ここまで\Bb[M]の有限生成について．

The situation is similar in the case of $G=\Rbb\times M$.
Note that in this case, $\{m\in G \mid m\geq 0\}$ is equal to $\{(r,0)\mid r\geq 0\}$. 
Recall that a geometric congruence on $\lau$ is a prime congruence associated to a point $x\in\Rbb^n$.

\begin{theorem} \label{theorem:fg for T}
  For a prime congruence $P$ on $\lau$, the following are equivalent:
  \begin{enumerate}
    \item $P$ is finitely generated,
    \item $\Lsf(P)$ is a submonoid of $\Rbb\times M$ generated by finitely many elements and $\Rbb_{\geq 0}\times \{0\}$,
    \item $(\het P,\coht P)$ is equal to $(n+1,0)$ or $(n,1)$,
    \item $P$ is one of the following three: (i) the maximum congruence, (ii) a geometric congruence, 
    or (iii) a congruence which corresponds to an $(\Rbb\times M)$-flag $H_{\bullet}$ of length 1 
    and $H_1=\Rbb\times H$ where $H$ is a rational hyperplane of $M_{\Rbb}$.
  \end{enumerate}
\end{theorem}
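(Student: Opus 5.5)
The plan mirrors the proof of \cref{theorem:fg}, with $G=\Rbb\times M$, so that $\{g\geq 0\}=\Rbb_{\geq 0}\times\{0\}$. The equivalence (1)$\iff$(2) is \cref{proposition:new fg} applied to the convex congruence $P=\<\Lsf(P)\geq 0\>$. The equivalence (3)$\iff$(4) is read off from \cref{proposition:flag inclusion group T}: $\coht P=l$ and $\het P=\dim\Rbb(H_l\cap(\Rbb\times M))$.
\begin{itemize}
\item If $l=0$, then $(\het P,\coht P)=(n+1,0)$ and $P$ is the maximum congruence.
\item If $l=1$, then $\het P=n$ exactly when $H_1\cap(\Rbb\times M)$ spans the hyperplane $H_1$, i.e.\ $H_1$ is rational in the appropriate sense.
\end{itemize}
There are two cases in the second situation. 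If $\Rbb\times\{0\}\not\subseteq H_1$, then $l'=0$ and $l''=1$, so $P$ is geometric by \cref{proposition:flag when geom}; conversely, every geometric $P$ has such a flag. If $\Rbb\times\{0\}\subseteq H_1$, then $H_1=\Rbb\times H$, and the rank of $H_1\cap(\Rbb\times M)$ spanning $n$ dimensions forces $\dim\Rbb(H\cap M)=n-1$, i.e.\ $H$ is a rational hyperplane of $M_\Rbb$. Note that the flag condition $\Rbb_{\geq 0}\times\{0\}\subseteq H_{1,+}\cup H_1$ holds automatically in both cases: it is a constraint on the orientation in the first case, and trivial in the second.

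For (4)$\Rightarrow$(2), I construct generators explicitly.
\begin{itemize}
\item Maximum congruence: $\Lsf=\Rbb\times M$ is generated by $\pm X_i$ and $(\pm 1,0)$; the element $(-1,0)$ is an extra generator and $(1,0)$ comes from $\Rbb_{\geq 0}\times\{0\}$.
\item Geometric at $x$: $\Lsf=\{(r,u)\mid r+\<u,x\>\geq 0\}$. It is generated by $\Rbb_{\geq 0}\times\{0\}$ and the $2n$ elements $\pm(-x_i,X_i)$, since any $(r,u)$ equals $\sum u_i(-x_i,X_i)+(r+\<u,x\>,0)$.
\item Case (iii): $\Lsf=(\Rbb\times(H_+\cap M))\cup(\Rbb\times(H\cap M))$. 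Choose $m_1\in H_+$ generating $M/(H\cap M)$ and a basis $m_2,\dots,m_n$ of $H\cap M$. Then $\Lsf$ is generated by $(0,m_1)$, $(0,\pm m_j)$, $(-1,0)$, and $\Rbb_{\geq0}\times\{0\}$.
\end{itemize}

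The main step is (2)$\Rightarrow$(4). Suppose $\Lsf$ is generated by a finite set $A$ together with $\Rbb_{\geq0}\times\{0\}$. Then $\cone(\Lsf)=\cone(A\cup\{(1,0)\})$ is a closed rational-type polyhedral cone, where the rationality is only in the $M$-directions. If $l\geq 1$, then $H_1=\cone(\Lsf)\cap-\cone(\Lsf)$ by the construction $\Psi_2$ adapted to $(\Rbb\times M)$-flags. Using the analogue of \cref{lemma:cone},
\[\cone(\Lsf)=\bigcup_i H_{i,+}\cup\Rbb(H_l\cap(\Rbb\times M)),\]
so its lineality space is $\Rbb(H_l\cap(\Rbb\times M))$. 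Hence $H_1=\Rbb(H_l\cap(\Rbb\times M))$, which forces $l=1$ and $H_1$ to be spanned by its lattice points $H_1\cap(\Rbb\times M)$.

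The obstacle is that $\Rbb\times M$ is not a lattice, so "spanned by lattice points" must be translated into (ii) or (iii). I would argue as follows. If $\Rbb\times\{0\}\not\subseteq H_1$, we are in the geometric case; no rationality of $x$ is needed, since $H_1\cap(\Rbb\times M)$ always has rank $n$ there. If $\Rbb\times\{0\}\subseteq H_1$, then $H_1=\Rbb\times H$, and spanning forces $H$ to be spanned by $H\cap M$, i.e.\ $H$ is rational.

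The remaining care is the analogue of \cref{lemma:cone} for $(\Rbb\times M)$-flags, namely the fact that $\cone(H_{j,+}\cap(\Rbb\times M))=H_{j,+}\cup\{0\}$. I expect this to hold because each $H_{j,+}$ is an open half-space of $\Rbb(H_{j-1}\cap(\Rbb\times M))$, whose lattice points are dense in directions. This density is the point I would verify carefully.
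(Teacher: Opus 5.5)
Your proposal is correct and follows essentially the same route as the paper, whose proof of this theorem simply transposes that of \cref{theorem:fg}: \cref{proposition:new fg} for (1)$\Leftrightarrow$(2), the flag formulas for height and coheight for (3)$\Leftrightarrow$(4), closedness and lineality space of the finitely generated cone for (2)$\Rightarrow$(4), and explicit generators for (4)$\Rightarrow$(2). The density claim you flag does hold, but you do not need it: the easy inclusion $\cone(\Lsf)\subseteq\bigcup_{i}H_{i,+}\cup\Rbb(H_l\cap(\Rbb\times M))$ already gives $H_1=\cone(\Lsf)\cap-\cone(\Lsf)\subseteq\Rbb(H_l\cap(\Rbb\times M))\subseteq H_l\subseteq H_1$, and this forces $l=1$ and $H_1=\Rbb(H_1\cap(\Rbb\times M))$.
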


\begin{proof}
  The proof is similar to that of \cref{theorem:fg}.
\end{proof}

Note that congruences in (4) of \cref{theorem:fg for T} 
are not lying over $\Tbb$ except for geometric congruences.
Thus the following corollary is immediate.

\begin{corollary}\label{corollary:fg for lying over T}
  Let $P$ be a prime congruence on $\lau$ lying over $\Tbb$.
  Then the following are equivalent:
  \begin{itemize}
    \item $P$ is finitely generated,
    \item $(\het P,\coht P)$ is equal to $(n,1)$, and
    \item $P$ is geometric.
  \end{itemize}
\end{corollary}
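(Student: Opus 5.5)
The statement to prove is \cref{corollary:fg for lying over T}. I will deduce it from \cref{theorem:fg for T} together with the flag description of points lying over $\Tbb$. Let $P$ be lying over $\Tbb$ and let $H_\bullet=H(P)_\bullet$ be its $(\Rbb\times M)$-flag of length $l$. By \eqref{eq:bij inn out flag} and \cref{proposition:flag corr group T over T}, being lying over $\Tbb$ means that the inner part has length $l''>0$ and $\Rbb\times\{0\}\not\subseteq H^{\mathrm{inn}}_1$. In particular, $l\geq 1$, and some $H_i$ fails to contain $\Rbb\times\{0\}$.

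First I check which of the three types in (4) of \cref{theorem:fg for T} can lie over $\Tbb$.
\begin{enumerate}
\item[(i)] The maximum congruence corresponds to the flag of length $0$. It is not lying over $\Tbb$.
\item[(iii)] Here $l=1$ and $H_1=\Rbb\times H$. Then $l'=1$ and $l''=0$, so by \cref{lemma:when lying over T} (equivalently \cref{proposition:flag corr group T over T}) $P$ is not lying over $\Tbb$.
\item[(ii)] Geometric congruences do lie over $\Tbb$.
\end{enumerate}
So, among congruences lying over $\Tbb$, condition (4) of \cref{theorem:fg for T} reduces to ``$P$ is geometric''. This gives the equivalence of the first and third items.

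For the second item, I argue as follows.
\begin{itemize}
\item If $P$ is geometric, then by \cref{proposition:flag when geom} we have $l'=0$ and $l''=1$, so $l=1$. Then \cref{proposition:flag inclusion group T} gives $\coht P=1$ and $\het P=\dim\Rbb(H_1\cap(\Rbb\times M))=n$, because $H_1$ does not contain $\Rbb\times\{0\}$ and is therefore the graph of a linear map on $M_{\Rbb}$.
\item Conversely, if $(\het P,\coht P)=(n,1)$, then condition (3) of \cref{theorem:fg for T} holds, so $P$ is finitely generated. By the first part, $P$ is then geometric.
\end{itemize}

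The only delicate point is the exclusion of type (iii), which requires reading off $l''=0$ from the decomposition \eqref{eq:bij inn out flag}. Since $\Rbb\times\{0\}\subseteq H_1$ forces $l'\geq 1=l$, this is immediate.
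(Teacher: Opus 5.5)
Your proposal is correct and follows the paper's route. The paper also deduces the corollary from \cref{theorem:fg for T} by observing that, among the types (i)--(iii) in condition (4), only geometric congruences lie over $\Tbb$. Your flag computation showing that a geometric $P$ has $(\het P,\coht P)=(n,1)$ rather than $(n+1,0)$ fills in a detail that the paper calls immediate.
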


The following claim follows from \cref{lemma:consider orbit} and \cref{corollary:fg for lying over T}.

\begin{corollary}
  Let $P$ be a prime congruence on $\Tbb[\Msf_{\sigma}]$ lying over $\Tbb$.
  Then $P$ is geometric if and only if $P$ is finitely generated.
\end{corollary}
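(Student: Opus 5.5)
The plan is to reduce everything to the two previous results, \cref{lemma:consider orbit} and \cref{corollary:fg for lying over T}, together with the compatibility of the stratification with geometricity (\cref{proposition:strata over T}). Let $P\in\CSpec_{\Tbb}\Tbb[\Msf_{\sigma}]$ and let $\Rbb\times\Fsf$ be its mobile face. By \cref{lemma:toric face} there is a face $\tau\prec\sigma$ with $\Fsf=\Msf_{\sigma}\cap\tau^{\perp}$. Moreover $\Fsf^{\mathrm{gp}}=\tau^{\perp}\cap M$, which is a free abelian group of rank $\dim\tau^{\perp}$; call this rank $n'$. Consequently $\tilde{P}$ is a prime congruence on $\Tbb[\tau^{\perp}\cap M]$, and after choosing a basis this algebra is $\Tbb[X_1^{\pm},\dots,X_{n'}^{\pm}]$.

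First, $\tilde{P}$ is lying over $\Tbb$ by \cref{proposition:strata over T}, because $P$ is. Second, \cref{lemma:consider orbit} shows that $P$ is finitely generated if and only if $\tilde{P}$ is. This lemma uses that $\Gamma_{\Fsf}$ is finitely generated and applies Gordan's lemma to the monoid $\Delta$. Third, \cref{corollary:fg for lying over T} applies to $\tilde{P}$ on the Laurent polynomial semiring in $n'$ variables. It says that $\tilde{P}$ is finitely generated if and only if $\tilde{P}$ is geometric. That corollary is stated for $\lau$ with $n$ variables, but its proof does not depend on $n$, so it holds with $n'$ in place of $n$. Finally, \cref{proposition:strata over T} gives that $\tilde{P}$ is geometric if and only if $P$ is geometric. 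Chaining these equivalences proves the statement.

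The main point needing care is the degenerate case $n'=0$, which occurs when $\tau$ is full dimensional. There $\tilde{P}$ lives on $\Tbb[\{0\}]=\Tbb$, and the only congruence on $\Tbb$ lying over $\Tbb$ is the diagonal. The diagonal is both geometric and finitely generated, since it is generated by the empty set. So the equivalence holds trivially in this case, and it should be checked directly rather than read off from the Laurent corollary.

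One should also confirm that the identification $\Tbb[\Fsf^{\mathrm{gp}}]\cong\Tbb[X_1^{\pm},\dots,X_{n'}^{\pm}]$ is an isomorphism of $\Tbb$-algebras. This matters because the notions of being geometric and lying over $\Tbb$ refer to the $\Tbb$-algebra structure. It holds because the isomorphism is induced by an o-isomorphism $\Rbb\times(\tau^{\perp}\cap M)\cong\Rbb\times\Zbb^{n'}$ that fixes $\Rbb\times\{0\}$.
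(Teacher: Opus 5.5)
Your proposal is correct and follows the paper's route. The paper derives this corollary directly from \cref{lemma:consider orbit} and \cref{corollary:fg for lying over T}, using \cref{proposition:strata over T} implicitly to transfer lying over $\Tbb$ and geometricity between $P$ and $\tilde{P}$. Your additional checks (that $\Tbb[\tau^{\perp}\cap M]$ is identified with a Laurent polynomial $\Tbb$-algebra in $n'=\dim\tau^{\perp}$ variables, and the degenerate case $n'=0$) are sound and make explicit what the paper leaves unstated.
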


Thus we have obtained another characterization of the subset $U^{\trop}_{\sigma}$ of $U^{\con}_{\sigma}$.

\printbibliography

@article{giansiracusa2016equations,
  title={Equations of tropical varieties},
  author={Giansiracusa, Jeffrey and Giansiracusa, Noah},
  journal={Duke Mathematical Journal},
  volume={165},
  number={18},
  pages={3379--3433},
  year={2016},
  publisher={Duke University Press}
}

@article{joo2018prime,
  title={Prime congruences of additively idempotent semirings and a Nullstellensatz for tropical polynomials},
  author={Jo{\'o}, D{\'a}niel and Mincheva, Kalina},
  journal={Selecta Mathematica},
  volume={24},
  number={3},
  pages={2207--2233},
  year={2018},
  publisher={Springer}
}

@article{joo2026varieties,
  author  = {Jo{\'o}, D{\'a}niel and Mincheva, Kalina},
  title   = {Varieties of prime tropical ideals and the dimension
             of the coordinate semiring},
  journal = {Mathematical Proceedings of the Cambridge Philosophical Society},
  volume  = {181},
  number  = {1},
  pages   = {853--868},
  year    = {2026},
  doi     = {10.1017/S0305004126101960}
}

@article{maclagan2018tropical,
  title={Tropical ideals},
  author={Maclagan, Diane and Rinc{\'o}n, Felipe},
  journal={Compositio Mathematica},
  volume={154},
  number={3},
  pages={640--670},
  year={2018},
  publisher={London Mathematical Society}
}

@article{BertramEaston2017TropicalNullstellensatz,
  author       = {Aaron Bertram and Robert Easton},
  title        = {The tropical Nullstellensatz for congruences},
  journal      = {Advances in Mathematics},
  volume       = {308},
  pages        = {36--82},
  year         = {2017},
}

@book{maclagan2015introduction,
  title={Introduction to tropical geometry},
  author={Maclagan, Diane and Sturmfels, Bernd},
  volume={161},
  year={2015},
  publisher={American Mathematical Society}
}

@article{rabinoff2012tropical,
  title={Tropical analytic geometry, Newton polygons, and tropical intersections},
  author={Rabinoff, Joseph},
  journal={Advances in Mathematics},
  volume={229},
  number={6},
  pages={3192--3255},
  year={2012},
  publisher={Elsevier}
}

@article{friedenberg2025geometric,
  author  = {Friedenberg, Netanel and Mincheva, Kalina},
  title   = {Geometric interpretation of valuated term (pre)orders},
  journal = {Michigan Mathematical Journal},
  pages   = {1--33},
  year    = {2025},
  note    = {Advance publication},
  doi     = {10.1307/mmj/20236430}
}

@misc{song2024geometricinterpretationkrulldimensions,
      title={A geometric interpretation of Krull dimensions of $\boldsymbol{T}$-algebras}, 
      author={JuAe Song and Yasuhito Nakajima},
      year={2024},
      eprint={2408.02366v2},
      archivePrefix={arXiv},
      primaryClass={math.AG},
      url={https://arxiv.org/abs/2408.02366}, 
}

@book{hebisch1998semirings,
  title={Semirings: algebraic theory and applications in computer science},
  author={Hebisch, Udo and Weinert, Hanns Joachim},
  volume={5},
  year={1998},
  publisher={World Scientific}
}

@book{golan2013semirings,
  title={Semirings and their Applications},
  author={Golan, Jonathan S},
  year={2013},
  publisher={Springer Science \& Business Media}
}

@book{hartshorne2013algebraic,
  title={Algebraic geometry},
  author={Hartshorne, Robin},
  volume={52},
  year={2013},
  publisher={Springer Science \& Business Media}
}

@book{cox2011toric,
  title={Toric varieties},
  author={Cox, David A and Little, John B and Schenck, Henry K},
  volume={124},
  year={2011},
  publisher={American Mathematical Society}
}

@incollection{Kajiwara2008TropicalToric,
  author    = {Kajiwara, Takeshi},
  title     = {Tropical toric geometry},
  booktitle = {Toric Topology},
  series    = {Contemporary Mathematics},
  volume    = {460},
  publisher = {American Mathematical Society},
  address   = {Providence, RI},
  year      = {2008},
  pages     = {197--207}
}

@article{payne2009analytification,
  title={Analytification is the limit of all tropicalizations},
  author={Payne, Sam},
  journal={Mathematical research letters},
  volume={16},
  number={3},
  pages={543--556},
  year={2009},
  publisher={International Press of Boston, Inc. Somerville, MA 02143, USA}
}

@misc{mikami2025tropicalcohomologysmoothalgebraic,
      title={On tropical cohomology of smooth algebraic varieties}, 
      author={Ryota Mikami},
      year={2025},
      eprint={2009.04690v6},
      archivePrefix={arXiv},
      primaryClass={math.AG},
      url={https://arxiv.org/abs/2009.04690}, 
}

@misc{ito2026nonfinitegeneratednesscongruencesdefined,
      title={Non-finite generatedness of the congruences defined by tropical varieties}, 
      author={Takaaki Ito},
      year={2026},
      eprint={2512.21565v2},
      archivePrefix={arXiv},
      primaryClass={math.AC},
      url={https://arxiv.org/abs/2512.21565}, 
}

@article{lorscheid2012geometry,
  title={The geometry of blueprints: Part I: Algebraic background and scheme theory},
  author={Lorscheid, Oliver},
  journal={Advances in Mathematics},
  volume={229},
  number={3},
  pages={1804--1846},
  year={2012},
  publisher={Elsevier}
}

@article{jarra2025strong,
  title={Strong congruence spaces and dimension in F1-geometry},
  author={Jarra, Manoel},
  journal={Journal of Pure and Applied Algebra},
  volume={229},
  number={2},
  pages={107862},
  year={2025},
  publisher={Elsevier}
}

\end{document}